\documentclass[letterpaper,oneside,11pt]{article}
\usepackage[english]{babel} 
\usepackage[T1]{fontenc}
\usepackage[utf8]{inputenc}
\usepackage{lmodern} 

\usepackage[abbrev]{amsrefs}
\renewcommand{\MR}[1]{}
\usepackage{hyphenat}
\usepackage{dsfont}
\usepackage{amsmath, amssymb}
\usepackage{amsthm}
\usepackage{amsfonts}
\newtheorem{teo}{Theorem}[section]
\newtheorem{prop}[teo]{Proposition}
\newtheorem{lemma}[teo]{Lemma}
\newtheorem{re}[teo]{Remark}
\newtheorem{de}[teo]{Definition}
\newtheorem{cor}[teo]{Corollary}
\newtheorem{claim}[teo]{Claim}
\numberwithin{equation}{section}
\newcommand{\Real}{\mathbb R}

\newcommand{\n}{\mathcal{N}}

\usepackage{mathtools}
\mathtoolsset{showonlyrefs}
\usepackage{enumerate}

\usepackage{parskip}
\usepackage{xcolor}
\usepackage[normalem]{ulem}


%
%

\usepackage[plainpages=false,pdfpagelabels,pdfusetitle,pdfdisplaydoctitle, pdfstartpage=1, pdfstartview={{XYZ null null 1.00}}]{hyperref}
\usepackage[top=3cm, bottom=2cm, left=3cm, right=2cm]{geometry}
\author{\textsc{Luccas Campos}, \textsc{Luiz Gustavo Farah}, \textsc{Svetlana Roudenko}}


\renewcommand{\Re}{\operatorname{Re}}
\renewcommand{\Im}{\operatorname{Im}}

\setcounter{tocdepth}{1}
\usepackage{etoolbox}
\makeatletter
\patchcmd{\l@section}{1.0em}{0.4em}{}{}
\makeatother

\begingroup 
\makeatletter
   \@for\theoremstyle:=theorem,de,re,pro,lem,coro,plain\do{%
     \expandafter\g@addto@macro\csname th@\theoremstyle\endcsname{%
        \addtolength\thm@preskip\parskip
     }%
   }
\endgroup

\begin{document}
\scrollmode
\title{Threshold solutions for the nonlinear Schrödinger equation}
\date{}
\maketitle

\makeatletter{\renewcommand*{\@makefnmark}{}
\footnotetext{\textit{2000 Mathematics Subject classification:} 35Q55, 35P25, 35P30, 35B40.}\makeatother}
\makeatletter{\renewcommand*{\@makefnmark}{}
\footnotetext{\textit{Keywords:} nonlinear Schrödinger equation, mass-energy threshold, asymptotic behavior, scattering, blow-up.}\makeatother}
\vspace{-.5cm}
\begin{abstract}\noindent
We study the focusing NLS equation in $\mathbb{R}^N$ in the mass-supercritical and energy-subcritical (or \textit{intercritical}) regime, with $H^1$ data at the mass-energy threshold $ \mathcal{ME}(u_0)=\mathcal{ME}(Q)$, where $Q$ is the ground state. Previously,
Duyckaerts-Merle studied the behavior of threshold solutions in the $H^1$-critical case, in dimensions $N = 3, 4, 5$, later generalized by Li-Zhang for higher dimensions. In the intercritical case, Duyckaerts-Roudenko studied the threshold problem for the 3d cubic NLS equation.


In this paper, we generalize the results of Duyckaerts-Roudenko for any dimension and any power of the nonlinearity for the  \textit{entire} intecritical range. We show the existence of special solutions, $Q^\pm$, besides the standing wave $e^{it}Q$, which exponentially approach the standing wave in the positive time direction, but differ in its behavior for negative time. We classify all solutions at the threshold level, showing either blow-up occurs in finite (positive and negative) time, or scattering in both time directions, or the solution is equal to one of the three special solutions above, up to symmetries. Our proof extends to the $H^1$-critical case, thus, giving a different and more unified approach than the Li-Zhang result.

These results are obtained by studying the linearized equation around the standing wave and some tailored approximate solutions to the NLS equation. We establish important decay properties of functions associated to the spectrum of the linearized Schrödinger operator,  which, in combination with modulational stability and coercivity for the linearized operator on special subspaces, allows us to use a fixed-point argument to show the existence of special solutions. Finally, we  prove the uniqueness by studying exponentially decaying solutions to a sequence of linearized equations.
\end{abstract}
\vspace{-.4cm}
\tableofcontents




\section{Introduction}

We consider the focusing energy-subcritical nonlinear Schrödinger (NLS) equation in $H^1(\Real^N)$, $N \geq 1$,

\begin{equation}\label{sub_NLS}
	\begin{cases}
	    i\partial_t u + \Delta u +|u|^{p-1}u = 0,\\
	    u(x,0) = u_0(x) \in H^1(\Real^N),
	\end{cases}
\end{equation}
where $1+\frac{4}{N} <p < 2^*-1 := \begin{cases}1+\frac{4}{N-2}, &N \geq 3,\\ +\infty, &N =1,2.\end{cases}$

When $N \geq 3$, we also consider the focusing energy-critical nonlinear Schrödinger equation in $\dot{H}^1(\Real^N)$ with nonlinearity power $p_c := \frac{N+2}{N-2}$
\begin{equation}\label{NLS}
	\begin{cases}
	    i\partial_t u + \Delta u +|u|^{p_c-1}u = 0,\\
	    u(x,0) = u_0(x) \in \dot{H}^1(\Real^N).
	\end{cases}
\end{equation}

The equation \eqref{sub_NLS} is considered in the inhomogeneous space $H^1(\Real^N)$ with the norm $\|f\|_{H^1} := \| f\|_{L^2}+\|\nabla f\|_{L^2}$, while \eqref{NLS} is studied in the homogeneous Sobolev space $\dot{H}^1(\Real^N)$ with the norm $\|f\|_{\dot{H}^1} := \|\nabla f\|_{L^2}$.

Note that \eqref{sub_NLS} and \eqref{NLS} are invariant under scaling. Namely, if $u$ is a solution, then 
\begin{equation}
    u_{\delta}(x,t) =  \delta^\frac{2}{p-1} u(\delta x, \delta^2 t)
\end{equation}
is also a solution to the same equation, for any $\delta > 0$. Computing the homogeneous $\dot{H}^s(\Real^N)$ norm yields
\begin{equation}
    \|u_{\delta}(\cdot,0)\|_{\dot{H}^s} = \delta^{s-\left(\frac{N}{2}-\frac{2}{p-1}\right)}\|u(\cdot,0)\|_{\dot{H}^s}.
\end{equation}
Hence, the scale-invariant norm is $\dot{H}^{s_c}(\Real^N)$, where
 $$
 s_c = \frac{N}{2}-\frac{2}{p-1}.
 $$
 The conditions on $p$ are equivalent to $0 < s_c < 1$ in \eqref{sub_NLS}, and to $s_c =1$ in \eqref{NLS}.

In addition to scaling invariance, the equations \eqref{sub_NLS} and \eqref{NLS} exhibit several symmetries, such as, space translation, time translation, phase rotation and time-reversal. Indeed, if $u(x,t)$ is such a solution, so is
$$
 e^{i\theta_0}u\left(x+x_0,t+t_0 \right) \text{ or }  e^{i\theta_0}\bar{u}\left(x+x_0,\-t+t_0 \right),
$$
with $(\theta_0, x_0,t_0) \in [0,2\pi)\times  \Real^N \times \Real$.

All these symmetries leave the $\dot{H}^{s_c}$ norm invariant. Another symmetry that does not have this characteristic is the \textit{Galilean boost}, given by
\begin{equation}
    e^{ix.\xi_0}e^{-it|\xi_0|^2}u(x-2\xi_0 t, t), \quad \xi_0 \in \mathbb{R}^N.
\end{equation}

Moreover, if $u_0\in \dot{H}^1(\Real^N)$, solutions to \eqref{sub_NLS} and \eqref{NLS} conserve the \textit{energy}
\begin{equation}
    E(u(t)) := \frac{1}{2}\int |\nabla u(t)|^2 - \frac{1}{p+1}\int |u(t)|^{p+1} = E(u_0),
\end{equation}
and whenever $u_0$ belongs to $L^2(\Real^N)$, the \textit{mass}
\begin{equation}
    M(u(t)) := \int |u(t)|^2 = M(u_0)
\end{equation}
and the \textit{momentum}
\begin{equation}
    P(u(t)) := \Im\int \bar{u}(t)\nabla u(t)  = P(u_0)
\end{equation}
are also conserved.

The Cauchy problem for \eqref{sub_NLS} was first studied by Ginibre and Velo \cite{GV79}. Namely, for initial data $u_0 \in H^1(\Real^N)$, there exists a non-empty maximal interval $I$ and a unique local-in-time solution $u : \Real^N \times I \to \mathbb{C}$ that belongs to $C^0_tH^1_x(\Real^N \times J)$ for every compact interval $J \subset I$. Moreover, the map $u_0 \mapsto u$ is uniformly continuous and $u$ satisfies the Duhamel formula
\begin{equation}
    u(t) = e^{it\Delta}u_0 + \int_0^t e^{i(t-s)\Delta}|u|^{p-1}u(s)\,ds
\end{equation}
for all $t \in I$. The solution is also known to be in $ L^q_t W_x^{1,r}(\Real^N \times J)$ for any Strichartz pair $(q,r)$ (see Section \ref{Sec2}).

In the energy-critical case, the Cauchy problem for \eqref{NLS} was first considered by Cazenave and Weissler \cite{CW90}. They proved that, for initial data $u_0 \in \dot{H}^1$, there exists a unique solution defined on a maximal non-empty interval $I$, satisfying the corresponding Duhamel formula and belonging to $C^0_t\dot{H}^1_x(\Real^N \times J) \cap L_{t,x}^{2(N+2)/(N-2)}(\Real^N \times J)$ for every compact interval $J \subset I$. Later works \cite{CKSTT08,RV07,TV05} proved that the map from the initial data to the solution is also uniformly continuous.

The impossibility to extend the solution to all times is related to the concept of \textit{finite-time blow-up}. We say that a solution to \eqref{sub_NLS} blows up in finite positive time $T >0$ if 
\begin{equation}
    \lim_{t \nearrow T} \|\nabla u(t)\|_{L^2} = +\infty.
\end{equation}

For the energy-critical case, given that the scale-invariant norm is $\dot{H}^1(\Real^N)$, this criterion is not enough to preclude the possibility of continuing the solution. Rather, we say that a solution to \eqref{NLS} blows up in finite positive time $T>0$ if 
\begin{equation}
    \int_0^T\int |u(x,t)|^{\frac{2(N+2)}{N-2}} \, dx dt= +\infty.
\end{equation}
In a similar way, blow-up in finite negative time is defined.


Solutions to \eqref{sub_NLS} and \eqref{NLS} can also exhibit a \textit{scattering} behavior in the energy space. We say that a solution to \eqref{sub_NLS} scatters forward in time if it is defined for any $t \in [0,+\infty)$ and there exists $\psi \in H^1(\Real^N)$ such that
\begin{equation}
    \lim_{t \to +\infty}\|u(t)-e^{it\Delta}\psi\|_{H^1} = 0.
\end{equation}
In the energy-critical case, the definition is the same, except that the $\dot{H}^1(\Real^N)$ norm is used instead. Scattering backward in time is defined analogously.

The $L_{t,x}^{2(N+2)/(N-2s_c)}$ norm also plays an important role in the scattering theory {(see, for instance, Cazenave \cite{cazenave}*{Chapter 7})}: solutions to either \eqref{sub_NLS} or \eqref{NLS} defined on the time interval $[0,+\infty)$ scatter forward in time if
\begin{equation}
    \int_0^{+\infty}\int |u(x,t)|^{\frac{2(N+2)}{N-2s_c}} \, dx dt < +\infty.
\end{equation}
By the time-reversal symmetry a similar scattering criterion backward in time can be obtained.

{Besides} finite-time blow-up and scattering, there is the concept of \textit{standing waves}. Consider the elliptic equation
\begin{equation}\label{sub_def_ground}
    \Delta \psi - (1-s_c)\psi +|\psi|^{p-1}\psi = 0.
\end{equation}
It is known that, for $s_c < 1$, this equation admits the unique radial, positive solution in $H^1(\Real^N)$, which we call the \textit{ground state} and denote by $Q = Q_{p,N}$ (see Strauss \cite{Strauss77}, Berestycki, Lions and Peletier \cite{BLP81}, Kwong \cite{Kwong89} and also Tao \cite[Appendix B]{TaoBook} for a textbook exposition). If $Q$ solves \eqref{sub_def_ground}, then the standing wave
\begin{equation}
    u(x,t) = e^{it}Q(x)
\end{equation}
is a solution to \eqref{sub_NLS} that neither blows up in finite time, nor scatters, in any time direction. On the other hand, if $s_c = 1$, since the equation \eqref{sub_def_ground} is invariant by scaling, the radial positive solution to \eqref{sub_def_ground} is not unique. In this case, an explicit solution is given by

\begin{equation}
    Q_{\frac{2N}{N-2},N}(x) := \frac{1}{\left(1+\frac{|x|^2}{N(N-2)}\right)^\frac{N-2}{2}}.
\end{equation}
This solution is commonly denoted by $W$, and we shall often do so.

A simple calculation shows that $W \in \dot{H}^1(\Real^N)$ for any $N \geq 3$, and that $W \in L^2(\Real^N)$ if and only if $N \geq 5$. As its subcritical counterpart, the static solution $u(x,t) = W(x)$ to \eqref{NLS} neither blows up in finite time, nor scatters, in any time direction.

Also, the following {\textit{Pohozaev}} identities follow from \eqref{sub_def_ground}
\begin{equation}\label{pohozaev}
\begin{gathered}
     \int |Q|^{p+1} = \frac{2(p+1)}{N(p-1)}\int |\nabla Q|^{2},\\
    \int |Q|^2 = \frac{(N-2)(p+1)-4}{(1-s_c)N(p-1)} \int |\nabla Q|^2, \text{ if } 0 < s_c < 1.
\end{gathered}    
\end{equation}
\begin{re}
The choice of the constant $(1 - s_c)$ in \eqref{sub_def_ground} is only for
convenience. If $s_ c < 1$ we can modify $Q$ and replace $(1-s_c)$ by any positive constant by scaling. Similarly, if $s_c = 1$, the choice of $Q_{\frac{2N}{N-2},N} = W$ is arbitrary, and we could have used any rescaled version of $W$. Since we will state our results up to a constant scaling (among other symmetries), there is no loss of generality.
\end{re}

The works of Weinstein \cite{W_Nonl} in the case $0<s_c < 1$ and of Aubin \cite{Aub76} and Talenti \cite{Tal76} for $s_c=1$ give the characterization of the ground state as the minimizer of 
\begin{equation}\label{gagl_gen}
    \|f\|_{L^{p+1}}^{p+1} \leq C_{N,p} \|\nabla f\|_{L^2}^{\frac{N(p-1)}{2}}\|f\|_{L^2}^{2-\frac{(N-2)(p-1)}{2}},
\end{equation}
with equality if and only if $f(x) = e^{i\theta_0}Q(x+x_0)$, if $0 < s_c < 1$, or $f(x) = e^{i\theta_0}\lambda_0^{\frac{N-2}{2}}W(\lambda_0 x+x_0)$, if $s_c = 1$, for some $\theta_0 \in [0,2\pi)$, $x_0 \in \Real^N$ and $\lambda_0>0$. Here, $C_{N,p}$ is the sharp constant of the inequality \eqref{gagl_gen}. In the subcritical case, the inequality \eqref{gagl_gen} is known as (one version of) the Gagliardo-Nirenberg inequality, and in the energy-critical case, it reduces to the classical Sobolev inequality.

The ground state is also associated with the \textit{threshold} for a dichotomy between finite-time blow-up and scattering. The behavior of solutions below the ground state level are now well understood for both focusing energy-critical and energy-subcritical nonlinear Schrödinger equation. Indeed, for the Cauchy problem \eqref{NLS}, solutions with $E(u_0) <E(W)$ were considered by Kenig and Merle \cite{KM_Glob} in the radial setting for $N = 3$, $4$ and $5$, introducing the concentration-compactness and rigidity approach for dispersive models. They showed that if $\|\nabla u_0\|_{L^2} <\|\nabla W\|_{L^2}$, then the corresponding solution exists globally in time and scatters in both time directions. On the other hand, if $\|\nabla u_0\|_{L^2} >\|\nabla W\|_{L^2}$, then the corresponding solution blows up in finite positive and negative times (provided it's radial or of finite variance). Later, Killip and Visan \cite{KV10} extended this result for $N \geq 5$, removing the radial assumption.
 For the Cauchy problem \eqref{sub_NLS}, this problem was studied by Holmer and Roudenko \cite{HR_Scat} in the 3d cubic radial case. In \cite{HR_Blow} for $0<s_c<1$, they consider the following scale-invariant quantities
\begin{equation}
    \mathcal{ME}(u(t)) =
    \frac{
    M(u(t))^{\frac{1-s_c}{s_c}}E(u(t))}{M(Q)^{\frac{1-s_c}{s_c}}E(Q)
    }
     = \mathcal{ME}(u_0),
\end{equation}
and 
\begin{equation}
    {\mathcal{MG}(u(t))} =\frac{ \|u_0\|_{L^2}^{\frac{1-s_c}{s_c}}   \|\nabla u(t)\|_{L^2}}
    {\|Q\|_{L^2}^{\frac{1-s_c}{s_c}}   \|\nabla Q\|_{L^2}}.
\end{equation}
Based on the concentration-compactness and rigidity approach, in \cite{HR_Blow} they proved that if $\mathcal{ME}(u_0)<1$ and $\mathcal{MG}(u_0)<1$, then the corresponding solution exists globally in time and scatters in both time directions. In \cite{HR_Blow} they also proved that if $\mathcal{ME}(u_0)<1$, $\mathcal{MG}(u_0)>1$ and either $u_0$ is radial or $|x|u_0 \in L^2(\Real^N)$, then the corresponding solution blows up in both finite positive and negative times\footnote{If $u_0$ is nonradial and has infinite variance, then there exists a sequence of times $\{t_n\}$ such that $\|\nabla u(t_n)\|_{L^2} \to +\infty$, as shown by Holmer and Roudenko in \cite{HR_Div}.}, establishing the dichotomy result in the 3d cubic radial case. Later Duyckaerts, Holmer and Roudenko \cite{HR_Scat} removed the radial assumption in the scattering result. Fang, Xie and Cazenave \cite{FXC_Scat} and Guevara \cite{Guevara} (see also Guevara and Carreon \cite{GC12}) extended this result to all intercritical ranges and dimensions.

This dichotomy does not hold above the ground state mass-energy threshold. In \cite{HPR}, Holmer, Platte and Roudenko proved blow-up criteria that included solutions above the mass-energy threshold. In \cite{DR_Going}, Duyckaerts and Roudenko showed, for $0 < s_c \leq 1$, the existence of asymmetric behavior in time of solutions to the NLS equation that are above the threshold and that scatter in one time direction and blow-up in finite time in the other time direction (in fact, they showed that it suffices to multiply the ground state by a quadratic phase to produce such a result). Moreover, they proved a dichotomy-type result above the mass-energy threshold with some conditions on the variance of the initial data.

At the threshold level, there exists a richer dynamics for the asymptotic  behavior of solutions. Indeed, for the focusing energy-critical NLS equation, this problem was first considered by Duyckaerts and Merle \cite{DM_Dyn}, in the radial case for $N = 3$, $4$ and $5$. In particular, they proved the existence of special solutions that approach $W$ as $t\to +\infty$ and either blow-up or scatter as $t \to -\infty$. Later Li and Zhang \cite{higher_thre} studied the case $N \geq 6$. For the focusing energy-subcritical nonlinear Schrödinger equation, Duyckaerts and Roudenko \cite{DR_Thre} treated the 3d cubic case. The main goal of this paper is to generalize the results in \cite{DR_Thre} to the entire intercritical range $0<s_c<1$. More precisely, we prove the following.

\begin{teo}[Energy-subcritical case]\label{sub_special}
For $N \geq 1$, there exist two radial solutions $Q^+$ and $Q^-$ to \eqref{sub_NLS} in $H^1(\Real^N)$ such that
\begin{itemize}
\item $M[Q^\pm] = M[Q]$, $E[Q^\pm] = E[Q]$ , $Q^\pm$ is defined at least on $[0,+\infty)$ and there exist $C$, $e_0>0$ such that
\begin{equation}
\|Q^\pm(t) - e^{it}Q\|_{H^1} \leq C e^{-e_0t}\text{ for all } t \geq 0,
\end{equation} 
\item $\|\nabla Q^+_0\|_2 > \|\nabla Q\|_2$ and $Q^+$ blows-up in finite negative time,
\item $\|\nabla Q^-_0\|_2 < \|\nabla Q\|_2$ and $Q^-$ is globally defined and scatters backward in time.
\end{itemize}
\end{teo}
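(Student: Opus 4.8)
The plan is to construct $Q^\pm$ via a fixed-point argument built on the spectral theory of the linearized operator around the standing wave $e^{it}Q$, following the scheme of Duyckaerts--Merle and Duyckaerts--Roudenko but carried out uniformly across the intercritical range. First I would linearize: writing a solution in the form $u(x,t) = e^{it}(Q + h(x,t))$ (after allowing a modulation in the scaling, phase and translation parameters), the perturbation $h$ satisfies $\partial_t h = \mathcal{L} h + R(h)$, where $\mathcal{L}$ is the (non-self-adjoint, real-linear) linearized operator and $R(h) = O(|h|^2)$ collects the nonlinear remainder. The key structural input — which I would take from the spectral analysis developed earlier in the paper — is that $\mathcal{L}$ has exactly one unstable eigenvalue $e_0 > 0$ with eigenfunction $\mathcal{Y}_+$ (and a corresponding stable eigenvalue $-e_0$ with $\mathcal{Y}_-$), that the rest of the spectrum lies on the imaginary axis, and most importantly that $\mathcal{Y}_\pm$ \emph{decay exponentially} in space. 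The exponential spatial decay is what makes all the nonlinear estimates close.

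The construction itself proceeds by building, for each $A \in \Real$, an approximate solution $U_A$ whose leading correction is $A e^{-e_0 t}\mathcal{Y}_+$, together with a finite asymptotic expansion $U_A^k = e^{it}(Q + \sum_{j=1}^k A^j e^{-j e_0 t} Z_j)$ solving \eqref{sub_NLS} up to an error of size $e^{-(k+1)e_0 t}$, where the profiles $Z_j$ are obtained by solving the triangular system of (elliptic-in-the-resolvent-sense) equations $(\mathcal{L} + j e_0) Z_j = (\text{polynomial in } Z_1,\dots,Z_{j-1})$; solvability uses that $j e_0$ is not in the spectrum of $\mathcal{L}$ for $j \geq 2$, and exponential decay of the $Z_j$ is inherited inductively. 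Then one sets up the fixed-point problem for the genuine remainder $h = u - U_A^k$ in a space of functions on $[T,\infty)$ with norm forcing decay like $e^{-(k+1/2)e_0 t}$ (in suitable Strichartz-type norms), using the Duhamel formula for the equation satisfied by $h$ together with the linear decay estimate for $e^{t\mathcal{L}}$ restricted to the stable subspace; here one must project out the unstable direction and absorb it into the free parameter $A$, exactly as in \cite{DM_Dyn}. Taking $k$ large enough and $T$ large enough, the contraction mapping principle produces a solution $u_A$ on $[T,\infty)$ with $\|u_A(t) - e^{it}Q\|_{H^1} \lesssim e^{-e_0 t}$; one then checks $A \mapsto u_A$ is (to leading order) continuous and that the solutions for $A$ and $-A$ are genuinely different (and different from $e^{it}Q$) because the sign of $A$ controls, again to leading order, the sign of $\mathcal{MG}(u_A) - 1$, i.e. whether $\|\nabla u_A(t)\|_{L^2}$ is above or below $\|\nabla Q\|_{L^2}$.

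To finish, one transports $u_A$ backward in time: the solution exists on a maximal interval $(T_-, \infty)$, and one invokes the below-threshold dichotomy of Holmer--Roudenko / Fang--Xie--Cazenave / Guevara (valid on all of $0 < s_c < 1$), applied via the conserved quantities $M[u_A] = M[Q]$, $E[u_A] = E[Q]$ together with the strict inequality $\mathcal{MG}(u_A(t)) \gtrless 1$ for $t$ near $T$: if $\mathcal{MG} > 1$ the solution must blow up in finite negative time (this is $Q^+$, after normalizing $A>0$ and using that at the threshold the "infinite variance / sequence of times" loophole is excluded because the solution is radial and the gradient norm, once above $\|\nabla Q\|_{L^2}$, stays above and forces genuine blow-up), while if $\mathcal{MG} < 1$ the solution is global and scatters backward in time (this is $Q^-$, with $A<0$). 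Rescaling to put $e^{it}Q$ in the normalized form of \eqref{sub_def_ground} gives the stated $M$, $E$ identities. The main obstacle I expect is not the soft structure — that is essentially the Duyckaerts--Merle template — but establishing, \emph{uniformly in $N$ and $p$ over the whole range $0 < s_c < 1$}, the two quantitative spectral facts the fixed point rests on: (i) that $\mathcal{L}$ has no eigenvalue other than $0, \pm e_0$ on the real axis and no purely-imaginary embedded resonance obstructing the decay of $e^{t\mathcal{L}}$ on the stable subspace, and (ii) the exponential spatial decay of $\mathcal{Y}_\pm$ and of all the profiles $Z_j$; these are exactly the "important decay properties of functions associated to the spectrum" advertised in the abstract, and in the intercritical case one cannot appeal to the extra algebraic structure available when $s_c = 1$, so the resolvent/ODE-asymptotics analysis of $\mathcal{L}$ near spatial infinity must be done by hand. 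Coercivity of the linearized energy on the subspace orthogonal to the symmetry directions and the generalized kernel (which, combined with modulational stability, controls the modulation parameters and the $L^2$-based part of $h$) is the other ingredient that must be verified in this generality, but that follows the standard Weinstein-type argument using the Gagliardo--Nirenberg characterization \eqref{gagl_gen}.
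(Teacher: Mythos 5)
Your construction of the special solutions themselves (spectral decomposition of $\mathcal{L}$, exponentially decaying profiles $Z_j$ solving $(\mathcal{L}+je_0)Z_j=\cdots$, fixed point for the remainder in exponentially weighted Strichartz spaces, and the sign of $A$ deciding the sign of $\|\nabla U^A(t)\|_{L^2}-\|\nabla Q\|_{L^2}$) is essentially the paper's route, up to a cosmetic difference in the fixed point (the paper integrates the Duhamel formula with the free propagator from $t$ to $+\infty$ against the approximate solution $\mathcal{V}^A_k$, rather than projecting onto the stable subspace of $e^{t\mathcal{L}}$). Note, though, that the sign argument silently requires the non-degeneracy $(Q,\mathcal{Y}_1)_{H^1}\neq 0$; the paper has to prove this, using the coercivity of $\Phi$ on $\tilde{G}^\perp$ (Lemma \ref{lem_coerc}) together with $\Phi(Q)<0$, so it is not free.

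The genuine gap is in your last step, the backward-in-time behavior. You propose to conclude that $Q^+$ blows up in finite negative time and $Q^-$ scatters backward by invoking the below-threshold dichotomy of Holmer--Roudenko / Fang--Xie--Cazenave / Guevara, but those theorems require $\mathcal{ME}(u_0)<1$ strictly, whereas $Q^\pm$ sit exactly at $\mathcal{ME}(u_0)=1$ (that is the whole point of the theorem). At the threshold their conclusions are simply false: $Q^-$ has $\mathcal{MG}<1$ yet does not scatter forward in time, and $Q^+$ has $\mathcal{MG}>1$ yet is global forward in time, so ``$\mathcal{MG}>1$ forces blow-up'' and ``$\mathcal{MG}<1$ forces scattering'' cannot be imported; in particular your remark that the gradient norm ``once above $\|\nabla Q\|_{L^2}$, stays above and forces genuine blow-up'' is exactly the implication that fails at the threshold. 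The paper has to prove dedicated threshold statements instead: Proposition \ref{sub_prop_supercrit}, where the truncated/exact virial identity combined with the Cauchy--Schwarz-type Claim \ref{sub_cauchy_banica} gives $F_R'(t)>0$ for all times, exponential decay of $\int_t^{\infty}d(s)\,ds$, and then finite-time blow-up in the past by a time-reversal contradiction (this is where radiality or finite variance of $Q^+$ enters); and Proposition \ref{sub_prop_subcrit}, where backward scattering of $Q^-$ is obtained by a concentration-compactness rigidity argument (if it failed in both directions, the monotonicity lemmas force $\int_{\Real} d(t)\,dt=0$, i.e. $u=e^{it}Q$ up to symmetries, a contradiction). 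Without replacing your citation of the sub-threshold results by arguments of this type, the second and third bullet points of the theorem are not proved.
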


\begin{teo}[Energy-subcritical case]\label{sub_class_thresh}
For $N \geq 1$, let $u$ be a solution to \eqref{sub_NLS} such that $\mathcal{ME}(u_0) = 1$. Then, the following holds.
\begin{itemize}
    \item If $\mathcal{MG}(u_0) < 1$, then $u$ is defined for all times. Moreover, either $u$ scatters in both time directions, or $u = Q^-$  up to the symmetries of the equation.

    \item If $\mathcal{MG}(u_0) = 1$, then $u = Q$ up to the symmetries of the equation.
    
    \item If $\mathcal{MG}(u_0) > 1$ and $u_0$ is either radial or $|x|u_0 \in L^2(\Real^N)$, then either $u$ blows up in finite positive and negative time, or $u = Q^+$ up to the symmetries of the equation.
\end{itemize}
\end{teo}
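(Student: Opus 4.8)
The proof splits according to $\mathcal{MG}(u_0)$, and the first, rigid, case is purely variational. If $\mathcal{MG}(u_0)=1$ together with $\mathcal{ME}(u_0)=1$, then the conservation of mass and energy forces equality in the Gagliardo--Nirenberg inequality \eqref{gagl_gen} at $t=0$ (one checks that $E(u_0)=E(Q)$ and $\|u_0\|_{L^2}^{(1-s_c)/s_c}\|\nabla u_0\|_{L^2}=\|Q\|_{L^2}^{(1-s_c)/s_c}\|\nabla Q\|_{L^2}$ saturate it, modulo the constant scaling used in \eqref{sub_def_ground}); hence $u_0=e^{i\theta_0}Q(\cdot+x_0)$ up to a scaling and a Galilean boost, and uniqueness of the Cauchy problem gives $u=e^{it}Q$ up to the symmetries of the equation. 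For the remaining cases I would first record the threshold analogue of the Holmer--Roudenko dichotomy: since $u(t)$ being a ground state is the only way to have $\mathcal{MG}(u(t))=1$ at the threshold, the sign of $\mathcal{MG}(u(t))-1$ is preserved; $\mathcal{MG}(u_0)<1$ then forces $\|\nabla u(t)\|_{L^2}$ to stay bounded (so $u$ is global), while $\mathcal{MG}(u_0)>1$ with $u_0$ radial or of finite variance forces, via the localized virial identity, that finite-time blow-up in both directions is the only alternative to a solution that stays, in a suitable sense, near the orbit of $Q$. Both cases then reduce to the following \emph{rigidity statement}: if $u$ is a threshold solution that neither scatters forward nor blows up in finite forward time, then, after applying fixed symmetries, $\|u(t)-e^{it}Q\|_{H^1}\le Ce^{-e_0t}$ for $t\ge 0$ (and, by time reversal, the analogous statement backward).

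To prove the rigidity statement I would combine a concentration--compactness reduction with a modulation analysis around $e^{it}Q$. First, the intercritical scattering theory (the profile decomposition and perturbation lemma behind \cite{HR_Scat}, \cite{FXC_Scat}, \cite{Guevara}) shows that a forward-global, non-scattering threshold solution has the compactness property: there are continuous $\theta(t),x(t),\lambda(t)$ with $\{e^{i\theta(t)}\lambda(t)^{2/(p-1)}u(\lambda(t)x+x(t),t):t\ge 0\}$ precompact in $H^1$. Next, while $u(t)$ lies near the orbit of $Q$, I would write $u(t)=e^{i\theta(t)}(Q+\varepsilon(t))$ after an extra scaling/Galilean renormalization, with $\varepsilon(t)$ satisfying orthogonality conditions that annihilate the generalized kernel of the linearized operator $\mathcal L$, and decompose $\varepsilon=a_+\mathcal Y_++a_-\mathcal Y_-+\varepsilon_\perp$, where $\mathcal Y_\pm$ are the eigenfunctions of $\mathcal L$ for its simple real eigenvalues $\pm e_0$. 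The spectral and modulational ingredients --- the decay estimates for the spectral functions, modulational stability, and the coercivity $\|\varepsilon_\perp\|_{H^1}^2\lesssim\langle\mathcal L\varepsilon_\perp,\varepsilon_\perp\rangle$ on the orthogonal subspace --- yield modulation equations $|\dot\theta|+|\dot\lambda/\lambda|+|\dot x|\lesssim\|\varepsilon\|_{H^1}$ and $|\dot a_\pm\mp e_0a_\pm|\lesssim\|\varepsilon\|_{H^1}^2$, together with $\|\varepsilon(t)\|_{H^1}\lesssim|a_+(t)|+|a_-(t)|$ up to a remainder controlled by $E(u_0)=E(Q)$, $M(u_0)=M(Q)$, $P(u_0)=0$ and the Pohozaev identities \eqref{pohozaev}.

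The heart of the argument is then a trapping/ejection dichotomy for $d(t):=\inf_{\theta,x}\|u(t)-e^{i\theta}Q(\cdot-x)\|_{H^1}$. If $d(t)$ stays below a fixed $\delta_0$ for all large $t$, I would feed the modulation equations into a well-chosen Lyapunov/virial functional (a localized version of $\int|x|^2|u|^2$ combined with the product $a_+a_-$) to produce a differential inequality forcing $\|\varepsilon(t)\|_{H^1}\to0$; then bootstrapping --- $a_-$ decays like $e^{-e_0t}$, and $a_+$ must vanish identically since a nonzero $a_+$ would eject $u$ from the neighborhood --- upgrades this to $\|u(t)-e^{it}Q\|_{H^1}\lesssim e^{-e_0t}$ after fixing the limiting symmetry parameters. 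If instead $d(t)\ge\delta_0$ along some $t_n\to+\infty$, I would invoke the compactness property together with a localized Morawetz/virial identity and the preserved sign of $\mathcal{MG}(u_0)-1$ to derive a contradiction: a compact threshold solution that remains away from the orbit of $Q$ must either scatter or blow up, contrary to the hypotheses. This last step --- controlling the scaling parameter $\lambda(t)$, which in the intercritical regime does \emph{not} arise from a symmetry, and closing the virial estimate despite the relatively weak decay of $Q$ when $N$ is small or $p$ is large --- is where I expect the main difficulty to lie.

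Finally comes the uniqueness of the special solutions, i.e.\ the ``sequence of linearized equations'' step. If $u_1,u_2$ both converge exponentially to $e^{it}Q$ as $t\to+\infty$, then after matching symmetry parameters and using the time-translation freedom to align the leading coefficient on $\mathcal Y_+$, the difference $w=u_1-u_2$ solves $i\partial_tw+\mathcal Lw=\mathcal R(t)$ with a super-quadratically small source, and $\|w(t)\|_{H^1}\to0$ exponentially; projecting onto the spectral subspaces of $\mathcal L$ and running a fixed-point/Gr\"onwall argument in norms weighted by a growing exponential forces $w\equiv0$. Hence, up to symmetries, the only threshold solutions converging to the standing wave are $e^{it}Q$ and the two solutions $Q^\pm$ of Theorem \ref{sub_special}, with $\|\nabla Q^+_0\|_{L^2}>\|\nabla Q\|_{L^2}$ and $\|\nabla Q^-_0\|_{L^2}<\|\nabla Q\|_{L^2}$. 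Assembling the pieces: if $\mathcal{MG}(u_0)<1$ then $u$ is global; if it fails to scatter forward it converges exponentially to the orbit of $Q$ by the rigidity statement, so by uniqueness $u=Q^-$ up to symmetries (it is neither $e^{it}Q$ nor $Q^+$ since $\mathcal{MG}(Q)=1$, $\mathcal{MG}(Q^+_0)>1$, and $\mathcal{MG}<1$ is preserved); the case where it scatters forward but not backward is handled by time reversal. The case $\mathcal{MG}(u_0)>1$ is symmetric, with $Q^+$ replacing $Q^-$, finite-time blow-up replacing scattering, and the radiality or finite-variance hypothesis used exactly to run the Morawetz/virial step.
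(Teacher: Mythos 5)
Your overall architecture matches the paper's: treat $\mathcal{MG}(u_0)=1$ variationally, reduce the other two cases to an exponential-convergence (rigidity) statement toward $e^{it}Q$, and then classify all exponentially converging threshold solutions as $e^{it}Q$ or $Q^{\pm}$ via a spectral/uniqueness argument (this is exactly the role of Lemma \ref{sub_improv_conv} and Corollary \ref{sub_unique_A} in the paper). However, there is a genuine gap in how you reduce the case $\mathcal{MG}(u_0)>1$ to your rigidity statement. Your rigidity proof opens by claiming that any forward-global, non-scattering threshold solution has the compactness property via ``the profile decomposition and perturbation lemma behind'' the sub-threshold scattering theory. That machinery produces compactness only when every solution with a strictly smaller value of the relevant functional scatters, which is the situation $\mathcal{MG}<1$ (Proposition \ref{sub_sub_global}); when $\|\nabla u_0\|_{L^2}>\|\nabla Q\|_{L^2}$ there is no such minimality structure, the $H^1$ norm of a forward-global solution is not even a priori bounded (grow-up must be ruled out, not assumed away), and no concentration-compactness argument gives you $x(t),\theta(t)$ making the orbit precompact. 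The paper never uses compactness in this regime: Proposition \ref{sub_prop_supercrit} rests entirely on the convexity of the (truncated) variance, $F''=-[2N(p-1)-8]\,d(t)\le 0$ with $F\ge 0$ forcing $F'>0$, combined with the Cauchy--Schwarz-type Claim \ref{sub_cauchy_banica} to close the differential inequality $F'\lesssim -F''$, yielding $\int_t^{\infty}d(s)\,ds\lesssim e^{-ct}$ (Lemmas \ref{sub_lem_int_exp_decay_d}, \ref{sub_lem_int_exp_decay_d_2}); Lemma \ref{sub_lem_exp_delta} then converts this into exponential convergence by pure modulation theory. To repair your argument you would need to replace the compactness step in the $\mathcal{MG}>1$ case by such a virial argument (your earlier sentence gestures at this but the actual reduction runs through the unavailable compactness), or otherwise justify boundedness and precompactness independently.

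Two secondary points. First, in the intercritical regime scaling is not a symmetry and the mass normalization fixes the scale, so the parameter $\lambda(t)$ you carry in the compactness and modulation ansatz is superfluous: the paper's almost periodicity is modulo spatial translation only, and the modulation uses only $(x,\theta)$ with orthogonality to $\nabla Q$, $iQ$ plus the condition $\int Q^p v_1=0$ needed for coercivity in low dimensions; the real difficulty in the $\mathcal{MG}<1$ case is controlling $x(t)$, handled via zero momentum, $x(t)/t\to0$ and the monotonicity estimate of Lemma \ref{sub_monot1}, not a scaling parameter. Relatedly, your trapping/ejection Lyapunov functional mixing the truncated variance with $a_+a_-$ is a different (Nakanishi--Schlag-type) mechanism from the paper's, which at the threshold gets by with $d\approx|\alpha|\approx\|h\|_{H^1}$, $|\alpha'|\lesssim d$ and the virial bound $\int_t^\infty d\lesssim d(t)$; your alternative could plausibly be made to work in the $\mathcal{MG}<1$ case but the key assertion ``a compact threshold solution staying away from the orbit of $Q$ must scatter or blow up'' is exactly what has to be proved there. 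Second, in the case $\mathcal{MG}(u_0)=1$ the equality case of \eqref{gagl_gen} gives $u_0=e^{i\theta_0}Q(\cdot+x_0)$ up to scaling only; a Galilean boost strictly increases the kinetic energy at fixed mass and potential energy, so it does not appear in the equality characterization (harmless for the conclusion, but the phrasing is imprecise).
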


There are two major difficulties in extending the previous results. The first one is to deal with low powers of the parameter $p$. If $p <3 $, then the nonlinearity $|u|^{p-1}u$ is not a smooth function of $(u,\bar{u})$. Moreover, as the power of the nonlinearity is not an odd integer, the difference $|u|^{p-1}u - |v|^{p-1}v$ cannot be written as a polynomial. Therefore, we cannot use the same estimates as in \cite{DR_Thre}, as they rely heavily on $H^s(\Real^N)$ estimates, for large values\footnotemark\footnotetext{To be precise, at least $s > N/2$, to make use of the fact that $H^s(\Real^N)$ is an algebra.} of $s$. Moreover, if $p \leq 2$, then the nonlinearity is not twice real-differentiable.  In order to perform the necessary estimates, we employ the fractional calculus tools introduced by Christ and Weinstein \cite{CW91} and Visan \cite{Visan07}.

Another problem arises from the fast decay of the ground state $Q$. When constructing the solutions $Q^\pm$, we must deal with some estimates that involve terms of the form $\|Q^{-1}f\|_{L^\infty}$. Even though $(Q^{-1}f)(x)$ is pontwise defined for any function $f$, the exponential decay of $Q$ makes it harder to obtain good estimates. Therefore, we have to carefully study the desired functions $f$ to ensure that they have the necessary decay. We establish the decay via several bootstrap arguments, and by making use of resolvent convolution kernels associated to the corresponding elliptic equations.

It is worth mentioning that, in order to prove the classification in Theorems \ref{sub_special} and \ref{sub_class_thresh}, one has to change the orthogonality conditions that were used in \cite{DR_Thre}, as in the lower dimensions they would not necessarily ensure coercivity. See Remark \ref{re_ortho} and the proof of Lemma \ref{lem_coerc} for details.

Since our proof can be readily applied to the energy-critical case, we also state and prove similar results for $s_c = 1$ and $N \geq 6$.  Note that $p_c \leq 2$ happens exactly when $N \geq 6$, which again implies that the nonlinearity is not twice real-differentiable. We have the following.

\begin{teo}[Energy-critical case]\label{special}
Let $N \geq 6$. There exist two radial solutions $W^+$ and $W^-$ to \eqref{NLS} in $\dot{H}^1(\Real^N)$ such that
\begin{itemize}
\item $E[W^\pm] = E[W]$ , $W^\pm$ is defined at least in $[0,+\infty)$ and there exist  $C$, $e_0>0$ such that
\begin{equation}
\|W^\pm(t) - W\|_{H^1} \leq C e^{-e_0t} \text{ for all } t \geq 0,
\end{equation} 
\item $\|\nabla W^+_0\|_2 > \|\nabla W\|_2$ and $W^+$ blows-up in finite negative time,
\item $\|\nabla W^-_0\|_2 < \|\nabla W\|_2$ and $W^-$ is globally defined and scatters backward in time.
\end{itemize}
\end{teo}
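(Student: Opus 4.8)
The plan is to mirror, in the energy-critical setting, the construction of the special solutions $Q^\pm$ from Theorem~\ref{sub_special}, adapting each step to the homogeneous space $\dot H^1(\Real^N)$ and to the fact that $p_c \le 2$ for $N \ge 6$. First I would linearize \eqref{NLS} around the static solution $W$: writing $u = W + h$, the perturbation $h$ satisfies $i\partial_t h + \mathcal L h = R(h)$, where $\mathcal L$ is the (matrix) linearized operator obtained from $\Delta + |{\cdot}|^{p_c-1}(\cdot)$ at $W$ and $R(h)$ collects the superlinear remainder. One then needs the spectral picture of $\mathcal L$ on $\dot H^1$: a single pair of real eigenvalues $\pm e_0$ with eigenfunctions $\mathcal Y^\pm$ (exponentially decaying, obtained from the earlier decay/resolvent analysis referenced in the introduction), the rest of the spectrum purely imaginary, and coercivity of the associated quadratic form on the subspace orthogonal to the symmetry directions (scaling, translations, phase) and to $\mathcal Y^\pm$ — exactly the modulational-stability-plus-coercivity package advertised in the abstract, which for $s_c=1$ involves the scaling direction $\Lambda W = \tfrac{N-2}{2}W + x\cdot\nabla W$ in place of the $L^2$-mass direction.

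Next I would build an approximate solution $W^a_k(t) = W + \sum_{j=1}^{k} a^j \varphi_j(x)$ with $a = \pm e^{-e_0 t}$, choosing $\varphi_1 = \mathcal Y^+$ (resp.\ a suitable multiple) and solving the resulting triangular system of elliptic equations $(\mathcal L - i j e_0)\varphi_j = (\text{polynomial in } \varphi_1,\dots,\varphi_{j-1})$, so that $W^a_k$ solves \eqref{NLS} up to an error of size $O(e^{-(k+1)e_0 t})$ in the relevant Strichartz-type norms on $[T,\infty)$. The required decay estimates on each $\varphi_j$ (control of $\|W^{-1}\varphi_j\|_{L^\infty}$-type quantities) come from the bootstrap/resolvent-kernel arguments quoted in the introduction; here one must be careful that $W$ has only polynomial decay (unlike the exponentially decaying subcritical $Q$), which actually makes some of these estimates cleaner but changes the weights. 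Then a fixed-point argument in a space of functions decaying like $e^{-(k+1-\eta)e_0 t}$ produces, for $T$ large, an exact solution $W^\pm$ of \eqref{NLS} with $\|W^\pm(t) - W^a_k(t)\| \lesssim e^{-(k+1)e_0 t}$, hence $\|W^\pm(t)-W\|_{\dot H^1}\le C e^{-e_0 t}$ for $t\ge T$; the contraction uses the fractional Leibniz/chain-rule estimates of Christ–Weinstein and Visan to handle the non-smooth nonlinearity $|u|^{p_c-1}u$ with $p_c\le 2$. Shifting in time (by the time-translation symmetry) extends the bound to all $t\ge 0$, giving the first bullet; and $E[W^\pm]=E[W]$ follows since the energy of $W^a_k(t)$ converges to $E[W]$ as $t\to\infty$ by the exponential decay and energy is conserved.

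For the last two bullets I would compute the sign of a suitable coercive quantity — essentially the second-order term in the expansion of $\|\nabla W^\pm(t)\|_{L^2}^2 - \|\nabla W\|_{L^2}^2$ or of the Kenig–Merle-type functional — along $W^\pm$. The choice $a = +e^{-e_0 t}$ versus $a = -e^{-e_0 t}$ fixes the sign of the leading perturbation $\pm e^{-e_0 t}\mathcal Y^+$, which by the spectral/variational structure forces $\|\nabla W^+_0\|_2 > \|\nabla W\|_2$ and $\|\nabla W^-_0\|_2 < \|\nabla W\|_2$. One then invokes the energy-critical analogue of the below-threshold theory (Kenig–Merle \cite{KM_Glob}, Killip–Visan \cite{KV10}): since $E[W^\pm]=E[W]$ but $W^-$ lies on the ``$\|\nabla u_0\|_2 < \|\nabla W\|_2$'' side, the ``scatters or equals a threshold solution converging to $W$'' trichotomy applies, and as $W^-(t)\to W$ only as $t\to +\infty$ (not $t\to -\infty$), the backward-in-time evolution must scatter and be globally defined; symmetrically, $W^+$ sits on the ``$\|\nabla u_0\|_2 > \|\nabla W\|_2$'' side with $E[W^+]=E[W]$, so the convexity/virial obstruction forces finite-time blow-up in negative time. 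The main obstacle I expect is not the soft structure of this argument but the analytic core: establishing the sharp pointwise decay of the eigenfunctions $\mathcal Y^\pm$ and of the correctors $\varphi_j$ in the homogeneous, low-power regime $p_c\le 2$, and closing the fixed point with the fractional-calculus substitutes for the (unavailable) algebra and polynomial-difference estimates — precisely the two difficulties flagged in the introduction, now compounded by the slow decay of $W$ and the absence of $L^2$ control.
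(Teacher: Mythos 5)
Your construction of $W^\pm$ follows essentially the same route as the paper (linearization around $W$, the spectral pair $\pm e_0$ with eigenfunctions $\mathcal{Y}_\pm$, the family of approximate solutions $W+\sum_j e^{-je_0t}Z_j$ solved order by order through the resolvent, and a contraction in exponentially weighted Strichartz spaces with the Christ--Weinstein/Visan fractional calculus to handle $p_c\le 2$; this is Propositions \ref{family_appr} and \ref{exist_UA}), and the sign determination of $\|\nabla W^\pm_0\|_2-\|\nabla W\|_2$ is the paper's argument, modulo the nondegeneracy $(W,\mathcal{Y}_1)_{\dot H^1}\neq 0$, which you assert implicitly but which must be proved (the paper gets it from $\Phi(W)<0$ together with coercivity of $\Phi$ on $G^\perp$). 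The genuine gaps are in the backward-in-time statements. For $W^-$ you invoke the Kenig--Merle/Killip--Visan ``below threshold'' theory, but that theory requires $E(u_0)<E(W)$ strictly; here $E(W^-)=E(W)$ exactly, so no off-the-shelf trichotomy is available — the threshold dichotomy is precisely what this paper (and Duyckaerts--Merle, Li--Zhang) must establish, and invoking Theorem \ref{class_thresh} at this stage would be circular. Moreover, your assertion that ``$W^-(t)\to W$ only as $t\to+\infty$'' is not known a priori: one must rule out that $W^-$ also converges to a modulated $W$ as $t\to-\infty$, and the paper does this through the compactness/almost-periodicity statement (Proposition \ref{sub_global}), the modulation theory, and the monotonicity Lemma \ref{monot1}, which show that two-sided convergence would force $\int_{\mathbb R}d(t)\,dt\lesssim d(-n)+d(n)\to 0$, hence $d(u_0)=0$, contradicting $\|\nabla W^-_0\|_2<\|\nabla W\|_2$ (this is the closing step of Proposition \ref{prop_subcrit}).

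For $W^+$ the ``convexity/virial obstruction'' cannot be run as stated, because $W^+$ is only known to lie in $\dot H^1$: any virial or localized-virial argument (the paper's $F_R$ with $F_R''=-\tfrac{16}{N-2}d(t)+A_R$) needs $u_0\in L^2$ to make sense of the truncated mass and to control the error $A_R$ via the radial Strauss estimate. The paper supplies a specific, and indispensable, step that you omit: using the exponential convergence $\|U^A(t)-W\|_{\dot H^1}\lesssim e^{-e_0t}$, the Cauchy--Schwarz-type inequality of Claim \ref{cauchy_banica} and Hardy's inequality, one shows $|\frac{d}{dt}\int\phi(x/R)|U^A|^2|\lesssim e^{-e_0t}$, integrates, and uses $W\in L^2(\mathbb R^N)$ for $N\ge 5$ to conclude $M(W^+)=M(W)<\infty$. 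Even after that, blow-up at the threshold energy is not the standard Glassey convexity argument (since $d(t)\to 0$ forward in time): it is the content of Proposition \ref{prop_supercrit} and Lemma \ref{lem_int_exp_decay_d_2}, where one proves $F_R'>0$ for all times, $\int_t^\infty d\lesssim e^{-ct}$, and then obtains finite-time negative blow-up by a time-reversal contradiction. Without the finite-mass step and this threshold-adapted virial analysis, the second bullet of the theorem is not established.
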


\begin{teo}[Energy-critical case]\label{class_thresh}
For $N \geq 6$, let $u$ be a radial solution to \eqref{NLS} such that $E(u_0) = E(W)$. Then, the following holds.
\begin{itemize}
    \item If $\|u_0\|_{\dot{H}^1} < \|W\|_{\dot{H}^1}$, then $u$ is defined for all times. Moreover, either $u$ scatters in both time directions, or $u = W^-$ up to the symmetries of the equation.
    \item If $\|u_0\|_{\dot{H}^1} = \|W\|_{\dot{H}^1}$, then $u = W$ up to the symmetries of the equation.
    \item If $\|u_0\|_{\dot{H}^1} > \|W\|_{\dot{H}^1}$, and $u_0 \in L^2$, then either $u$ blows-up in finite positive and negative time, or $u = W^+$ up to the symmetries of the equation.
\end{itemize}
\end{teo}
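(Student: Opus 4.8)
The plan is to follow the architecture of the proof of Theorem~\ref{sub_class_thresh} (ultimately that of Duyckaerts--Merle~\cite{DM_Dyn}), indicating only where the energy-critical low-power features force changes. First I would dispose of the middle case and set up the trichotomy. The Pohozaev identities \eqref{pohozaev} at $p=p_c$ give $\int|W|^{2^*}=\int|\nabla W|^{2}$, hence $E(W)=\tfrac1N\|\nabla W\|_{L^2}^2$; if $u$ is a radial solution with $E(u_0)=E(W)$ and $\|\nabla u(t_0)\|_{L^2}=\|\nabla W\|_{L^2}$ at some $t_0$, then energy conservation forces $\|u(t_0)\|_{L^{2^*}}=\|W\|_{L^{2^*}}$, i.e.\ equality in the sharp Sobolev inequality \eqref{gagl_gen}, so $u(t_0)$ is an Aubin--Talenti bubble---a solution of \eqref{sub_def_ground} with $s_c=1$---and therefore $u=W$ up to the symmetries. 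By continuity of $t\mapsto\|\nabla u(t)\|_{L^2}$, once the value $\|\nabla W\|_{L^2}$ is excluded the sign of $\|\nabla u(t)\|_{L^2}-\|\nabla W\|_{L^2}$ is constant on the maximal interval; in the case $<$ this traps $\|\nabla u(t)\|_{L^2}$ and yields global existence exactly as in the sub-threshold theory \cite{KM_Glob,KV10}, and in the case $>$ with $u_0\in L^2$ the virial identity gives finite-time blow-up unless the solution is global with concentration. The task then reduces to: a threshold solution strictly below (resp.\ above) threshold either scatters (resp.\ blows up) in both time directions, or equals $W^-$ (resp.\ $W^+$) up to symmetries, with $W^\pm$ supplied by Theorem~\ref{special}.

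Next I would analyze a below-threshold solution $u$ that does not scatter forward in time. Using the profile decomposition for \eqref{NLS}, the sub-threshold scattering statement as the base case, and a Kenig--Merle minimal-element extraction, the forward trajectory $\{u(t):t\ge0\}$ is precompact in $\dot H^1(\ren)$ modulo the scaling group $\lambda^{(N-2)/2}f(\lambda\,\cdot)$ (radiality removes translations); a rigidity argument---monotonicity of a truncated virial functional, using $E=E(W)$ and the strict gap---then forces $u(t_n)\to W$ in $\dot H^1$ along some $t_n\to+\infty$, up to rescaling and a phase. On the maximal interval where $u$ stays near the $W$-orbit I would invoke modulational stability (established earlier in the paper) to write $u(x,t)=\lambda(t)^{(N-2)/2}e^{i\theta(t)}\bigl(W+\eps\bigr)\!\bigl(\lambda(t)x,t\bigr)$ with $\|\eps(t)\|_{\dot H^1}$ small and $\eps(t)$ in a subspace on which $\langle\mathcal L\eps,\eps\rangle$ is coercive (the orthogonality conditions being the modified ones of Remark~\ref{re_ortho} and Lemma~\ref{lem_coerc}). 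Writing $a_\pm(t)$ for the components of $\eps$ along the stable/unstable eigenfunctions $\mathcal Y_\pm$ of $\mathcal L$ (with $\mathcal L\mathcal Y_\pm=\mp e_0\mathcal Y_\pm$), and using the decay estimates for $\mathcal Y_\pm$ to control $\|W^{-1}\mathcal Y_\pm\|_{L^\infty}$-type terms, the equation yields $|\dot a_+-e_0a_+|\lesssim\|\eps\|^2$, $|\dot a_-+e_0a_-|\lesssim\|\eps\|^2$, and $\|\eps(t)\|^2\lesssim a_+(t)^2+a_-(t)^2$, the energy defect vanishing since $E(u_0)=E(W)$. A standard ODE/monotonicity argument then shows that if $u$ does not exit the neighbourhood of $W$ as $t\to+\infty$, then $a_+\equiv0$, $\|\eps(t)\|_{\dot H^1}\lesssim e^{-e_0t}$, and $\lambda(t),\theta(t)$ converge, so $u$ converges exponentially to $W$ up to symmetries. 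The above-threshold case (solution not blowing up forward in time) is handled the same way, using the analogous compactness.

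Running this analysis backward, I would distinguish two possibilities. If the remaining unstable coefficient vanishes identically, $\eps$ decays exponentially as $t\to\pm\infty$, coercivity forces $\eps\equiv0$, and $u=W$---excluded by the strict inequality; otherwise the ejection lemma shows $u$ leaves the neighbourhood of $W$ in finite negative time, and the sign of the unstable coefficient, forced by $u$ being strictly below threshold, matches the backward behaviour of $W^-$. I would then close the classification with a uniqueness statement: two below-threshold solutions converging exponentially to $W$ (up to symmetries) with the same sign of the unstable mode coincide up to symmetries, proved by running on their difference the fixed-point/contraction scheme that produced $W^\pm$ in Theorem~\ref{special}, integrating from $t=+\infty$ and using the exponential decay and coercivity. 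Hence $u=W^-$ up to symmetries, and since $W^-$ scatters backward so does $u$; the case $\|u_0\|_{\dot H^1}>\|W\|_{\dot H^1}$, $u_0\in L^2$, is identical with ``scatters'' replaced by ``blows up'' and $W^-$ by $W^+$, the $L^2$ hypothesis entering in the virial/blow-up dichotomy and in the compactness step and being natural since $W\in L^2$ for $N\ge5$.

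The hard part will be the internal estimates of the last two steps, exactly the difficulties flagged in the introduction: since $p_c\le2$ for $N\ge6$ the nonlinearity $|u|^{p_c-1}u$ is not twice real-differentiable, so the differential inequalities for $a_\pm$ and the coercivity of $\langle\mathcal L\cdot,\cdot\rangle$ on the modulation subspace cannot be obtained by the polynomial expansions of \cite{DM_Dyn,DR_Thre} and must instead be derived with the fractional Leibniz and chain-rule estimates of \cite{CW91,Visan07} together with a different choice of orthogonality conditions; and controlling the $\|W^{-1}\mathcal Y_\pm\|_{L^\infty}$-type quantities requires the pointwise decay of the spectral functions $\mathcal Y_\pm$, which I would establish through bootstrap arguments and resolvent convolution kernels. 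I expect the uniqueness assertion in the third step to be the principal obstacle, since it carries most of the technical weight of the paper.
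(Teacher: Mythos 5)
Your proposal is correct and follows essentially the same route as the paper: the middle case by the variational characterization of $W$, exponential convergence to $W$ for non-scattering (resp.\ non-blow-up) threshold solutions via compactness, truncated-virial monotonicity and modulation (Propositions \ref{prop_subcrit} and \ref{prop_supercrit}), and then identification with $W^\pm$ through the spectral analysis of the linearized equation and the uniqueness in the fixed-point construction of Proposition \ref{exist_UA} (Lemma \ref{improv_conv} and Corollary \ref{unique_A}). The only cosmetic difference is that you phrase the first exponential-decay step as a stable/unstable-mode ODE argument and the final uniqueness as a contraction on the difference of two solutions, whereas the paper obtains $\int_t^{\infty} d(s)\,ds \lesssim e^{-ct}$ from the virial argument and then bootstraps the rate by repeatedly linearizing around $W$ and $W+\mathcal{V}_k$ before invoking the uniqueness class of the fixed point; these are the same mechanisms in a slightly different order.
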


The last two theorems were originally proved by Li and Zhang \cite{higher_thre}, by means of weighted Sobolev estimates. Since our approach is considerably different from \cite{higher_thre}, we include our proofs in this paper.

\begin{re}\label{sub_scaling} By scaling, the condition $\mathcal{ME}(u_0) = 1$  can be read, without loss of generality, as 
\begin{equation}\label{sub_scaling_me}
    \begin{cases}
    M(u_0) = M(Q)\\
    E(u_0) = E(Q).
    \end{cases}
\end{equation}
Indeed, considering $u_{0,\delta}(x) = \delta^{\frac{2}{p-1}}u_0(\delta x)$, with $\delta = (M(u_0)/M(Q))^{\frac{1}{2s_c}}$, gives the above condition for $u_{0,\delta}$. Similarly, the condition $$\mathcal{MG}(u_0) < 1$$ (resp. ``$=$'', ``$>$'') can be read as $$\|\nabla u_0\|_{L^2} < \| \nabla Q\|_{L^2}$$ (resp. ``$=$'', ``$>$''). Unless stated otherwise, we shall adopt this simplification throughout the whole paper.
\end{re}

\smallskip

{\bf Acknowledgements.} This project was mostly done while L.C. was visiting the Department of Mathematics and Statistics at Florida International University, Miami, FL during his third year of PhD training. He thanks the department and the university for the hospitality and support. He would like to thank Thomas Duyckaerts and Nicola Visciglia for their valuable comments and suggestions, which helped improve the manuscript. L. C. was financed in part by the Coordena\c{c}\~ao de Aperfei\c{c}oamento de Pessoal de N\'ivel Superior - Brasil (CAPES) - Finance Code 001. S.R. was partially supported by the NSF DMS grant 1927258, and part of research travel of L.C. was funded by the same grant DMS-1927258 (PI:Roudenko).  L.G.F. was partially supported by Coordena\c{c}\~ao de Aperfei\c{c}oamento de Pessoal de N\'ivel Superior - Brasil (CAPES) - Finance Code 001, Conselho Nacional de Desenvolvimento Cient\'ifico e Tecnol\'ogico - CNPq and Funda\c{c}\~ao de Amparo a Pesquisa do Estado de Minas Gerais - Fapemig/Brazil.

\section{Notation and preliminaries}\label{Sec2}
We will need the following tools from Harmonic Analysis.
\begin{lemma}[Sobolev inequality, see Stein \cites{steinbook}] If $0 < \rho - \sigma < N$, $1 < q < p < \infty$, and
\begin{equation}
	\frac{1}{p} = \frac{1}{q}-\frac{\rho - \sigma}{N},
\end{equation}
then the following estimate holds
\begin{align}
	\label{sobolev}\|D^\sigma u\|_{L^p(\Real^N)} &\leq C \|D^\rho u\|_{L^q(\Real^N)}.\\
\end{align}
\end{lemma}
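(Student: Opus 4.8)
The plan is to reduce \eqref{sobolev} to the $L^q\!\to\!L^p$ boundedness of the Riesz potential, and then to establish the latter by the classical pointwise estimate against the Hardy--Littlewood maximal function (Hedberg's trick).

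\textbf{Step 1: Reduction to the Riesz potential.} Set $\alpha := \rho-\sigma\in(0,N)$ and recall that for $0<\alpha<N$ the Riesz potential $I_\alpha$ has Fourier multiplier $|\xi|^{-\alpha}$ and is realized as convolution with $c_{N,\alpha}\,|\cdot|^{\alpha-N}$. The multiplier identity $|\xi|^{\sigma}=|\xi|^{-\alpha}\,|\xi|^{\rho}$ gives, first for Schwartz functions and then by density together with the a priori bound proved in Step 2,
\[
D^{\sigma}u = I_{\alpha}\big(D^{\rho}u\big).
\]
Hence it suffices to prove $\|I_\alpha f\|_{L^p}\le C\|f\|_{L^q}$ whenever $1<q<p<\infty$ and $\frac1p=\frac1q-\frac\alpha N$, and then apply it with $f=D^\rho u$.

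\textbf{Step 2: Pointwise bound and maximal function.} Fix $x$ and $R>0$, and split
\[
|I_\alpha f(x)| \lesssim \int_{|y|<R} |y|^{\alpha-N}|f(x-y)|\,dy \;+\; \int_{|y|\ge R} |y|^{\alpha-N}|f(x-y)|\,dy .
\]
Summing over dyadic annuli inside the ball bounds the first term by $C R^{\alpha}\,\mathbf{M}f(x)$, where $\mathbf{M}$ denotes the Hardy--Littlewood maximal operator; H\"older's inequality with exponents $q,q'$ bounds the second by $C R^{\alpha-N/q}\|f\|_{L^q}$, the tail integral $\int_{|y|\ge R}|y|^{(\alpha-N)q'}\,dy$ being finite precisely because $\alpha q<N$, i.e. $p<\infty$. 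Optimizing in $R$ (choose $R=(\|f\|_{L^q}/\mathbf{M}f(x))^{q/N}$) and using that $1-\alpha q/N=q/p$ by the scaling relation yields
\[
|I_\alpha f(x)| \lesssim \big(\mathbf{M}f(x)\big)^{q/p}\,\|f\|_{L^q}^{\,1-q/p}.
\]
Raising to the $p$-th power, integrating, and invoking the maximal inequality $\|\mathbf{M}f\|_{L^q}\lesssim\|f\|_{L^q}$ (valid since $q>1$) gives $\|I_\alpha f\|_{L^p}^{p}\lesssim\|f\|_{L^q}^{p-q}\|\mathbf{M}f\|_{L^q}^{q}\lesssim\|f\|_{L^q}^{p}$, which is the claim.

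\textbf{Main obstacle.} The only genuinely delicate point --- mild here, since this is a textbook result --- is the justification of the identity $D^\sigma u = I_\alpha D^\rho u$ at the required level of generality: the kernel $|y|^{\alpha-N}$ is locally integrable but fails to be integrable at infinity, so the convolution defining $I_\alpha f$ must be understood through the a priori $L^q\to L^p$ estimate and a density/limiting argument rather than as an absolutely convergent integral for an arbitrary $f\in L^q$. The hypothesis $q>1$ is essential and not merely technical: it is exactly what makes the maximal function step work, and its failure at $q=1$ is why no endpoint version of \eqref{sobolev} holds.
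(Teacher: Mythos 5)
Your argument is correct: the paper offers no proof of this lemma (it is quoted from Stein's book), and your reduction $D^{\sigma}u=I_{\rho-\sigma}(D^{\rho}u)$ followed by the splitting of the Riesz kernel at radius $R$, the bound $R^{\alpha}\mathbf{M}f(x)$ for the near part, H\"older for the tail (finite exactly because $\alpha q<N$, i.e.\ $p<\infty$), optimization in $R$, and the maximal theorem for $q>1$ is precisely the standard proof of the Hardy--Littlewood--Sobolev fractional integration theorem given in that reference. Your closing remarks on interpreting $I_{\alpha}f$ for general $f\in L^{q}$ via density and on the necessity of $q>1$ are also accurate, so there is nothing to correct.
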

\begin{lemma}[Leibniz rule, see Christ and Weinstein \cite{CW91}] Let $s \in (0,1)$, $p_j, q_j \in (1,\infty)$, with $\frac{1}{p} = \frac{1}{p_j}+\frac{1}{q_j}$, $j = 1,2$. Then
\begin{equation}\label{leibiniz}
	\|D^s (fg)\|_{L^p(\Real^N)} \leq C\left(\|D^s f\|_{L^{p_1^{}}(\Real^N)} \|g\|_{L^{q_1^{}}(\Real^N)} + \|f\|_{L^{p_2^{}}(\Real^N)} \|D^s g\|_{L^{q_2^{}}(\Real^N)}\right).
\end{equation}
\end{lemma}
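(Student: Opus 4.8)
\emph{Proof proposal.} This is a fractional Leibniz (Kato--Ponce type) estimate for $s\in(0,1)$, which I would prove by a Littlewood--Paley paraproduct decomposition of the product $fg$, combined with three standard ingredients: the square-function characterization of the homogeneous Sobolev norm,
\[
\|D^sh\|_{L^r}\ \sim\ \Big\|\Big(\sum_{j\in\mathbb{Z}}2^{2js}|P_jh|^2\Big)^{1/2}\Big\|_{L^r}=:\|\mathfrak{S}h\|_{L^r}\qquad(1<r<\infty);
\]
the Fefferman--Stein vector-valued maximal inequality $\big\|(\sum_j|\m h_j|^2)^{1/2}\big\|_{L^r}\lesssim\big\|(\sum_j|h_j|^2)^{1/2}\big\|_{L^r}$ for $1<r<\infty$; and the pointwise bounds $|P_jh|\lesssim\m h$ and $\sup_j|P_{<j}h|\lesssim\m h$, which hold because the corresponding convolution kernels are dominated by a fixed radially decreasing $L^1$ function. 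Here $\{P_j\}$ is a homogeneous Littlewood--Paley family, $P_{<j}=\sum_{k<j}P_k$, and $\m$ is the Hardy--Littlewood maximal operator; recall $p_1,q_1,p_2,q_2\in(1,\infty)$ by hypothesis, and note $p\in(1/2,\infty)$.

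\emph{Step 1: paraproduct decomposition and the unbalanced terms.} I would write
\[
fg=\sum_j(P_{<j-2}f)(P_jg)+\sum_j\sum_{|k-j|\le1}(P_jf)(P_kg)+\sum_j(P_jf)(P_{<j-2}g)=:\Pi_{\mathrm{lh}}+\Pi_{\mathrm{hh}}+\Pi_{\mathrm{hl}}.
\]
In $\Pi_{\mathrm{lh}}$ the $j$-th summand has frequency support in the annulus $\{|\xi|\sim2^j\}$, so $D^s$ acts on it as $2^{js}$ times an operator with uniformly integrable kernel. Using the lacunary square-function estimate, then Fefferman--Stein to discard that operator, then $|P_{<j-2}f|\lesssim\m f$, and finally H\"older with exponents $(p_2,q_2)$,
\[
\|D^s\Pi_{\mathrm{lh}}\|_{L^p}\lesssim\Big\|\Big(\sum_j2^{2js}|(P_{<j-2}f)(P_jg)|^2\Big)^{1/2}\Big\|_{L^p}\lesssim\big\|\,\m f\cdot\mathfrak{S}g\,\big\|_{L^p}\le\|\m f\|_{L^{p_2}}\|\mathfrak{S}g\|_{L^{q_2}}\lesssim\|f\|_{L^{p_2}}\|D^sg\|_{L^{q_2}}.
\]
The same argument with $f$ and $g$ exchanged gives $\|D^s\Pi_{\mathrm{hl}}\|_{L^p}\lesssim\|D^sf\|_{L^{p_1}}\|g\|_{L^{q_1}}$, so the two ``unbalanced'' paraproducts already produce exactly the two terms on the right-hand side of \eqref{leibiniz}.

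\emph{Step 2: the high--high term.} This is the crux, and the only place where $s>0$ is genuinely used. Each summand $(P_jf)(P_kg)$ with $|k-j|\le1$ is supported only in the \emph{ball} $\{|\xi|\lesssim2^j\}$, so $D^s$ no longer contributes a full $2^{js}$: writing $D^s\Pi_{\mathrm{hh}}=\sum_l P_l(D^s\Pi_{\mathrm{hh}})$, the output projection $P_l$ is nonzero only for $l\le j+O(1)$, and the derivative contributes $2^{ls}=2^{(l-j)s}2^{js}$. Since $\sum_{l\le j}2^{(l-j)s}<\infty$ precisely because $s>0$, I would apply a Cauchy--Schwarz in $j$ against the weight $2^{(l-j)s}$, interchange the $l$- and $j$-sums, and absorb the loss back into $2^{js}$; combined with $|P_jf|\lesssim\m f$ and two applications of the vector-valued maximal inequality this yields
\[
\|D^s\Pi_{\mathrm{hh}}\|_{L^p}\lesssim\Big\|\Big(\sum_j2^{2js}\,|P_jf|^2\Big(\textstyle\sum_{|k-j|\le1}|P_kg|\Big)^{2}\Big)^{1/2}\Big\|_{L^p}\lesssim\big\|\,\m f\cdot\mathfrak{S}g\,\big\|_{L^p}\le\|f\|_{L^{p_2}}\|D^sg\|_{L^{q_2}}
\]
(one could instead keep the factor $2^{js}$ on $P_jf$ and obtain $\|D^sf\|_{L^{p_1}}\|g\|_{L^{q_1}}$). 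Summing the three contributions gives \eqref{leibiniz}.

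\emph{Main obstacle and alternatives.} The real work is the bookkeeping in Step~2: trading the output frequency $2^{ls}$ for the input frequency $2^{js}$ at the cost of a geometrically summable factor (this is where $s>0$ enters), and controlling the maximal functions of the products $(P_jf)(P_kg)$ without losing the asymmetric H\"older structure of the right-hand side. A secondary technical point is the range $p\le1$ permitted by the hypotheses: there the lacunary square-function and Fefferman--Stein steps must be carried out in the Hardy space $H^p$, using $\|\cdot\|_{L^p}\le C\|\cdot\|_{H^p}$ --- the viewpoint taken in Christ--Weinstein \cite{CW91}. An alternative route, closer to \cite{CW91}, is to start from the pointwise identity $D^s(fg)=f\,D^sg+g\,D^sf-B(f,g)$ with $B(f,g)(x)=c_{N,s}\int_{\Real^N}\frac{(f(x)-f(y))(g(x)-g(y))}{|x-y|^{N+s}}\,dy$ (valid for $0<s<1$), obtained by writing $f(x)g(x)-f(y)g(y)=f(x)(g(x)-g(y))+g(y)(f(x)-f(y))$ and then $g(y)=g(x)-(g(x)-g(y))$; the first two terms are then handled by H\"older for \emph{every} $0<p<\infty$, and the estimate reduces to the single bilinear remainder $B(f,g)$, which one bounds by a dyadic decomposition of the $y$-integral, using $0<s<1$ and the pointwise control of Gagliardo-type differences by maximal functions of $D^sf$ and $D^sg$.
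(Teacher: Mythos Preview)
The paper does not prove this lemma at all: it is stated in Section~2 as a preliminary tool with a reference to Christ--Weinstein \cite{CW91}, and no argument is given. So there is nothing in the paper to compare your proof against.

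Your proposal is a correct and standard proof of the fractional Leibniz rule via Littlewood--Paley paraproducts, and your identification of the key point (that $s>0$ is what makes the high--high term summable after trading output frequency for input frequency) is accurate. The alternative you sketch at the end, based on the explicit integral remainder $B(f,g)$, is indeed closer in spirit to the original argument in \cite{CW91}. One minor caveat: in Step~2 your use of the lacunary square-function bound on $D^s\Pi_{\mathrm{hh}}$ requires a little care because the summands of $\Pi_{\mathrm{hh}}$ are supported in balls rather than annuli; the way you handle this (decomposing the output via $P_l$ and summing $2^{(l-j)s}$) is the right fix, but the displayed chain of inequalities elides this step somewhat. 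For the purposes of this paper, which only \emph{uses} the estimate, either route is more than sufficient.
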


\begin{lemma}[Fractional chain rule for Hölder continuous functions, see Visan \cite{Visan07}] Let $F$ be a Hölder continuous function of order $0 < \alpha <1$. Then, for every $0 < s < \alpha$, $1 < p < \infty$, and $\frac{s}{\alpha} < \nu < 1$ we have
\begin{equation}\label{frac_chain}
	\|D^s F(u)\|_{L^{p^{}}(\Real^N)} \leq C \| |u|^{\alpha-\frac{s}{\nu}}\|_{L^{p_1^{}}(\Real^N)} \|D^\nu u\|_{L^{\frac{s}{\nu}q_1^{}}(\Real^N)}^\frac{s}{\nu},
\end{equation}
provided $\frac{1}{p} = \frac{1}{p_1}+\frac{1}{q_1}$ and $\left(1-\frac{s}{\nu \alpha}\right)p_1 > 1$.
\end{lemma}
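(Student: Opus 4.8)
\emph{Proof idea.} The plan is to run the classical finite-difference/Littlewood--Paley argument of Christ--Weinstein \cite{CW91} and Visan \cite{Visan07}. For $0<\sigma<1$, $1<r<\infty$ and $q\in[1,\infty)$ introduce the $q$-difference square function
\[
\mathcal S^{(q)}_{\sigma} g(x):=\left(\int_{\Real^N}\frac{|g(x+y)-g(x)|^{q}}{|y|^{N+\sigma q}}\,dy\right)^{1/q}.
\]
By the characterization of the Triebel--Lizorkin space $\dot W^{\sigma,r}=\dot F^{\sigma}_{r,2}$ by first differences one has $\|\mathcal S^{(2)}_{\sigma}g\|_{L^r}\sim\|D^{\sigma}g\|_{L^r}$, and combining this with the embedding $\dot F^{\sigma}_{r,q}\hookrightarrow\dot F^{\sigma}_{r,2}$ (valid for $q\le 2$) gives the one-sided bound $\|D^{\sigma}g\|_{L^r}\lesssim\|\mathcal S^{(q)}_{\sigma}g\|_{L^r}$ for $1\le q\le 2$. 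I would use the equivalence with $g=u$, $\sigma=\nu$, $q=2$, and the one-sided bound with $g=F(u)$, $\sigma=s$, and an exponent $q\in(1,2]$ to be pinned down (close to $1$) only at the very end.

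The core of the argument is the pointwise estimate
\[
\mathcal S^{(q)}_{s}\!\big(F(u)\big)(x)\ \lesssim\ \big(\mathcal S^{(2)}_{\nu} u(x)\big)^{s/\nu}\,\big(M(|u|^{q\alpha})(x)\big)^{\frac1q\left(1-\frac{s}{\nu\alpha}\right)},
\]
where $M$ is the Hardy--Littlewood maximal operator. Since $D^s$ annihilates constants we may take $F(0)=0$, so Hölder continuity simultaneously gives $|F(a)-F(b)|\le C|a-b|^{\alpha}$ and $|F(a)|\le C|a|^{\alpha}$. Fixing $x$ and a radius $R>0$, I split the defining integral over $\{|y|<R\}$ and $\{|y|\ge R\}$. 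On the near region I use the first inequality and Hölder in $y$ with the pair $\big(\tfrac{2}{q\alpha},\tfrac{2}{2-q\alpha}\big)$ (both exponents exceed $1$ because $q\alpha<2$), factoring $|u(x+y)-u(x)|^{q\alpha}|y|^{-N-sq}=\big(|u(x+y)-u(x)|^{2}|y|^{-N-2\nu}\big)^{q\alpha/2}\,|y|^{\gamma}$; the residual power $|y|^{\gamma}$ is integrable on $\{|y|<R\}$ precisely because $\nu\alpha>s$, producing the bound $\lesssim\big(\mathcal S^{(2)}_{\nu}u(x)\big)^{q\alpha}R^{q(\nu\alpha-s)}$. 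On the far region I use $|F(u(x+y))-F(u(x))|\le C(|u(x+y)|^{\alpha}+|u(x)|^{\alpha})$ together with the standard truncated bound $\int_{|y|\ge R}|h(x+y)|\,|y|^{-N-sq}\,dy\lesssim R^{-sq}Mh(x)$ applied to $h=|u|^{q\alpha}$, giving $\lesssim R^{-sq}M(|u|^{q\alpha})(x)$. As the two powers of $R$ have opposite signs ($\nu\alpha-s>0$), optimizing over $R>0$ yields the displayed pointwise bound.

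To conclude, I would take $L^p$ norms, apply Step 1 to $F(u)$, and split by Hölder in $x$ with $\tfrac1p=\tfrac1{q_1}+\tfrac1{p_1}$:
\[
\|D^sF(u)\|_{L^p}\ \lesssim\ \big\|(\mathcal S^{(2)}_{\nu}u)^{s/\nu}\big\|_{L^{q_1}}\,\big\|(M(|u|^{q\alpha}))^{\frac1q\left(1-\frac{s}{\nu\alpha}\right)}\big\|_{L^{p_1}}.
\]
The first factor equals $\|\mathcal S^{(2)}_{\nu}u\|_{L^{sq_1/\nu}}^{s/\nu}\sim\|D^{\nu}u\|_{L^{sq_1/\nu}}^{s/\nu}$ by Step 1 (using that $sq_1/\nu>1$, implicit in the statement). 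For the second factor, $(M(|u|^{q\alpha}))^{\frac1q(1-\frac{s}{\nu\alpha})}\in L^{p_1}$ means $M(|u|^{q\alpha})\in L^{\rho}$ with $\rho=\tfrac{p_1}{q}\big(1-\tfrac{s}{\nu\alpha}\big)$; since the hypothesis $\big(1-\tfrac{s}{\nu\alpha}\big)p_1>1$ is \emph{strict}, one may fix $q\in(1,2]$ close enough to $1$ that $\rho>1$, and then the Hardy--Littlewood maximal inequality gives
\[
\big\|(M(|u|^{q\alpha}))^{\frac1q\left(1-\frac{s}{\nu\alpha}\right)}\big\|_{L^{p_1}}\lesssim\big\||u|^{q\alpha}\big\|_{L^{\rho}}^{\frac1q\left(1-\frac{s}{\nu\alpha}\right)}=\big\||u|^{\alpha-\frac{s}{\nu}}\big\|_{L^{p_1}},
\]
the last equality being the exponent identities $q\alpha\rho=\big(\alpha-\tfrac{s}{\nu}\big)p_1$ and $\tfrac1{\rho}\cdot\tfrac1q\big(1-\tfrac{s}{\nu\alpha}\big)=\tfrac1{p_1}$. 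Multiplying the two estimates yields \eqref{frac_chain}.

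The step I expect to be the main obstacle is the exponent bookkeeping in the last paragraph. Using the naive $q=2$ difference characterization for $D^sF(u)$ would force $M(|u|^{2\alpha})\in L^{p_1(1-s/(\nu\alpha))/2}$ and hence the strictly stronger requirement $\big(1-\tfrac{s}{\nu\alpha}\big)p_1>2$; recovering the sharp hypothesis $\big(1-\tfrac{s}{\nu\alpha}\big)p_1>1$ forces one to take the outer exponent $q$ in the difference characterization of $D^sF(u)$ arbitrarily close to $1$, while in the near-field Hölder step still extracting $\mathcal S^{(2)}_{\nu}u$ (not $\mathcal S^{(q)}_{\nu}u$) so that the resulting factor can be identified with $\|D^{\nu}u\|_{L^{sq_1/\nu}}$ via the $q=2$ equivalence. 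A secondary technical point is making the reduction to $F(0)=0$ rigorous and checking the admissibility of the residual power $\gamma$ in the near-field integral; both hinge on the assumption $s<\alpha\nu$.
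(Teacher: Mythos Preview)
The paper does not prove this lemma: it is stated in the preliminaries section as a known tool, with attribution ``see Visan \cite{Visan07}'', and no argument is given. Your sketch is a faithful outline of the standard proof from that reference (finite-difference characterization of $\dot F^{\sigma}_{r,q}$, near/far splitting with optimization in $R$, and the maximal function bound), so there is nothing to compare against in the paper itself; your approach is exactly the one the citation points to.
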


\begin{lemma}[Kato-Strichartz inequalities, see Cazenave \cite{cazenave}, Kato \cite{Kato94}, Foschi \cite{Foschi05}, Keel and Tao \cites{KT98}]\label{KS_est} Let $N \geq 1$, $I \subset \Real$ and $1 \leq q_i,r_i\leq \infty$, $i = 1,2$. If the pairs $(q_1,r_1)$ and $(q_2,r_2)$ satisfy
\begin{equation}
    \frac{1}{q_i} < N\left(\frac{1}{2}-\frac{1}{r_i} \right)\text{ or } (q_i,r_i) = (\infty,2), \quad i = 1,2,
\end{equation}
\begin{equation}
	\frac{1}{q_1}+\frac{1}{q_2} = N\left(1-\frac{1}{r_1}-\frac{1}{r_2}\right),
\end{equation}
and:
\begin{itemize}
    \item If $N = 2$, we require that $r_1, r_2 < +\infty$,
    \item If $N > 2$, we consider two subcases
    \begin{itemize}
        \item \textit{non sharp case:}
            \noindent\begin{align}
    	        \frac{1}{q_1}&+\frac{1}{q_2} < 1,\\
    	        \frac{N-2}{r_1} \leq \frac{N}{r_2}, &\quad \frac{N-2}{r_1} \leq \frac{N}{r_2},	
            \end{align}
        \item \textit{sharp case:}
        \noindent\begin{align}
    	        \frac{1}{q_1}&+\frac{1}{q_2} = 1,\\
    	        \frac{N-2}{r_1} < \frac{N}{r_2}, &\quad \frac{N-2}{r_1} < \frac{N}{r_2},\\
    	        \frac{1}{r_1}\leq \frac{1}{q_1}, &\quad \frac{1}{r_2}\leq \frac{1}{q_2}.
            \end{align}
    \end{itemize}
\end{itemize}

Then the following estimate holds
\begin{equation}\label{intro_in_stric}
	\left\|\int_{s>t} e^{i(t-s)\Delta}F(s)\, ds \right\|_{L_I^{q_1^{}}L_x^{r_1^{}}} + \left\|\int_{s<t} e^{i(t-s)\Delta}F(s)\, ds \right\|_{L_I^{q_1^{}}L_x^{r_1^{}}} \lesssim \left\|F\right\|_{L_I^{q_2'}L_x^{r_2'}}.
\end{equation}

\end{lemma}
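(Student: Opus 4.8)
\textbf{Proof strategy for the Kato--Strichartz inequality (Lemma \ref{KS_est}).}

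The plan is to prove the dual bilinear (retarded) estimate \eqref{intro_in_stric} by interpolating between the two extreme regimes: the \emph{sharp} case where $\frac1{q_1}+\frac1{q_2}=1$ (which is a Hardy--Littlewood--Sobolev/hardest endpoint), and the \emph{non-sharp} case where $\frac1{q_1}+\frac1{q_2}<1$ (which reduces to untruncated Strichartz estimates plus Christ--Kiselev). First I would recall the pointwise dispersive bound $\|e^{it\Delta}f\|_{L^\infty_x}\lesssim |t|^{-N/2}\|f\|_{L^1_x}$, which together with $L^2$-unitarity and Riesz--Thorin gives $\|e^{it\Delta}f\|_{L^{r}_x}\lesssim |t|^{-N(\frac12-\frac1r)}\|f\|_{L^{r'}_x}$ for $2\le r\le\infty$. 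Writing $T F(t)=\int e^{i(t-s)\Delta}F(s)\,ds$ (the full, non-retarded operator), one has
\begin{equation}
\|TF(t)\|_{L^{r_1}_x}\lesssim \int_{\Real}\frac{\|F(s)\|_{L^{r_2'}_x}}{|t-s|^{N(1-\frac1{r_1}-\frac1{r_2})}}\,ds,
\end{equation}
so that with the scaling relation $\frac1{q_1}+\frac1{q_2}=N(1-\frac1{r_1}-\frac1{r_2})$ the $L^{q_1}_I\leftarrow L^{q_2'}_I$ bound for $TF$ follows from the one-dimensional Hardy--Littlewood--Sobolev inequality when $\frac1{q_1}+\frac1{q_2}<1$ (so the kernel exponent is in $(0,1)$ and $q_2'<q_1$, the HLS admissibility), and from a direct Young/Schur-test argument in the borderline $\frac1{q_1}+\frac1{q_2}=1$ case, exactly as in Keel--Tao and Foschi. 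The constraints $\frac{N-2}{r_1}\le\frac N{r_2}$ etc.\ and the $2\le r_i$ (equivalently $\frac1{q_i}\le N(\frac12-\frac1{r_i})$) are precisely what is needed to interpolate the dispersive estimate at the non-endpoint exponents and to keep $r_i\in[2,\infty]$ (with $r_i<\infty$ when $N=2$ to avoid the failure of the $L^1\to L^\infty$ Sobolev embedding at the endpoint).

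Next I would pass from the full operator $TF$ to the retarded pieces $\int_{s<t}$ and $\int_{s>t}$ appearing in \eqref{intro_in_stric}. In the strictly non-sharp range $\frac1{q_1}+\frac1{q_2}<1$ one has $q_2'<q_1$ strictly, so the Christ--Kiselev lemma applies and yields the retarded bounds from the bound on $TF$ at no extra cost. In the sharp range $\frac1{q_1}+\frac1{q_2}=1$ the Christ--Kiselev lemma fails (equal exponents), and one must instead argue directly: this is the Keel--Tao endpoint analysis, where the retarded operator is decomposed dyadically in $|t-s|$, $\sum_j \int_{|t-s|\sim 2^j}$, each dyadic piece is estimated by the dispersive bound, and the dyadic sum is summed using the extra $\epsilon$ of room coming from the \emph{strict} inequalities $\frac{N-2}{r_1}<\frac N{r_2}$ and $\frac1{r_i}\le\frac1{q_i}$ (a real-interpolation / Bourgain-type summation trick handling the logarithmic divergence at the center $|t-s|\sim 1$). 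The hypotheses in the sharp case are exactly tuned so that this dyadic sum converges.

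The main obstacle, as in all treatments of this estimate, is the sharp (endpoint-type) case $\frac1{q_1}+\frac1{q_2}=1$: there the naive Christ--Kiselev splitting of the retarded integral is unavailable, and one needs the delicate atomic/dyadic decomposition and summation argument, using crucially the strict inequalities among the exponents to gain decay in the dyadic parameter. By contrast, the non-sharp case and the reduction to HLS are essentially bookkeeping once the dispersive estimate and the scaling identity are in hand. Since this lemma is quoted verbatim from Cazenave \cite{cazenave}, Kato \cite{Kato94}, Foschi \cite{Foschi05}, and Keel--Tao \cite{KT98}, where the full argument (including the endpoint) is carried out in detail, I would simply record the reduction above and refer to those sources for the endpoint summation, rather than reproducing it; in the body of the present paper the lemma is used only as a black box for the nonlinear estimates in the later sections.
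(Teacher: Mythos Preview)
Your proposal is correct and matches the paper's treatment: the paper does not prove this lemma at all but simply states it with citations to Cazenave, Kato, Foschi, and Keel--Tao, using it as a black box in the subsequent nonlinear estimates. Your outline of the underlying argument (dispersive estimate, HLS for the untruncated operator, Christ--Kiselev in the non-sharp range, and the Keel--Tao dyadic summation at the endpoint) is the standard one carried out in those references, and your final remark that one should simply refer to those sources is exactly what the paper does.
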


\begin{de} We say that the pair $(q,r)$ is $\dot{H}^{s}$-admissible if $2 \leq q,r \leq +\infty$, $(q,r,N) \neq (2,\infty,2)$, and
\begin{equation}
    \frac{2}{q}+\frac{N}{r} = \frac{N}{2}-s.
\end{equation}

If $s = 0$, we say that the pair $(q,r)$ is $L^2$-admissible.
\end{de}
We define Strichartz norms for the energy-critical and intercritical cases separately.
\begin{de}[Critical case]\label{stric_sp}

Let $I$ be a (possibly unbounded) time interval. Given $0 < \varepsilon \ll \frac{4}{N-2}$, $N \geq 6$, define the spaces
\begin{align}
	S(\dot{H}^1,I) &= L_I^\infty L_x^\frac{2N}{N-2}
	\\
	S(\dot{H}^{1-\varepsilon},I) &= L_I^\infty L_x^\frac{2N}{N-2+2\varepsilon} \cap L_I^{\frac{4}{\varepsilon}} L_x^\frac{2N}{N-2+\varepsilon} \cap L_I^{\frac{2(N-2)}{\varepsilon(N-4)}} L_x^\frac{2N(N-2)}{(N-2)^2+4\varepsilon},\\
	S'(\dot{H}^{-(1-\varepsilon)},I) &= L_I^{\frac{2}{\varepsilon}} L_x^\frac{2N}{N+2},\\
	S(L^2,I) &= \left\{L^q_IL^r_x \left| (q,r) \text{ is } L^2-\text{admissible} \right.\right\},\\
	S'(L^2,I) &= L_I^{2} L_x^\frac{2N}{N+2}.\\
\end{align}
\end{de}
\begin{re}
In particular, we make use of the following spaces in $S(L^2,I)$: $L_I^\infty L_x^2$, $ L_I^{\frac{4}{\varepsilon}} L_x^\frac{2N}{N-\varepsilon}$, $L_I^{\frac{2(N-2)}{\varepsilon(N-4)}} L_x^\frac{2N(N-2)}{N(N-2)-2\varepsilon(N-4)}$, $L^2_IL^\frac{2N}{N-2}_x$, $L^{\frac{2(N+2)}{N-2}}_IL^{\frac{2(N+2)}{N^2+4}}_x$,   and $L^\frac{16}{\varepsilon(N-2)}_I L^\frac{8N}{4N-\varepsilon(N-2)}_x$.
\end{re}

\begin{re}
By Sobolev embedding, if $f \in S(\dot{H}^1,I) \cap \nabla^{-1} S(L^2,I)$,
\begin{equation}
	\|f\|_{S(\dot{H}^1,I)}+  \|D^\varepsilon f\|_{S(\dot{H}^{1-\varepsilon},I)} \lesssim \|\nabla f\|_{S(L^2,I)},
\end{equation}
and by Kato-Strichartz estimates, non sharp case,
\begin{align}
	\left\|\int_{s>t} e^{i(t-s)\Delta}F(s)\, ds \right\|_{S(L^2,I)} \lesssim \left\|F\right\|_{S'(L^2,I)},\\
	\left\|\int_{s>t} e^{i(t-s)\Delta}F(s)\, ds \right\|_{S(\dot{H}^{1-\varepsilon},I)} \lesssim \left\|F\right\|_{S'(\dot{H}^{-(1-\varepsilon)},I)}.
\end{align}
\end{re}
\begin{re}
Note that the pair in $S(\dot{H}^1,I)$ is $\dot{H}^1$-admissible, the pairs in $S(\dot{H}^{1-\varepsilon},I)$ are $\dot{H}^{1-\varepsilon}$-admissible, the pairs in $S(L^2,I)$ and in the dual space of $S'(L^2,I)$ are $L^2$-admissible, and the pair corresponding to the dual space of $S'(\dot{H}^{-(1-\varepsilon)},I)$ is $\dot{H}^{-(1-\varepsilon)}$-admissible.
\end{re}

\begin{de}[Intercritical case]

Define the set

\begin{equation}
    \mathcal{A}_0 = \left\{(q,r) |(q,r) \text{ is } L^2\text{-admissible}
    \right\}.
\end{equation}

For $s \in (0,1)$, define $\mathcal{A}_s$ as the $\dot{H}^s$-admissible pairs that satisfy
\begin{equation}
    \left\{\begin{array}{ccccccc} 
    \frac{2N}{N-2s} &\leq &r &\leq &\left(\frac{2N}{N-2}\right)^-&, & N \geq 3,\\
    \frac{2}{1-s} &\leq &r &\leq &\left(\left(\frac{2}{1-s}\right)^+\right)'&, & N = 2,\\ 
    \frac{2}{1-2s} &\leq &r &\leq &\infty&, &N = 1,
    \end{array}\right.   
\end{equation}
and $\mathcal{A}_{-s}$ as the $\dot{H}^{-s}$-admissible pairs that satisfy
\begin{equation}
    \left\{\begin{array}{ccccccc} 
    \left(\frac{2N}{N-2s}\right)^+ &\leq &r &\leq &\left(\frac{2N}{N-2}\right)^-&, & N \geq 3,\\
    \left(\frac{2}{1-s}\right)^+ &\leq &r &\leq &\left(\left(\frac{2}{1-s}\right)^+\right)'&, & N = 2,\\ 
    \left(\frac{2}{1-2s}\right)^+ &\leq &r &\leq &\infty&, &N = 1,
    \end{array}\right.   
\end{equation}

where $(a^+)'$ is the number such that 
\begin{equation}
    \frac{1}{a} = \frac{1}{a^+} + \frac{1}{(a^+)'}.
\end{equation}
Let $I$ be a (possibly unbounded) time interval. For $s \in [0,1)$, we define the following Strichartz norms
\begin{equation}
	\|u\|_{S(L^2,I) } = \sup_{(q,r)\in \mathcal{A}_0}\|u\|_{L_I^qL_x^r},	
\end{equation}
\begin{equation}
	\|u\|_{S(\dot{H}^{s},I) } = \sup_{(q,r)\in \mathcal{A}_{s}}\|u\|_{L_I^qL_x^r},	
\end{equation}
and the dual Strichartz norms
\begin{equation}
	\|u\|_{S'(L^2,I) } = \inf_{(q,r)\in \mathcal{A}_{0}}\|u\|_{L_I^{q'}L_x^{r'}},	
\end{equation}
\begin{equation}
	\|u\|_{S'(\dot{H}^{-s},I) } = \inf_{(q,r)\in \mathcal{A}_{-s}}\|u\|_{L_I^{q'}L_x^{r'}}.	
\end{equation}

\end{de}
\begin{re}
By Sobolev embedding, if $f \in S(\dot{H}^{s_c},\,I) \cap \langle\nabla\rangle^{-1} S(L^2,\,I)$,
\begin{equation}
	\|f\|_{S(\dot{H}^{s_c},\,I)} \lesssim \||\nabla|^{s_c} f\|_{S(L^2,\,I)} \lesssim \|\langle\nabla\rangle f\|_{S(L^2,\,I)},
\end{equation}
and by Kato-Strichartz estimates, non sharp case,
\begin{align}
	\left\|\int_{s>t} e^{i(t-s)\Delta}F(s)\, ds \right\|_{S(L^2,\,I)} \lesssim \left\|F\right\|_{S'(L^2,\,I)},\\
	\left\|\int_{s>t} e^{i(t-s)\Delta}F(s)\, ds \right\|_{S(\dot{H}^{s_c},\,I)} \lesssim \left\|F\right\|_{S'(\dot{H}^{-s_c},\,I)}.
\end{align}
\end{re}

\section{The linearized equation}

In order to prove the main theorems of this paper, we need to carefully study \eqref{sub_NLS} and \eqref{NLS} around the ground state.
We identify the complex number $a + bi$ with the vector $\begin{pmatrix}
a\\b
\end{pmatrix}$. For a complex-valued function $f$, we write $f = f_1+if_2$.
We next introduce the following definition. 

\begin{de} For $0 < s_c \leq 1$, we define
\begin{align}\label{def_op}
L_+ &=(1-s_c)-\Delta-pQ^{p-1},\\
L_-&=(1-s_c)-\Delta - Q^{p-1},\\
\mathcal{L} &:= \begin{pmatrix}0 & -L_- \\
L_+ & 0 \\
\end{pmatrix},\\
R(f) &= |Q+f|^{p-1}(Q+f)-Q^{p}-pQ^{p-1}f_1-iQ^{p-1}f_2,\\
K(f) &= p Q^{p-1}f_1+iQ^{p-1}f_2.\\
\end{align} 

\end{de}

If $u$ is a solution to \eqref{sub_NLS}, write $u = e^{i(1-s_c)t}(Q+v)$. Then $v$ must satisfy
\begin{equation}\label{linearized_eq}
	\partial_t v + \mathcal{L}v = iR(v),
\end{equation}
or, writing it as a Schrödinger equation,
\begin{equation}\label{linearized_eq_2}
	i\partial_t v + \Delta v -(1-s_c)v +K(v) = -R(v). 
\end{equation}

In the next two sections we recall some properties of the operator $\mathcal{L}$.
\subsection{The linearized operator}
For $0 < s_c \leq 1$, we have, by a direct calculation,
{
\begin{align}
    L_-(Q) &= 0,\\
    L_+(\partial_kQ)&= 0,\quad  1 \leq k \leq N.
\end{align}
This implies
\begin{equation}
 \mathcal{L}(\partial_kQ)=\mathcal{L}(iQ) = 0,\quad  1 \leq k \leq N.
\end{equation}

Also, defining $\Lambda f$ as the scaling generator $\frac{2}{p-1}f +x \cdot \nabla f$, we have
\begin{equation}
    L_-(\Lambda Q) = -\mathcal{L}(\Lambda Q) = 2(1-s_c)Q, \,\,\, \mbox{if}\,\,\, 0 < s_c <1, \,\,\, \mbox{and}\,\,\,  L_-(\Lambda W) = 0, \,\,\, \mbox{if}\,\,\, s_c = 1.
\end{equation}
}
The above directions are obtained from $Q$ by the symmetries of the NLS equation. Indeed, defining 

\begin{equation}
	f_{[x_0,\lambda_0,\theta_0]}(x) = e^{i\theta_0}\frac{1}{\lambda_0^\frac{2}{p-1}}f\left(\frac{x}{\lambda_0}+x_0\right),
\end{equation}
we have
\begin{equation}
(\nabla Q, \Lambda Q, iQ) = \frac{\partial Q_{[x_0,\lambda_0,\theta_0]}}{\partial(x_0,\lambda_0,\theta_0)}\Bigg|_{(x_0,\lambda_0,\theta_0) = (0,1,0)}.
\end{equation}
The following result is well-known, but for completeness we provide its proof here.
\begin{lemma}[see \cite{DM_Dyn},\cite{CGNT07}]\label{spectrum_L}Let $\sigma(\mathcal{L})$ be the spectrum of the operator $\mathcal{L}$, defined in $L^2(\Real^N)\times L^2(\Real^N)$ with domain $H^2(\Real^N)\times H^2(\Real^N)$ and let $\sigma_{ess}(\mathcal{L})$ be its essential spectrum. Then
\begin{equation}
	\sigma_{ess}(\mathcal{L}) = \{i y:\,\, y \in \Real, |y| \geq 1-s_c\},\quad \sigma \cap \Real = \{-e_0,0,e_0\} \quad \text{with } e_0 >0. 
\end{equation}
Moreover, $e_0$ and $-e_0$ are simple eigenvalues  of $\mathcal{L}$ with eigenfunctions $\mathcal{Y}_+$ and $\mathcal{Y}_- = \overline{\mathcal{Y}}_+ \in \mathcal{S}$, respectively. The null space of $\mathcal{L}$ is spanned by $iQ$ and $\partial_k Q$, $1 \leq k \leq N$ (and, in the energy-critical case, also by $\Lambda W$).
\begin{re}\label{re_spectrum}
By Lemma \ref{spectrum_L}, if $\mathcal{Y}_1 = \Re(\mathcal{Y}_+)$ and $\mathcal{Y}_2 = \Im(\mathcal{Y}_+)$, then
\begin{equation}
	L_+ \mathcal{Y}_1 = e_0 \mathcal{Y}_2 \quad \text{and } L_-\mathcal{Y}_2 = -e_0 \mathcal{Y}_1.
\end{equation}
Furthermore, the null space of $L_+$ is spanned by the vectors $\partial_k Q$, $1 \leq k \leq N$, (and by $\Lambda W$, if $s_c = 1$) and the null space of $L_-$ is spanned by $Q$.
\end{re}
\end{lemma}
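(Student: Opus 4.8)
The plan is to prove Lemma \ref{spectrum_L} by combining the standard Weyl-type analysis of the essential spectrum with the variational characterization of the ground state, which controls the discrete real spectrum of the scalar operators $L_\pm$, and then to transfer this information to the matrix operator $\mathcal{L}$.

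\textbf{Step 1: Essential spectrum.} Since $Q^{p-1}$ decays exponentially (and $Q \to 0$), the potential terms $pQ^{p-1}$ and $Q^{p-1}$ are relatively compact perturbations of $-\Delta$. By Weyl's theorem, $\sigma_{ess}(L_\pm) = \sigma_{ess}((1-s_c) - \Delta) = [1-s_c, +\infty)$. For the matrix operator $\mathcal{L} = \begin{pmatrix} 0 & -L_- \\ L_+ & 0 \end{pmatrix}$, I would first analyze the ``free'' operator $\mathcal{L}_0 = \begin{pmatrix} 0 & -((1-s_c)-\Delta) \\ (1-s_c)-\Delta & 0 \end{pmatrix}$, which on the Fourier side is multiplication by $\begin{pmatrix} 0 & -(|\xi|^2 + (1-s_c)) \\ |\xi|^2+(1-s_c) & 0 \end{pmatrix}$, whose spectrum is $\{\pm i(|\xi|^2+(1-s_c)) : \xi \in \Real^N\} = \{iy : |y| \geq 1-s_c\}$. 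Then $\mathcal{L} - \mathcal{L}_0$ is again a relatively compact (exponentially decaying potential) perturbation, so $\sigma_{ess}(\mathcal{L}) = \sigma_{ess}(\mathcal{L}_0) = \{iy : |y| \geq 1-s_c\}$.

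\textbf{Step 2: Real spectrum of $\mathcal{L}$ and the role of $L_\pm$.} A real eigenvalue $e$ of $\mathcal{L}$ with eigenfunction $\binom{g}{h}$ means $-L_- h = e g$ and $L_+ g = e h$, hence $L_+ L_- h = -e^2 h$ and $L_- L_+ g = -e^2 g$. So nonzero real eigenvalues correspond to negative eigenvalues $-e^2$ of $L_+L_-$ (equivalently $L_-L_+$). Here I invoke the classical spectral facts about $L_\pm$ near the ground state: $L_-$ is a nonnegative operator (it is $-\Delta + (1-s_c) - Q^{p-1}$ and $Q > 0$ is its ground state, so $0 = \inf\sigma(L_-)$ with one-dimensional kernel spanned by $Q$), while $L_+$ has exactly one negative eigenvalue, a one-dimensional kernel spanned by $\{\partial_k Q\}$ (and $\Lambda W$ when $s_c=1$), and the rest of its spectrum positive — this uses the nondegeneracy of $Q$ (Kwong, and in the critical case the known spectral properties of the linearization around $W$), together with the variational/Pohozaev identities \eqref{pohozaev}. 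From these, one concludes (as in \cite{DM_Dyn}, \cite{CGNT07}, ultimately going back to Grillakis--Shatah--Strauss / Weinstein-type arguments) that $L_+L_-$ has exactly one negative simple eigenvalue $-e_0^2$ with $e_0 > 0$; this gives $\pm e_0 \in \sigma(\mathcal{L})$, simple, and no other real eigenvalues. The kernel of $\mathcal{L}$ is computed directly: $\mathcal{L}\binom{g}{h}=0$ iff $L_- h = 0$ and $L_+ g = 0$, i.e. $h \in \mathrm{span}\{Q\}$ and $g \in \mathrm{span}\{\partial_k Q\}$ ($+\,\Lambda W$ if $s_c=1$), which under the $a+bi \leftrightarrow \binom{a}{b}$ identification is exactly $\mathrm{span}\{iQ, \partial_k Q\}$ (and $\Lambda W$).

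\textbf{Step 3: Eigenfunction regularity and the conjugate structure.} Since $\mathcal{L}$ has real coefficients in the sense that $\overline{\mathcal{L}f} = -\mathcal{L}(\bar f)$ — more precisely, if $\mathcal{Y}_+$ solves $\mathcal{L}\mathcal{Y}_+ = e_0 \mathcal{Y}_+$ then $\overline{\mathcal{Y}_+}$ (i.e. flipping the sign of the imaginary part) solves $\mathcal{L}\overline{\mathcal{Y}_+} = -e_0\overline{\mathcal{Y}_+}$ — the $-e_0$ eigenfunction is the conjugate of the $e_0$ one, so $\mathcal{Y}_- = \overline{\mathcal{Y}_+}$. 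Finally, Schwartz-class regularity $\mathcal{Y}_\pm \in \mathcal{S}$ follows from elliptic regularity (bootstrapping $L_+ g = e_0 h$, $L_- h = -e_0 g$ with the smooth exponentially decaying potential $Q^{p-1}$ gives smoothness) together with the exponential decay coming from the fact that $e_0$ lies in the spectral gap $(-(1-s_c), 1-s_c)$ away from the essential spectrum — an Agmon-type estimate. Writing $\mathcal{Y}_1 = \Re \mathcal{Y}_+$, $\mathcal{Y}_2 = \Im \mathcal{Y}_+$, the eigenvalue equation $\mathcal{L}\mathcal{Y}_+ = e_0\mathcal{Y}_+$ unpacks precisely to $L_+\mathcal{Y}_1 = e_0\mathcal{Y}_2$ and $L_-\mathcal{Y}_2 = -e_0\mathcal{Y}_1$, which is Remark \ref{re_spectrum}.

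\textbf{Main obstacle.} The genuinely nontrivial input is the spectral analysis of $L_+$ and $L_-$ — that $L_-\ge 0$ with kernel exactly $\mathrm{span}\{Q\}$, and that $L_+$ has exactly one negative eigenvalue and kernel exactly $\mathrm{span}\{\partial_kQ\}$ ($+\Lambda W$) — since this rests on the nondegeneracy of the ground state and on delicate positivity/variational arguments, and (in the energy-critical case) on the known but more involved spectral theory of the linearization around $W$. The deduction that $L_+L_-$ has a single simple negative eigenvalue $-e_0^2$, and that there are no embedded or additional real eigenvalues, is the technical heart; the essential-spectrum computation and the regularity/conjugation statements are comparatively routine. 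I would therefore present Steps 1 and 3 in full and quote the $L_\pm$ spectral facts from \cite{DM_Dyn}, \cite{CGNT07} (and the ground-state nondegeneracy references already cited) for Step 2.
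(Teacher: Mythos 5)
Your Steps 1 and 3 are essentially the paper's (Weyl for the essential spectrum, conjugation symmetry for $\mathcal{Y}_-=\overline{\mathcal{Y}}_+$, elliptic bootstrap for regularity and decay), but Step 2 contains a genuine gap. You claim that from the standard facts about $L_\pm$ ($L_-\geq 0$ with kernel $\mathrm{span}\{Q\}$, $L_+$ with exactly one negative eigenvalue and kernel $\mathrm{span}\{\partial_kQ\}$, plus $\Lambda W$ when $s_c=1$) ``one concludes'' that $L_+L_-$ has exactly one simple negative eigenvalue $-e_0^2$. That implication is false as stated: all of those facts about $L_\pm$ hold equally in the mass-subcritical range $1<p<1+4/N$, where the standing wave is orbitally stable and $\mathcal{L}$ has \emph{no} nonzero real eigenvalue. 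The missing ingredient is precisely the supercritical input: a direction orthogonal to $\ker L_-=\mathrm{span}\{Q\}$ on which $L_+$ is negative (the negative eigenfunction of $L_+$ itself is of no direct use, since it need not be orthogonal to $Q$, and one must work with the symmetrized operator $P=L_-^{1/2}L_+L_-^{1/2}$, for which only directions in the range of $L_-^{1/2}$, i.e.\ orthogonal to $Q$, are admissible). The paper supplies this explicitly with $Z=\Lambda Q-\frac{(\Lambda Q,Q)_{L^2}}{(Q,Q)_{L^2}}Q$ and the computation \eqref{Z_positive_dir}, where the sign is negative exactly because $p>1+\frac4N$; the minimax principle then gives $-e_0^2$, and simplicity of $\pm e_0$ together with the identification of $\ker\mathcal{L}$ come from the coercivity of $\Phi$ on $G^\perp$ and $\tilde G^\perp$ (Lemma \ref{lem_coerc}), not merely from nondegeneracy of $Q$. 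Since the whole point of the lemma in the paper is to give this argument rather than cite it, deferring Step 2 wholesale to \cite{DM_Dyn}, \cite{CGNT07} leaves the heart of the proof unaddressed, and the heuristic you do offer would prove too much.

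Two secondary points. In the energy-critical case your scheme needs an extra argument: $0$ is the bottom of the essential spectrum of $L_-$, its range is not closed, and the test direction must be approximated (the paper inserts $(1+L_-)^{-1}$ and an $\varepsilon$-approximation before applying the same minimax step); your outline does not account for this. Also, the Agmon-type remark that exponential decay of $\mathcal{Y}_\pm$ comes from ``$e_0$ lying in the gap $(-(1-s_c),1-s_c)$'' is off: the essential spectrum sits on the imaginary axis, so $\pm e_0$ are separated from it for any $e_0>0$, and the actual decay rate is governed by $\Re\sqrt{1+ie_0}>1$ (cf.\ Lemma \ref{sub_boot_decay} and Corollary \ref{sub_decay_eigen}); the paper instead proves super-polynomial decay by the cutoff induction \eqref{schw_ind}, which suffices for $\mathcal{Y}_\pm\in\mathcal{S}$.
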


\begin{proof}[Proof of Lemma \ref{spectrum_L}] In this proof, for brevity, we write $V = Q^{p-1}$ for $0 < s_c \leq 1$. Note that $V$ defines a compact operator from $H^1$ to $L^2$.

\textit{Intercritical case.} For convenience, from now on we rescale $Q$ in the intercritical case as to solve
\begin{equation}\label{intercrit_Q}
    \Delta Q - Q + Q^p = 0.
\end{equation}

This is in order to simplify the exposition, avoiding unnecessary parameters in the calculations. The term $(1-s_c)$ must be replaced by $1$ in the definition of $\mathcal{L}$ and in the standing wave solution $e^{i(1-s_c)t}Q$ as well.

The operator $\mathcal{L}$ is a relatively compact perturbation of $i(1-\Delta)$, and therefore, has the same essential spectrum.
We now prove the existence of exactly one negative eigenvalue to $\mathcal{L}$.

From the proof of Lemma \ref{lem_coerc}, we see that $L_-$ on $L^2$ with domain $H^2$ is non-negative. Since it is also self-adjoint, it has a unique square root $L_-^{\frac{1}{2}}$ with domain $H^1$. It is equivalent to show that the the self-adjoint operator $P:= L_-^{\frac{1}{2}}L_+L_-^{\frac{1}{2}}$ on $L^2$ with domain $H^4$ has a unique negative eigenvalue. Indeed, consider the function
\begin{equation}
    Z = \Lambda Q-\frac{(\Lambda Q,Q)_{L^2}}{(Q,Q)_{L^2}} Q.
\end{equation}
One can check that $Z \in H^2$, $Z \in \{Q\}^\perp$ and, for $0 < s_c \leq 1$,
\begin{equation}\label{Z_positive_dir} 
    (L_+Z, Z)_{L^2} = -\frac{N^2(p-1)}{4(p+1)}\left[p-\left(1+\frac{4}{N}\right)\right]\int Q^{p+1}<0.
\end{equation}
Defining $ h:= L_-^{-\frac{1}{2}}Z \in {Q}^{\perp}$, one also has
\begin{equation}
    h = (L_-^{\frac{1}{2}}L_-^{-1})  (L_-^{-1} L_-) Z = L_-^{-1}L_-^{-\frac{1}{2}} L_- Z \in H^3.
\end{equation}

For $\varepsilon >0$, choose $\tilde{h}_\varepsilon \in H^4$ such that $\tilde{h}_\varepsilon \perp Q$ and $\|h-\tilde{h}_\varepsilon\|_{H^3} < \varepsilon$. We have
\begin{equation}
    \inf_{f \in H^4} \frac{(Pf,f)_{L^2}}{\|f\|_{L^2}^2} \leq \frac{(L_+L_-^{\frac{1}{2}}\tilde{h}_\varepsilon, L_-^{\frac{1}{2}}\tilde{h}_\varepsilon)_{L^2}}{\|\tilde{h}_\varepsilon\|_{L^2}^2}  
    < 0, 
\end{equation}
if $\varepsilon$ is small enough.

Hence, by the minimax principle, $P$ has a negative eigenvalue $-e_0^2$ and an associated eigenfunction $g$. Defining $\mathcal{Y}_1 := L_-^{\frac{1}{2}}g$, $\mathcal{Y}_2 := \frac{1}{e_0} L_+ \mathcal{Y}_1$, and $\mathcal{Y}_{\pm}:= \mathcal{Y}_{1} \pm i \mathcal{Y}_{2}$, we have $\mathcal{L}\mathcal{Y}_{\pm}=\pm e_0 \mathcal{Y}_{\pm}$.
Uniqueness of the negative eigenfunction of $P$ follows from the non-negativity of $L_+$ on $\{Q^p\}^\perp$. The assertions about the kernel of $\mathcal{L}$ follow from the coercivity given by Lemma \ref{lem_coerc}.

It remains to prove that $\mathcal{Y}_{\pm} \in S(\Real^N)$. It suffices to prove this assertion for $\mathcal{Y}_1 = \Re \mathcal{Y}_+$. The differential equation for $\mathcal{Y}_1$ is 
\begin{equation}\label{diff_eq_y1}
    [(1-\Delta)^2+e_0^2]\mathcal{Y}_1 = [pV^2+V(1-\Delta)]\mathcal{Y}_1-p(1-\Delta)[V\mathcal{Y}_1].
\end{equation}

Since the Fourier symbol of $(1-\Delta)^2+e_0^2$ is $(1+|\xi|^2)^2+e_0^2 \approx (1+|\xi|^2)^2$, and $V,\mathcal{Y}_1 \in H^2(\Real^N)$, we have that $\mathcal{Y}_1 \in H^{s}$ for all $s \geq 0$.
As in \cite{DM_Dyn}, we show that for all non-negative integers $k, s$ and all $\varphi \in C^{\infty}_c(\Real^N)$, we have
\begin{equation}\label{schw_ind}
    \|\varphi(x/R)\mathcal{Y}_1\|_{H^s} \leq \frac{C(\varphi,s,k) }{R^k}\,\,\, \text{for all} \,\,\, R \geq 1.
\end{equation}
Indeed, the inequality \eqref{schw_ind} holds if $k = 0$, for any $s \geq 0$. By induction, we show that if it holds for $(k,s)$, it also holds for $(k+1,s+1)$. Given $\varphi$, consider $\tilde{\varphi} \in C^{\infty}_c(\Real^N)$ such that $\tilde{\varphi}$ is $1$ on the support of $\varphi$, so that we have $\tilde{\varphi}\partial^{\alpha}\varphi = \partial^{\alpha}\varphi$ for any multi-index $\alpha$. Since $Q$ and its derivatives decay (more than) polynomially, \eqref{diff_eq_y1} gives, for $s \geq 3$,
\begin{equation}
    \|{\varphi(x/R)} [(1-\Delta)^2+e_0^2]\mathcal{Y}_1\|_{H^{s-3}} 
    \leq \frac{C}{R}\|\tilde{\varphi}(x/R)\mathcal{Y}_1\|_{H^{s-1}}
    \leq \frac{C}{R}\|\tilde{\varphi}(x/R)\mathcal{Y}_1\|_{H^{s}}.
\end{equation}
Using the trivial commutator estimate $\|[(1-\Delta)^2+e_0^2,\phi(x/R)]\|_{H^{s-3}\to H^{s-3}} \leq C/R$, we get
\begin{equation}
    \| \varphi(x/R)\mathcal{Y}_1\|_{H^{s+1}} \approx \|[(1-\Delta)^2+e_0^2](\varphi(x/R)\mathcal{Y}_1)\|_{H^{s-3}} 
    \leq \frac{C}{R}\|\tilde{\varphi}(x/R)\mathcal{Y}_1\|_{H^{s}}.
\end{equation}

By the induction hypothesis, we get $\| \varphi(x/R)\mathcal{Y}_1\|_{H^{s+1}} \leq C/R^{k+1}$, as desired. The same argument shows that, if $\lambda \in \Real \backslash\sigma(\mathcal{L})$, then $(\lambda - \mathcal{L})^{-1} S(\Real^N) \subset S(\Real^N)$.

\textit{Critical case.}

The range of the operator $L_-$ is no longer closed, but the operator $1+L_-$  is invertible on $\{Q\}^\perp$. Therefore, for any $\varepsilon >0$, one can take $G_\varepsilon \in H^2$ such that
\begin{equation}
    \|L_-G_\varepsilon - (1+L_-)Z\|_{L^2} < \varepsilon.
\end{equation}
Letting $h_\varepsilon := (1+L_-)^{-1}L_-^{\frac{1}{2}}G_\varepsilon = L_-^{\frac{1}{2}}(1+L_-)^{-1}G_\varepsilon = 
(1+L_-)^{-1}(1+L_-)^{-\frac{1}{2}}L_-^{\frac{1}{2}}(1+L_-)^{-\frac{1}{2}}G_\varepsilon \in H^3$, we have
\begin{align}
    \|L_-^{\frac{1}{2}}h_\varepsilon-Z\|_{H^2} &= \|(1-\Delta)(1+L_-)^{-1}[L_-G_\varepsilon - (1+L_-)Z]\|_{L^2}\\
    &\leq \varepsilon\|[1-V(1-\Delta)^{-1}]^{-1}\|_{L^2 \to L^2}.
\end{align}
Choosing $\tilde{h}_\varepsilon \in H^4$ such that $\tilde{h}_\varepsilon \perp Q$ and $\|h_\varepsilon-\tilde{h}_\varepsilon\|_{H^3} < \varepsilon$, and recalling $P = L_-^{\frac{1}{2}}L_+L_-^{\frac{1}{2}}$ and \eqref{Z_positive_dir}, we get 
\begin{equation}
    (P \tilde{h}_\varepsilon,\tilde{h}_\varepsilon)_{L^2} = (L_+Z,Z)_{L^2}+O(\varepsilon).
\end{equation}
Thus, if $\varepsilon$ is small enough, the conclusion follows. The regularity and the decay of $\mathcal{Y}_{\pm}$ follow analogously from the argument for the intercritical case.
\end{proof}

Consider the bilinear form 
\begin{align}
	B(f,g) &:= \frac{1}{2}(L_+f_1,g_1)+ \frac{1}{2}(L_-f_2,g_2)\\
	&= \frac{1-s_c}{2}\int f_1 \cdot  g_1+  \frac{1}{2}\int \nabla f_1 \cdot \nabla g_1 - \frac{p}{2}\int Q^{p-1}f_1g_1+\\
	&\quad\quad\quad +\frac{1-s_c}{2}\int f_2 \cdot  g_2+ \frac{1}{2}\int \nabla f_2 \cdot \nabla g_2 - \frac{1}{2} \int Q^{p-1}f_2 g_2,
\end{align}

and define the \textit{linearized energy}
\begin{align}
\Phi(f) &:= B(f,f) = \frac{1}{2}(L_+f_1,f_1)+\frac{1}{2}(L_-f_2,f_2)\\
&= \frac{1-s_c}{2} \int |\nabla f|^2+\frac{1}{2} \int |\nabla f|^2 -\frac{1}{2}\int Q^{p-1}(p |f_1|^2 + |f_2|^2).
\end{align}

If $0 < s_c \leq 1$, one can check directly that, for any $f$, $g \in S(\Real^N)$,
\begin{equation}\label{prop_B}
	\begin{gathered}
	B(f,g) = B(g,f),\\
	B(\mathcal{L}f,g) = - B(f,\mathcal{L}g),\\
	\quad B(iQ,f) = 0,\\
	B(\partial_k Q,f) = 0, \,k \leq N,\\
	 B(\Lambda Q,f) = -\frac{(1-s_c)(p-1)}{2}\int Q^pf_1,\\
	\Phi(\mathcal{Y}_+) = \Phi(\mathcal{Y}_-) = 0.\\
	\end{gathered}
\end{equation}
In the energy-critical case, we have
\begin{equation}
     B(\Lambda W,f) = 0.
\end{equation}

If $ 0 < s_c < 1$, consider the following orthogonality relations 
\begin{equation}\label{sub_o1}
\int Q v_2 = \int \partial_k Q v_1 = 0, \quad 1 \leq k \leq N,    
\end{equation}
\begin{equation}\label{sub_o2}
    \int  Q^p v_1 = 0,
\end{equation}
\begin{equation}\label{sub_o3}
    \int\mathcal{Y}_1 v_2 = \int\mathcal{Y}_2v_1 = 0.
\end{equation}
Denote by $G^\perp$ the set of $v \in H^1$ satisfying \eqref{sub_o1} and \eqref{sub_o2}, and $\tilde{G}^\perp$ the set of $v \in H^1$ satisfying \eqref{sub_o1} and \eqref{sub_o3}. 
\begin{re}\label{re_ortho}
Differently from \cite{DR_Thre}, we use the orthogonality condition \eqref{sub_o2} instead of $\int \Delta Q v_1 = 0$. We make this choice in order to be able to prove coercivity in all dimensions, specially in dimension $N= 1$.     
\end{re}

By direct calculations, one sees that
\begin{equation}
	\Phi_{|\text{span}\{\nabla Q, iQ\}} = 0
\end{equation}
and

\begin{equation}\label{sub_Phi_W_neg}
	\Phi(Q) = -\frac{p+1}{2}\int Q^{p+1} < 0.
\end{equation}

If $s_c = 1$, consider the directions $W$, $iW$, $\Lambda W = \frac{N-2}{2}W+x \cdot \nabla W$ and $\partial_k W$, $1\leq k \leq N$, in the Hilbert space $\dot{H}^1 = \dot{H}^1(\Real^N,\mathbb{C})$ with real inner product defined in \eqref{inner_dot}. 
Denote by $G :=$ span$\{W, \nabla W, iW, \Lambda W\}$  and by $G^\perp$ its orthogonal complement in $\dot{H}^1$ with
\begin{equation}\label{inner_dot}
	(f,g)_{\dot{H}^1} = \int \nabla f_1 \cdot \nabla g_1 + \int \nabla f_2 \cdot \nabla g_2 = \Re\int \nabla f \cdot \nabla \overline{g}.
\end{equation}
Let $\tilde{G}^\perp$ be the set $\left\{v \in \dot{H}^1; \, v \perp \text{span}\{\nabla W,iW,\Lambda W\}, \, B(\mathcal{Y}_+,v) = B(\mathcal{Y}_-,v) = 0\right\}$.

By direct calculations, one sees that
\begin{equation}
	\Phi_{|\text{span}\{\nabla W, iW, \Lambda W\}} = 0
\end{equation}
and
\begin{equation}\label{Phi_W_neg}
	\Phi(W) = -\frac{2}{(N-2)C_N^N} < 0,
\end{equation}
where $C_N$ is the sharp constant for Sobolev inequality for the embedding $\dot{H}^1 (\Real^N)\hookrightarrow L^\frac{2N}{N-2}(\Real^N)$. The following lemma shows that $\Phi$ is coercive in $G^\perp \cup \tilde{G}^\perp$. 

\begin{lemma}\label{lem_coerc}
For $0 < s_c \leq 1$, there exists a constant $\tilde{c} > 0$ such that, for any $f \in G^\perp\cup\tilde{G}^\perp$
\begin{equation}\label{Phi_coerc}
	\Phi(f) \geq \tilde{c}\|f\|_{\dot{H}^1}^2.
\end{equation}
\end{lemma}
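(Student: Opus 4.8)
The plan is to prove coercivity of the linearized energy $\Phi$ on the two subspaces $G^\perp$ and $\tilde G^\perp$ by reducing everything to the spectral properties of the scalar operators $L_+$ and $L_-$ recorded in Lemma \ref{spectrum_L} and Remark \ref{re_spectrum}. Writing $f = f_1 + if_2$, we have $\Phi(f) = \tfrac12(L_+f_1,f_1) + \tfrac12(L_-f_2,f_2)$, so the two halves decouple and can be treated separately: it suffices to prove $(L_-f_2,f_2) \gtrsim \|f_2\|_{\dot H^1}^2$ and $(L_+f_1,f_1) \gtrsim \|f_1\|_{\dot H^1}^2$ under the relevant orthogonality constraints. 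The $L_-$ part is the easy one: since $L_- Q = 0$, $Q>0$, and $Q$ is the ground state, $L_-$ is a non-negative operator whose kernel is spanned by $Q$, with a spectral gap above $0$ in the appropriate (homogeneous, in the critical case) sense; the standard positivity argument — write $f_2 = \alpha Q + g$ with $g\perp Q$ in $L^2$ (resp. with $g$ in the right complement in the critical case) and use that $L_-$ is coercive on $\{Q\}^\perp$ — gives the bound once we know $\int Q f_2 = 0$, which is part of \eqref{sub_o1}. In the energy-critical case one must additionally quotient out $\Lambda W$ (which is also in the kernel of $L_-$) and work with the homogeneous norm, using the known fact that $L_-\ge 0$ on $\dot H^1$ with kernel $\mathrm{span}\{Q, \Lambda W\}$ modulo the appropriate orthogonality.

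The $L_+$ part is the heart of the matter. The operator $L_+$ has exactly one negative eigenvalue and its kernel is spanned by $\partial_k Q$ (and $\Lambda W$ in the critical case); so $L_+$ restricted to the orthogonal complement of $\mathrm{span}\{\partial_k Q\} \cup \{$negative eigendirection$\}$ is positive with a gap. The negative direction can be controlled in two ways, corresponding to the two subspaces. For $G^\perp$: the key computation \eqref{sub_Phi_W_neg}, i.e. $(L_+ Q, Q) = -(p+1)\int Q^{p+1} < 0$, shows that $Q$ itself is a direction along which $L_+$ is negative; combined with the orthogonality condition \eqref{sub_o2}, namely $\int Q^p f_1 = 0 = (L_+ Q, f_1)/(\text{const})$... more precisely $\int Q^p f_1$ being a fixed nonzero multiple of the $L_+$-pairing of $f_1$ with the negative direction after accounting for the relation $L_+ \Lambda Q \sim Q$, one gets that $f_1$ is $L_+$-orthogonal to the span of $Q$ and the negative eigenfunction. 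The standard lemma (going back to Weinstein, used in \cite{DM_Dyn}, \cite{DR_Thre}) then yields $(L_+ f_1, f_1) \ge c\|f_1\|_{\dot H^1}^2$. For $\tilde G^\perp$: here the negative direction is controlled instead through $\mathcal Y_1, \mathcal Y_2$ via the conditions \eqref{sub_o3}; since $\Phi(\mathcal Y_\pm) = 0$ and $\mathcal Y_\pm$ are the eigenfunctions for $\pm e_0$, the condition $\int \mathcal Y_2 f_1 = 0$ kills the projection onto the negative-spectral-subspace part, again reducing to positivity on a complement.

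Concretely, the steps in order: (1) reduce $\Phi(f)\ge \tilde c\|f\|_{\dot H^1}^2$ to the two scalar estimates for $L_-$ on $f_2$ and $L_+$ on $f_1$; (2) prove the $L_-$ coercivity — establish $L_-\ge 0$ (this is used in the proof of Lemma \ref{spectrum_L}, so I should prove it here or cite the Gagliardo–Nirenberg/Sobolev-minimality characterization), identify its kernel, and conclude using $\int Q f_2 = 0$ (plus $f_2\perp\Lambda W$ etc. in the critical case); (3) prove the abstract scalar lemma: if $A$ is a self-adjoint operator, bounded below, with essential spectrum in $[\delta,\infty)$, exactly one negative eigenvalue $-\mu$ with eigenfunction $\chi_-$, finite-dimensional kernel spanned by $\{\zeta_j\}$, and if $g\perp \zeta_j$ and $(Ag,g)$ satisfies a suitable negativity-annihilating condition (via orthogonality to an explicit direction $\rho$ with $(A\rho,\rho)<0$), then $(Ag,g)\ge c\|g\|^2$; (4) apply it to $L_+$ with $\rho = Q$, $\zeta_j = \partial_k Q$ (and $\Lambda W$) for the $G^\perp$ case, using \eqref{sub_o2}; (5) apply a variant to $L_+$ for the $\tilde G^\perp$ case using the $\mathcal Y_i$ orthogonalities \eqref{sub_o3} and $\Phi(\mathcal Y_\pm)=0$; (6) combine, upgrading $L^2$-level coercivity to $\dot H^1$-coercivity by a standard interpolation/absorption argument (write $\|f\|_{\dot H^1}^2 \le C\Phi(f) + C'\|f\|_{L^2}^2$ type inequality and bootstrap).

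The main obstacle I anticipate is step (4)/(5): verifying that the orthogonality conditions \eqref{sub_o2} (resp. \eqref{sub_o3}) genuinely eliminate the negative direction of $L_+$, which requires the non-degeneracy fact that $\int Q^p f_1 = 0$ is not automatically implied by $f_1 \perp \partial_k Q$ — this is exactly the point flagged in Remark \ref{re_ortho}, that the naive condition $\int \Delta Q\, f_1 = 0$ fails to give coercivity in low dimensions, so the computation that $\langle$ negative eigendirection of $L_+$, $Q^p\rangle \ne 0$ (equivalently that $Q^p$ is not $L_+$-orthogonal to the negative subspace) must be done carefully and is dimension-sensitive; relatedly, in the energy-critical case one must be careful that all manipulations respect the homogeneous space $\dot H^1$ and that $\Lambda W$ is properly accounted for in both kernels. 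I also expect the passage from $L^2$-coercivity to $\dot H^1$-coercivity (step 6) to require a short but nontrivial argument, since $\Phi$ only directly controls $\|\nabla f\|_{L^2}^2$ minus a potential term; a compactness/contradiction argument or an explicit use of the exponential decay of $Q$ handles it.
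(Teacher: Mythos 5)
Your plan for $G^\perp$ in the intercritical case is essentially sound and close in spirit to the paper's argument (which uses the second variation of the Weinstein functional plus a Lagrange-multiplier/compactness step): indeed $L_+Q=-(p-1)Q^p$, so \eqref{sub_o2} is exactly $L_+$-orthogonality to the negative direction $Q$, and the $L_-$ part follows from $L_-\ge 0$ with kernel $\mathrm{span}\{Q\}$ and \eqref{sub_o1}. The genuine gap is in your step (5), the $\tilde G^\perp$ case. You assert that $\int \mathcal{Y}_2 f_1=0$ ``kills the projection onto the negative-spectral-subspace'' of $L_+$, but $\mathcal{Y}_2$ is not the negative eigenfunction of $L_+$; it is a component of an eigenfunction of the matrix operator $\mathcal{L}$. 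Since $L_+\mathcal{Y}_1=e_0\mathcal{Y}_2$, the condition says $(L_+\mathcal{Y}_1,f_1)=0$, and the one-negative-direction lemma with $\rho=\mathcal{Y}_1$ only applies if one knows $(L_+\mathcal{Y}_1,\mathcal{Y}_1)=e_0\int\mathcal{Y}_1\mathcal{Y}_2<0$, i.e.\ the non-degeneracy $\int\mathcal{Y}_1\mathcal{Y}_2\neq 0$ (equivalently $B(\mathcal{Y}_+,\mathcal{Y}_-)\neq 0$) with the correct sign. This is exactly the nontrivial input of this case: the paper obtains it by a codimension count (if $B(\mathcal{Y}_+,\mathcal{Y}_-)=0$ then $\Phi$ vanishes on an $(N+4)$-dimensional subspace, contradicting positivity on $G^\perp$, which has codimension $N+3$), and that argument presupposes the $G^\perp$ coercivity already proved. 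A direct route is also available (from $L_-\ge 0$: $(L_-\mathcal{Y}_2,\mathcal{Y}_2)=-e_0\int\mathcal{Y}_1\mathcal{Y}_2\ge 0$, with equality forcing $\mathcal{Y}_2\in\ker L_-=\mathrm{span}\{Q\}$ and then $\mathcal{Y}_\pm=0$), but some such argument must be supplied; as written the step does not close. Relatedly, you cannot import the kernel/negative-eigenvalue structure of $L_\pm$ from Lemma \ref{spectrum_L} and Remark \ref{re_spectrum}, since in the paper those are proved \emph{using} Lemma \ref{lem_coerc}; you must cite the classical non-degeneracy results (Kwong, Weinstein \cite{W_1989}) independently to avoid circularity.

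The second gap is the energy-critical case, where the hypotheses of your abstract scalar lemma fail: there $1-s_c=0$, so $L_\pm=-\Delta-cW^{p_c-1}$ has essential spectrum $[0,\infty)$ (no gap $\delta>0$), zero-modes such as $\Lambda W$ need not lie in $L^2$ in low dimensions, and there is no $L^2$-level coercivity to ``upgrade'' in your step (6) because $\Phi$ contains no mass term. Coercivity must be set up and proved directly in $\dot H^1$; the paper does this for $G^\perp$ via the stereographic/conformal map to $S^N$ and the explicit spherical-harmonic eigenvalues (following \cite{Rey90}), and for $\tilde G^\perp$ by the codimension count above combined with weak compactness of $h\mapsto\int W^{p_c-1}|h|^2$ on $\dot H^1$. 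So in the critical case a genuinely different mechanism from the one you describe is needed, not merely extra care with the homogeneous norm. (A minor slip: $\Lambda W$ lies in $\ker L_+$, not $\ker L_-$; this does not affect the $L_-$ estimate, whose only constraint on $f_2$ is orthogonality to $W$, but it should be stated correctly.)
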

{This result was proved in a different context in \cite{Rey90}, in the energy-critical case, and in \cite{DR_Thre} in the 3d cubic case. We give here the proof for all $0 < s_c \leq 1$, in any dimension.}

\begin{proof}[Proof of Lemma \ref{lem_coerc}, energy-critical case]$ $   \newline
\textit{Step 1. Coercivity in $G^\perp$.} We adapt here the proof in \cite{Rey90} to our context.

Let $\Pi : S^N \to \Real^N$ be the ``stretched'' stereographic projection of the sphere $S^N$ onto $\Real^N$, with respect to the North pole, defined by
\begin{equation}
    y_i = \frac{1}{N(N-2)} \frac{x_i}{1-x_{N+1}}, \quad 1 \leq i \leq N.
\end{equation}

If $y = \Pi x$ and $v$ is a real function is defined on $\Real^N$, we define a function $u$ on $S^N$ by
\begin{equation}
    u(x) = W^{-1}(y)v(y).
\end{equation}
By integration by parts, one can check that
\begin{equation}
    \int_{S^N} |\nabla_{S_N} u|^2 d\sigma = 2^{N-2}\int_{\Real^N}\left( |\nabla v|^2 - W^{p_c} v^2  \right)dy, 
\end{equation}
and
\begin{equation}
      \int_{S^N} u^2 d\sigma = \frac{2^{N}}{N(N-2)}\int_{\Real^N} W^{p_c} v^2  dy. 
\end{equation}
The spectrum of $\Delta_{S^N}$ is well-known \cite{5Rey}. Namely, for the first eigenvalues $\lambda_{k}$, with multiplicity $n_{k}$ and associated eigenfunctions $u_{k,\alpha}$, with $\alpha =(\alpha_1, \cdots, \alpha_N)\in \mathbb{Z}^N_{\geq 0}$,  we have
\begin{align}
\begin{array}{llll}
    \lambda_0 = 0,      &n_0 = 1,      &u_0 = 1,  &     \\
    \lambda_1 = N,      &n_1 = N+1,    &u_{1,j} = x_i, &1 \leq j \leq N+1,\\ 
    \lambda_2 = 2(N+1), & n_2 = 2N+3.  &
\end{array}
\end{align}

Therefore, if $v \perp W$ in $\dot{H}^1$, then $u$ is orthogonal to $u_0$, and we have
\begin{equation}
    \int_{\Real^N} \left( |\nabla v|^2 - W^{p_c} v^2  \right)  dy \geq \frac{4\lambda_1}{N(N-2)} \int_{\Real^N}  W^{p_c} v^2 dy, 
\end{equation}
which is equivalent to
\begin{equation}
    \int_{\Real^N} \left( |\nabla v|^2 - W^{p_c} v^2  \right)  dy \geq \frac{4}{N+2} \int_{\Real^N} |\nabla v|^2  dy. 
\end{equation}
Similarly, if $v \perp \text{span}\{W,\nabla W, \Lambda W\}$ in $\dot{H}^1$, then $u$ is orthogonal to $u_0$, $u_{1,i}$, $1 \leq i \leq N+1$, and thus
\begin{equation}
    \int_{\Real^N} \left( |\nabla v|^2 - W^{p_c} v^2  \right)  dy \geq \frac{4\lambda_2}{N(N-2)} \int_{\Real^N} W^{p_c} v^2  dy,
\end{equation}
which is equivalent to
\begin{equation}
    \int_{\Real^N} \left(|\nabla v|^2 - p_cW^{p_c} v^2 \right) dy \geq \frac{4}{N+2} \int_{\Real^N} |\nabla v|^2  dy. 
\end{equation}
Therefore, we proved that, for $h \in G^\perp$,
\begin{equation}
    \Phi(h) \geq \frac{4}{N+2}\|h\|_{\dot{H}^1}.
\end{equation}

\textit{Step 2. Coerciveness of $\Phi$ in $\tilde{G}^\perp$}.

We first claim that $B(\mathcal{Y}_+,\mathcal{Y}_-) \neq 0$. If $B(\mathcal{Y}_+,\mathcal{Y}_-)$ was $0$, then $\Phi$ would 
be identically $0$ on span$\{\nabla W, iW, \Lambda W, \mathcal{Y}_+, \mathcal{Y}_-\}$, a subspace of dimension $N+4$. But this cannot happen, given $\Phi$ is positive definite on $G^\perp$, which is of codimension $N+3$.

We now show that $\Phi(h) > 0$ on $\tilde{G}^\perp \backslash \{0\}$. Assume, by contradiction, that there exists $h \in \tilde{G}^\perp \backslash \{0\}$ such that $\Phi(h) \leq 0$. Recall that $\ker \mathcal{L} = \text{span}\{\nabla W, iW, \Lambda W\}$, and that, by definition of $\tilde{G}^\perp \backslash \{0\}$, $B(\mathcal{Y}_+,h) = 0$. Hence, the vectors $\partial_kW$, $1 \leq k \leq N$, $iW$, $\Lambda W$, $\mathcal{Y}_+$ and $h$ are mutually orthogonal under the symmetric form $B$. Since
\begin{equation}
	\Phi(\partial_k W) = \Phi(iW) = \Phi(\Lambda W) = \Phi(\mathcal{Y}_+) = 0,
\end{equation}
we get
\begin{equation}
	\Phi_{|\text{span}\{\nabla W, iW, \Lambda W, \mathcal{Y}_+,h\}} \leq 0.
\end{equation}
We claim that these vectors are independent. Indeed, if
\begin{equation}
	\sum_k \alpha_k \partial_k W + \beta iW + \gamma \Lambda W +\delta \mathcal{Y}_+ +\epsilon h = 0,
\end{equation}
then
\begin{equation}
	\delta B(\mathcal{Y}_+,\mathcal{Y}_-) = 0,
\end{equation}
and since $B(\mathcal{Y}_+,\mathcal{Y}_-) \neq 0$, $\delta = 0$. Therefore, the claim is proven, since $\partial_k W$, $iW$, $\Lambda W$ and $h$ are orthogonal in the real Hilbert space $\dot{H}^1$.

To prove coercivity, we rely on a compactness argument. Suppose, by contradiction, that there exists $\{h_n\} \subset \tilde{G}^\perp$ such that
\begin{equation}
	\lim \Phi(h_n) = 0, \quad \|h_n\|_{\dot{H}^1} = 1.
\end{equation}

Up to a subsequence, we may assume $h_n \rightharpoonup h^*$ weakly in $\dot{H}^1$. This implies $h^* \in \tilde{G}^\perp$. Since the operator $\int W^{p_c-1}|\cdot|^2$ is compact, we have $\int W^{p_c-1}|h_*|^2 > 0$ and 
\begin{equation}
	\Phi(h^*)\leq 
	\lim\inf \Phi(h_n) = 0.
\end{equation}
This contradicts the strict positivity of $\Phi$ on $\tilde{G}^\perp\backslash\{0\}$.

\end{proof}

\begin{proof}[Proof of Lemma \ref{lem_coerc}, intercritical case] Since the explicit formula for $Q$ in the intercritical case is not available, we cannot proceed as in the energy-critical case. We follow here \cite{W_1989} and \cite{DR_Thre}.

\textit{Step 1. Non-negativity on $G^\perp$}. Define the functional 
\begin{equation}
J(u) = \frac{\left(\displaystyle\int|\nabla u|^2\right)^{a}\left(\displaystyle\int|u|^2\right)^{b}}{\displaystyle\int |u|^{p+1}}, 
\end{equation}
where
\begin{equation}
    a = \frac{N(p-1)}{4}, \quad b = \frac{2p+2-N(p-1)}{4}.
\end{equation}
By the sharp Gagliardo-Nirenberg inequality, one can see that this functional achieves an absolute min\-\\
i\-mum at $Q$. Therefore, the minimization condition $\frac{d^2}{d\varepsilon^2}J(Q+\varepsilon h)_{|\varepsilon = 0} \geq 0$ for all functions $h \in H^1$ gives
\begin{equation}
    \Phi(h) \geq\frac{b}{\left(\displaystyle\int Q^2\right)}\left[ \frac{1}{a}\left(\int \Delta Q h_1\right)^2
    +\frac{1}{b}\left(\int Q h_1\right)^2 - \left(\int Q^p h_1\right)^2\right].
\end{equation}

Since $a$ and $b$ are positive if $0 < s_c < 1$, we have that $\Phi(h) \geq 0$ if $\int Q^p h_1 = 0 $. Therefore, $\Phi$ must be non-negative on $G^\perp$.

\textit{Step 2. Coercivity on $G^\perp$}. We now employ compactness to show that, for every real function $h \in G^\perp$, 
\begin{equation}
    (L_+h,h)_{L^2} \gtrsim \|h\|_{L^2}, \text{ and } (L_-h,h)_{L^2} \gtrsim \|h\|_{L^2}.
\end{equation}
If we prove both inequalities, then (again) by compactness, the coercivity follows.
Suppose that there is a sequence of real $H^1$ functions $\{h_n\}$ in $G^\perp$ such that
\begin{equation}
    \lim_{n \to \infty} (L_+h_n,h_n)_{L^2} = \Phi(h_n) = 0, \text{ and } \|h_n\|_{L^2} = 1.
\end{equation}
This implies
\begin{equation}\label{sub_aux}
    0 \leq \frac{1}{2}\int|\nabla h_n|^2  = -\frac{1}{2}+\frac{p}{2}\int Q^{p-1}h_n^2 +\Phi(h_n) \lesssim 1.
\end{equation}
Therefore, $\|\nabla h_n\| \lesssim 1$ and, for large $n$, $\int Q^{p-1}h_n^2 \gtrsim 1$. Passing to a subsequence, and recalling that $Q$ decays at infinity, we get that there exists $h_* \in G^\perp$ such that 
\begin{equation}
    h_n \rightharpoonup h_* \text{ weakly in } H^1, \text{ and } \int Q^{p-1}h_*^2 >0.
\end{equation}
In particular, $h_* \neq 0$. Moreover,
\begin{equation}
    \Phi(h_*) \leq \frac{1}{2}\liminf_{n+\to \infty}\|h_n\|_{H^1}-\frac{p}{2} \lim_{n \to \infty}\int Q^{p-1}h_n^2 = \liminf_{n+\to \infty} \Phi(h_n) = 0.  
\end{equation}
Recall that $\Phi(h_*) \geq 0$ by Step 1. Therefore, $\Phi(h_*) = 0$ and $h_*$ is the solution to the minimization problem 
\begin{equation}
    0 =  (L_+h_*,h_*)_{L^2} = \min_{f\in E}  (L_+h,h)_{L^2}, \text{ where}
\end{equation}
\begin{equation}
    E:= \left\{ h \in H^1: \|h\|_{L^2} = \|h_*\|_{L^2} \text{ and } h \in G^{\perp}\right\}.
\end{equation}

Thus, there exist Lagrange multipliers $\lambda_0, \cdots, \lambda_{N+1}$ such that
\begin{equation}
    L_+h_* = \lambda_0 Q^p + \sum_{j=1}^N \lambda_j \partial_{j}Q + \lambda_{N+1}h_*.
\end{equation}

Since $h_* \in G^{\perp}\backslash\{0\}$ and $(L_+h_*,h_*)_{L^2} = 0$, we have $\lambda_{N+1}=0$. By testing the last equation against $\partial_j Q$ and using that $L_+(\partial_k Q) = 0$, for all $k \leq N$, we conclude that 
\begin{equation}
    L_+h_* = \lambda_0 Q^p.
\end{equation}
Recalling that $L_+Q = -\frac{p-1}{2}Q^p$ and $\ker(L_+) = \text{span}\{\nabla Q\}$, we conclude that there exist $\mu_1, \cdots, \mu_N$ such that
\begin{equation}
    h_* = -\frac{2\lambda_0}{p-1}Q +  \sum_{j=1}^N \mu_j \partial_{j}Q.
\end{equation}
Noting that $\int Q \partial_j Q = \frac{1}{2}\int \partial_j(Q^2) = 0$, and recalling that $h_* \in G^\perp$ gives $\mu_j = 0$ for all $j$. Therefore,
\begin{equation}
    h_* = \frac{2\lambda_0}{p-1}Q.
\end{equation}
And, by direct calculation,
\begin{equation}
    (L_+h_*,h_*)_{L^2} = -\left(\frac{2\lambda_0}{p-1}\right)^2\int Q^{p+1} < 0.
\end{equation}
This contradicts $(L_+h_*,h_*)_{L^2} \geq 0$ and $h_* \neq 0$, and proves that 
\begin{equation}
    (L_+h,h)_{L^2} \gtrsim \|h\|_{L^2}
\end{equation}
for any real function $h \in G^{\perp}$. The proof for $L_-$ is analogous. In particular, we have strict positivity of $\Phi$ on $G^\perp\backslash\{0\}$ and, by compactness, the coercivity follows on $G^\perp$.

\textit{Step 3. Coercivity on $\tilde{G}^\perp$.} The proof relies on a (co)dimensional argument, together with compactness, as in the energy-critical case.
\end{proof}

Unlike the energy-critical case, the ground state decays exponentially if $s_c < 1$. In the next sections, we need sharp bounds on the decay of $Q$ and its derivatives. We start recalling the following result, proved by Gidas, Ni and Nirenberg \cite{Gidas81}. Recall that $2^*=2N/(N-2)$, if $N\geq 3$ and $2^*=+\infty$, if $N=1, 2$.

\begin{lemma}[Gidas et al. {\cite{Gidas81}*{Theorem 2, p. 370}}]\label{sub_GNN}
For $1+\frac{4}{N} \leq p < 2^*-1$, let $Q \in \mathcal{S}(\Real^N)$, Schwartz space, be the unique radial positive solution of the equation \eqref{intercrit_Q}.
Then there exists $C > 0$ such that
\begin{equation}
    \lim_{|x| \to +\infty} |x|^{\frac{N-1}{2}}e^{|x|} Q(x) = C.
\end{equation}
\end{lemma}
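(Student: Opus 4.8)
\textbf{Proof plan for Lemma \ref{sub_GNN}.}

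The plan is to treat the statement as a refined asymptotic analysis of a radial ODE. Writing $Q = Q(r)$ with $r = |x|$, the equation \eqref{intercrit_Q} becomes $Q'' + \frac{N-1}{r}Q' - Q + Q^p = 0$. Since $Q \in \mathcal{S}(\Real^N)$, both $Q$ and $Q^p$ decay faster than any polynomial, and in particular the term $Q^p$ is a negligible (superexponentially small relative to $e^{-r}$) forcing term once we know $Q$ decays like $e^{-r}$ modulo polynomial corrections. So the first step I would carry out is to establish a crude two-sided bound $e^{-(1+\eps)r} \lesssim Q(r) \lesssim e^{-(1-\eps)r}$ for every $\eps>0$ and $r$ large — this is a comparison-principle argument against the modified Bessel equation $u'' + \frac{N-1}{r}u' - (1\mp\eps) u = 0$, whose decaying solution behaves like $r^{-(N-1)/2}e^{-\sqrt{1\mp\eps}\,r}$; standard sub/supersolution constructions give the bound. (Alternatively one can cite the classical results of Berestycki–Lions or the references already quoted.)

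The second step is to extract the sharp rate. Substitute $Q(r) = r^{-(N-1)/2} e^{-r} \psi(r)$ into the ODE. A direct computation shows the $\frac{N-1}{r}Q'$ term is designed exactly to cancel the leading-order transport term, so $\psi$ satisfies an equation of the schematic form
\begin{equation}
    \psi'' - 2\psi' + \left(\frac{a}{r^2} + \text{(superexponentially small)}\right)\psi = 0,
\end{equation}
where $a = \frac{(N-1)(N-3)}{4}$ comes from differentiating the polynomial prefactor; the $Q^p$ contribution becomes $r^{(N-1)(p-1)/2} e^{-(p-1)r}\psi^p$, which is $O(e^{-(p-1)r})$ and hence integrable and small. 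The decaying branch of the constant-coefficient part $\psi'' - 2\psi' = 0$ corresponds to $\psi' \to 0$; one then sets up a Volterra-type fixed-point / integrating-factor argument for $\psi$ on $[R,\infty)$ using the bounds from Step 1 to control the error terms, and concludes that $\psi(r)$ converges to a finite limit $C$ as $r \to \infty$. One must check $C \neq 0$: this follows because $Q > 0$ everywhere (so $\psi > 0$), and the limit cannot be $0$ since then a second application of the same Volterra analysis to the next-order term would force $Q$ to decay strictly faster than every translate $e^{-(1-\eps)r}$-lower bound from Step 1, a contradiction; more cleanly, one shows $\psi$ is eventually monotone (its derivative has a sign determined by the sign of the error integral), hence has a strictly positive limit.

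The main obstacle is Step 2: making the Volterra argument rigorous requires care in choosing the right integrating factor for the $\psi'' - 2\psi'$ operator (whose two fundamental solutions are $1$ and $e^{2r}$, so one integrates against the bounded solution and must verify the relevant improper integrals converge given only the rough bounds from Step 1), and in verifying that the nonlinear self-interaction $Q^p$ really is harmless — this uses crucially that $p > 1$ so that $e^{-(p-1)r}$ is genuinely decaying and $\int^\infty e^{-(p-1)r}\,dr < \infty$. Since all of this is classical and is exactly the content of \cite{Gidas81}*{Theorem 2}, in the paper I would simply invoke that reference rather than reproduce the computation; the lemma is stated for citation purposes, and the only thing to note is that our rescaled normalization \eqref{intercrit_Q} (with the coefficient $1$ in front of the linear term) is the one to which the cited theorem directly applies, so no rescaling of the exponent is needed.
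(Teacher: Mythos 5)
The paper gives no proof of this lemma: it is simply quoted from \cite{Gidas81}, so your decision to defer to that reference is exactly what the paper does, and your sketch of the underlying radial-ODE argument (rough exponential upper and lower bounds, the substitution $Q=r^{-(N-1)/2}e^{-r}\psi$, and a Volterra-type analysis of $\psi''-2\psi'$ with integrable perturbations) is the standard proof and essentially correct, including the remark that the normalization \eqref{intercrit_Q} requires no rescaling of the exponent. Two small points if you keep the sketch: after the substitution the coefficient of the $r^{-2}$ term is $-\frac{(N-1)(N-3)}{4}$ (not $+$) and the nonlinear term carries the prefactor $r^{-(N-1)(p-1)/2}$ (a negative power of $r$); and the only genuinely delicate step is the non-vanishing of the limit, where your ``eventual monotonicity'' shortcut depends on the sign of $(N-1)(N-3)$ and hence on the dimension, whereas the bootstrap-contradiction argument you also outline (if $\psi\to 0$ then $Q$ would decay strictly faster than the lower bound allows) works uniformly.
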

We next study the decay of solutions to the equation \eqref{intercrit_Q}. {The next lemma, as well as its corollary, might be known in the theory of elliptic equations. However, we could not find a specific reference, and for convenience provide a proof here.}
\begin{lemma}\label{sub_boot_decay} Let $f \in S(\mathbb{R}^N)$ and $\lambda \in \Real$. If $f$ solves
\begin{equation}
    (1-\Delta+\lambda i)f = G
\end{equation}
with 
\begin{equation}
|G(x)| \lesssim \frac{e^{-a|x|}}{\left(1+|x|^\frac{N-1}{2}\right)^b},
\end{equation}
for $0<a \neq \Re\sqrt{1+\lambda i}$, $0<b \neq 1$, then
\begin{equation}
    |f(x)| \lesssim \frac{1}{\left(1+|x|^\frac{N-1}{2}\right)^{\min\{b,1\}}}\left(e^{-|x|}\right)^{\min\{a,\Re\sqrt{1+\lambda i}\}}.
\end{equation}
\end{lemma}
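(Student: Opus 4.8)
The strategy is to write $f$ explicitly via the convolution kernel of the operator $(1-\Delta+\lambda i)^{-1}$ and then estimate the resulting convolution integral. Let $G_\lambda$ denote the Green's function of $1-\Delta+\lambda i$ on $\Real^N$, so that $f = G_\lambda * G$ (this is justified since $f \in \mathcal{S}$ and the operator is invertible, its symbol $1+|\xi|^2+\lambda i$ being bounded away from $0$). The first step is to record the pointwise decay of $G_\lambda$: writing $1+\lambda i = \mu^2$ with $\mu = \sqrt{1+\lambda i}$ chosen so that $\Re\mu>0$, the kernel of $(\mu^2-\Delta)^{-1}$ is the classical Bessel-type potential, and one has
\begin{equation}
|G_\lambda(x)| \lesssim \frac{e^{-(\Re\mu)|x|}}{\left(1+|x|^{\frac{N-1}{2}}\right)}\cdot\frac{1}{|x|^{\frac{N-1}{2}}}\quad\text{for }|x|\gtrsim 1,
\end{equation}
together with the local singularity $|G_\lambda(x)|\lesssim |x|^{-(N-2)}$ (or $\log$ in $N=2$, bounded in $N=1$) near the origin; these are standard and follow from the asymptotics of Bessel/Macdonald functions. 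Set $c_0 := \Re\mu = \Re\sqrt{1+\lambda i}$.

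The second and main step is the convolution estimate: one must show that if $|G(x)| \lesssim e^{-a|x|}(1+|x|^{\frac{N-1}{2}})^{-b}$ and $|G_\lambda(y)| \lesssim e^{-c_0|y|}(1+|y|^{\frac{N-1}{2}})^{-1}\cdot(\text{integrable local factor})$, then the convolution decays like $e^{-\min\{a,c_0\}|x|}(1+|x|^{\frac{N-1}{2}})^{-\min\{b,1\}}$. This is a routine but slightly delicate splitting argument: fix $x$ with $|x|$ large, and break $\int |G_\lambda(x-y)|\,|G(y)|\,dy$ into the region $|y|\le |x|/2$, the region $|x-y|\le|x|/2$, and the region where both $|y|$ and $|x-y|$ are $\gtrsim |x|$. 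In the first region $e^{-a|y|}$ is the slowly-decaying factor relative to $e^{-c_0|x-y|}\approx e^{-c_0|x|}e^{c_0|y|}$-type bounds; in the second the roles swap; in the third both factors are small and one gains from the product. The exponents $\min\{a,c_0\}$ and $\min\{b,1\}$ emerge exactly from which region dominates, and the polynomial losses $(1+|x|^{(N-1)/2})^{-1}$ are preserved because the "inner" variable's polynomial weight is integrable near its own decay scale. The hypotheses $a\neq c_0$ and $b\neq 1$ are used precisely to avoid the borderline logarithmic losses when two exponential rates or two polynomial rates coincide, so that the clean bound with the stated minima holds.

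The third step is bookkeeping for small dimensions and near the origin: for $|x|\lesssim 1$ the claimed bound is just boundedness of $f$, which holds since $f\in\mathcal S$, so no estimate is needed there; and the local singularity of $G_\lambda$ is locally integrable in every dimension $N\ge 1$, so it does not affect the tail analysis. I expect the main obstacle to be the careful three-region convolution estimate in the second step — in particular, tracking the polynomial weights $(1+|x|^{(N-1)/2})$ through the convolution without losing a power, and verifying that the "both large" region genuinely produces the product-type gain rather than only an additive one. Everything else (the Green's function asymptotics, the reduction $f=G_\lambda*G$, the small-$|x|$ case) is standard.
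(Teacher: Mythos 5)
Your plan is essentially the paper's own proof: the paper likewise writes $f = K*G$ with the resolvent (Bessel-type) kernel, quotes its decay $|K(x)|\lesssim e^{-c|x|}\left(1+|x|^{\frac{N-1}{2}}\right)^{-1}$ for $|x|\gg 1$ together with an integrable local singularity (citing Aronszajn--Smith), and then estimates the convolution by treating the cases $a<c$ and $a>c$ separately with weighted triangle inequalities that track the polynomial factor — the same splitting you describe, just organized as a direct weight transfer rather than a three-region decomposition. One small slip: your far-field kernel bound carries a spurious extra factor $|x|^{-\frac{N-1}{2}}$ (the correct asymptotic is $e^{-(\Re\mu)|x|}|x|^{-\frac{N-1}{2}}$, not $|x|^{-(N-1)}$), but since only the weaker, correct bound is needed, the argument is unaffected.
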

\begin{proof}
Let $c = \Re\sqrt{1+\lambda i} \geq 1$. We recall the integral form of the resolvent (see \cite{AS61})
\begin{equation}
    (1-\Delta+\lambda i)^{-1}G = K * G,
\end{equation}
where $K \in L^1(\Real^N)$ is such that, for $|x| \gg 1$,
\begin{equation}\label{sub_decay_K_inf}
     K(x) \lesssim \frac{e^{-c|x|}}{1+|x|^{\frac{N-1}{2}}},
\end{equation}
and, for $|x|\ll 1$,
\begin{equation}\label{sub_decay_K_zero}
    K(x) \lesssim 
    \begin{cases} 
        \frac{1}{|x|^{\frac{N-1}{2}}} &\text{ for } N > 2,\\
        \ln\frac{1}{|x|} &\text{ for } N = 2,\\
        1 &\text{ for } N < 2.
    \end{cases}
\end{equation}
Consider first the case $0<a < c$. We estimate
\begin{align}
    |K*G(x)| &\lesssim \int K(y)\frac{e^{-a|x-y|}}{\left(1+|x-y|^\frac{N-1}{2}\right)^b}\,dy\\
    &\lesssim \frac{e^{-a|x|}}{\left(1+|x|^\frac{N-1}{2}\right)^{\min\{b,1\}}} \int K(y)e^{a|y|}\frac{\left(1+|x-y|^{\frac{N-1}{2}}+|y|^\frac{N-1}{2}\right)^{\min\{b,1\}}}{\left(1+|x-y|^\frac{N-1}{2}\right)^b}\,dy\\
    &\lesssim \frac{e^{-a|x|}}{\left(1+|x|^\frac{N-1}{2}\right)^{\min\{b,1\}}} \int K(y)e^{a|y|}\left(1+|y|^{\frac{N-1}{2}}\right)^{\min\{b,1\}}\,dy.\\
\end{align}
By \eqref{sub_decay_K_inf} and \eqref{sub_decay_K_zero}, the integral in the last inequality is $O(1)$.
For $a > c$, the estimate is
\begin{align}
   |K*G(x)|&\lesssim\frac{e^{-c|x|}}{\left(1+|x|^\frac{N-1}{2}\right)^{\min\{1,b\}}} \int K(y)e^{c|y|}\left(1+|y|^{\frac{N-1}{2}}\right)^{\min\{1,b\}}e^{-(a-c)|x-y|}\,dy\\
   &\lesssim \frac{e^{-c|x|}}{\left(1+|x|^\frac{N-1}{2}\right)^{\min\{1,b\}}} \left[\int e^{-(a-c)|x-y|}\,dy + \int_{|y|\leq 1}K(y)e^{c|y|}\left(1+|y|^{\frac{N-1}{2}}\right)\,dy\right]. 
\end{align}
   
Since the first integral in the last inequality is bounded uniformly in $x$, the lemma is proved.
\end{proof}
\begin{cor}\label{sub_decay_eigen} For any multi-index $\alpha\in \mathbb{Z}^N_+$, the following estimates hold
\begin{enumerate}[(i)]
    \item\label{sub_a_inf} $\|Q^{-1}\partial^\alpha Q\|_{L^\infty} < +\infty$,
    \item\label{sub_b_inf} $\|Q^{-1}e^{\eta|x|}\partial^\alpha\mathcal{Y}_\pm\|_{L^\infty} < +\infty$, for some $0<\eta\ll1$,
    \item\label{sub_c_inf} $\|Q^{-1}e^{\eta|x|}\partial^\alpha[(\mathcal{L}-\lambda)^{-1}f]\|_{L^\infty} < +\infty$, for every  $\lambda \in \Real \backslash \sigma(\mathcal{L})$ and every $f\in S(\Real^N)$ such that $\|Q^{-1}e^{\eta|x|}\partial^\beta f\|_{L^\infty}<+\infty$ for some $0<\eta<\Re(\sqrt{1+\lambda i}) $ and any $\beta \in \mathbb{Z}^N_+$.
\end{enumerate}
\end{cor}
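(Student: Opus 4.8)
The three parts have a common shape: each of $\partial^\alpha Q$, $\partial^\alpha\mathcal Y_\pm$, $\partial^\alpha[(\mathcal L-\lambda)^{-1}f]$ solves a constant-coefficient elliptic equation whose right-hand side is, apart from the very term being estimated, a source decaying like $Q^{p-1}$ times a controlled function. Since we are in the intercritical case and $Q>0$ everywhere, there is no low-regularity issue with $Q^p$, and since $Q$, $\mathcal Y_\pm$ (Lemma~\ref{spectrum_L}) and $(\mathcal L-\lambda)^{-1}f$ (last line of the proof of Lemma~\ref{spectrum_L}) all lie in $\mathcal S$, the whole content is the decay as $|x|\to\infty$; near the origin every quotient $\partial^\alpha Q/Q$, $\mathcal Y_\pm/Q$, $g/Q$ is bounded by continuity. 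The plan for each part is to represent the function through the resolvent kernel of the relevant constant-coefficient operator --- whose exponential decay rate is provided by Lemma~\ref{sub_boot_decay} --- and to run a finite bootstrap in which each pass improves the exponential rate by a fixed amount $\gtrsim p-1$ until it saturates at the kernel's rate, the polynomial weight $(1+|x|)^{-(N-1)/2}$ being produced automatically by the lemma once the source has polynomial decay of order $>1$.

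For (i), differentiate the (rescaled) equation $(1-\Delta)Q=Q^p$ to get $(1-\Delta)\partial^\alpha Q=\partial^\alpha(Q^p)$ and induct on $|\alpha|$. By the Fa\`a di Bruno/Leibniz formula (legitimate since $Q>0$), $\partial^\alpha(Q^p)=pQ^{p-1}\partial^\alpha Q+R_\alpha$, where $R_\alpha$ is a finite sum of terms $Q^{p-m}\prod_j\partial^{\gamma_j}Q$ with $m\ge2$ and all $|\gamma_j|<|\alpha|$, so $|R_\alpha|\lesssim Q^p$ by the inductive hypothesis. Starting from $\partial^\alpha Q\in\mathcal S$ (bounded), the source is $\lesssim Q^{p-1}$; iterating Lemma~\ref{sub_boot_decay} with $\lambda=0$ (kernel rate $1$) improves the exponential rate of $\partial^\alpha Q$ in steps of $p-1$, reaching $1$ after finitely many passes, and a last application --- in which the source $pQ^{p-1}\partial^\alpha Q+R_\alpha\lesssim Q^{p-1}e^{-|x|}+Q^p$ has polynomial decay of order $>1$ --- gives $|\partial^\alpha Q|\lesssim (1+|x|)^{-(N-1)/2}e^{-|x|}\simeq Q$ by Lemma~\ref{sub_GNN}. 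The degenerate equalities $a=1$, $b=1$ excluded in Lemma~\ref{sub_boot_decay} are avoided by shrinking $a$ slightly at intermediate passes.

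For (ii)--(iii), write $\mathcal L=\mathcal L_0+V$ with $\mathcal L_0=\begin{pmatrix}0&-(1-\Delta)\\(1-\Delta)&0\end{pmatrix}$ and $V$ having entries proportional to $Q^{p-1}$. In (ii), $\mathcal L\mathcal Y_\pm=\pm e_0\mathcal Y_\pm$ gives $(\mathcal L_0\mp e_0)\mathcal Y_\pm=-V\mathcal Y_\pm$ (alternatively use \eqref{diff_eq_y1}); each entry of $(\mathcal L_0\mp e_0)^{-1}$ has the form $P(1-\Delta)\big((1-\Delta)^2+e_0^2\big)^{-1}$ with $\deg P\le1$, and since $(1-\Delta)^2+e_0^2=((1-\Delta)-ie_0)((1-\Delta)+ie_0)$, Lemma~\ref{sub_boot_decay} shows its kernel decays like $e^{-u|x|}$ with $u=\Re\sqrt{1+e_0i}$ and $u^2=\tfrac12(1+\sqrt{1+e_0^2})>1$. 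Bootstrapping the $Q^{p-1}$-source as in (i) yields $|\mathcal Y_\pm|\lesssim e^{-u|x|}(1+|x|)^{c_0}$; differentiating the system, the remainders $\partial^{\alpha-\beta}V\,\partial^\beta\mathcal Y_\pm$ with $|\beta|<|\alpha|$ decay faster than $e^{-u|x|}$ by induction, so the same bootstrap gives $|\partial^\alpha\mathcal Y_\pm|\lesssim e^{-u|x|}(1+|x|)^{c_\alpha}$ with the $\alpha$-independent rate $u$; any $\eta<u-1$ then absorbs the polynomial factors, proving (ii). In (iii), the same reduction gives $(\mathcal L_0-\lambda)g=f-Vg$ for $g:=(\mathcal L-\lambda)^{-1}f\in\mathcal S$, and the kernel of $(\mathcal L_0-\lambda)^{-1}$ decays like $e^{-c|x|}$ with $c=\Re\sqrt{1+\lambda i}>1$ (note $\lambda\ne0$ since $0\in\sigma(\mathcal L)$). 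The source is $f$, decaying like $Qe^{-\eta|x|}\simeq(1+|x|)^{-(N-1)/2}e^{-(1+\eta)|x|}$ by hypothesis, plus $-Vg$, whose rate improves by $p-1$ per pass; provided $\eta$ is small enough that $1+\eta<c$, the rate of $g$ saturates at $1+\eta$, and the convolution --- dominated by the region $|x-y|\lesssim1$, exactly as in the proof of Lemma~\ref{sub_boot_decay} --- reproduces the weight $(1+|x|)^{-(N-1)/2}$, so $|g|\lesssim Qe^{-\eta|x|}$. Differentiating the equation and repeating, with the Leibniz remainders absorbed by the induction, gives $|\partial^\alpha g|\lesssim Qe^{-\eta|x|}$ for every $\alpha$.

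The main obstacle is not the exponential rate but the polynomial weight: one must check that convolving a kernel carrying the prefactor $(1+|x|)^{-(N-1)/2}$ against a source carrying the same prefactor and a strictly slower exponential rate reproduces exactly $(1+|x|)^{-(N-1)/2}$, not a worse power --- this is precisely where the sharp form of Lemma~\ref{sub_boot_decay} and the Gidas--Ni--Nirenberg asymptotics (Lemma~\ref{sub_GNN}) enter --- together with the bookkeeping that, at each bootstrap pass, separates the self-referential term $Q^{p-1}\cdot(\text{derivative of the unknown})$, which drives the iteration, from the genuinely lower-order part, which is fixed, and the routine verification that the parameters $(a,b)$ stay in the admissible ranges of Lemma~\ref{sub_boot_decay}, the few degenerate equalities being circumvented by an $\varepsilon$-perturbation of the exponents.
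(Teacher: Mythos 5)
Your proof follows essentially the same route as the paper's: differentiate the elliptic equations satisfied by $Q$, $\mathcal{Y}_\pm$ and $(\mathcal{L}-\lambda)^{-1}f$, isolate a source of the form $Q^{p-1}$ times controlled terms, and bootstrap the exponential rate through the resolvent kernel bounds of Lemma \ref{sub_boot_decay} (together with the factorization $(1-\Delta)^2+e_0^2=(1-\Delta+ie_0)(1-\Delta-ie_0)$ and the Gidas--Ni--Nirenberg asymptotics of Lemma \ref{sub_GNN}), inducting on $|\alpha|$. Your inversion of the matrix operator $\mathcal{L}_0\mp e_0$ is merely a repackaging of the paper's scalar factored fourth-order equation with the auxiliary function $g=(1-\Delta-ie_0)\mathcal{Y}_1$, and your explicit requirement $1+\eta<\Re\sqrt{1+\lambda i}$ in part (iii) is the correct reading of the statement, consistent with how the corollary is applied in Proposition \ref{family_appr}.
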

\begin{proof}
We first remark that $Q$ is strictly positive, and thus, $Q^{-1}$ is well-defined. Recalling Lemma \ref{sub_GNN}, we have, for all $x$,
\begin{equation}
    Q(x) \approx \frac{e^{-|x|}}{1+|x|^\frac{N-1}{2}}.
\end{equation}
We differentiate \eqref{sub_def_ground} to obtain
\begin{equation}
    (1-\Delta)\nabla Q = pQ^{p-1}\nabla Q.
\end{equation}
Since $Q \in S$, by Lemma \ref{sub_boot_decay} and a bootstrap argument, we conclude \eqref{sub_a_inf} for $|\alpha|=1$. Differentiating \eqref{sub_def_ground} and repeating the argument, we conclude \eqref{sub_a_inf}, by induction, for any multi-index $\alpha$.

To prove \eqref{sub_b_inf}, recall the differential equation for $\mathcal{Y}_1 = \Re(\mathcal{Y}_+)$ 
\begin{equation}
    (1-\Delta -pQ^{p-1})(1-\Delta -Q^{p-1})\mathcal{Y}_1 = -e_0^2 \mathcal{Y}_1.
\end{equation}

By factoring $[(1-\Delta)^2+e_0^2] = (1-\Delta+ie_0)(1-\Delta-ie_0)$ and using the item \eqref{sub_a_inf}, this equation can be rewritten as
\begin{equation}
     (1-\Delta+ie_0)(1-\Delta-ie_0)\mathcal{Y}_1 = G_2(\mathcal{Y}_1),
\end{equation}
where we define $G_k(f)$ as a linear function on $f$ and its derivatives up to order $k$ that satisfies, for any $k \geq 1$,
\begin{equation}
    |G_k(f)| \lesssim Q^{p-1}\sum_{|\alpha|\leq k}|\partial^\alpha f|.
\end{equation}
Writing $g=(1-\Delta-ie_0)\mathcal{Y}_1$, we have, for any multi-indices $\alpha, \beta$,
\begin{equation}
    \begin{cases}
        (1-\Delta+ie_0)\partial^\alpha g = G_{|\alpha|+2}(\mathcal{Y}_1)\\
        (1-\Delta-ie_0)\partial^\beta\mathcal{Y}_1 = \partial^\beta g.
    \end{cases}
\end{equation}
Therefore, using Lemma \ref{sub_boot_decay} and bootstrapping, we prove that $Q^{-1}e^{\eta|x|}\partial^\alpha\mathcal{Y}_1 \in L^\infty$ for any multi-index $\alpha$, where $0<\eta\ll \Re(\sqrt{1+ie_0})-1$. The estimate on $\mathcal{Y}_2$ is analogous, and hence, \eqref{sub_b_inf} holds.
We now turn to estimate \eqref{sub_c_inf}. If $g = (\mathcal{L}-\lambda)^{-1}f$, then, for any $\alpha$,
\begin{equation}
    \begin{cases}
    -\partial^\alpha(1-\Delta-Q^{p-1}) g_2-\lambda\partial^\alpha g_1 = \partial^\alpha f_1\\
    \partial^\alpha(1-\Delta-pQ^{p-1}) g_1-\lambda\partial^\alpha g_2 = \partial^\alpha f_2.
    \end{cases}
\end{equation}
We can rewrite this system as 
\begin{equation}
    [(1-\Delta)^2+\lambda^2]\partial^\alpha g_1 = G_{|\alpha|+2}(g_1)+H_{|\alpha|+2}(f),
\end{equation}
where we define $H_k(f)$ as a linear function on $f$ and its derivatives up to order $k$ that satisfies, for any $k \geq 1$,
\begin{equation}
    |H_k(f)| \lesssim \sum_{|\alpha|\leq k}|\partial^\alpha f|.
\end{equation}
Bootstrapping similarly to the previous items, and noting that the argument to $g_2$ is analogous, completes the proof of Lemma \ref{sub_boot_decay}.

\end{proof}

\subsection{Estimates on the linearized equation}

We now prove some estimates that will be used in the next sections. We start with estimates for the energy-critical case.

\begin{lemma}[Preliminary estimates]\label{lem_nonl} Let $s_c = 1$, $N \geq 6$ (and then $p_c-1 = \frac{4}{N-2}\leq 1$), $0 < \epsilon \ll \frac{4}{N-2}$ and $I$ be a bounded time interval with $|I|\leq 1$, and consider $f,g \in S(\dot{H}^1,I)$ such that $\nabla f, \nabla g \in S(L^2,I)$. The following estimates hold

\begin{enumerate}[(i)]
\item \label{linear1} $\|\nabla K(f)\|_{S'(L^2,I)} \lesssim |I|^\frac{1}{2} \|\nabla f\|_{S(L^2,I)}$,
\item \label{grad_r_point} $\begin{aligned}[t]{\|\nabla(R(f)-R(g))\|_{S'(L^2,I)}} \lesssim 
&\|\nabla(f-g)\|_{S(L^2,I)}\left(\|\nabla f\|^{p_c-1}_{S(L^2,I)} +\|\nabla g\|^{p_c-1}_{S(L^2,I)}\right)\\
&+\|D^\varepsilon(f-g)\|_{S(\dot{H}^{1-\varepsilon},I)}^{p_c-1}\left(
\|\nabla f\|_{S(L^2,I)}+\|\nabla g\|_{S(L^2,I)}\right).\end{aligned}$
\end{enumerate}
If $N >6$, then also
\begin{enumerate}[(i)]
\setcounter{enumi}{2}
\item \label{linear2} $\| D^\varepsilon K(f)\|_{S'(\dot{H}^{-(1-\varepsilon)},I)} \lesssim |I|^\frac{\varepsilon}{N-2} \|D^\varepsilon f\|_{S(\dot{H}^{1-\varepsilon},I)}$,
\item \label{r_point} $\|D^\varepsilon (R(f)-R(g))\|_{S'(\dot{H}^{-(1-\varepsilon)},I)} \lesssim \|D^\varepsilon(f-g)\|_{S(\dot{H}^{1-\varepsilon},I)}\left(\|\nabla f\|_{S(L^2,I)}^{p_c-1}+\|\nabla g\|_{S(L^2,I)}^{p_c-1}\right)$.
\end{enumerate}\end{lemma}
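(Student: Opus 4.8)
The plan is to prove the four estimates of Lemma~\ref{lem_nonl} by reducing them to the Leibniz rule \eqref{leibiniz}, the fractional chain rule \eqref{frac_chain}, the Kato--Strichartz estimates (Lemma~\ref{KS_est}), and pointwise bounds on $K$ and on the nonlinear remainder $R$ and its difference $R(f)-R(g)$. First I would record the pointwise inequalities that drive everything: since $K(f)=pQ^{p_c-1}f_1+iQ^{p_c-1}f_2$ is linear in $f$ with a bounded, rapidly decaying coefficient $Q^{p_c-1}\in L^\infty$, one has $|K(f)|\lesssim |f|$ and, more usefully, $\|Q^{p_c-1}\|_{L^q}<\infty$ for every $q\in[1,\infty]$, so $K$ gains integrability for free. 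For $R$, writing $F(z)=|z|^{p_c-1}z$, which is $C^1$ with $|F'(z)|\lesssim |z|^{p_c-1}$ and Hölder-continuous derivative of order $p_c-1$ (here $p_c-1=\tfrac{4}{N-2}\le 1$ since $N\ge 6$), Taylor's theorem gives $|R(f)|\lesssim |f|^{p_c}$ and, more generally, the pointwise difference bound $|R(f)-R(g)|\lesssim |f-g|\,(|f|^{p_c-1}+|g|^{p_c-1})$, together with the analogous bound for first derivatives obtained by differentiating $F(Q+f)$: $|\nabla(R(f)-R(g))|\lesssim |\nabla(f-g)|(|f|^{p_c-1}+|g|^{p_c-1}) + |f-g|^{p_c-1}(|\nabla f|+|\nabla g|)$ (the second term arising from the Hölder modulus of continuity of $F'$, which prevents the naive product-rule bound when $p_c<2$).

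Next, for the linear estimates \eqref{linear1} and \eqref{linear2} I would choose explicit admissible exponents. For \eqref{linear1}: estimate $\|\nabla K(f)\|_{L^2_I L^{2N/(N+2)}_x}$ by distributing $\nabla$ via \eqref{leibiniz} onto either $Q^{p_c-1}$ (whose derivatives still decay and lie in every $L^q$) or onto $f$, then Hölder in $x$ to pair $\nabla f\in L^{2N/(N-2)}_x$ (an $S(\dot H^1)$ exponent, hence controlled by $\|\nabla f\|_{S(L^2,I)}$ through Sobolev embedding) against $Q^{p_c-1}\in L^{N}_x$; the time integration over $I$ with $|I|\le 1$ then yields the factor $|I|^{1/2}$ by Hölder in $t$ since the natural time-exponent for the product is $\infty$ while the target dual exponent is $2$. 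The estimate \eqref{linear2} is the same computation carried out with $D^\varepsilon$ in place of $\nabla$, using \eqref{leibiniz} with $s=\varepsilon$ and the $S(\dot H^{1-\varepsilon})$ and $S'(\dot H^{-(1-\varepsilon)})$ exponents listed in Definition~\ref{stric_sp}; the power of $|I|$ changes to $|I|^{\varepsilon/(N-2)}$ because the Hölder-in-time gap is now $\varepsilon/(N-2)$ rather than $1/2$. One must check the admissibility/scaling bookkeeping — that the chosen triples are $\dot H^{1-\varepsilon}$- and $\dot H^{-(1-\varepsilon)}$-admissible and that the Hölder exponents in $x$ add up correctly — but this is routine given the spaces are already defined.

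For the nonlinear difference estimates \eqref{grad_r_point} and \eqref{r_point}, I would start from the pointwise bounds above. For \eqref{r_point}, apply $D^\varepsilon$ to $R(f)-R(g)$: the term with $D^\varepsilon$ falling on the ``difference factor'' is handled by \eqref{frac_chain} (fractional chain rule for the Hölder function $F'$, or for $F$ directly, with order $\alpha=p_c-1$ and $s=\varepsilon<\alpha$ — this forces $N>6$ in parts (iii),(iv), since when $N=6$ one has $p_c-1=1$ and $\varepsilon<1$ is still fine, so actually the $N>6$ restriction comes from needing $\varepsilon<p_c-1=\tfrac{4}{N-2}$ to be compatible with the other exponent constraints), while the remaining $(|f|^{p_c-1}+|g|^{p_c-1})$ factors are put in $L^{N/2}_x$ and $\nabla$-factors (via Sobolev) in the appropriate Lebesgue space; one then Hölders in $x$ and in $t$, using that $\varepsilon/(N-2)$-type gaps combine with $|I|\le 1$ harmlessly (indeed no $|I|$ power is claimed here). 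For \eqref{grad_r_point}, the same scheme applies to each of the two pieces of the pointwise gradient bound: the first piece $|\nabla(f-g)|(|f|^{p_c-1}+|g|^{p_c-1})$ goes straight into Hölder ($\nabla(f-g)$ in an $S(L^2)$ exponent, the $L^\infty$-type power in $L^{N/2}$), and the second piece $|f-g|^{p_c-1}(|\nabla f|+|\nabla g|)$ requires writing $|f-g|^{p_c-1}$ in terms of $D^\varepsilon(f-g)$ via the fractional chain rule / Sobolev embedding from $S(\dot H^{1-\varepsilon})$, producing exactly the $\|D^\varepsilon(f-g)\|_{S(\dot H^{1-\varepsilon},I)}^{p_c-1}$ factor in the statement.

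The main obstacle I anticipate is not any single estimate but the exponent accounting: one must exhibit, for each term, a concrete Strichartz pair on the right-hand side belonging to the defined spaces $S(L^2,I)$ or $S(\dot H^{1-\varepsilon},I)$, a dual pair for the left-hand side in $S'(L^2,I)$ or $S'(\dot H^{-(1-\varepsilon)},I)$, and a splitting of Hölder exponents in both $x$ and $t$ that closes — all while respecting that $F$ and $F'$ are only Hölder (not Lipschitz/$C^2$) when $p_c\le 2$, so the fractional chain rule \eqref{frac_chain} must be invoked with its side conditions $\tfrac{s}{\alpha}<\nu<1$ and $(1-\tfrac{s}{\nu\alpha})p_1>1$ verified. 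Once a consistent choice of exponents is fixed (the paper's remark after Definition~\ref{stric_sp} essentially lists the usable ones), each inequality follows by a short chain of Hölder, Sobolev \eqref{sobolev}, Leibniz \eqref{leibiniz}, and \eqref{frac_chain}, and the $|I|$ powers come out of Hölder in time against the trivial measure bound $|I|\le 1$.
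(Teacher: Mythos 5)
Your proposal is correct and follows essentially the same route as the paper's own proof: pointwise Taylor/H\"older bounds coming from the H\"older continuity of order $p_c-1$ of the derivatives of $z\mapsto|z|^{p_c-1}z$, the Leibniz rule combined with Visan's fractional chain rule for the $D^\varepsilon$ estimates (which is exactly where $N>6$ enters, since that chain rule requires the H\"older order $\tfrac{4}{N-2}<1$), and H\"older/Sobolev in admissible Strichartz exponents with $|I|\leq 1$ producing the time factors. Two small repairs: in the critical case $Q=W$ decays only polynomially, so $W^{p_c-1}\sim|x|^{-4}$ lies in $L^q$ only for $q>N/4$ (not all $q\in[1,\infty]$, though the exponents you actually need, such as $L^{N/2}_x$ and $L^N_x$, are fine), and your pointwise bound for $\nabla(R(f)-R(g))$ should also include the terms where the derivative falls on $W$ (the paper's terms $(a)$, $(d)$, $(e)$), which are estimated by the same H\"older/Sobolev scheme using $|\nabla W|\lesssim W$.
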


\begin{re}
It is necessary to treat the case $N >6$ differently due to the low power of the nonlinearity. If $N \leq 6$, then it is possible to estimate $\|\nabla (R(f)-R(g))\|_{S'(L^2,I)}$ at least linearly in terms of $\|\nabla(f-g)\|_{S(L^2,I)}$. In higher dimensions, one of the terms must be in the form $\|\nabla(f-g)\|_{S(L^2,I)}^{p_c-1}$, which is not good enough for the fixed-point argument carried on in the next section. The use of less than one derivative enables us to keep the desired linearity.
\end{re}
\begin{proof}[Proof of Lemma \ref{lem_nonl}] We start by proving the following claim:
\begin{claim}\label{claim1} Let $H$ be a map such that $H(0) = 0$ and $|H(f) - H(g)| \leq C |f-g|^\frac{4}{N-2}$ for all functions $f, g : \Real^N \to \mathbb{C}$, $N > 6$. Then, for all $f, g \in S(\dot{H}^1,I)\cap \nabla^{-1}S(L^2,I)$,
\begin{align}
	\left\| D^\varepsilon(H(f) g) \right\|_{L_I^{\frac{2}{\varepsilon}}L_x^{\frac{2N}{N+2}}} \lesssim \|\nabla f\|_{\frac{16}{\varepsilon(N-2)},\frac{8N}{4N-\varepsilon(N-2)}}^\frac{4}{N-2}\|D^\varepsilon g\|_{\frac{4}{\varepsilon},\frac{2N}{N-2+\varepsilon}}\\
	+	\|\nabla f\|_{\frac{4}{\varepsilon},\frac{2N}{N-\varepsilon}}^\frac{4}{N-2}\|D^\varepsilon g\|_{\frac{2(N-2)}{\varepsilon(N-4)},\frac{2N(N-2)}{(N-2)^2+4\varepsilon}}.
\end{align}
In other words,
\begin{equation}
    \left\| D^\varepsilon(H(f) g) \right\|_{S'(\dot{H}^{-(1-\varepsilon)},I)} \lesssim \|\nabla f\|_{S(L^2,I)}^\frac{4}{N-2}\|D^\varepsilon g\|_{S(\dot{H}^{1-\varepsilon},I)}.
\end{equation}
\end{claim}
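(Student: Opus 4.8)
The plan is to prove Claim~\ref{claim1} by chaining together the fractional Leibniz rule \eqref{leibiniz}, the fractional chain rule for Hölder continuous functions \eqref{frac_chain}, and the Sobolev inequality \eqref{sobolev}, in that order. Write $\alpha:=\tfrac{4}{N-2}=p_c-1$; since $N>6$ we have $\alpha\in(0,1)$, and by hypothesis $H$ is Hölder continuous of exponent $\alpha$ with $H(0)=0$, so $|H(w)|\lesssim|w|^{\alpha}$ for every $w$. First I would apply \eqref{leibiniz} to distribute the $\varepsilon$ derivative: pointwise in $t$,
\begin{equation*}
\|D^{\varepsilon}(H(f)g)(t)\|_{L_x^{\frac{2N}{N+2}}}\lesssim \|D^{\varepsilon}H(f)(t)\|_{L_x^{p_1}}\|g(t)\|_{L_x^{q_1}}+\|H(f)(t)\|_{L_x^{p_2}}\|D^{\varepsilon}g(t)\|_{L_x^{q_2}},
\end{equation*}
with $\tfrac{N+2}{2N}=\tfrac1{p_i}+\tfrac1{q_i}$, $i=1,2$, the exponents $p_i,q_i$ being pinned down so as to reproduce exactly the two terms in the statement.

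For the term carrying the derivative on $g$, I would take $q_2=\tfrac{2N}{N-2+\varepsilon}$ — one of the Lebesgue exponents defining $S(\dot H^{1-\varepsilon},I)$ — which forces $p_2=\tfrac{2N}{4-\varepsilon}$; then $\|H(f)(t)\|_{L_x^{p_2}}\lesssim\|f(t)\|_{L_x^{\alpha p_2}}^{\alpha}$ by the Hölder bound on $H$, and \eqref{sobolev} turns this into $\|\nabla f(t)\|_{L_x^{r_2}}^{\alpha}$ with $r_2=\tfrac{8N}{4N-\varepsilon(N-2)}$, which is $L^2$-admissible. For the term carrying the derivative on $H(f)$, I would first use \eqref{sobolev} to write $\|g(t)\|_{L_x^{q_1}}\lesssim\|D^{\varepsilon}g(t)\|_{L_x^{\bar q_1}}$ with $\bar q_1=\tfrac{2N(N-2)}{(N-2)^2+4\varepsilon}$ (again a component of $S(\dot H^{1-\varepsilon},I)$), and then apply the Hölder chain rule \eqref{frac_chain} to $D^{\varepsilon}H(f)$ with a parameter $\nu\in(\varepsilon/\alpha,1)$ chosen close to $1$ (possible since $\varepsilon\ll\alpha$): this gives $\|D^{\varepsilon}H(f)(t)\|_{L_x^{p_1}}\lesssim\big\||f(t)|^{\alpha-\varepsilon/\nu}\big\|_{L_x^{a}}\,\|D^{\nu}f(t)\|_{L_x^{(\varepsilon/\nu)b}}^{\varepsilon/\nu}$ with $\tfrac1{p_1}=\tfrac1a+\tfrac1b$, after which $\||f|^{\alpha-\varepsilon/\nu}\|_{L_x^a}=\|f\|_{L_x^{(\alpha-\varepsilon/\nu)a}}^{\alpha-\varepsilon/\nu}$ and the factor $\|D^{\nu}f\|$ are each converted by \eqref{sobolev} into powers of $\|\nabla f(t)\|_{L_x^{r_1}}$; choosing $a,b$ so that the total homogeneity in $\nabla f$ is exactly $\alpha$ and $\nu\to1$, one lands on $r_1=\tfrac{2N}{N-\varepsilon}$, which is $L^2$-admissible. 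Finally, Hölder in $t$ on $I$ — with $|I|\le1$ available to absorb any residual nonnegative power of $|I|$, although in fact the time exponents balance exactly — matches the factors to the Strichartz time exponents $\tfrac{16}{\varepsilon(N-2)}$, $\tfrac4\varepsilon$, $\tfrac4\varepsilon$, $\tfrac{2(N-2)}{\varepsilon(N-4)}$ appearing in the statement; bounding each factor by the relevant supremum over $L^2$- resp.\ $\dot H^{1-\varepsilon}$-admissible pairs then yields the ``in other words'' form $\lesssim\|\nabla f\|_{S(L^2,I)}^{\alpha}\|D^{\varepsilon}g\|_{S(\dot H^{1-\varepsilon},I)}$.

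The main obstacle is the exponent bookkeeping: one must choose $p_1,q_1,p_2,q_2,a,b,\nu$ so that, simultaneously, the Leibniz and chain-rule Hölder-conjugacy relations hold, the side conditions of \eqref{frac_chain} ($\varepsilon/\alpha<\nu<1$ and $(1-\tfrac{\varepsilon}{\nu\alpha})a>1$) and of \eqref{sobolev} (the exponents lie in $(1,\infty)$ and the differentiability gaps lie in $(0,N)$) are met, all the $\nabla f$-exponents are $L^2$-admissible and all the $D^{\varepsilon}g$-exponents are $\dot H^{1-\varepsilon}$-admissible, and the powers of $\|\nabla f\|$ sum to exactly $\alpha=\tfrac4{N-2}$ while the time exponents close. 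This is where the hypotheses $N>6$ (so $\alpha<1$ and the chain rule for merely Hölder functions must be used, the classical Leibniz/chain rules being unavailable) and $0<\varepsilon\ll\tfrac4{N-2}$ (so all the above inequalities hold as a small perturbation of the $\varepsilon=0$ configuration) enter essentially.
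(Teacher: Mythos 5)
Your proposal is correct and uses essentially the same approach as the paper: the fractional Leibniz rule to split $D^\varepsilon(H(f)g)$, the fractional chain rule for H\"older continuous functions on $D^\varepsilon H(f)$, Sobolev embedding on the remaining factors, and H\"older in time, with all side conditions satisfied for $0<\varepsilon\ll \frac{4}{N-2}$. The only (immaterial) difference is that you attach the two admissible-pair combinations to the opposite Leibniz terms than the paper does; your bookkeeping still closes, since the time exponents balance ($\tfrac{\varepsilon}{4}+\tfrac{\varepsilon}{4}=\tfrac{\varepsilon}{2}$ and $\tfrac{\varepsilon}{N-2}+\tfrac{\varepsilon(N-4)}{2(N-2)}=\tfrac{\varepsilon}{2}$) and the spatial exponents are consistent, yielding exactly the two displayed terms.
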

\begin{proof}[Proof of Claim \ref{claim1}] By Leibiniz Rule \eqref{leibiniz} and Holder's inequality, we can write
\begin{align}
	 \left\| D^\varepsilon(H(f) g) \right\|_{L_I^{\frac{2}{\varepsilon}}L_x^{\frac{2N}{N+2}}} &\lesssim \left\| \| D^\varepsilon H(f)\|_{L_x^\frac{2N}{4+\varepsilon}} \|g\|_{L_x^\frac{2N}{N-2-\varepsilon}} \right\|_{L_I^\frac{2}{\varepsilon}} + \left\| \| H(f)\|_{L_x^\frac{N(N-2)}{2(N-2-\varepsilon)}} \|D^\varepsilon g\|_{L_x^\frac{2N(N-2)}{(N-2)^2+4\varepsilon}} \right\|_{L_I^\frac{2}{\varepsilon}}\\
	&\lesssim \| D^\varepsilon H(f)\|_{L_I^\frac{4}{\varepsilon}L_x^\frac{2N}{4+\varepsilon}} \|g\|_{L_I^\frac{4}{\varepsilon}L_x^\frac{2N}{N-2-\varepsilon}}  \\ &\quad+\| H(f)\|_{L_I^\frac{N-2}{\varepsilon}L_x^\frac{N(N-2)}{2(N-2-\varepsilon)}} \|D^\varepsilon g\|_{L_I^\frac{2(N-2)}{\varepsilon(N-4)}L_x^\frac{2N(N-2)}{(N-2)^2+4\varepsilon}}.
\end{align}

By Sobolev inequality,
\begin{equation}
   \|g\|_{L_I^\frac{4}{\varepsilon}L_x^\frac{2N}{N-2-\varepsilon}} \lesssim \|D^\varepsilon g\|_{L_I^\frac{4}{\varepsilon}L_x^\frac{2N}{N-2+\varepsilon}}, 
\end{equation}
and since, by the assumption on $H$, $|H(f)| \lesssim |f|^\frac{4}{N-2}$, we have, by Sobolev,
\begin{equation}
	\| H(f)\|_{L_I^\frac{N-2}{\varepsilon}L_x^\frac{N(N-2)}{2(N-2-\varepsilon)}} \lesssim \|f\|_{\frac{4}{\varepsilon},\frac{2N}{N-2-\varepsilon}}^\frac{4}{N-2} 
	\lesssim \|\nabla f\|_{\frac{4}{\varepsilon},\frac{2N}{N-\varepsilon}}^\frac{4}{N-2}.
\end{equation}
It remains to estimate $\| D^\varepsilon H(f)\|_{L_x^\frac{2N}{4+\varepsilon}}$. Choosing $\nu$ such that $\frac{(N-2)\varepsilon}{4} < \nu < 1$, by fractional chain rule\footnote{This is where the hypothesis $N>6$ is used, as the fractional chain rule requires $0 < 4/(N-2) < 1$. } \eqref{frac_chain} and Sobolev embeddings, we have
\begin{align}
	\| D^\varepsilon H(f)\|_{L_x^\frac{2N}{4+\varepsilon}} &\lesssim \|f\|^{\frac{4}{N-2}-\frac{\varepsilon}{\nu}}_{L_x^{\left(\frac{4}{N-2}-\frac{\varepsilon}{\nu}\right)p_1^{}}} \|D^\nu f\|_{L_x^{\frac{\varepsilon}{\nu}q_1^{}}}^{\frac{\varepsilon}{\nu}}\\
	&\lesssim \|\nabla f\|^{\frac{4}{N-2}-\frac{\varepsilon}{\nu}}_{L_x^{p_2^{}}}\|\nabla f\|_{L_x^{q_2^{ }}}^\frac{\varepsilon}{\nu},
\end{align}

where we choose $p_2 = q_2 = \frac{8N}{4N-\varepsilon(N-2)} \in (1,+\infty)$, and $p_1$ and $q_1$ must satisfy 
\begin{equation}
	1 < p_1, q_1 < \infty,
	\end{equation}\begin{equation}
	\frac{1}{p_2} = \frac{1}{\left(\frac{4}{N-2}-\frac{\varepsilon}{\nu}\right)p_1}+\frac{1}{N}, \quad
	\frac{1}{q_2} = \frac{1}{\frac{\varepsilon}{\nu}q_1}+\frac{1-\nu}{N},\\
\end{equation}
and
\begin{equation}
	\left(1-\frac{\varepsilon(N-2)}{4\nu}\right)p_1 > 1.
\end{equation}


These conditions can be easily satisfied if $\varepsilon$ is small enough (depending only on the dimension). The claim is now proved.
\end{proof}

The estimate \eqref{linear2} of Lemma \ref{lem_nonl} follows directly from Sobolev inequality and Claim \ref{claim1}, by taking $H(W) = |W|^\frac{4}{N-2}$, and from the fact that $|\nabla W| \in L^2_x\cap L^\infty_x$, if $N > 6$. 

To prove \eqref{r_point}, note that $R(f) = W^{p_c}J(W^{-1}f)$, where $J(z) = |1+z|^{p_c-1}(1+z)-1-\frac{p_c+1}{2}z -\frac{p_c-1}{2}\bar{z}$ is $C^1(\mathbb{C})$. Its derivatives $J_z$ and $J_{\bar{z}}$ satisfy $J_z(0) = J_{\bar{z}}(0) = 0$ and, if $N > 6$, are Hölder continuous of order $p_c-1 < 1$. Therefore, writing
\begin{equation}\label{R_diff}
	R(f) - R(g) = W^{p_c-1}\int_0^1 J_z(W^{-1}(sf+(1-s)g))(f-g) + J_{\bar{z}}(W^{-1}(sf+(1-s)g))(\overline{f-g}) ds,
\end{equation}

we can apply Claim \ref{claim1} to estimate each term in \eqref{R_diff}, taking $H(f) = W^{p_c-1}J_z(W^{-1}f)$ or $H(f) = W^{p_c-1}J_{\bar{z}}(W^{-1}f)$. Estimate \eqref{r_point} then follows directly.

To prove \eqref{linear1}, we write
\begin{equation}
	|\nabla K(f)| \lesssim |W|^{p_c-2}|\nabla W| |f| + |W|^{p_c-1}|\nabla f|.
\end{equation}
 Using the fact that $|\partial^{\alpha} W(x)| \leq C_{\alpha} |W(x)|$ for every multi-index $\alpha \in \mathbb{Z}_+^N$ and all $x$, we have, by Hölder inequality
 \begin{align}
	\|\nabla K(f)\|_{L_I^2L_x^{\frac{2N}{N+2}}} &\lesssim  \left\| \|W\|_{L^\frac{2N}{N-2}_x}^{\frac{4}{N-2}} \|f\|_{L^\frac{2N}{N-2}_x} \right\|_{L^2_t}+\left\| \|W\|_{L^\frac{4N}{N-2}_x}^{\frac{4}{N-2}} \|\nabla f\|_{L^2_x} \right\|_{L^2_t} \\
	&\lesssim |I|^\frac{1}{2} \left(\|f\|_{L^\infty_t L^\frac{2N}{N-2}_x}+\|\nabla f\|_{L^\infty_t L^2_x}\right).
 \end{align}

Note that we used that $W \in L_x^\frac{2N}{N-2} \cap L_x^\frac{4N}{N-2}$, which follows from the fact that $W \in L^2_x\cap L^\infty_x$, if $N > 6$. The inequality follows from Sobolev embedding.

We finally turn to estimate \eqref{grad_r_point}. Write
\begin{align}
	\nabla (R(f) - R(g)) = &\underbrace{p_c W^{p_c-1} \nabla W (J(W^{-1} f)-J(W^{-1}g))}_{(a)} \\
	&+ \underbrace{W^{p_c -1}J_z(W^{-1}f)\nabla f - W^{p_c -1}J_z(W^{-1}g)\nabla g}_{(b)} \\
	&+ \underbrace{W^{p_c -1}J_{\bar{z}}(W^{-1}f)\nabla \bar{f} - W^{p_c -1}J_{\bar{z}}(W^{-1}g)\nabla \bar{g}}_{(c)}\\
	&+\underbrace{W^{p_c -2} \nabla  W J_{z}(W^{-1}f) f - W^{p_c -2}\nabla W J_{z}(W^{-1}g) g}_{(d)}\\
	&+\underbrace{W^{p_c -2} \nabla  W J_{\bar{z}}(W^{-1}f) \bar{f} - W^{p_c -2}\nabla W J_{\bar{z}}(W^{-1}g) \bar{g}}_{(e)}
\end{align}
To estimate $(a)$, note that
\begin{align}
	|(a)| &\lesssim W^{p_c-1}\int_0^1 |J_z(W^{-1}(sf+(1-s)g))(f-g) + J_{\bar{z}}(W^{-1}(sf+(1-s)g))(\overline{f-g})| ds \\
	&\lesssim \left(|f|^{p_c-1}+|g|^{p_c-1}\right)|f-g|.
\end{align}
Thus, by Hölder and Sobolev inequalities,
\begin{align}
	\|(a)\|_{L_I^2L_x^{\frac{2N}{N+2}}} &\lesssim |I|^\frac{1}{2}\left(\|f\|^{p_c-1}_{L_I^\infty L_x^\frac{2N}{N-2}}+\|g\|^{p_c-1}_{L_I^\infty L_x^\frac{2N}{N-2}} \right)\|f-g\|_{L_I^\infty L_x^\frac{2N}{N-2}}\\
	& \lesssim |I|^\frac{1}{2}\left(\|\nabla f\|^{p_c-1}_{L_I^\infty L_x^2}+\|\nabla g\|^{p_c-1}_{L_I^\infty L_x^2} \right)\|\nabla (f-g)\|_{L_I^\infty L_x^2}.\\
\end{align}

We now estimate $(b)$. By triangle inequality,
\begin{align}
	|(b)| &\leq W^{p_c -1}|J_z(W^{-1}f)||\nabla f-\nabla g| + W^{p_c -1}|J_z(W^{-1}f)-J_z(W^{-1}g)||\nabla g|\\
	& \leq |f|^{p_c-1} |\nabla (f- g)| + |f-g|^{p_c-1}|\nabla g|.\\
\end{align}
So that, by Hölder and Sobolev inequalities,
\begin{align}
	\|(b)\|_{L_I^2L_x^{\frac{2N}{N+2}}} &\lesssim \|f\|^{p_c-1}_{L_I^\infty L_x^\frac{2N}{N-2}} \|\nabla (f- g)\|_{L_I^2 L_x^\frac{2N}{N-2}} 
	+ \|f-g\|^{p_c-1}_{L_I^\infty L_x^\frac{2N}{N-2}}\|\nabla g\|_{L_I^2 L_x^\frac{2N}{N-2}}\\
	&\lesssim \|\nabla f\|^{p_c-1}_{L_I^\infty L_x^2} \|\nabla (f- g)\|_{L_I^2 L_x^\frac{2N}{N-2}} +\|D^\varepsilon (f-g)\|^{p_c-1}_{L_I^\infty L_x^\frac{2N}{N-2+2\varepsilon}}\|\nabla g\|_{L_I^2 L_x^\frac{2N}{N-2}}.\\
\end{align}
The estimate for $(c)$ is analogous. To estimate $(d)$, we write
\begin{align}
	|(d)| &\leq W^{p_c -1}|J_z(W^{-1}f)|| f- g| + W^{p_c -1}|J_z(W^{-1}f)-J_z(W^{-1}g)|| g|\\
	& \leq |f|^{p_c-1} |f- g| + |f-g|^{p_c-1}|g|.
\end{align}

Therefore, by H\"older and Sobolev,
\begin{align}
	\|(d)\|_{L_I^2L_x^{\frac{2N}{N+2}}} &\lesssim |I|^{\frac{1}{2}}\|f\|^{p_c-1}_{L_I^\infty L_x^\frac{2N}{N-2}} \|f- g\|_{L_I^\infty L_x^\frac{2N}{N-2}} 
	+ |I|^{\frac{1}{2}}\|f-g\|^{p_c-1}_{L_I^\infty L_x^\frac{2N}{N-2}}\| g\|_{L_I^\infty L_x^\frac{2N}{N-2}}\\
	&\lesssim |I|^{\frac{1}{2}}\|\nabla f\|^{p_c-1}_{L_I^\infty L_x^2} \|\nabla (f- g)\|_{L_I^\infty L_x^2} +|I|^{\frac{1}{2}}\|D^\varepsilon (f-g)\|^{p_c-1}_{L_I^\infty L_x^\frac{2N}{N-2+2\varepsilon}}\|\nabla g\|_{L_I^\infty L_x^2}.\\
\end{align}

Since the estimate for $(e)$ is analogous, the proof of Lemma \ref{lem_nonl} is complete.

\end{proof}

The following Strichartz-type continuity argument follows from Lemma \ref{lem_nonl} and will be useful in proving the main theorems of this paper.

\begin{lemma}\label{lem_stric_exp}
Let $h$ be a solution to \eqref{linearized_eq_2}. If, for some $c>0$, and all $t > 0$,
\begin{equation}\label{exp_h}
    \|h(t)\|_{\dot{H}^1} \lesssim e^{-c t},
\end{equation}
then, for all $t > 0$,
\begin{equation}\label{stric_exp}
    \|\nabla h\|_{S(L^2,\,[t,+\infty))} \lesssim e^{-ct}.
\end{equation}
\end{lemma}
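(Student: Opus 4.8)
The plan is to localize in time, run a small-data Strichartz estimate on each short interval, and then sum a geometric series. Fix a small length $\delta\in(0,1]$ to be chosen below, set $t_n=n\delta$ and $J_n=[t_n,t_{n+1}]$, so that $[0,\infty)=\bigcup_{n\ge0}J_n$. Since for every admissible pair $(q,r)$ and every $t\in[t_n,t_{n+1})$ one has $\|g\|_{L^q_{[t,\infty)}L^r_x}\le\sum_{k\ge n}\|g\|_{L^q_{J_k}L^r_x}$, hence $\|\nabla h\|_{S(L^2,[t,\infty))}\le\sum_{k\ge n}\|\nabla h\|_{S(L^2,J_k)}$, it suffices to produce a constant $C$ with
\[
A_n:=\|\nabla h\|_{S(L^2,J_n)}\le C\,e^{-c\,t_n}\qquad\text{for all }n\ge0 .
\]
Indeed, summing then gives $\|\nabla h\|_{S(L^2,[t,\infty))}\le C\sum_{k\ge n}e^{-ct_k}\lesssim e^{-ct_n}\lesssim e^{-ct}$, which is \eqref{stric_exp}.

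First I would note that $A_n<\infty$ for each $n$: by hypothesis $h$ is a solution of \eqref{linearized_eq_2} on $[0,\infty)$ with $\|h(t)\|_{\dot{H}^1}$ bounded, so it has finite Strichartz norms on compact intervals, and by persistence of regularity $\nabla h\in S(L^2,J_n)$. Next I would write the Duhamel formula for \eqref{linearized_eq_2} based at $t_n$, using the propagator $e^{is(\Delta-(1-s_c))}$ (which satisfies the same Strichartz bounds as $e^{is\Delta}$):
\[
h(t)=e^{i(t-t_n)(\Delta-(1-s_c))}h(t_n)+i\int_{t_n}^t e^{i(t-s)(\Delta-(1-s_c))}\bigl(K(h(s))+R(h(s))\bigr)\,ds .
\]
Applying $\nabla$, taking the $S(L^2,J_n)$ norm, and using the Strichartz estimates (Lemma \ref{KS_est}, non–sharp case, together with the homogeneous estimate for the free evolution) gives
\[
A_n\le C_0\|\nabla h(t_n)\|_{L^2}+C_0\|\nabla K(h)\|_{S'(L^2,J_n)}+C_0\|\nabla R(h)\|_{S'(L^2,J_n)} .
\]
By \eqref{linear1} of Lemma \ref{lem_nonl} the first nonlinear term is $\le C_1\delta^{1/2}A_n$, and by \eqref{grad_r_point} of Lemma \ref{lem_nonl} with $g\equiv0$ (note $R(0)=0$), together with the Sobolev bound $\|D^\varepsilon h\|_{S(\dot{H}^{1-\varepsilon},J_n)}\lesssim A_n$, the second is $\le C_2A_n^{1+\theta}$ with $\theta=p_c-1>0$. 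Hence
\[
A_n\le C_0\|\nabla h(t_n)\|_{L^2}+C_0C_1\delta^{1/2}A_n+C_0C_2A_n^{1+\theta}.
\]

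Now fix $\delta\in(0,1]$ once and for all so that $C_0C_1\delta^{1/2}\le\tfrac14$; absorbing that term leaves $\tfrac34A_n\le C_0\|\nabla h(t_n)\|_{L^2}+C_0C_2A_n^{1+\theta}$. Running the same computation over the truncated intervals $[t_n,t_n+\tau]$, $0\le\tau\le\delta$, shows that $\varphi(\tau):=\|\nabla h\|_{S(L^2,[t_n,t_n+\tau])}$, which is continuous and nondecreasing, satisfies this inequality with $A_n$ replaced by $\varphi(\tau)$; moreover $\varphi(0)=\|\nabla h(t_n)\|_{L^2}$. The inequality confines $\varphi(\tau)$ to a set of the form $[0,y_-]\cup(y_+,\infty)$, where $y_+$ is of order one and $y_-\lesssim\|\nabla h(t_n)\|_{L^2}$, provided $\|\nabla h(t_n)\|_{L^2}$ is below a threshold $\eta_*>0$ depending only on $C_0,C_1,C_2,\theta$; since $\varphi(0)$ lies below the gap and $\varphi$ cannot cross it by continuity, we conclude $A_n=\varphi(\delta)\le 2C_0\|\nabla h(t_n)\|_{L^2}$. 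By \eqref{exp_h} there is $N_0$ with $\|\nabla h(t_n)\|_{L^2}\le\eta_*$ for all $n\ge N_0$, whence $A_n\le 2C_0\|\nabla h(t_n)\|_{L^2}\lesssim e^{-ct_n}$ there; for the finitely many $n<N_0$ the finite quantities $A_n$ satisfy trivially $A_n\le\bigl(\max_{k<N_0}A_k\bigr)e^{cN_0\delta}e^{-ct_n}$. Taking $C$ to be the larger of the two implied constants completes the proof.

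The genuinely delicate points are two. First, the linear term $K(h)$ carries no smallness from the decay of $h$ (it is linear in $h$), so it can only be tamed through the short length of the intervals; this is exactly what the factor $\delta^{1/2}$ in \eqref{linear1} of Lemma \ref{lem_nonl} provides, and it is what forces the time-localization. Second, closing the estimate for the superlinear remainder $R(h)$ requires an a priori smallness of $A_n$, which is not available directly but is produced by the continuity argument on $[t_n,t_n+\tau]$, valid once $\|\nabla h(t_n)\|_{L^2}$ is small — guaranteed for large $n$ by \eqref{exp_h}. The intercritical case $0<s_c<1$ follows by the same scheme, working with $\langle\nabla\rangle h$ in place of $\nabla h$ and with the corresponding fractional-calculus estimates for $K$ and $R$ in the spaces $S(L^2)$ and $S'(L^2)$.
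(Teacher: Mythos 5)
Your proof is correct and follows essentially the same route as the paper's: Duhamel plus Strichartz on short time intervals of fixed length, absorbing the linear term $K(h)$ via the $|I|^{1/2}$ factor from Lemma \ref{lem_nonl}, closing the superlinear term $R(h)$ through a continuity/bootstrap argument that exploits the smallness of $\|\nabla h(t)\|_{L^2}$ for large $t$ coming from \eqref{exp_h}, and finally summing a geometric series over the partition of $[t,+\infty)$. The only cosmetic difference is that the paper phrases the bootstrap as a contradiction at the first $\tau$ where the localized norm equals $2Ke^{-ct}$, whereas you phrase it as a no-crossing-the-gap argument; the content is the same.
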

\begin{proof}
Differentiating \eqref{linearized_eq_2}, we get
\begin{equation}
    i\partial_t (\nabla h) + \Delta (\nabla h) +\nabla(K(h)+R(h)) = 0.
\end{equation}
By Duhamel formula, Strichartz estimates and items \eqref{linear1} and \eqref{grad_r_point} of Lemma \ref{lem_nonl}, if $0 < \tau < 1$,
\begin{equation}
    \|\nabla h\|_{S(L^2,\,[t,t+\tau])} \lesssim \|h(t)\|_{\dot{H}^1}+ \tau^\frac{1}{2}\|\nabla h\|_{S(L^2,\,[t,t+\tau])}+\|\nabla h\|_{S(L^2,\,[t,t+\tau])}^{p_c}.
\end{equation}
By \eqref{exp_h}, we get, for some $K>0$,
\begin{equation}\label{stric_K}
    \|\nabla h\|_{S(L^2,\,[t,t+\tau])} \leq K( e^{-ct}+ \tau^\frac{1}{2}\|\nabla h\|_{S(L^2,\,[t,t+\tau])}+\|\nabla h\|_{S(L^2,\,[t,t+\tau])}^{p_c}).
\end{equation}
This implies, for large $t$, 
\begin{equation}
    \|\nabla h\|_{S(L^2,\,[t,t+\tau_0])} < 2K e^{-ct},\quad \tau_0 = \frac{1}{9K^2}.
\end{equation}
Indeed, assume by contradiction that there exists $\tau \in (0,\tau_0]$ such that $\|h\|_{S(L^2,\,[t,t+\tau])} = 2K e^{-ct}$, for fixed $t>0$. Then, by \eqref{stric_K}, 
\begin{equation}
2Ke^{-ct} \leq Ke^{-ct}+2K^2\tau^\frac{1}{2}e^{-ct}+(2K)^{p_c}Ke^{-cp_ct} \leq   \frac{5}{3} Ke^{-ct}+(2K)^{p_c}Ke^{-cp_ct} , 
\end{equation}
which is a contradiction if $t$ is large. Therefore, by decomposing $[t,+\infty) = \displaystyle\bigcup_{j=0}^{\infty}[t+j\tau_0,t+(j+1)\tau_0]$ and using the triangle inequality, we see that \eqref{stric_exp} holds.
\end{proof}

The following lemma is the intercritical version of Lemma \ref{lem_nonl}, and its proof is analogous.

\begin{lemma}[Preliminary estimates, subcritical case]\label{sub_lem_nonl} Let $0 < s_c < 1$ and $I$ be a bounded time interval such that $|I|\leq 1$, and consider $f,g \in S(L^2,\,I)$ such that $\nabla f, \nabla g \in S(L^2,\,I)$. There exists $\alpha >0$ such that the following estimates hold.

For $p > 1$:
\begin{enumerate}[(i)]
\item \label{sub_linear1} $\|\langle\nabla\rangle K(f)\|_{S'(L^2,\,I)} \lesssim |I|^{\alpha} \|\langle\nabla\rangle f\|_{S(L^2,\,I)}$,
\item \label{sub_linear2} $\| K(f)\|_{S'(\dot{H}^{-s_c},\,I)} \lesssim |I|^{\alpha} \|f\|_{S(\dot{H}^{s_c},\,I)}$.
%
\end{enumerate}
For $p > 2$:
\begin{enumerate}[(i)]
\setcounter{enumi}{2}
\item \label{sub_P_grad_r_point} $\begin{aligned}[t]\|\langle\nabla\rangle(R(f)-R(g))\|_{S'(L^2,\,I)} \lesssim 
\|\langle\nabla\rangle(f-g)\|_{S(L^2,\,I)}&\left[\|\langle\nabla\rangle f\|_{S(L^2,\,I)} +\| \langle\nabla\rangle g\|_{S(L^2,\,I)}\right.\\
&+\left.\|\langle\nabla\rangle f\|^{p-1}_{S(L^2,\,I)} +\|\langle\nabla\rangle g\|^{p-1}_{S(L^2,\,I)}\right],
\end{aligned}$
\item \label{sub_P_r_s_point}$\begin{aligned}[t]\|R(f)-R(g)\|_{S'(\dot{H}^{-s_c},\,I)} \lesssim 
\|f-g\|_{S(\dot{H}^{s_c},\,I)}&\left[\|f\|_{S(\dot{H}^{s_c},\,I)} +\| g\|_{S(\dot{H}^{s_c},\,I)}\right.\\
&\left.+\|f\|^{p-1}_{S(\dot{H}^{s_c},\,I)} +\| g\|^{p-1}_{S(\dot{H}^{s_c},\,I)}\right],\end{aligned}$
\end{enumerate}
For $1 < p \leq 2$:
\begin{enumerate}[(i)]
\setcounter{enumi}{4}
%
\item \label{sub_p_grad_r_point} $\begin{aligned}[t]\|\langle\nabla\rangle(R(f)-R(g))\|_{S'(L^2,\,I)} \lesssim 
&\|\langle\nabla\rangle(f-g)\|_{S(L^2,\,I)}\left(\|f\|^{p-1}_{S(\dot{H}^{s_c},\,I)} +\| g\|^{p-1}_{S(\dot{H}^{s_c},\,I)}\right)\\
&+\|f-g\|_{S(\dot{H}^{s_c},\,I)}^{p-1}\left(
\|\langle\nabla\rangle f\|_{S(L^2,\,I)}+\|\langle\nabla\rangle g\|_{S(L^2,\,I)}\right),\end{aligned}$
\item \label{sub_p_r_s_point}$\|R(f)-R(g)\|_{S'(\dot{H}^{-s_c},\,I)} \lesssim 
\|f-g\|_{S(\dot{H}^{s_c},\,I)}\left(\|f\|^{p-1}_{S(\dot{H}^{s_c},\,I)} +\| g\|^{p-1}_{S(\dot{H}^{s_c},\,I)}\right)$.
\end{enumerate}
\begin{proof}
The estimates are very similar as the ones in the proof of the energy-critical case. We use the following classical inequalities
\begin{equation}
    \||a|^{p-1}b\|_{S'(L^2)} \leq \|a\|_{S(\dot{H}^{s_c})}^{p-1}\|b\|_{S(L^2)} \lesssim \|\langle \nabla \rangle a \|_{S(L^2)}^{p-1}\|b\|_{S(L^2)},
\end{equation}
and 
\begin{equation}
    \||a|^{p-1}b\|_{S'(\dot{H}^{-s_c})} \leq \|a\|_{S(\dot{H}^{s_c})}^{p-1}\|b\|_{S(\dot{H}^{s_c})} \lesssim \|\langle \nabla \rangle a\|_{S(L^2)}^{p-1}\|\langle \nabla \rangle b\|_{S(L^2)},
\end{equation}
which can be verified using the pairs $\left(\frac{4(p+1)}{N(p-1)},p+1\right)\in \mathcal{A}_0$, $\left(\frac{2(p-1)(p+1)}{4-(N-2)(p-1)}, p+1\right)\in \mathcal{A}_{s_c}$, and $\left(\frac{2(p-1)(p+1)}{(p-1)(Np-2)-4},p+1\right) \in \mathcal{A}_{-s_c}$, together with Sobolev inequality.
Let us estimate, for example, $\|\nabla (R(f)-R(g))\|_{S'(L^2,I)}$. Write

\begin{align}
	\nabla (R(f) - R(g)) = &\underbrace{p Q^{p-1} \nabla Q (J(Q^{-1} f)-J(Q^{-1}g))}_{(a)} \\
	&+ \underbrace{Q^{p -1}J_z(Q^{-1}f)\nabla f - Q^{p -1}J_z(Q^{-1}g)\nabla g}_{(b)} \\
	&+ \underbrace{Q^{p -1}J_{\bar{z}}(Q^{-1}f)\nabla \bar{f} - Q^{p -1}J_{\bar{z}}(Q^{-1}g)\nabla \bar{g}}_{(c)}\\
	&+\underbrace{Q^{p -2} \nabla  Q J_{z}(Q^{-1}f) f - Q^{p_c -2}\nabla Q J_{z}(Q^{-1}g) g}_{(d)}\\
	&+\underbrace{Q^{p -2} \nabla  Q J_{\bar{z}}(Q^{-1}f) \bar{f} - Q^{p -2}\nabla Q J_{\bar{z}}(Q^{-1}g) \bar{g}}_{(e)}.
\end{align}

Making use of $|\nabla Q| \lesssim Q$ (which follows from Corollary \ref{sub_decay_eigen}), we write $(a)$ as
\begin{equation}
    |(a)| \lesssim Q^{p-1}\int_0^1 |J_z(Q^{-1}(sf+(1-s)g))(f-g) + J_{\bar{z}}(Q^{-1}(sf+(1-s)g))(\overline{f-g})| ds.
\end{equation}
Now, since
\begin{equation}
    |J_z(z_1)-J_z(z_2)| + |J_{\bar{z}}(z_1)-J_{\bar{z}}(z_2)| \lesssim \begin{cases}
    |z_1-z_2|(1 + |z_1|^{p-2}+|z_2|^{p-2}), & p \geq 2,\\
    |z_1-z_2|^{p-1}, & 1 < p < 2,
    \end{cases}
\end{equation}
we have
\begin{equation}
|(a)| \lesssim \begin{cases}
(Q^{p-2}|f|+Q^{p-2}|g|+|f|^{p-1}+|g|^{p-1})|f-g|, & p \geq 2,\\
(|f|^{p-1}+|g|^{p-1})|f-g|, & 1 < p < 2.
\end{cases}
\end{equation}

Thus, since $Q \in \mathcal{S}(\Real^N)$ and $|I|\leq 1$,
\begin{equation}
    \|(a)\|_{S'(L^2,I)} \lesssim \begin{cases}
    (\|f\|_{S(\dot{H}^{s_c}, I)} + \|g\|_{S(\dot{H}^{s_c}, I)}+
    \|f\|_{S(\dot{H}^{s_c},I)}^{p-1}+\|g\|_{S(\dot{H}^{s_c}, I)}^{p-1})\|f-g\|_{S'(L^2, I)}, & p \geq 2,\\
    (\|f\|_{S(\dot{H}^{s_c},I)}^{p-1}+\|g\|_{S(\dot{H}^{s_c}, I)}^{p-1})\|f-g\|_{S'(L^2, I)}, & 1 < p < 2.
    \end{cases}
\end{equation}
We also have
\begin{equation}
    \|(b)\|_{S'(L^2,I)} + \|(d)\|_{S'(L^2,I)} \lesssim
\begin{cases}
\|\nabla(f-g)\|_{S(L^2,I)}(\|\langle\nabla\rangle f\|_{S(L^2,I)}+\|\langle\nabla\rangle f\|_{S(L^2,I)}^{p-1}\\
\quad +\|\langle\nabla\rangle g\|_{S(L^2,I)}+\|\langle\nabla\rangle g\|_{S(L^2,I)}^{p-1}), &p \geq 2,\\
\|\langle\nabla\rangle(f-g)\|_{S(L^2,\,I)}\left(\|f\|^{p-1}_{S(\dot{H}^{s_c},\,I)} +\| g\|^{p-1}_{S(\dot{H}^{s_c},\,I)}\right)\\
\quad+\|f-g\|_{S(\dot{H}^{s_c},\,I)}^{p-1}\left(
\|\langle\nabla\rangle f\|_{S(L^2,\,I)}+\|\langle\nabla\rangle g\|_{S(L^2,\,I)}\right), & 1 < p  < 2,
\end{cases}
\end{equation}
with the same bounds for $(d)$ and $(e)$.
\end{proof}
\begin{re}
We do not employ the same estimates as in \cite{DR_Thre}, since the nonlinearity $|u|^{p-1}u$ is not a polynomial in $(u,\bar{u})$ if $p$ is not an odd integer. Therefore, instead of using $H^s$ estimates, we rely on $S(L^2)$ and $S(\dot{H}^{s_c})$ estimates, that are more suitable for generalizing the result to all dimensions and powers of the nonlinearity.
\end{re}
\begin{re}
We employ a different approach than Li and Zhang \cite{higher_thre}, that divide all estimates in regions where $|f|> W$ or $|f| \leq W$. Instead, we use fractional derivatives to avoid some sublinear estimates, resulting in a simpler proof.
\end{re}
\end{lemma}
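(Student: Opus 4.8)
The plan is to mirror the proof of the energy-critical Lemma~\ref{lem_nonl}, replacing $W$ by $Q$ and the single exponent $p_c-1$ by the general power $p-1$, while exploiting two structural facts. First, Corollary~\ref{sub_decay_eigen} gives $|\partial^\alpha Q|\lesssim Q$ for every multi-index $\alpha$, so any term carrying a derivative of $Q$ is pointwise dominated by the corresponding term carrying $Q$ itself. Second, $R(f)=Q^{p}J(Q^{-1}f)$ and $K(f)$ equals $Q^{p}$ times the linear part of $J$ at $0$, where $J(z)=|1+z|^{p-1}(1+z)-1-\tfrac{p+1}{2}z-\tfrac{p-1}{2}\bar z$ is $C^1(\mathbb{C})$ with $J_z(0)=J_{\bar z}(0)=0$, and whose first derivatives are Lipschitz-type with a lower-order factor of size $|z|^{p-2}$ when $p\ge 2$, or Hölder continuous of order $p-1$ when $1<p\le 2$. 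Writing $R(f)-R(g)=Q^{p-1}\int_0^1\big(J_z(Q^{-1}(sf+(1-s)g))(f-g)+J_{\bar z}(\cdots)(\overline{f-g})\big)\,ds$ and differentiating, one reduces everything to a finite sum of terms of the form $Q^{p-1}\,|h_1|^{p-1}\,|h_2|$ (and, when $p\ge 2$, also $Q^{p-2}\,|\partial Q|\,|h_1|^{p-1}|h_2|$) to be bounded in $S'(L^2,I)$ or $S'(\dot{H}^{-s_c},I)$.

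The two estimates doing the real work are the ``algebraic'' inequalities $\||a|^{p-1}b\|_{S'(L^2,I)}\lesssim\|a\|_{S(\dot{H}^{s_c},I)}^{p-1}\|b\|_{S(L^2,I)}$ and $\||a|^{p-1}b\|_{S'(\dot{H}^{-s_c},I)}\lesssim\|a\|_{S(\dot{H}^{s_c},I)}^{p-1}\|b\|_{S(\dot{H}^{s_c},I)}$, which I would prove by testing against the pairs $(\tfrac{4(p+1)}{N(p-1)},p+1)\in\mathcal{A}_0$, $(\tfrac{2(p-1)(p+1)}{4-(N-2)(p-1)},p+1)\in\mathcal{A}_{s_c}$ and $(\tfrac{2(p-1)(p+1)}{(p-1)(Np-2)-4},p+1)\in\mathcal{A}_{-s_c}$, using Hölder in $L^{p+1}_x$ and the Sobolev embedding $\dot{H}^{s_c}\hookrightarrow L^{p+1}$, and further dominating $\|a\|_{S(\dot{H}^{s_c})}$ by $\|\langle\nabla\rangle a\|_{S(L^2)}$. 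For $p\ge 2$ the pointwise bound $|J_z(z_1)-J_z(z_2)|+|J_{\bar z}(z_1)-J_{\bar z}(z_2)|\lesssim|z_1-z_2|(1+|z_1|^{p-2}+|z_2|^{p-2})$ turns each of the five terms in the decomposition of $\nabla(R(f)-R(g))$ into sums of the admissible shapes, so \eqref{sub_P_grad_r_point} and \eqref{sub_P_r_s_point} follow from the two inequalities above together with $|\nabla Q|\lesssim Q$; the linear bounds \eqref{sub_linear1} and \eqref{sub_linear2} are the same computation applied to the linear part of $J$, with the gain $|I|^{\alpha}$ produced by Hölder in time, since $Q^{p-1}$ and all its derivatives lie in every $L^r_x$ and $|I|\le 1$.

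The genuinely delicate case, which I expect to be the main obstacle, is $1<p\le 2$: there $J_z,J_{\bar z}$ are only Hölder of order $p-1$, so the naive estimate of $|J_z(z_1)-J_z(z_2)|$ by $|z_1-z_2|^{p-1}$ would force a term of the shape $\|\langle\nabla\rangle(f-g)\|_{S(L^2,I)}^{p-1}$, which is sublinear in the difference and useless for the contraction argument of the next section. The remedy, exactly as in Claim~\ref{claim1} for the critical case, is to move \emph{less than one derivative} onto the difference: using the Christ--Weinstein Leibniz rule \eqref{leibiniz} together with Visan's fractional chain rule for Hölder continuous functions \eqref{frac_chain} (applicable precisely because $0<p-1<1$), one estimates the relevant $D^{s}(H(f)g)$ norm, for a small $s>0$, by $\|\langle\nabla\rangle f\|_{S(L^2,I)}^{p-1}$ times an $S(L^2,I)$-type norm of $D^{s}g$, keeping exactly one factor of $f-g$ to the first power while the remaining power $p-1$ lands on the $S(\dot{H}^{s_c})$-norm of $f-g$, which is in turn controlled by $\|\langle\nabla\rangle(f-g)\|_{S(L^2,I)}$. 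Carrying this through the decomposition $(a)$--$(e)$ yields \eqref{sub_p_grad_r_point} and \eqref{sub_p_r_s_point}. What remains is bookkeeping: checking that in every dimension $N\ge 1$ the Hölder exponents produced by the Leibniz and fractional-chain rules satisfy all the admissibility constraints (of the type $1<p_1,q_1<\infty$ and $(1-\tfrac{s}{\nu(p-1)})p_1>1$), which holds once the amount of derivative moved is chosen small depending only on $N$ and $p$. I would close by noting that, unlike \cite{DR_Thre}, no step uses that $p$ is an odd integer or that the nonlinearity is polynomial in $(u,\bar u)$, so the argument is uniform across the whole intercritical range.
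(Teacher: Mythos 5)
Your treatment of the linear estimates \eqref{sub_linear1}--\eqref{sub_linear2} and of the case $p>2$ coincides with the paper's proof: the same decomposition $(a)$--$(e)$, the bound $|\nabla Q|\lesssim Q$ from Corollary \ref{sub_decay_eigen}, and the two H\"older/Sobolev inequalities $\||a|^{p-1}b\|_{S'(L^2)}\lesssim\|a\|^{p-1}_{S(\dot H^{s_c})}\|b\|_{S(L^2)}$ and $\||a|^{p-1}b\|_{S'(\dot H^{-s_c})}\lesssim\|a\|^{p-1}_{S(\dot H^{s_c})}\|b\|_{S(\dot H^{s_c})}$ verified on the indicated admissible pairs. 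The gap is in the case $1<p\le 2$, and it stems from a misreading of what must be proved. You assert that the H\"older bound $|J_z(z_1)-J_z(z_2)|\lesssim|z_1-z_2|^{p-1}$ ``would force a term of the shape $\|\langle\nabla\rangle(f-g)\|^{p-1}_{S(L^2,I)}$'' and therefore import the Christ--Weinstein Leibniz rule and Visan's fractional chain rule as in Claim \ref{claim1}. In fact that bound produces, e.g.\ from the term $(b)$, the pointwise bound $|f-g|^{p-1}|\nabla g|$ (the weights $Q^{p-1}|Q^{-1}\cdot|^{p-1}$ cancel), and the first displayed inequality places the factor $|f-g|^{p-1}$ in the \emph{derivative-free} norm $\|f-g\|^{p-1}_{S(\dot H^{s_c},I)}$, which is exactly the sublinear term that the statement \eqref{sub_p_grad_r_point} allows; likewise \eqref{sub_p_r_s_point} involves no derivatives at all and follows at once from $|R(f)-R(g)|\lesssim(|f|^{p-1}+|g|^{p-1})|f-g|$ and the second inequality. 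No fractional Leibniz or chain rule is used, or needed, anywhere in the intercritical lemma: the device of moving a small fractional derivative is specific to the energy-critical case, where scaling forces the auxiliary space $S(\dot H^{1-\varepsilon})$ to carry $D^{\varepsilon}$; its intercritical counterpart is simply the zero-derivative norm $S(\dot H^{s_c})$, and the sublinearity in the difference is absorbed later, in the contraction argument of Lemma \ref{sub_contr_estim}, through the choice of the metric $\rho$, not in this lemma.

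Moreover, the fractional-calculus step as you describe it does not deliver the stated items: item \eqref{sub_p_grad_r_point} is a $\langle\nabla\rangle$-level estimate, whereas ``moving less than one derivative'' onto the product would only yield a $D^{s}$-level bound for small $s$, and note that even in the critical case the paper's $\nabla$-level estimate (Lemma \ref{lem_nonl}, item (ii)) retains a sublinear term in the weaker norm, since $J_z$ cannot be differentiated once more when $p<2$. Your description is also internally inconsistent, claiming simultaneously to keep $f-g$ to the first power via the chain rule and to have the power $p-1$ land on $\|f-g\|_{S(\dot H^{s_c})}$. The repair is simple: drop the detour and apply the H\"older continuity of $J_z,J_{\bar z}$ together with the two displayed inequalities, exactly as you did for $p>2$; items (v) and (vi) then follow by the same bookkeeping.
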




\section{Construction of special solutions}

In this section, we construct \textit{special} solutions to the NLS equation  \eqref{NLS}, in the sense that they are on the same energy level of the ground state, converge to the standing wave in $\dot{H}^1$ as $t \to +\infty$, but have kinetic energy different from $\|\nabla Q\|_{L^2}$.

\subsection{Construction of a family of approximate solutions}

We start with a proposition that was first proved by Duyckaerts and Merle in \cite{DM_Dyn}, for $s_c = 1$, and later in \cite{DR_Thre} for the 3d cubic case. We extend here those proofs to the intercritical case. The main difference from the energy-critical case is that $Q$ decays exponentially if $0 < s_c < 1$, so we need to be careful with its spatial decay, as we make use of the estimates of the type $\|Q^{-1}f\|_{L^{\infty}}$. To this end, we make use of the sharp decay estimate for $Q$ given by \eqref{sub_GNN} and of the control on the spatial decay given by Corollary \ref{sub_decay_eigen}.

\begin{prop}\label{family_appr}  Let $0 < s_c \leq 1$ and $A \in \Real$. There exists a sequence ($Z_k^A)_{k \geq 1}$ of functions in $\mathcal{S}(\Real^N)$ such that $Z_1^A = A \mathcal{Y}_+$ and, if $k \geq 1$ and $\mathcal{V}_k^A = \sum_{j=1}^k e^{-je_0 t}Z_j^A$, then as $t \to +\infty$ we have
\begin{equation}\label{eq_sol_v}
\partial_t\mathcal{V}_k^A + \mathcal{LV}_k^A = iR(\mathcal{V}_k^A)+O\left(e^{-\left(k+1\right)e_0t}\right) \text{ in }\mathcal{S}(\Real^N),
\end{equation}
where $\mathcal{L}$ and $R$ are given in Definition \ref{def_op}.
\end{prop}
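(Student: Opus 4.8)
\textit{Proof plan.} I will construct the $Z_k^A$ by induction on $k$, cancelling the powers of $e^{-e_0t}$ in \eqref{linearized_eq} one at a time, as in Duyckaerts--Merle and Duyckaerts--Roudenko. The one new feature of the intercritical range is that, because $Q$ decays exponentially, I must propagate along the induction a quantitative decay estimate of the form $\|Q^{-1}e^{\eta_0|x|}\partial^\alpha Z_k^A\|_{L^\infty}<\infty$; this is what tames the \emph{negative} powers of $Q$ produced by the nonlinearity when $p<2$.

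Throughout I use the normalization $\Delta Q-Q+Q^p=0$ of Corollary \ref{sub_decay_eigen}, so $1-s_c$ is replaced by $1$ in Definition \ref{def_op}. Since $Q>0$, for any $v$ one has $R(v)=Q^p\,\widetilde G(v/Q)$ with $\widetilde G(w):=|1+w|^{p-1}(1+w)-1-\tfrac{p+1}{2}w-\tfrac{p-1}{2}\bar w$. As $|1+w|^{p-1}=\big((1+w)(1+\bar w)\big)^{(p-1)/2}$ is real-analytic near $w=0$, so is $\widetilde G$, with $\widetilde G(w)=O(|w|^2)$; for each $M\ge2$ write $\widetilde G(w)=\sum_{m=2}^M\widetilde G_m(w)+\mathrm{Rem}_M(w)$, where $\widetilde G_m$ is homogeneous of degree $m$ in $(w,\bar w)$ and $|\partial^j\mathrm{Rem}_M(w)|\lesssim|w|^{M+1-j}$ for $|w|\le\tfrac12$. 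Fix a small $\eta_0>0$ with $\eta_0<\eta_1$, the rate in Corollary \ref{sub_decay_eigen}(ii); since $\Re\sqrt{1+i\mu}=\big((1+\sqrt{1+\mu^2})/2\big)^{1/2}$ increases from $1$ on $[0,\infty)$, after shrinking $\eta_0$ we may also assume $\eta_0<\Re\sqrt{1+2ie_0}\le\Re\sqrt{1+ije_0}$ for all $j\ge2$.

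\emph{Induction.} Set $Z_1^A=A\mathcal{Y}_+$. Then $(\mathcal{L}-e_0)Z_1^A=0$ (Lemma \ref{spectrum_L}), $Z_1^A\in\mathcal{S}(\Real^N)$, and $Z_1^A$ satisfies the $\eta_0$-decay bound by Corollary \ref{sub_decay_eigen}(ii); since $\partial_t\mathcal{V}_1^A+\mathcal{L}\mathcal{V}_1^A=0$ and $R(\mathcal{V}_1^A)=Q^p\widetilde G(e^{-e_0t}A\mathcal{Y}_+/Q)=O(e^{-2e_0t})$ in $\mathcal{S}(\Real^N)$, the case $k=1$ holds. Assume $Z_1^A,\dots,Z_k^A\in\mathcal{S}(\Real^N)$ satisfy the $\eta_0$-decay bound, with $Z_1^A=A\mathcal{Y}_+$ and $Z_j^A=(\mathcal{L}-je_0)^{-1}(i\Theta_j)$ for $2\le j\le k$, where $\Theta_j$ is defined below from $Z_1^A,\dots,Z_{j-1}^A$. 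The decay bounds give $|\partial^\alpha(\mathcal{V}_k^A/Q)|\lesssim e^{-e_0t}$ uniformly in $x$; hence for $t$ large $\|\mathcal{V}_k^A/Q\|_{L^\infty}<\tfrac12$, and by Taylor's theorem
\begin{equation}
R(\mathcal{V}_k^A)=\sum_{m=2}^{k+1}Q^p\widetilde G_m\!\big(\mathcal{V}_k^A/Q\big)+Q^p\,\mathrm{Rem}_{k+1}\!\big(\mathcal{V}_k^A/Q\big).
\end{equation}
Every monomial of $Q^p\widetilde G_m(\mathcal{V}_k^A/Q)$ has the form $(\text{power of }e^{-e_0t})\times Q^{p-m}\times(\text{product of }m\text{ factors among the }Z_j^A\text{, some conjugated})$; since $|\partial^\beta Q^{p-m}|\lesssim Q^{p-m}$ by Corollary \ref{sub_decay_eigen}(i) and $m\ge2$, each such term lies in $\mathcal{S}(\Real^N)$, satisfies the $\eta_0$-decay bound, and its coefficient of $e^{-le_0t}$ involves only $Z_1^A,\dots,Z_{l-1}^A$; moreover $Q^p\mathrm{Rem}_{k+1}(\mathcal{V}_k^A/Q)=O(e^{-(k+2)e_0t})$ in $\mathcal{S}(\Real^N)$. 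Writing $\partial_t\mathcal{V}_k^A+\mathcal{L}\mathcal{V}_k^A=\sum_{j=1}^ke^{-je_0t}(\mathcal{L}-je_0)Z_j^A$ and collecting powers, the previous choices cancel the coefficients of $e^{-e_0t},\dots,e^{-ke_0t}$, leaving $\mathcal{E}_k:=\partial_t\mathcal{V}_k^A+\mathcal{L}\mathcal{V}_k^A-iR(\mathcal{V}_k^A)=-ie^{-(k+1)e_0t}\Theta_{k+1}+O(e^{-(k+2)e_0t})$, with $\Theta_{k+1}\in\mathcal{S}(\Real^N)$ the coefficient of $e^{-(k+1)e_0t}$ in $R(\mathcal{V}_k^A)$, satisfying the $\eta_0$-decay bound. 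Define $Z_{k+1}^A:=(\mathcal{L}-(k+1)e_0)^{-1}(i\Theta_{k+1})$; this is legitimate as $(k+1)e_0\notin\sigma(\mathcal{L})$ (Lemma \ref{spectrum_L}), one has $Z_{k+1}^A\in\mathcal{S}(\Real^N)$ by the last assertion of the proof of Lemma \ref{spectrum_L}, and $Z_{k+1}^A$ satisfies the $\eta_0$-decay bound by Corollary \ref{sub_decay_eigen}(iii), applicable since $\eta_0<\Re\sqrt{1+i(k+1)e_0}$. With $\mathcal{V}_{k+1}^A=\mathcal{V}_k^A+e^{-(k+1)e_0t}Z_{k+1}^A$ one computes $\partial_t\mathcal{V}_{k+1}^A+\mathcal{L}\mathcal{V}_{k+1}^A=\partial_t\mathcal{V}_k^A+\mathcal{L}\mathcal{V}_k^A+ie^{-(k+1)e_0t}\Theta_{k+1}$, while $R(\mathcal{V}_{k+1}^A)-R(\mathcal{V}_k^A)=Q^p\big[\widetilde G(\mathcal{V}_{k+1}^A/Q)-\widetilde G(\mathcal{V}_k^A/Q)\big]=O(e^{-(k+2)e_0t})$ in $\mathcal{S}(\Real^N)$, because $\widetilde G(w)=O(|w|^2)$ forces $D\widetilde G=O(|w|)=O(e^{-e_0t})$ along $\mathcal{V}_k^A/Q$ while the increment of the argument is $O(e^{-(k+1)e_0t})$. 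Hence $\mathcal{E}_{k+1}=\mathcal{E}_k+ie^{-(k+1)e_0t}\Theta_{k+1}+O(e^{-(k+2)e_0t})=O(e^{-(k+2)e_0t})$, closing the induction.

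\emph{Main obstacle.} The only real difficulty, and the reason Corollary \ref{sub_decay_eigen} was proved, is the appearance of the negative powers $Q^{p-m}$ of the exponentially decaying ground state in $Q^p\widetilde G_m(\mathcal{V}_k^A/Q)$ when $p<2$: both the Schwartz property of $\Theta_{k+1}$ and the applicability of $(\mathcal{L}-(k+1)e_0)^{-1}$ to $\Theta_{k+1}$ through Corollary \ref{sub_decay_eigen}(iii) depend on each $Z_j^A$ decaying at least like $Qe^{-\eta_0|x|}$, and maintaining this with a single $\eta_0$ along the whole induction is exactly what needs $\eta_0<\Re\sqrt{1+2ie_0}$. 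The remaining content is bookkeeping of powers of $e^{-e_0t}$ plus routine $\mathcal{S}(\Real^N)$-seminorm estimates. For the energy-critical case $s_c=1$ the same scheme works with $Q$ replaced by $W$ and $R(v)=W^{p_c}\widetilde G(W^{-1}v)$ (now with exponent $p_c$); it is in fact easier, since $W$ decays only polynomially while $\mathcal{Y}_\pm\in\mathcal{S}(\Real^N)$, so the factors $W^{p_c-m}$ grow at most polynomially and the only input needed is that $(\mathcal{L}-\lambda)^{-1}$ preserves $\mathcal{S}(\Real^N)$, again from the proof of Lemma \ref{spectrum_L}.
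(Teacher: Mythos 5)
Your proposal is correct and follows essentially the same route as the paper: induction on $k$, expansion of $R(v)=Q^p J(Q^{-1}v)$ into homogeneous terms, cancellation of the $e^{-(k+1)e_0t}$ coefficient by inverting $\mathcal{L}-(k+1)e_0$ (legitimate since $(k+1)e_0\notin\sigma(\mathcal{L})$, with Schwartz regularity from the resolvent statement in Lemma \ref{spectrum_L}), and propagation of the weighted decay bound $\|Q^{-1}e^{\eta_0|x|}\partial^\alpha Z_j\|_{L^\infty}<\infty$ via Corollary \ref{sub_decay_eigen} to handle the negative powers $Q^{p-m}$. The only differences are cosmetic: you make the coefficient bookkeeping (that the $e^{-le_0t}$ coefficient of $R(\mathcal{V}_k)$ involves only $Z_1,\dots,Z_{l-1}$) and the uniform choice of $\eta_0<\Re\sqrt{1+2ie_0}$ explicit, where the paper argues by identifying and annihilating the lower-order coefficients $F_j$ through the induction hypothesis.
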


\begin{proof}
We prove this proposition by induction. For simplicity, we often omit the superscript $A$.

Define $Z_1 = A \mathcal{Y}_+ $ and $\mathcal{V}_1 =\ e^{-e_0t}Z_1$. Thus,
\begin{equation}
\partial_t\mathcal{V}_1 + \mathcal{LV}_1 - iR(\mathcal{V}_1) =- iR(\mathcal{V}_1).
\end{equation}

Note now that $R(f) = Q^{p}J(Q^{-1}f)$, where $J(z) = |1+z|^{p-1}(1+z)-1-\frac{p+1}{2}z -\frac{p-1}{2}\bar{z}$ is real-analytic in the disc $\{z: |z| < 1\}$, and satisfies $J(0) = \partial_{z}J(0) = \partial_{\bar{z}}J(0) = 0$. Write its Taylor expansion as 
\begin{equation}\label{sub_J_exp}
	J(z) = \sum_{i+j \geq 2} a_{ij} z^i \bar{z}^j
\end{equation}

with the uniform convergence of the series and all of its derivatives in the compact disc $\{z: |z| \leq \frac{1}{2}\}$.

Now, if $s_c = 1$, since $Z_1 \in \mathcal{S}(\Real^N)$ and $W$ decays polynomially, we have that $\|W^{-1}Z_1\|_{L^\infty} < +\infty$. For $0 < s_c < 1$, we make use of Corollary \ref{sub_decay_eigen} \eqref{sub_b_inf}, to conclude that $\|Q^{-1}Z_1\|_{L^\infty} < +\infty$. In any case, we can choose $t_0$ such that $|\mathcal{V}_1(t)| \leq \frac{1}{2} Q$, for any $t \geq t_0$. Therefore, for large $t$, we have 
\begin{equation}
	|R(\mathcal{V}_1)| \leq \|Q\|_{L^\infty}^{p} \left( \sum_{i+j \geq 0} |a_{ij}| \frac{1}{2^{i+j}}\right) |Q^{-1}\mathcal{V}_1|^2 = C |Q^{-1}\mathcal{V}_1|^2 . 
\end{equation}

In the same fashion, we can use Leibiniz rule, equation \eqref{sub_J_exp} and items \eqref{sub_a_inf} and \eqref{sub_b_inf} of Corollary \ref{sub_decay_eigen} to bound all the derivatives of $R(\mathcal{V}_1)$. Using that $\mathcal{V}_1 = e^{-e_0t}Z_1$, we conclude that $R(\mathcal{V}_1) = O(e^{-2e_0t})$ in $\mathcal{S}(\Real^N)$. Moreover, by Corollary \ref{sub_decay_eigen}.\eqref{sub_b_inf}, we have 
$ \|Q^{-1}e^{\eta|x|}\partial^\alpha Z_1\|_{L^\infty} < +\infty$.

Now let $k \geq 1$ and assume that $\mathcal{V}_i$ is defined and satisfy \eqref{eq_sol_v} for all $i \leq k$. For $0 < s_c <1$, assume furthermore that, for all $i \leq k$, and all multi-indices $\alpha$,
\begin{equation}\label{sub_Z_ind_hyp}
    \|Q^{-1}e^{\eta|x|}\partial^\alpha Z_i\|_{L^\infty} < +\infty.
\end{equation} Defining
\begin{equation}\label{sub_e_k}
	\epsilon_k = \partial_t \mathcal{V}_k +\mathcal{L}\mathcal{V}_k-iR(\mathcal{V}_k), 
\end{equation}
note that 
\begin{equation}
	\partial_t \mathcal{V}_k = \sum_{j=1}^k (-je_0) e^{-je_0 t}Z_k,
\end{equation}

so that \eqref{sub_e_k} can be written as

\begin{equation}\label{sub_e_k2}
	\epsilon_k(x,t) = \sum_{j=1}^k e^{-je_0 t}\left(-je_0 Z_k(x) +\mathcal{L}Z_k(x)\right)-iR(\mathcal{V}_k(x,t)). 
\end{equation}

Recall that, for all $k$, $Z_k \in \mathcal{S}(\Real^N)$. If $0 < s_c < 1$, we also have \eqref{sub_Z_ind_hyp}. Therefore, for large $t$, and all $x$, $|\mathcal{V}_k(x,t)| \leq \frac{1}{2} Q(x)$. Writing $R(\mathcal{V}_k) = Q^{p}J(Q^{-1}\mathcal{V}_k)$ and using again the expansion \eqref{sub_J_exp}, we get by \eqref{sub_e_k2} that there exist functions $F_j \in \mathcal{S}(\Real^N)$ such that for large $t$
\begin{equation}
	\epsilon_k(x,t) = \sum_{j=1}^{k+1}e^{-je_0 t}F_j(x)  + O(e^{-e_0(k+2)t}) \text{ in }\mathcal{S}(\Real^N).
\end{equation}

By \eqref{eq_sol_v}, we conclude that $F_j = 0$ for $j \leq k$, which shows
\begin{equation}\label{decay_e}
	\epsilon_k (x,t) = e^{-(k+1)e_0 t}F_{k+1} + O(e^{-(k+2)e_0 t}).
\end{equation}

Noting that $(k+1)e_0$ is not in the spectrum of $\mathcal{L}$, define $Z_{k+1} = -(\mathcal{L}+(k+1)e_0)^{-1}F_{k+1}$, which belongs to $\mathcal{S}$ (see Appendix). Moreover, if $0 < s_c < 1$, $Z_{k+1}$ satisfies \eqref{sub_Z_ind_hyp} with $k$ replaced by $k+1$. By definition, we have $\mathcal{V}_{k+1} = \mathcal{V}_k + e^{-(k+1)e_0 t}Z_{k+1}$. Furthermore,

\begin{equation}
\epsilon_{k+1} = \epsilon_k - e^{-(k+1)e_0 t} F_{k+1}-i(R(\mathcal{V}_{k+1})-R(\mathcal{V}_k)).
\end{equation}

By \eqref{decay_e}, $\epsilon_k - e^{-(k+1)e_0 t} F_{k+1} =  O(e^{-(k+2)e_0 t})$. Writing again $R(f) = Q^{p}J(Q^{-1}f)$, and using the expansion \eqref{sub_J_exp}, we conclude that $R(\mathcal{V}_{k+1})-R(\mathcal{V}_k) =  O(e^{-(k+2)e_0 t})$, completing the proof.
\end{proof}

\subsection{Contraction argument near an approximate solution}

We now prove the key result of this {section}. The propositions for the energy-critical and for the intercritical cases are stated separately.

\subsubsection{Energy-critical case}

We only treat here the case $N \geq 6$, as in the lower-dimensional cases this result is proved in \cite{DM_Dyn}. The main difference here from \cite{DM_Dyn} is that $0 < p_c-1 < 1$ if $N > 6$, so that the nonlinearity is no longer $C^2$, and its derivative is only H\"older-continuous of order $p_c-1$. This introduces difficulties, as the control of the convergence of $\nabla U^A$ to $\nabla W$ is not enough to close the contraction argument, and we need to ensure that the higher order terms $D^{\varepsilon}(U^A-W-\mathcal{V}_k)$ converge faster to 0, for small $\varepsilon>0$. The fractional derivative $D^\varepsilon$ is needed here to avoid certain end-point Strichartz estimates, which are not available for any combination of $\dot{H}^1$-admissible and $\dot{H}^{-1}$-admissible pairs.

\begin{prop}\label{exist_UA} Let $N \geq 6$. There exists $k_0 > 0$ such that for any $k \geq k_0$, there exists $t_k \geq 0$ and a solution $U^A$ to \eqref{NLS} such that for $t \geq t_k$ and $l( k) = {\left\lceil \frac{N-2}{4}k +\frac{N-6}{4}\right\rceil}$ (where $\lceil x \rceil$ denotes the least integer bigger or equal to x),
\begin{equation}
\begin{aligned}\label{bound_UA}
	\|D^\varepsilon(U^A-W-\mathcal{V}_{l(k)}^A)\|_{S(\dot{H}^{1-\varepsilon},\,[t,+\infty))} &\leq e^{-(k+\frac{1}{2})\frac{N-2}{4}  e_0 t}\text{ and}\\
	\|\nabla(U^A-W-\mathcal{V}_{l(k)}^A)\|_{S(L^2,\,[t,+\infty))} &\leq e^{-(k+\frac{1}{2})e_0 t}.
\end{aligned}
\end{equation}
Furthermore, $U^A$ is the unique solution to \eqref{NLS} satisfying \eqref{bound_UA} for large $t$. Finally, $U^A$ is independent of $k$ and satisfies for large $t$,
\begin{equation}\label{bound_UA2}
	\|U^A(t) - W - Ae^{-e_0t}\mathcal{Y}_+\|_{\dot{H}^1} \leq e^{-2e_0 t}.
\end{equation}
\end{prop}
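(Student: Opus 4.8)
The plan is to construct $U^A$ by a contraction argument near the approximate solution $\mathcal{V}_{l(k)}^A$ from Proposition \ref{family_appr}. Fix $A$, write $l=l(k)$, $\mathcal{V}_l=\mathcal{V}_l^A$, and let $\epsilon_l:=\partial_t\mathcal{V}_l+\mathcal{L}\mathcal{V}_l-iR(\mathcal{V}_l)=O(e^{-(l+1)e_0t})$ in $\mathcal{S}(\Real^N)$ be the error furnished by Proposition \ref{family_appr}. Looking for $U^A=W+\mathcal{V}_l+h$, the correction $h$ must solve
\[
\partial_t h+\mathcal{L}h=i\bigl(R(\mathcal{V}_l+h)-R(\mathcal{V}_l)\bigr)-\epsilon_l ,
\]
equivalently, in Schrödinger form, $i\partial_t h+\Delta h+K(h)=-\bigl(R(\mathcal{V}_l+h)-R(\mathcal{V}_l)\bigr)-i\epsilon_l$. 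Since we want $\|h(t)\|_{\dot H^1}\to 0$, I would recast this through the Duhamel formula integrated from $+\infty$,
\[
h(t)=\Phi(h)(t):=-i\int_t^{+\infty}e^{i(t-s)\Delta}\Bigl[K(h)+\bigl(R(\mathcal{V}_l+h)-R(\mathcal{V}_l)\bigr)+i\epsilon_l\Bigr](s)\,ds ,
\]
which is meaningful as a limit because the solutions we build decay exponentially, so that $e^{i(t-T')\Delta}h(T')\to 0$ in $\dot H^1$ as $T'\to+\infty$.

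The fixed point would be run on the set
\[
\mathcal{E}=\Bigl\{h:\ \|\nabla h\|_{S(L^2,[t,+\infty))}\le e^{-(k+\frac12)e_0t}\ \text{ and }\ \|D^\varepsilon h\|_{S(\dot H^{1-\varepsilon},[t,+\infty))}\le e^{-(k+\frac12)\frac{N-2}{4}e_0t}\ \ \forall\,t\ge t_k\Bigr\},
\]
made into a complete metric space by the \emph{weaker} weighted $S(\dot H^{1-\varepsilon})$-distance ($\mathcal{E}$ is closed for it since the $\nabla$-bound passes to limits by weak lower semicontinuity). To check $\Phi(\mathcal{E})\subseteq\mathcal{E}$ I would split $[t,+\infty)$ into intervals of a small fixed length $\tau_0$ and use Strichartz estimates with Lemma \ref{lem_nonl}: the linear term $K(h)$ is handled by \eqref{linear1}--\eqref{linear2}, whose $|I|^{\alpha}$-gain together with the geometric sum (ratio $\le e^{-(k+\frac12)e_0\tau_0}$, close to $1$ because $k\ge k_0$) leaves a contribution $\le\tfrac14 e^{-(k+\frac12)e_0t}$; the term $R(\mathcal{V}_l+h)-R(\mathcal{V}_l)$ is handled by \eqref{grad_r_point},\eqref{r_point}, where, using $\|\nabla\mathcal{V}_l\|_{S(L^2,[t,+\infty))}\lesssim e^{-e_0t}$, every resulting term decays strictly faster than the two target rates — it carries either an extra factor $\|\nabla\mathcal{V}_l\|+\|\nabla h\|\lesssim e^{-e_0t}$ or the lower-order norm raised to the power $p_c-1$, and $(k+\tfrac12)\tfrac{N-2}{4}(p_c-1)=k+\tfrac12$; and $\epsilon_l$ contributes at rate $(l(k)+1)e_0\ge\tfrac{N-2}{4}(k+1)e_0$, which for $N\ge 6$ exceeds both $(k+\tfrac12)e_0$ and $\tfrac{N-2}{4}(k+\tfrac12)e_0$. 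Taking $t_k$ large then absorbs the remaining constants. For the contraction, $\Phi(h_1)-\Phi(h_2)$ involves only $K(h_1-h_2)$ (linear) and $R(\mathcal{V}_l+h_1)-R(\mathcal{V}_l+h_2)$, and I would estimate the latter in the weak norm via \eqref{r_point}, which is \emph{linear} in $\|D^\varepsilon(h_1-h_2)\|_{S(\dot H^{1-\varepsilon})}$ — this is exactly why the contraction is run in that norm rather than in the $\nabla$-norm — obtaining $d(\Phi h_1,\Phi h_2)\le\tfrac12 d(h_1,h_2)$ for $t_k$ large. Banach's theorem then gives a unique $h\in\mathcal{E}$, and $U^A=W+\mathcal{V}_{l(k)}^A+h$ (with $t_k$ as above) solves \eqref{NLS} and satisfies \eqref{bound_UA}.

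For uniqueness: any solution $\tilde U$ of \eqref{NLS} satisfying \eqref{bound_UA} for $t$ large yields $\tilde h:=\tilde U-W-\mathcal{V}_{l(k)}^A\in\mathcal{E}$ (after enlarging $t_k$), and since the bound forces $\|\tilde h(t)\|_{\dot H^1}\to 0$, $\tilde h$ satisfies the from-$+\infty$ Duhamel identity, i.e.\ is a fixed point of $\Phi$; hence $\tilde h=h$. For independence of $k$: if $k_2>k_1\ge k_0$, the solution built from $k_2$ still satisfies \eqref{bound_UA} with parameter $k_1$ for $t$ large, because $\mathcal{V}_{l(k_2)}^A-\mathcal{V}_{l(k_1)}^A=\sum_{l(k_1)<j\le l(k_2)}e^{-je_0t}Z_j^A$ decays faster than the $k_1$-rates; by uniqueness it equals the $k_1$-solution. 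Finally, taking $k$ large, $\|U^A(t)-W-\mathcal{V}_{l(k)}^A(t)\|_{\dot H^1}\lesssim\|\nabla(U^A-W-\mathcal{V}_{l(k)}^A)\|_{S(L^2,[t,+\infty))}\le e^{-(k+\frac12)e_0t}=o(e^{-2e_0t})$, while $\mathcal{V}_{l(k)}^A(t)-Ae^{-e_0t}\mathcal{Y}_+=\sum_{j=2}^{l(k)}e^{-je_0t}Z_j^A=O(e^{-2e_0t})$ in $\dot H^1$, which gives \eqref{bound_UA2} for $t$ large.

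The main obstacle, flagged before the statement, is the low power $p_c-1=\tfrac{4}{N-2}<1$ when $N>6$: the nonlinearity is not $C^2$, its derivative only Hölder of order $p_c-1$, and consequently $\|\nabla(R(f)-R(g))\|_{S'(L^2)}$ contains a genuinely sublinear term $\|D^\varepsilon(f-g)\|_{S(\dot H^{1-\varepsilon})}^{p_c-1}\bigl(\|\nabla f\|+\|\nabla g\|\bigr)_{S(L^2)}$ (see \eqref{grad_r_point}), so $\Phi$ fails to be Lipschitz in the natural $\dot H^1$-Strichartz norm. The fix — which dictates the whole architecture above — is to also propagate the lower-order fractional norm $\|D^\varepsilon\cdot\|_{S(\dot H^{1-\varepsilon})}$, in which $R$ \emph{is} Lipschitz (estimate \eqref{r_point}), to contract in that weaker norm while merely bounding the $\nabla$-norm on the invariant set $\mathcal{E}$, and to demand the faster rate $(k+\tfrac12)\tfrac{N-2}{4}e_0$ for the fractional component precisely so that raising it to the power $p_c-1$ returns the rate $(k+\tfrac12)e_0$; this in turn forces the approximate solution to carry $l(k)=\lceil\tfrac{N-2}{4}k+\tfrac{N-6}{4}\rceil$ terms, so that $\epsilon_{l(k)}$ outpaces both rates. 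Making all these exponents line up simultaneously, and using the fractional derivative $D^\varepsilon$ to sidestep the unavailable endpoint $\dot H^1/\dot H^{-1}$ Strichartz estimates, is the delicate part; once it is set up, the contraction itself is routine.
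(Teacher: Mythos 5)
Your proposal is correct and follows essentially the same route as the paper: the Duhamel-from-$+\infty$ fixed point for $h=U^A-W-\mathcal{V}_{l(k)}^A$, run on the two-norm exponentially weighted set with the contraction taken in the weaker weighted $D^\varepsilon$--$S(\dot H^{1-\varepsilon})$ metric (completeness by weak limits), the same subdivision-into-$\tau_0$-intervals/geometric-series device for the linear term, the same exponent bookkeeping $(k+\tfrac12)\tfrac{N-2}{4}(p_c-1)=k+\tfrac12$ and $l(k)+1\ge\tfrac{N-2}{4}(k+1)$, and the same uniqueness, $k$-independence, and \eqref{bound_UA2} arguments. The one caveat is $N=6$: there you invoke the fractional estimates \eqref{linear2} and \eqref{r_point}, which Lemma \ref{lem_nonl} only provides for $N>6$ (their proof uses the H\"older fractional chain rule, requiring $p_c-1<1$); since $p_c-1=1$ when $N=6$, estimate \eqref{grad_r_point} is already linear in $\|\nabla(f-g)\|_{S(L^2,I)}$, and the paper simply drops the fractional norm and contracts in the weighted $\nabla$--$S(L^2)$ norm alone, which is the easy adjustment your argument needs in that case.
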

\begin{proof}
Since $A \in \Real$ is fixed in the proof, we omit the superscripts $A$. Define
\begin{equation}
	h = U^A - W - \mathcal{V}_{l(k)}^A,
\end{equation}
so that $U^A$ is a solution to \eqref{NLS}, if and only if, $h$ satisfies
\begin{equation}
	i\partial_t h + \Delta h = - K(h) - (R(\mathcal{V}_{l(k)}+h)-R(\mathcal{V}_{l(k)}))+i\epsilon_{l(k)}, 
\end{equation}
where $\epsilon_{l(k)} = O\left(e^{-\left(l(k)+1\right)e_0t}\right) \text{ in }\mathcal{S}(\Real^N)$ for all $k \geq 0$. Therefore, the existence of $U^A$ can be written as the fixed-point problem
\begin{equation}
	h(t) = \mathcal{M}(h)(t),
\end{equation}
where
\begin{equation}
	\mathcal{M}(h)(t) = -i\int_t^{+\infty} e^{i(t-s)\Delta}\left[- K(h) - (R(\mathcal{V}_{l(k)}+h)-R(\mathcal{V}_{l(k)}))+i\epsilon_{l(k)}\right]\,ds.
\end{equation}

Let first $N > 6$. We will show that $\mathcal{M} $ is a contraction on $B$ defined by
\begin{align}
	    B = B(k,t_k) := &\left\{h \in E:\|h\|_E \leq 1\right\},\\
		E = E(k,t_k) := &\left\{
		h \in 
		S(\dot{H}^1,\,[t_k,+\infty)), D^\varepsilon h \in S(\dot{H}^{1-\varepsilon}\,[t_k, +\infty)), 
		\right.\\ 
		&\quad\quad\quad\left.\nabla h \in S(L^2,\,[t_k,+\infty)):\|h\|_E < +\infty\right\},\\
		\|h\|_E := &\sup_{t\geq t_k} e^{\left(k+\frac{1}{2}\right)\frac{N-2}{4} e_0t}\|D^\varepsilon h\|_{S(\dot{H}^{1-\varepsilon},\,[t,+\infty))}+\sup_{t\geq t_k} e^{\left(k+\frac{1}{2}\right)e_0 t}\|\nabla h\|_{S(L^2,\,[t,+\infty))},
	\end{align}
	equipped with the metric
	\begin{equation}
	\rho(u,v) = \sup_{t\geq t_k} e^{\left(k+\frac{1}{2}\right)\frac{N-2}{4}e_0t}\|D^\varepsilon(u-v)	\|_{S(\dot{H}^{1-\varepsilon},\,[t,+\infty)).}
	\end{equation}
	

Let $\{h_n\} \subset B$ and $h \in S(\dot{H}^{1-\varepsilon},I)$ with $\rho(h_n,h) \to 0$. By reflexivity and uniqueness between the weak and strong limits, $h \in B$. This shows that $(B,\rho)$ is a complete metric space. We will show that $\mathcal{M}(B) \subset B$ and that $\mathcal{M}$ is a contraction.

By Strichartz estimates, there exists $C^*>0$ such that
\begin{align}\label{contract1}
	\| \nabla\mathcal{M}(h)\|_{S(L^2,\,[t,+\infty))} \leq C^*&\left[\|\nabla K(h)\|_{S'(L^2,\,[t,+\infty))}
	\right.\\
 &\left.+ \| \nabla(R(\mathcal{V}_{l(k)}+h)-R(\mathcal{V}_{l(k)}))\|_{S'(L^2,\,[t,+\infty))}
	+ \|\nabla \epsilon_{l(k)}\|_{S'(L^2,\,[t,+\infty))}\right],
\end{align}

\begin{align}\label{contract2}
	\| D^\varepsilon\mathcal{M}(h)\|_{S(\dot{H}^{1-\varepsilon},\,[t,+\infty))} \leq C^*&\left[\| D^\varepsilon K(h)\|_{S'(\dot{H}^{-(1-\varepsilon)},\,[t,+\infty))}\right.\\
	&+ \|D^\varepsilon (R(\mathcal{V}_{l(k)}+h)-R(\mathcal{V}_{l(k)}))\|_{S'(\dot{H}^{-(1-\varepsilon)},\,[t,+\infty))}\\
	&\quad\quad\quad\quad\quad\quad\quad\quad\quad+ \left.\|D^\varepsilon \epsilon_{l(k)}\|_{S'(\dot{H}^{-(1-\varepsilon)},\,[t,+\infty))}\right],
	\end{align}

and
\begin{align}\label{contract3}
	\| D^\varepsilon(\mathcal{M}(g)-\mathcal{M}(h))\|_{S(\dot{H}^{1-\varepsilon},\,[t,+\infty))} \leq C^*&\left[\| D^\varepsilon K(g-h)\|_{S'(\dot{H}^{-(1-\varepsilon)},\,[t,+\infty))}\right.\\
	&+ \left.\|D^\varepsilon (R(\mathcal{V}_{l(k)}+g)-R(\mathcal{V}_{l(k)}+h))\|_{S'(\dot{H}^{-(1-\varepsilon)},\,[t,+\infty))}\right].
	\end{align}
To finish the argument, we need the following estimates.
\begin{lemma}\label{contr_estim} For every $\eta >0$, there exists $\tilde{k}(\eta) >0$ such that, if $k \geq \tilde{k}(\eta)$, then for any $g$, $h \in B$ the following inequalities hold for all $t \geq t_k$
\begin{enumerate}[(i)]
	\item\label{contr_lin} $\|\nabla K(h)\|_{S'(L^2,\,[t,+\infty))} \leq \eta e^{-\left(k+\frac{1}{2}\right)e_0 t}\|h\|_E$,
	\item\label{contr_R} $\| \nabla (R(\mathcal{V}_{l(k)}+h)-R(\mathcal{V}_{l(k)}))\|_{S'(L^2,\,[t,+\infty))} \leq C_k
	e^{-\left(k+\frac{1}{2}+\frac{4}{N-2}\right)e_0 t}$,
	\item\label{contr_lin2} $\|D^\varepsilon K(h)\|_{S'(\dot{H}^{-(1-\varepsilon)},\,[t,+\infty))} \leq \eta e^{-\left(k+\frac{1}{2}\right)\frac{N-2}{4}e_0 t}\rho(h,0)$,
	\item\label{contr_R2} $\| D^\varepsilon (R(\mathcal{V}_{l(k)}+g)-R(\mathcal{V}_{l(k)}+h))\|_{S'(\dot{H}^{-(1-\varepsilon)},\,[t,+\infty))} \leq C_k
	e^{-\left(k+\frac{1}{2}+\frac{16}{(N-2)^2}\right)\frac{N-2}{4}e_0 t}\rho(g,h)$,
	\item\label{contr_e} $\|\nabla\epsilon_{l(k)}\|_{S'(L^2,\,[t,+\infty))}+\|D^\varepsilon\epsilon_{l(k)}\|_{S'(\dot{H}^{-(1-\varepsilon)},\,[t,+\infty))} \leq C_k e^{-\left({k}+1\right){ \frac{N-2}{4} }e_0 t}$.
\end{enumerate}
\end{lemma}

Indeed, assuming this lemma, choosing first $\eta > 0$ small enough, and then a large enough $t_k$, we see by \eqref{contract1} and \eqref{contract2} that $\mathcal{M}(B) \subset B$. Moreover, by \eqref{contract3}, $\mathcal{M}$ is a contraction on $B$. Thus, for every $k \geq k_0 = \tilde{k}(\eta)$, there is a unique solution $U^A$ to \eqref{NLS} satisfying \eqref{bound_UA} for $t \geq t_k$. Note that the uniqueness still holds in the class of solutions to \eqref{NLS} satisfying \eqref{bound_UA} for $t \geq t'_k$, where $t_k' \geq t_k$. Thus, uniqueness of the solutions to \eqref{NLS} shows that $U^A$ does not depend on $k$.

It remains to show Lemma \ref{contr_estim}. By Lemma \ref{lem_nonl}.\eqref{linear1}, if $\tau_0 > 0$ and $t \geq t_k$, then
\begin{equation}
	\|\nabla K(h)\|_{S'(L^2,\,[t,t+\tau_0])} \leq C \tau_0^{\frac{1}{2}}e^{-\left(k+\frac{1}{2}\right)e_0 t}\|h\|_E.
\end{equation}
Summing up this equation at times $t_j = t+j\tau_0$, and using the triangle inequality, we get a geometric series, whose sum is
\begin{equation}
	\|\nabla K(h)\|_{S'(L^2,\,[t,+\infty))} \leq C \frac{\tau_0^{\frac{1}{2}}}{1-e^{-\left(k+\frac{1}{2}\right)\frac{4}{N-2}e_0 \tau_0}} e^{-\left(k+\frac{1}{2}\right)e_0 t}\|h\|_E.
\end{equation}
Choosing $\tau_0$ and $k_0$ such that $\tau_0^\frac{1}{2} = \frac{\eta}{2 C}$  and $e^{-\left(k+\frac{1}{2}\right)e_0 \tau_0} \leq \frac{1}{2}$ , we get the estimate (\ref{contr_lin}) of Lemma \ref{contr_estim}. The estimate \eqref{contr_lin2} follows analogously from Lemma \ref{lem_nonl} \eqref{linear2}.

We now turn to the item (\ref{contr_R}). By Lemma \ref{lem_nonl} \eqref{grad_r_point}, we have
\begin{align}
	\|\nabla (R(\mathcal{V}_{l(k)}+h)-R(\mathcal{V}_{l(k)}))\|_{S'(L^2,\,[t,t+1])} \leq (A)\|\nabla h\|_{S(L^2,\,[t,t+1])}+(B)\|D^\varepsilon h\|^{\frac{4}{N-2}}_{S(\dot{H}^{1-\varepsilon},\,[t,t+1])},	
\end{align}
where 
$$
(A) \lesssim \|\nabla \mathcal{V}_{l(k)}\|_{S(L^2,\,[t,t+1])}^{\frac{4}{N-2}}+\|\nabla h\|^{\frac{4}{N-2}}_{S(L^2,\,[t,t+1])}
$$ 
and 
$$
(B) \lesssim \|\nabla \mathcal{V}_{l(k)}\|_{S(L^2,\,[t,t+1])}+\|\nabla h\|_{S(L^2,\,[t,t+1])}.
$$

By the explicit form of $\mathcal{V}_k$ and the fact that $h \in B$, we get
\begin{equation}
	(A)+(B) \leq C_k e^{-e_0\frac{4}{N-2}t} .
\end{equation}
Therefore, 
\begin{align}
\|(R(\mathcal{V}_{l(k)}+h)-R(\mathcal{V}_{l(k)}))\|_{S'(L^2,\,[t,t+1])} 
&\leq C_k e^{-e_0\frac{4}{N-2}t} (\|\nabla h\|_{S(L^2,\,[t,t+1])}+ 
\|D^\varepsilon h\|^{\frac{4}{N-2}}_{S(\dot{H}^{1-\varepsilon},\,[t,t+1])}) \\&\leq C_k 
e^{-\left(k+\frac{1}{2}+\frac{4}{N-2}\right)e_0t}.
\end{align}

Since $h \in B$, the triangle inequality and the sum of the resulting geometric series gives us \eqref{contr_R}. As for {the} item \eqref{contr_R2}, it follows analogously from Lemma \ref{lem_nonl} \eqref{r_point}. The estimate (\ref{contr_e}) of Lemma \ref{contr_estim} follows immediately from \eqref{decay_e} and from the bound
\begin{equation}
    l(k) +1\geq \left(k+1\right)\frac{N-2}{4}.
\end{equation}

Finally, given that $U^A = W+\mathcal{V}_k +h$ with $h \in B$, we see that, for large $k$,  
\begin{equation}
	\|\nabla h(t)\|_{L^2} \leq e^{-\frac{5}{2}e_0t}\|h\|_E \leq e^{-\frac{5}{2}e_0t},
\end{equation}
and recalling the definition of $\mathcal{V}_k$ given in Proposition \ref{family_appr}, we have, for all $k$,
\begin{equation}
	\mathcal{V}_{l(k)} = Ae^{-e_0 t}\mathcal{Y}_+ + O(e^{-2e_0t}) \text{ in } \mathcal{S}(\Real^N),
\end{equation}
which proves \eqref{bound_UA2}, and finishes the case $N > 6$.

For the case $N = 6$, note that $\frac{N-2}{4} = 1$, so that, by Sobolev embedding, only the space $S(L^2,I)$ is enough for the contraction argument. Therefore, defining the space $B$ as
\begin{align}
        B = B(k,t_k) := &\left\{h \in E:\,\,\|h\|_E \leq 1\right\},\\
		E = E(k,t_k) := &\left\{h \in C_t\dot{H}^1([t_k,+\infty))\cap S(\dot{H}^1,\,[t_k,+\infty)),\right.\\ &\left.\quad\quad\quad\nabla h \in S(L^2,\,[t_k,+\infty)):\,\, \|h\|_E < +\infty\right\},\\	
		\|h\|_E := &\sup_{t\geq t_k} e^{\left(k+\frac{1}{2}\right)e_0 t}\|\nabla h\|_{S(L^2,\,[t,+\infty))},
	\end{align}
	equipped with the metric
	\begin{equation}
	\rho(u,v) = \sup_{t\geq t_k} e^{\left(k+\frac{1}{2}\right)e_0t}\|\nabla(u-v)	\|_{S(L^2,\,[t,+\infty))}.
	\end{equation}
By Lemma \ref{lem_nonl}, the analogous estimates of Lemma \ref{contr_estim} hold. Hence, the conclusion of Proposition \ref{exist_UA} also holds for $N = 6$, finishing its proof.

\end{proof}

\subsubsection{Intercritical case}

\begin{prop}\label{sub_exist_UA} There exists $k_0 > 0$ such that for any $k \geq k_0$, there exists $t_k \geq 0$ and a solution $U^A$ to \eqref{sub_NLS} such that for $t \geq t_k$ and $l(k) = \max\{\left\lceil \frac{k+1}{p-1}-1\right\rceil,k\}$, we have
\begin{equation}\label{sub_bound_UA_S}
\|U^A-e^{it}Q-e^{it}\mathcal{V}_{l(k)}^A\|_{S(\dot{H}^{s_c},[t,+\infty))} \leq e^{-(k+\frac{1}{2})\max\{\frac{1}{p-1},1\}  e_0 t}
\end{equation}
and
\begin{equation}\label{sub_bound_UA}
\|\langle\nabla\rangle(U^A-e^{it}Q-e^{it}\mathcal{V}_{l(k)}^A)\|_{Z(t,+\infty)} \leq e^{-(k+\frac{1}{2})e_0 t}.
\end{equation}

Furthermore, $U^A$ is the unique solution to the NLS equation \eqref{sub_NLS} satisfying \eqref{sub_bound_UA} for large $t$. Finally, $U^A$ is independent of $k$ and satisfies for large $t$,
\begin{equation}\label{sub_bound_UA2}
	\|U^A(t) - e^{it}Q - Ae^{-e_0t+it}\mathcal{Y}_+\|_{H^1} \leq e^{-2e_0 t}.
\end{equation}
\end{prop}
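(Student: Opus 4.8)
The plan is to prove Proposition~\ref{sub_exist_UA} by repeating the structure of the proof of Proposition~\ref{exist_UA}, but with the scale‑invariant Strichartz spaces $S(L^2,\cdot)$, $S(\dot H^{s_c},\cdot)$ and their duals in place of the energy‑critical ones, and with Lemma~\ref{sub_lem_nonl} in place of Lemma~\ref{lem_nonl}. After the rescaling that normalizes $Q$ to solve $\Delta Q-Q+Q^p=0$ (so the constant $1-s_c$ becomes $1$), I would write $U^A=e^{it}(Q+v^A)$, so that $v^A$ solves $\partial_t v^A+\mathcal L v^A=iR(v^A)$, and set $\tilde h=v^A-\mathcal V^A_{l(k)}$; since $e^{it}$ is unimodular, $\tilde h$ has the same norm as $U^A-e^{it}Q-e^{it}\mathcal V^A_{l(k)}$ in every space appearing in~\eqref{sub_bound_UA_S}--\eqref{sub_bound_UA}. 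By Proposition~\ref{family_appr}, $\tilde h$ solves
\[
i\partial_t\tilde h+\Delta\tilde h-\tilde h+K(\tilde h)=-\bigl(R(\mathcal V^A_{l(k)}+\tilde h)-R(\mathcal V^A_{l(k)})\bigr)+i\epsilon_{l(k)},\qquad \epsilon_{l(k)}=O\!\bigl(e^{-(l(k)+1)e_0t}\bigr)\ \text{in}\ \mathcal S(\Real^N),
\]
so that $U^A$ solves~\eqref{sub_NLS} exactly when $\tilde h$ is a fixed point of the Duhamel operator
\[
\mathcal M(\tilde h)(t)=-i\int_t^{+\infty}e^{i(t-s)(\Delta-1)}\Bigl[-K(\tilde h)-\bigl(R(\mathcal V^A_{l(k)}+\tilde h)-R(\mathcal V^A_{l(k)})\bigr)+i\epsilon_{l(k)}\Bigr]\,ds,
\]
the mass shift in $e^{i(t-s)(\Delta-1)}=e^{-i(t-s)}e^{i(t-s)\Delta}$ being harmless for Strichartz estimates.

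The core is then a contraction argument, for $k$ large and $t_k$ large, on the ball $B=\{h:\|h\|_E\le1\}$ of the two‑rate space
\[
\|h\|_E:=\sup_{t\ge t_k}e^{(k+\frac12)\max\{\frac1{p-1},1\}e_0t}\,\|h\|_{S(\dot H^{s_c},[t,+\infty))}+\sup_{t\ge t_k}e^{(k+\frac12)e_0t}\,\|\langle\nabla\rangle h\|_{Z(t,+\infty)},
\]
with the metric $\rho(u,v)=\sup_{t\ge t_k}e^{(k+\frac12)\max\{\frac1{p-1},1\}e_0t}\|u-v\|_{S(\dot H^{s_c},[t,+\infty))}$, the verbatim analogue of the space used for $N\ge6$ in Proposition~\ref{exist_UA}. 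Applying the Kato--Strichartz estimates of Lemma~\ref{KS_est} to $\mathcal M$ reduces everything to estimating, on a short window $[t,t+\tau_0]$, the three source terms in the dual norms $\|\langle\nabla\rangle(\cdot)\|_{S'(L^2)}$ and $\|\cdot\|_{S'(\dot H^{-s_c})}$, and then summing the resulting geometric series over $[t+j\tau_0,t+(j+1)\tau_0]$, $j\ge0$. For the linear term I would invoke Lemma~\ref{sub_lem_nonl}\eqref{sub_linear1}--\eqref{sub_linear2}, whose gain $|I|^{\alpha}$ lets the summed contribution be made $\le\eta\|\tilde h\|_E$ with $\eta$ arbitrarily small by shrinking $\tau_0$. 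For the nonlinear difference I would use Lemma~\ref{sub_lem_nonl}\eqref{sub_P_grad_r_point}--\eqref{sub_P_r_s_point} when $p>2$ and~\eqref{sub_p_grad_r_point}--\eqref{sub_p_r_s_point} when $1<p\le2$; since the explicit form of $\mathcal V^A_{l(k)}$ gives $\|\langle\nabla\rangle\mathcal V^A_{l(k)}\|_{S(L^2,[t,t+1])}+\|\mathcal V^A_{l(k)}\|_{S(\dot H^{s_c},[t,t+1])}\lesssim e^{-e_0t}$, every contribution carries an extra factor $e^{-\min\{p-1,1\}e_0t}$ relative to $\|\tilde h\|_E$, leaving room after summation. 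Finally the error term is controlled by $\epsilon_{l(k)}=O(e^{-(l(k)+1)e_0t})$ together with the choice $l(k)=\max\{\lceil\frac{k+1}{p-1}-1\rceil,k\}$, which forces $l(k)+1\ge(k+1)\max\{\frac1{p-1},1\}$, so the error strictly outruns the target rates. Once $\eta$ is fixed small and $t_k$ chosen large, $\mathcal M(B)\subset B$ and $\mathcal M$ is a $\rho$‑contraction; its fixed point is the desired $U^A$, \eqref{sub_bound_UA_S}--\eqref{sub_bound_UA} hold for $t\ge t_k$, uniqueness follows from the contraction, independence of $k$ from the uniqueness of solutions to~\eqref{sub_NLS} (a solution meeting~\eqref{sub_bound_UA} for one $k$ meets it for every larger $k$ on a possibly larger time interval), and~\eqref{sub_bound_UA2} follows from $\mathcal V^A_{l(k)}=Ae^{-e_0t}\mathcal Y_++O(e^{-2e_0t})$ in $\mathcal S(\Real^N)$ together with $\|\tilde h(t)\|_{H^1}\lesssim\|\tilde h\|_E\,e^{-(k+\frac12)e_0t}\le e^{-2e_0t}$ for $k$ large.

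The step I expect to be the main obstacle is the low‑power regime $1<p\le2$. There the nonlinearity fails to be twice real‑differentiable, and $R(\mathcal V+h)-R(\mathcal V)$ cannot be estimated linearly in $h$ using a single derivative in $L^2$‑Strichartz norms --- one is forced into a useless sublinear term $\|h\|^{p-1}$. The remedy, which is precisely what dictates the shape of $E$ and of $l(k)$, is to also control $h$ in $S(\dot H^{s_c},\cdot)$ with the faster decay rate $(k+\frac12)\frac1{p-1}e_0$: Lemma~\ref{sub_lem_nonl}\eqref{sub_p_grad_r_point} then produces the term $\|h\|_{S(\dot H^{s_c})}^{p-1}\,\|\langle\nabla\rangle\mathcal V^A_{l(k)}\|_{S(L^2)}$, and raising the faster rate to the power $p-1$ recovers exactly the base rate $(k+\frac12)e_0$ --- just enough to close the contraction. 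Making all of these exponents line up, and in particular recognizing that $l(k)$ must be enlarged to $\lceil\frac{k+1}{p-1}-1\rceil$ so that the approximate‑solution error decays faster than $h$, is the delicate bookkeeping of the argument, completely parallel to the role of the factor $\frac{N-2}{4}=\frac{1}{p_c-1}$ and of $l(k)=\lceil\frac{N-2}{4}k+\frac{N-6}{4}\rceil$ in the energy‑critical case of Proposition~\ref{exist_UA}. One should also keep in mind, as a prerequisite already packaged into Proposition~\ref{family_appr}, that the spatial‑decay control from Corollary~\ref{sub_decay_eigen} is what makes $\mathcal V^A_{l(k)}$ lie in $\mathcal S(\Real^N)$ with enough exponential decay for $R(\mathcal V^A_{l(k)})$ and the Taylor expansion of $J$ to be meaningful.
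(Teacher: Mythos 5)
Your proposal is correct and follows essentially the same route as the paper: the paper itself proves Proposition~\ref{sub_exist_UA} by declaring it ``essentially the same'' fixed-point argument as Proposition~\ref{exist_UA}, with the intercritical Strichartz spaces and Lemma~\ref{sub_lem_nonl} replacing the critical ones, and it records exactly the rate bookkeeping you describe (including the roles of $\max\{\frac1{p-1},1\}$, the two-rate space, and $l(k)+1\geq(k+1)\max\{\frac1{p-1},1\}$) in Lemma~\ref{sub_contr_estim}, noting also that \eqref{sub_bound_UA_S} follows from \eqref{sub_bound_UA} by Sobolev when $p\geq 2$. Your treatment of the low-power regime $1<p\leq 2$ via the faster $S(\dot H^{s_c})$ rate is precisely the mechanism the paper relies on.
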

In view of Lemma \ref{sub_lem_nonl}, the proof of Proposition \ref{sub_exist_UA} is essentially the same as in the energy-critical case, and we state Lemma \ref{sub_contr_estim} below for completeness. Note that \eqref{sub_bound_UA_S} is a consequence of \eqref{sub_bound_UA} in the case $p \geq 2$, due to the Sobolev inequalities. 

\begin{lemma}\label{sub_contr_estim} For every $\eta >0$, there exists $\tilde{k}(\eta) >0$ such that, if $k \geq \tilde{k}(\eta)$, then for any $g$, $h \in B$ the following inequalities hold for all $t \geq t_k$    
\begin{enumerate}[(i)]
	\item\label{sub_contr_lin} $\|\nabla K(h)\|_{S'(L^2,\,[t,+\infty))} \leq \eta e^{-\left(k+\frac{1}{2}\right)e_0 t}\|h\|_E$,
	\item\label{sub_contr_R} $\| \nabla (R(\mathcal{V}_{l(k)}+h)-R(\mathcal{V}_{l(k)}))\|_{S'(L^2,\,[t,+\infty))} \leq C_k e^{-\min\{p-1,1\}e_0 t}
	e^{-\left(k+\frac{1}{2}\right)e_0 t}$,
	\item\label{sub_contr_lin2} $\| K(h)\|_{S'(\dot{H}^{-s_c},\,[t,+\infty))} \leq \eta e^{-\left(k+\frac{1}{2}\right)\max\{\frac{1}{p-1},1\}e_0 t}\rho(h,0)$,
	\item\label{sub_contr_R2} $\| R(\mathcal{V}_{l(k)}+g)-R(\mathcal{V}_{l(k)}+h)\|_{S'(\dot{H}^{-s_c},\,[t,+\infty))} \leq C_k e^{-\min\{p-1,1\}e_0t}
	e^{-\left(k+\frac{1}{2}\right)\max\{\frac{1}{p-1},1\}e_0 t}\rho(g,h)$,
	\item\label{sub_contr_e} $\|\nabla\epsilon_{l(k)}\|_{S'(L^2,\,[t,+\infty))}+\|\epsilon_{l(k)}\|_{S'(\dot{H}^{-s_c},\,[t,+\infty))} \leq C_k e^{-\left({k}+1\right)\max\{\frac{1}{p-1},1\}e_0 t}$.
\end{enumerate}
\end{lemma}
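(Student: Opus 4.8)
The plan is to run essentially verbatim the argument behind Lemma~\ref{contr_estim} (the energy-critical case), substituting the intercritical nonlinear estimates of Lemma~\ref{sub_lem_nonl} for those of Lemma~\ref{lem_nonl}. Here $B=B(k,t_k)=\{h:\ \|h\|_E\le 1\}$, where, in exact analogy with the energy-critical case,
\[
\|h\|_E:=\sup_{t\ge t_k}e^{(k+\frac{1}{2})\max\{\frac{1}{p-1},1\}e_0 t}\|h\|_{S(\dot{H}^{s_c},[t,+\infty))}+\sup_{t\ge t_k}e^{(k+\frac{1}{2})e_0 t}\|\langle\nabla\rangle h\|_{S(L^2,[t,+\infty))},
\]
and $\rho(u,v):=\sup_{t\ge t_k}e^{(k+\frac{1}{2})\max\{\frac{1}{p-1},1\}e_0 t}\|u-v\|_{S(\dot{H}^{s_c},[t,+\infty))}$. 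The common mechanism in all six items is the same: bound the left-hand side on a short interval $[t,t+\tau_0]$ with $\tau_0\le 1$, then sum the resulting bounds over $[t+j\tau_0,t+(j+1)\tau_0]$, $j\ge0$; the exponential weights make this a convergent geometric series with sum $\lesssim (\text{single-interval bound})/(1-e^{-c\tau_0})$.

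For \eqref{sub_contr_lin} and \eqref{sub_contr_lin2} I would invoke Lemma~\ref{sub_lem_nonl}.\eqref{sub_linear1} and \eqref{sub_linear2}, which each carry a gain $|I|^\alpha$: on $[t,t+\tau_0]$ they give $\|\langle\nabla\rangle K(h)\|_{S'(L^2)}\lesssim\tau_0^\alpha e^{-(k+\frac{1}{2})e_0 t}\|h\|_E$ and $\|K(h)\|_{S'(\dot{H}^{-s_c})}\lesssim\tau_0^\alpha e^{-(k+\frac{1}{2})\max\{\frac{1}{p-1},1\}e_0 t}\rho(h,0)$. Summing produces a constant $\lesssim\tau_0^\alpha/(1-e^{-(k+\frac{1}{2})e_0\tau_0})$, which is made $\le\eta$ by first choosing $\tau_0$ small and then $k_0$ (hence $t_k$) large enough that $e^{-(k+\frac{1}{2})e_0\tau_0}\le\frac{1}{2}$.

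For \eqref{sub_contr_R} and \eqref{sub_contr_R2} I would apply Lemma~\ref{sub_lem_nonl}.\eqref{sub_P_grad_r_point}--\eqref{sub_p_r_s_point} with $(f,g)=(\mathcal{V}_{l(k)}+h,\mathcal{V}_{l(k)})$, respectively $(\mathcal{V}_{l(k)}+g,\mathcal{V}_{l(k)}+h)$, so the difference is $h$, respectively $g-h$. Since $\mathcal{V}_{l(k)}=\sum_{j=1}^{l(k)}e^{-je_0 t}Z_j$ with $Z_j\in\mathcal{S}(\Real^N)$, one has $\|\langle\nabla\rangle\mathcal{V}_{l(k)}\|_{S(L^2,[t,t+1])}+\|\mathcal{V}_{l(k)}\|_{S(\dot{H}^{s_c},[t,t+1])}\lesssim C_k e^{-e_0 t}$ (the $j=1$ term dominates), and on $B$ the $h$-contributions decay faster; hence for large $t$ every ``coefficient'' factor of the form $\|\mathcal{V}_{l(k)}+h\|$ or $\|\mathcal{V}_{l(k)}+h\|^{p-1}$ appearing in Lemma~\ref{sub_lem_nonl} is $\le C_k e^{-\min\{p-1,1\}e_0 t}$. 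Pairing this with the remaining factor $\|\langle\nabla\rangle h\|$ (or, through $\rho$, with $\|g-h\|$) and using $\max\{\frac{1}{p-1},1\}(p-1)=\max\{1,p-1\}\ge1$ and $\min\{p-1,1\}\le1$, the single-interval bound comes out as $C_k e^{-\min\{p-1,1\}e_0 t}e^{-(k+\frac{1}{2})\max\{\frac{1}{p-1},1\}e_0 t}$ (times $\rho(g,h)$ for \eqref{sub_contr_R2}); summing the geometric series yields the claim. The only delicate point is the dichotomy $p>2$ versus $1<p\le2$ built into Lemma~\ref{sub_lem_nonl}: in the subquadratic range the difference enters to the power $p-1$, and one checks that $\|h\|_{S(\dot{H}^{s_c},[t,t+1])}^{p-1}\le e^{-(k+\frac{1}{2})e_0 t}$ precisely because the $\dot{H}^{s_c}$-weight in $\|h\|_E$ equals $\tfrac{e_0}{p-1}$ there, while $e^{-e_0 t}\le e^{-(p-1)e_0 t}$ for $t\ge0$.

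Finally, \eqref{sub_contr_e} is immediate from Proposition~\ref{family_appr}: since $\epsilon_{l(k)}=O(e^{-(l(k)+1)e_0 t})$ in $\mathcal{S}(\Real^N)$, all the Strichartz-type norms of $\langle\nabla\rangle\epsilon_{l(k)}$ on a unit interval are $\lesssim e^{-(l(k)+1)e_0 t}$, and summing over unit intervals preserves this bound on $[t,+\infty)$; the choice $l(k)=\max\{\lceil\frac{k+1}{p-1}-1\rceil,k\}$ is engineered exactly so that $l(k)+1\ge(k+1)\max\{\frac{1}{p-1},1\}$ in both regimes. Thus the only real obstacle in this lemma is the bookkeeping needed to make the exponents collapse uniformly to $\min\{p-1,1\}$ and $\max\{\frac{1}{p-1},1\}$; the genuine analytic difficulty (the non-smoothness of $|u|^{p-1}u$ for $p\le2$) has already been absorbed into Lemma~\ref{sub_lem_nonl} via the fractional chain rule, so nothing new is required here.
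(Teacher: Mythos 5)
Your proposal is correct and follows essentially the same route the paper intends: the paper states this lemma only "for completeness," declaring its proof to be the same as the energy-critical argument for Lemma \ref{contr_estim} inside the proof of Proposition \ref{exist_UA}, and your argument reproduces exactly that scheme (short-interval bounds from Lemma \ref{sub_lem_nonl}, exponential decay of the $\mathcal{V}_{l(k)}$ coefficients, geometric-series summation, and the choice of $l(k)$ giving $l(k)+1\geq (k+1)\max\{\tfrac{1}{p-1},1\}$ for the error term). The bookkeeping in the subquadratic case, where $\|h\|_{S(\dot H^{s_c})}^{p-1}$ picks up the weight $(k+\tfrac12)\max\{\tfrac{1}{p-1},1\}(p-1)e_0=(k+\tfrac12)e_0$, is handled correctly.
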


\section{Modulation}

Throughout the rest of the paper, we write, for $0 < s_c \leq 1$,

\begin{equation}
	d(f) = \Big|\|\nabla f\|_{L^2}-\|\nabla Q\|_{L^2}\Big|.
\end{equation}

If $u$ is a solution to $\eqref{sub_NLS}$ or to $\eqref{NLS}$ and if there is no risk of ambiguity, we also write 
\begin{equation}
    d(t) = d(u(t)).
\end{equation}

\subsection{Energy-critical case}
The variational characterization of $W$, see \cites{Aub76,Tal76,Lio85}, shows that, if $E(f) = E(W)$, then
\begin{equation}
	\inf_{\substack{x \in \Real^N\\ \lambda >0 \\ \theta \in \Real}}\|f_{[x,\lambda,\theta]}-W\|_{\dot{H}^1} \leq \epsilon(d(f))
\end{equation}
with $\epsilon = \epsilon(\delta)$ such that
\begin{equation}
	\lim_{\delta \to 0^+}\epsilon(\delta) = 0.
\end{equation}

The goal of this section is to construct modulation parameters $x_0, \lambda_0$ and $\theta_0$ such that the quantity $d(f)$ controls linearly $\|f_{[x_0,\lambda_0,\theta_0]}-W\|_{\dot{H}^1}$ as well as the parameters and its derivatives.

By making use of the Implicit Function Theorem, we can construct appropriate modulation parameters. The proof of the next two lemmas is very similar to the ones in \cite{DM_Dyn}, with the introduction of a translation parameter, and will be given in Appendix.

\begin{lemma}\label{mod_1}
There exist $\delta_0 > 0$ and a positive function $\epsilon(d)$ defined for $0 < d < \delta_0$, with $\epsilon(d)\to 0$ as $d\to 0$, such that, for all $f \in \dot{H}^1$ satisfying $E(f) = E(W)$ and $d(f) < \delta_0$, there exist $(x,\lambda,\theta)$ such that 
\begin{align}
	\|f_{[x,\lambda,\theta]}-W\|_{\dot{H}^1} \leq \epsilon(d(u)),\\
	f_{[x,\lambda,\theta]} \perp \text{span}\{\nabla W, iW, \Lambda W\}.
\end{align}

The parameters $(x,\lambda,\theta)$ are unique in $\Real^N \times \Real/2\pi \mathbb{Z} \times \Real_+$ and the mapping $u \mapsto (x,\lambda,\theta)$ is $C^1$.
\end{lemma}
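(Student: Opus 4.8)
\textbf{Proof strategy for Lemma \ref{mod_1}.}

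The plan is to apply the Implicit Function Theorem to a suitable map encoding the orthogonality conditions, using the variational characterization of $W$ as the starting point. First, I would introduce the function
\begin{equation}
	\Psi(f,x,\lambda,\theta) = \begin{pmatrix}
	(f_{[x,\lambda,\theta]}-W, \partial_1 W)_{\dot{H}^1}\\
	\vdots\\
	(f_{[x,\lambda,\theta]}-W, \partial_N W)_{\dot{H}^1}\\
	(f_{[x,\lambda,\theta]}-W, iW)_{\dot{H}^1}\\
	(f_{[x,\lambda,\theta]}-W, \Lambda W)_{\dot{H}^1}
	\end{pmatrix},
\end{equation}
viewed as a $C^1$ map on a neighborhood of $(W,0,1,0)$ in $\dot{H}^1 \times \Real^N \times \Real_+ \times \Real/2\pi\mathbb{Z}$, noting that $\Psi(W,0,1,0)=0$ because the directions $\nabla W$, $iW$, $\Lambda W$ are themselves orthogonal to each other in $\dot{H}^1$ (they are distinct generators of the symmetry group, and for the mixed pairings one uses that $\nabla W$ is real while $iW$ is imaginary, and that $(\nabla W, \Lambda W)_{\dot H^1}=0$ by parity). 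The differential of $\Psi$ in the parameters $(x,\lambda,\theta)$ at $(W,0,1,0)$ is the Gram-type matrix whose entries are $\partial$-derivatives of $W_{[x,\lambda,\theta]}$ paired against the same directions; since $\partial_1 W,\dots,\partial_N W, iW, \Lambda W$ are linearly independent in $\dot{H}^1$, this matrix is invertible. Hence the IFT produces $C^1$ functions $(x,\lambda,\theta) = (x(f),\lambda(f),\theta(f))$ defined for $f$ in an $\dot{H}^1$-neighborhood of $W$, with $\Psi(f,x(f),\lambda(f),\theta(f))=0$, i.e.\ $f_{[x(f),\lambda(f),\theta(f)]} \perp \mathrm{span}\{\nabla W, iW, \Lambda W\}$.

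Next I would transfer this from an $\dot{H}^1$-neighborhood of $W$ to the set $\{E(f)=E(W),\ d(f)<\delta_0\}$. The variational characterization quoted just before the lemma says precisely that if $E(f)=E(W)$ then $\inf_{[x,\lambda,\theta]}\|f_{[x,\lambda,\theta]}-W\|_{\dot{H}^1}$ is controlled by a modulus $\epsilon(d(f))\to 0$. So given $\delta_0$ small enough, any $f$ with $E(f)=E(W)$ and $d(f)<\delta_0$ admits \emph{some} symmetry image $\tilde f = f_{[x_1,\lambda_1,\theta_1]}$ lying in the IFT neighborhood of $W$; applying the IFT-produced map to $\tilde f$ and composing the two symmetry transformations (the group law for the action $f\mapsto f_{[x,\lambda,\theta]}$) yields the desired $(x,\lambda,\theta)$ for $f$ itself. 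The bound $\|f_{[x,\lambda,\theta]}-W\|_{\dot{H}^1}\le \epsilon(d(f))$ then follows by combining the variational estimate with the continuity (Lipschitz near $W$) of the IFT map and shrinking $\epsilon$ as needed. Uniqueness in $\Real^N\times\Real/2\pi\mathbb{Z}\times\Real_+$ follows from the local uniqueness clause of the IFT, again after shrinking $\delta_0$: two solutions would give two symmetry images of $f$ both close to $W$ and both satisfying the orthogonality, forcing the symmetry parameters relating them to be trivial by the invertibility of the same Gram matrix.

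The main obstacle is a bookkeeping one rather than a deep one: one must verify carefully that the Gram/Jacobian matrix $\partial_{(x,\lambda,\theta)}\Psi|_{(W,0,1,0)}$ is nondegenerate, which amounts to the linear independence of $\{\partial_k W\}_{k=1}^N$, $iW$, $\Lambda W$ in $\dot{H}^1$, and to check that these objects actually lie in $\dot H^1$ — $\Lambda W = \frac{N-2}{2}W + x\cdot\nabla W$ and the $\partial_k W$ all have $\dot H^1$ norms that are finite for $N\ge 3$, with the relevant integrals converging because $W$ decays like $|x|^{-(N-2)}$. A secondary subtlety is that the map $f\mapsto f_{[x,\lambda,\theta]}$ is continuous but not smooth jointly in $(f,\lambda)$ with respect to strong topologies uniformly; however, since we only differentiate in the finite-dimensional parameters and the pairing against the fixed smooth, rapidly-decaying directions $\nabla W$, $iW$, $\Lambda W$ is what enters $\Psi$, the required $C^1$ regularity of $\Psi$ holds. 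I would defer the explicit computation of these pairings and of the Jacobian determinant to the appendix, as the lemma statement already promises.
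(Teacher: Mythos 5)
Your proposal follows essentially the same route as the paper's proof: apply the Implicit Function Theorem to the vector of $\dot H^1$-pairings of $f_{[x,\lambda,\theta]}$ against $\partial_k W$, $iW$, $\Lambda W$, use the mutual orthogonality (hence linear independence) of these directions to invert the Jacobian at $(W,0,1,0)$, and then reduce the general threshold case to a neighborhood of $W$ via the variational characterization and the group law of the symmetries, with uniqueness and $C^1$ regularity coming from the IFT. The only cosmetic remark is that with your definition of $\Psi$ (which subtracts $W$) the vanishing $\Psi(W,0,1,0)=0$ is trivial, and the orthogonality of $W$ itself to $\nabla W$, $iW$, $\Lambda W$ is what you would invoke if, as in the paper, the subtraction were omitted.
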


Let $u$ be a solution to \eqref{NLS} and $I$ be a time interval such that 
\begin{equation}
    d(t) < \delta_0 \text{ for all } t \in I.
\end{equation}

For each $t \in I$, choose the parameters $(x(t),\lambda(t),\theta(t))$ according to Lemma \ref{mod_1}. Write
\begin{equation}\label{decomp_alpha}
	u_{[x(t),\lambda(t),\theta(t)]}(t) = (1+\alpha(t))W+ h(t),
\end{equation}
where
\begin{equation}
	\alpha(t) = \frac{1}{\|W\|_{\dot{H}^1}}(u_{[x(t),\lambda(t),\theta(t)]},W)_{\dot{H}^1}-1.
\end{equation}
Note that $\alpha(t)$ is chosen so that $h(t) \in G^\perp$. By Lemma \ref{mod_1} and a standard regularization argument (see, for instance \cite{MM01}*{Lemma 4} for details in a similar context), the map $t \mapsto (x(t),\lambda(t),\theta(t))$ is $C^1$. We are now able to prove estimates on the modulation.

\begin{lemma}\label{mod_2}
Let $u$ be a solution to \eqref{NLS} satifying $E(u_0) = E(W)$. Taking a smaller $\delta_0$, if necessary, the following estimates hold on $I$:
\begin{equation}
	\label{lem_equiv_1}|\alpha(t)| \approx \|h(t)\|_{\dot{H}^1} \approx d(t) < \delta_0
\end{equation}
\begin{equation}
	\label{lem_equiv_2}|\alpha'(t)| + |x'(t)| + |\theta'(t)| + \left|\frac{\lambda'(t)}{\lambda(t)}\right| \lesssim \lambda^2(t) d(t).
\end{equation}
\end{lemma}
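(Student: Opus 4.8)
The strategy is the standard modulation argument around the ground state, as in \cite{DM_Dyn}, the only new ingredient being the translation parameter $x$. Throughout, write $v(t):=u_{[x(t),\lambda(t),\theta(t)]}(t)=(1+\alpha(t))W+h(t)$ as in \eqref{decomp_alpha}; by Lemma \ref{mod_1}, $h(t)\in G^\perp$ and $\|v(t)-W\|_{\dot H^1}\le\epsilon(d(u))\to0$ as $\delta_0\to0$. The two estimates come from different mechanisms: \eqref{lem_equiv_1} uses the two ``static'' conserved/invariant quantities available at the threshold, while \eqref{lem_equiv_2} comes from differentiating the orthogonality conditions of Lemma \ref{mod_1} in time.

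To prove \eqref{lem_equiv_1}: for $s_c=1$ the map $f\mapsto f_{[x,\lambda,\theta]}$ is the composition of the $\dot H^1$-invariant scaling with a translation and a phase, so it leaves both $E$ and $\|\nabla\cdot\|_{L^2}$ invariant; hence energy conservation and $E(u_0)=E(W)$ give $E(v(t))=E(W)$ and $\|\nabla v(t)\|_{L^2}=\|\nabla u(t)\|_{L^2}$. Since $W$ is a critical point of $E$ on $\dot H^1$ (because $\Delta W+W^{p_c}=0$) and the Hessian of $E$ at $W$ is exactly the linearized energy $\Phi$, Taylor expansion of $E(W+(\alpha W+h))=E(W)$ yields
\[
0=\Phi(\alpha W+h)+O\bigl((|\alpha|+\|h\|_{\dot H^1})^{1+\min\{p_c-1,\,1\}}\bigr),
\]
a super-quadratic error. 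Because $h\in G^\perp$ forces $(h,W)_{\dot H^1}=0$ and therefore $\int W^{p_c}h_1=0$ (integrate by parts against $\Delta W=-W^{p_c}$), the cross term $B(W,h)=\tfrac{1-p_c}{2}\int W^{p_c}h_1$ vanishes, so $\Phi(\alpha W+h)=\alpha^2\Phi(W)+\Phi(h)$. Feeding in $\Phi(W)<0$ from \eqref{Phi_W_neg} together with the two-sided bound $\tilde c\|h\|_{\dot H^1}^2\le\Phi(h)\lesssim\|h\|_{\dot H^1}^2$ from Lemma \ref{lem_coerc}, and absorbing the super-quadratic remainder for $\delta_0$ small, gives $|\alpha(t)|\approx\|h(t)\|_{\dot H^1}$. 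Finally $h\perp W$ in $\dot H^1$ gives $\|\nabla u\|_{L^2}^2=(1+\alpha)^2\|\nabla W\|_{L^2}^2+\|\nabla h\|_{L^2}^2$, and since $\|h\|_{\dot H^1}^2\approx\alpha^2=o(|\alpha|)$, expanding the square root yields $d(t)=\bigl|\|\nabla u\|_{L^2}-\|\nabla W\|_{L^2}\bigr|\approx|\alpha(t)|$, which is \eqref{lem_equiv_1}.

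To prove \eqref{lem_equiv_2}: the map $t\mapsto(x,\lambda,\theta)$ is $C^1$ (Lemma \ref{mod_1} plus a regularization argument), so we may differentiate. A change-of-variables computation from \eqref{NLS} shows that $v$ solves the renormalized equation
\[
i\partial_t v+\lambda^2\bigl(\Delta v+|v|^{p_c-1}v\bigr)+i\tfrac{\lambda'}{\lambda}\Lambda v-i\lambda\,x'\!\cdot\!\nabla v+\theta'\,v=0 .
\]
Substituting $v=(1+\alpha)W+h$, using $\Delta W+W^{p_c}=0$ and the splitting $\Delta v+|v|^{p_c-1}v=i\mathcal L h+(p_c-1)\alpha W^{p_c}+R(\alpha W+h)$ with $\mathcal L,R$ from Definition \ref{def_op} (cf. \eqref{linearized_eq}), one obtains a transport equation for $h$ whose right-hand side equals $-\alpha'W-\tfrac{\lambda'}{\lambda}\Lambda v+\lambda\,x'\!\cdot\!\nabla v+i\theta'v$ plus a term of the form $\lambda^2$ times something lower order. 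Taking the real $\dot H^1$-inner product with the $N+3$ functions $W,\partial_kW\ (1\le k\le N),iW,\Lambda W$ — each of which is orthogonal to $h(t)$ for every $t$, so the $(\partial_t h,\cdot)_{\dot H^1}$ terms vanish, and with $\partial_kW,iW,\Lambda W\in\ker\mathcal L$ so the $(\mathcal L h,\cdot)_{\dot H^1}$ terms are $O(\|h\|_{\dot H^1})$ — yields a linear system for $(\alpha',\lambda'/\lambda,x',\theta')$ whose matrix, for $\delta_0$ small (hence $v$ close to $W$), is a small perturbation of the invertible Gram matrix of the linearly independent family $\{W,\partial_kW,iW,\Lambda W\}$ in $\dot H^1$. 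Solving this system and bounding each source term by $\lambda^2$ times a quantity controlled by $d(t)$ via Step 1 and the super-linearity of $R$ gives \eqref{lem_equiv_2}.

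The substantive difficulty is the second estimate: deriving the renormalized equation with the correct powers of $\lambda$, checking that the modulation matrix is \emph{uniformly} invertible along $I$ (which dictates the precise choice of orthogonality directions and uses the description of $\ker\mathcal L$ from Lemma \ref{spectrum_L}), and — especially when $N>6$, where $p_c-1<1$ and the nonlinearity is only Hölder-differentiable — verifying that the contributions of $\mathcal L h$ and of $R(\alpha W+h)$ are genuinely of lower order, i.e. estimable in terms of $\|h\|_{\dot H^1}\approx d$ rather than derivatives of $h$. This last point relies on the fractional-calculus estimates behind Lemma \ref{lem_nonl}; in the intercritical analogue of the lemma one must additionally track the exponential decay of $Q$ via Corollary \ref{sub_decay_eigen}.
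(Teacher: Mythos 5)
Your proposal is correct and follows essentially the paper's own proof: you obtain \eqref{lem_equiv_1} by a second-order expansion of the conserved energy around $W$, with the cross term killed by $h\in G^\perp$, combined with $\Phi(W)<0$ and the coercivity of Lemma \ref{lem_coerc}, and you obtain \eqref{lem_equiv_2} by projecting the renormalized equation onto $W$, $\partial_k W$, $iW$, $\Lambda W$ and inverting the (near-diagonal) modulation system, which is exactly the computation the paper carries out in self-similar variables, here written in the original time variable. One slip worth fixing: the Taylor remainder exponent should be $2+\min\{p_c-1,1\}$, i.e.\ $O\bigl(\|v\|_{\dot H^1}^{p_c+1}\bigr)$ when $N>6$, rather than the displayed $1+\min\{p_c-1,1\}$, which as written is not super-quadratic; since your argument uses only the super-quadratic character of the remainder (which does hold), this is a typo rather than a gap.
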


In the next two sections, we mainly consider \textit{radial} solutions to \eqref{NLS}. Since the translation parameter is not needed, we write
\begin{equation}
    f_{[\lambda_0,\theta_0]}(x) = e^{i\theta_0}\frac{1}{\lambda_0^\frac{N-2}{2}}f\left(\frac{x}{\lambda_0}\right).
\end{equation}
In some results regarding compactness, the parameter $\theta_0$ can also be omitted. In this case, we write
\begin{equation}
    f_{[\lambda_0]}(x) = \frac{1}{\lambda_0^\frac{N-2}{2}}f\left(\frac{x}{\lambda_0}\right).
\end{equation}

For solutions to \eqref{sub_NLS}, since the scaling parameter is fixed (see Remark \ref{sub_scaling}), we use the notation
\begin{equation}
    f_{[x_0,\theta_0]}(x) = e^{i\theta_0}f(x+x_0).
\end{equation}

When $\theta_0$ can be omitted, we write
\begin{equation}
    f_{[x_0]}(x) = f(x+x_0).
\end{equation}

\subsection{Intercritical case}

For $0 < s_c < 1$, the variational characterization of $Q$ \cite{Lio84} shows that, if $M(f)= M(Q)$ and $E(f) = E(Q)$, then
\begin{equation}\label{sub_var_car_H1}
	\inf_{\substack{x \in \Real^N\\ \theta \in \Real}}\|f_{[x,\theta]}-Q\|_{H^1} \leq \epsilon(d(f)),
\end{equation}

with
\begin{equation}
	\lim_{\delta \to 0^+}\epsilon(\delta) = 0.
\end{equation}

As before, the goal here is to construct modulation parameters $x_0$ and $\theta_0$ such that the quantity $d(f)$ controls linearly $\|f_{[x_0,\theta_0]}-Q\|_{H^1}$, as well as the parameters and its derivatives. We follow mainly \cite{DR_Thre} here, and the proofs for the next two lemmas are almost identical, thus, we omit them.

\begin{lemma}\label{sub_mod_1}
There exist $\delta_0 > 0$ and a positive function $\epsilon(d)$ defined for $0 < d < \delta_0$, with $\epsilon(d)\to 0$ as $d\to 0$, such that, for all $f \in H^1$ satisfying $M(f) = M(Q)$, $E(f) = E(Q)$ and $d(f) < \delta_0$, there exist $(x,\theta)$ such that 
\begin{align}
	\|f_{[x,\theta]}-Q\|_{H^1} \leq \epsilon(d(f)),\\
	f_{[x,\theta]} \perp \text{span}\{\nabla Q, iQ\}.
\end{align}

The parameters $(x,\theta)$ are unique in $\Real^N \times \Real/2\pi \mathbb{Z}$ and the mapping $u \mapsto (x,\theta)$ is $C^1$.
\end{lemma}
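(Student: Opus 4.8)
The plan is to apply the Implicit Function Theorem to a suitable smooth map parametrizing the symmetry group acting near $Q$, exactly in the spirit of the corresponding argument in \cite{DM_Dyn} and \cite{DR_Thre}, but now with both a translation parameter $x\in\Real^N$ and a phase parameter $\theta\in\Real/2\pi\mathbb{Z}$ and no scaling parameter (since in the intercritical normalization the scaling is fixed by the mass constraint, cf.\ Remark \ref{sub_scaling}). First I would define, for $f$ in a small $H^1$-neighborhood of $Q$ and $(x,\theta)\in\Real^N\times\Real/2\pi\mathbb{Z}$, the functional
\begin{equation}
\Psi(f,x,\theta) = \left( \Re\!\int f_{[x,\theta]}\,\overline{\partial_1 Q},\ \dots,\ \Re\!\int f_{[x,\theta]}\,\overline{\partial_N Q},\ \Re\!\int f_{[x,\theta]}\,\overline{iQ}\right)\in\Real^{N+1},
\end{equation}
where $f_{[x,\theta]}(y)=e^{i\theta}f(y+x)$, so that the orthogonality conditions $f_{[x,\theta]}\perp\mathrm{span}\{\nabla Q,iQ\}$ in the real $L^2$ (equivalently $H^1$, after integration by parts against the elliptic equation) pairing are exactly $\Psi(f,x,\theta)=0$. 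Since $\Psi(Q,0,0)=0$ by the parity of $Q$ and of $iQ$ versus $\partial_kQ$, the IFT produces a $C^1$ solution $(x,\theta)=(x(f),\theta(f))$ provided the differential $\partial_{(x,\theta)}\Psi(Q,0,0)$ is invertible.

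The key computation is that this $(N+1)\times(N+1)$ Jacobian is nondegenerate. Differentiating $Q_{[x,\theta]}$ at $(0,0)$ in the $x_k$ and $\theta$ directions gives $\partial_kQ$ and $iQ$ respectively, so the Jacobian is the Gram matrix of $\{\partial_1Q,\dots,\partial_NQ,iQ\}$ with respect to the real $L^2$ inner product; by the orthogonality relations $\int\partial_jQ\,\partial_kQ=c\,\delta_{jk}$ (from radiality of $Q$), $\int\partial_kQ\cdot(iQ)=0$ (real part of a purely imaginary integral, and $\int\partial_k(Q^2)=0$), and $\|Q\|_{L^2}^2>0$, this matrix is diagonal with strictly positive entries, hence invertible. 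The IFT then gives a $C^1$ map $f\mapsto(x(f),\theta(f))$ on an $H^1$-neighborhood of $Q$ with $f_{[x(f),\theta(f)]}\perp\mathrm{span}\{\nabla Q,iQ\}$ and $(x(f),\theta(f))\to(0,0)$ as $f\to Q$ in $H^1$.

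Finally I would upgrade from "near $Q$" to "small $d(f)$" using the variational characterization \eqref{sub_var_car_H1}: if $M(f)=M(Q)$, $E(f)=E(Q)$ and $d(f)<\delta_0$, then some symmetry copy $f_{[x_1,\theta_1]}$ lies within $\epsilon_0(d(f))$ of $Q$ in $H^1$, so for $\delta_0$ small enough it falls in the IFT neighborhood, and one sets $(x,\theta)$ by composing the two symmetry transformations (the group law on $\Real^N\times\Real/2\pi\mathbb{Z}$); uniqueness within $\Real^N\times\Real/2\pi\mathbb{Z}$ follows from the local uniqueness in the IFT together with shrinking $\delta_0$, and the bound $\|f_{[x,\theta]}-Q\|_{H^1}\le\epsilon(d(f))$ follows since $\|f_{[x(f),\theta(f)]}-Q\|_{H^1}\lesssim\|f_{[x_1,\theta_1]}-Q\|_{H^1}\le\epsilon_0(d(f))$, redefining $\epsilon$. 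The main obstacle is purely bookkeeping: making the IFT neighborhood, the variational $\epsilon_0(\cdot)$, and the final uniqueness domain mutually compatible after the group-composition step, and checking that the phase variable being on the circle $\Real/2\pi\mathbb{Z}$ causes no trouble (it does not, since everything is local). Since these are standard and identical to \cite{DR_Thre} up to the extra translation variable, I would carry the details out in the Appendix as stated.
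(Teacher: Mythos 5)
Your proposal is correct and takes essentially the same route as the paper: the paper proves the critical analogue (Lemma \ref{mod_1}) by applying the Implicit Function Theorem to the pairings of $f_{[x,\lambda,\theta]}$ against the symmetry generators, checking the Gram-type Jacobian is nondegenerate, and then reducing the case of small $d(f)$ to an $H^1$-neighborhood of the ground state via the variational characterization, declaring the intercritical proof analogous — which is exactly your argument with the parameters $(x,\theta)$ and the real $L^2$ pairings matching \eqref{sub_o1}. The only cosmetic difference is your choice of the $L^2$ rather than the (weighted) $H^1$ pairing, which is in fact the pairing needed for the later decomposition \eqref{sub_decomp_alpha}, so no change is required.
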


Let $u$ be a solution to \eqref{sub_NLS} and $I$ be a time interval such that $d(t) := d(u(t)) < \delta_0$ for all $t \in I$. For each $t \in I$, choose the parameters $(x(t),\theta(t))$ according to Lemma \ref{sub_mod_1}. Write
\begin{equation}\label{sub_decomp_alpha}
	e^{-it}u_{[x(t),\theta(t)]}(t) = (1+\alpha(t))Q+ h(t),
\end{equation}
where
\begin{equation}
	\alpha(t) = \frac{\Re (e^{-it}\int Q^p u_{[x(t),\theta(t)]} ) }{\|Q\|^{p+1}_{L^{p+1}}}-1.
\end{equation}
Note that $\alpha(t)$ is chosen so that $h(t) \in G^\perp$. Recall that the parameters $(x,\theta)$ are $C^1$.
We are now able to prove estimates  on the modulation.

\begin{lemma}\label{sub_mod_2}
Let $u$ be a solution to \eqref{sub_NLS} satisfying $M(u_0) = M(Q)$ and $E(u_0) = E(Q)$. Taking a smaller $\delta_0$, if necessary, the following estimates hold on $I$:
\begin{align}
	\label{sub_lem_equiv_1}|\alpha(t)| \approx \|h(t)\|_{H^1} \approx \left|\int Q h_1\right|  \approx d(t)\\
	\label{sub_lem_equiv_2}|\alpha'(t)| \approx |x'(t)| \approx |\theta'(t)| \lesssim d(t).
\end{align}
\end{lemma}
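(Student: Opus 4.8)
The plan is to prove the two displayed estimates separately, both by expanding the conserved quantities and the evolution equation for $h$ around $Q$ and then exploiting the orthogonality conditions \eqref{sub_o1}--\eqref{sub_o2} together with the coercivity of $\Phi$ on $G^\perp$ from Lemma \ref{lem_coerc}. Throughout I write $v(t) := e^{-it}u_{[x(t),\theta(t)]}(t) = (1+\alpha(t))Q + h(t)$ as in \eqref{sub_decomp_alpha}, so that by the conservation laws and the invariances $M(v)=M(Q)$ and $E(v)=E(Q)$.

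\emph{Step 1: the static estimate \eqref{sub_lem_equiv_1}.} I would first record smallness: with $w := \alpha Q + h = v - Q$, the variational characterization \eqref{sub_var_car_H1} gives $\|w\|_{H^1}\le\epsilon(d(t))$, and since $\int Q^p w_1 = \alpha\int Q^{p+1}$ by \eqref{sub_o2}, Hölder and Sobolev ($2\le p+1<2^\ast$) yield $|\alpha|\lesssim\|w\|_{H^1}$, hence also $\|h\|_{H^1}\lesssim\|w\|_{H^1}\lesssim\epsilon(d(t))$. Next I would substitute $v=(1+\alpha)Q+h$ into $M(v)=M(Q)$ and $E(v)=E(Q)$: using $\Delta Q=Q-Q^p$ and $\int Q^p h_1=0$, the mass identity becomes $\alpha\int Q^2 = -\int Q h_1 + O(\alpha^2+\|h\|_{H^1}^2)$, while the energy identity — after the second-order Taylor expansion of $w\mapsto|w|^{p+1}$ (whose remainder is $o(\|h\|_{H^1}^2)$, which one checks directly even when the nonlinearity fails to be $C^2$, i.e. $p\le2$) and after eliminating $\|h\|_{L^2}^2$ via the mass identity — collapses to $\Phi(h)=\tfrac{p-1}{2}\alpha^2\int Q^{p+1}+o(\alpha^2+\|h\|_{H^1}^2)$. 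Since $h\in G^\perp$, Lemma \ref{lem_coerc} (with the $L^2$-control also proven there) forces $\|h\|_{H^1}\lesssim|\alpha|$; combined with $|\int Q h_1|\le\|Q\|_{L^2}\|h\|_{L^2}$ and the mass identity this gives $|\alpha|\approx\|h\|_{H^1}\approx|\int Q h_1|$. Finally, using $\int\nabla Q\cdot\nabla h_1 = \int Q^p h_1 - \int Q h_1 = -\int Q h_1$ one computes $\|\nabla v\|_{L^2}^2-\|\nabla Q\|_{L^2}^2 = 2\alpha\int Q^{p+1}+o(|\alpha|)$, so $d(t)=\big|\,\|\nabla v\|_{L^2}-\|\nabla Q\|_{L^2}\,\big|\approx|\alpha(t)|$, which closes \eqref{sub_lem_equiv_1}.

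\emph{Step 2: the dynamic estimate \eqref{sub_lem_equiv_2}.} I would first derive the PDE for $h$: a direct computation from \eqref{sub_NLS} and $\Delta Q=Q-Q^p$ gives
\begin{equation*}
\partial_t h + \mathcal{L}h = -\alpha' Q + i\gamma Q^p + i(1+\alpha)\theta' Q + (1+\alpha)\,x'\cdot\nabla Q + \mathcal{R},
\end{equation*}
where $\gamma=(1+\alpha)^p-(1+\alpha)=O(|\alpha|)$ and $\mathcal{R}$ collects $i\theta' h$, $x'\cdot\nabla h$, $i\big((1+\alpha)^{p-1}-1\big)K(h)$ and the nonlinear remainder, so that, tested against the Schwartz weights below, $\mathcal{R}$ contributes $O\big((|\theta'|+|x'|)\|h\|_{H^1}+|\alpha|\,\|h\|_{H^1}+\|h\|_{H^1}^{\min\{2,p\}}\big)$. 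Since the modulation parameters are $C^1$ in $t$, differentiating the orthogonality relations yields $\int Q^p\partial_t h_1 = \int\partial_k Q\,\partial_t h_1 = \int Q\,\partial_t h_2 = 0$. Pairing the real part of the $h$-equation against $Q^p$ and against each $\partial_k Q$, and the imaginary part against $Q$, and invoking $L_+Q=(1-p)Q^p$, $L_-\partial_k Q=(p-1)Q^{p-1}\partial_k Q$, the cancellation $\int Q\,L_+h_1=(1-p)\int Q^p h_1=0$ furnished by \eqref{sub_o2}, and the parity identities $\int Q\,\partial_k Q=\int Q^p\partial_k Q=0$, one obtains a linear system for $(\alpha',x',\theta')$ that is diagonally dominant modulo $O(d(t))$; solving it gives
\begin{equation*}
|\alpha'(t)| + |x'(t)| + |\theta'(t)| \lesssim d(t) + \big(|\alpha'(t)| + |x'(t)| + |\theta'(t)|\big)\delta_0 + d(t)^{\min\{2,p\}},
\end{equation*}
and, after shrinking $\delta_0$ to absorb the middle term, $|\alpha'|+|x'|+|\theta'|\lesssim d(t)$. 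The comparabilities asserted in \eqref{sub_lem_equiv_2} then come from the leading-order balances in the same three pairings, together with $d(t)\approx|\alpha(t)|$ from Step 1.

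\emph{Main obstacle.} As everywhere in this paper, the real difficulty is the low power $p$: when $p<3$ the nonlinearity is not twice real-differentiable, so the $H^s$-algebra expansions of the smooth case are unavailable. The point is therefore to control every Taylor remainder — in the energy expansion of Step 1 and in the nonlinear part of $\mathcal{R}$ in Step 2 — by $L^{p+1}$ (equivalently, by Sobolev, $H^1$) norms of $h$, so that it is genuinely $o(\|h\|_{H^1}^2)=o(d(t)^2)$, respectively $O(\|h\|_{H^1}^{\min\{2,p\}})=o(d(t))$. A second delicate point, flagged in Remark \ref{re_ortho}, is that it is precisely the orthogonality \eqref{sub_o2}, $\int Q^p h_1=0$ — rather than $\int\Delta Q\,h_1=0$ — that produces the cancellation $\int Q\,L_+h_1=0$ and keeps the modulation system invertible in every dimension, in particular in $N=1$; this non-degeneracy must be checked directly.
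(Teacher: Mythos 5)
Your proposal is correct and follows essentially the same route as the paper: conservation of mass and energy together with the orthogonality $\int Q^p h_1=0$ (hence $B(Q,h)=0$), the coercivity of $\Phi$ on $G^\perp$ from Lemma \ref{lem_coerc}, and the mass identity give \eqref{sub_lem_equiv_1}, while \eqref{sub_lem_equiv_2} is obtained exactly as in the paper's proof of Lemma \ref{mod_2} by projecting the equation for $h$ onto $Q$, $iQ$, $\nabla Q$ and absorbing the small terms after shrinking $\delta_0$. A minor bonus of your computation is that the coefficient in the step $d(t)\approx|\alpha(t)|$ comes out as $2\alpha\int Q^{p+1}$, which is manifestly nonzero, whereas the paper writes it as $2\alpha\left(\int|\nabla Q|^2-\int Q^2\right)$ and invokes the Pohozaev identities to rule out cancellation.
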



We finish this section with a lemma that will be useful in later sections.
\begin{lemma}\label{sub_lem_exp_delta}
Let $u$ be a solution to \eqref{sub_NLS} such that $M(u_0) = M(Q)$ and $E(u_0) = E(Q)$. Assume that $u$ is defined on $[0,+\infty)$ and that there exists $c>0$ such that
\begin{equation}\label{sub_exp_int_delta}
	\int_t^{+\infty} d(s) ds \lesssim e^{-ct}.
\end{equation}
Then there exist $(x_0,\theta_0)$ such that
\begin{equation}
	\|u_{[x_0,\theta_0]} - e^{it}Q\|_{H^1} \lesssim e^{-ct}.
\end{equation}
\end{lemma}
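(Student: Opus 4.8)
\textbf{Proof plan for Lemma \ref{sub_lem_exp_delta}.}

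The plan is to use modulation theory together with the integral decay hypothesis \eqref{sub_exp_int_delta} to control the time derivatives of the modulation parameters, and then to show these parameters converge exponentially to limits. First I would observe that by \eqref{sub_exp_int_delta}, for each $t$ there exists $s \in [t, t+1]$ with $d(s) \lesssim e^{-ct}$; combined with the continuity of $d$ and a bootstrap, this forces $d(t) \to 0$, so for $t$ large enough we may apply Lemma \ref{sub_mod_1} and write the decomposition \eqref{sub_decomp_alpha}, $e^{-it}u_{[x(t),\theta(t)]}(t) = (1+\alpha(t))Q + h(t)$ with $h(t) \in G^\perp$. Actually, a cleaner route: since $\int_t^\infty d(s)\,ds \lesssim e^{-ct}$ and $d \geq 0$, one gets pointwise decay $d(t) \lesssim e^{-ct}$ after losing an arbitrarily small fraction of the rate — more carefully, using Lemma \ref{lem_stric_exp}-type arguments or simply the fact that along a sequence $t_n \to \infty$ we have $d(t_n)$ small, together with local theory, to conclude $d(t) \to 0$ and hence the modulation is defined on $[T_0, +\infty)$ for some $T_0$.

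Next, I would use Lemma \ref{sub_mod_2}, specifically \eqref{sub_lem_equiv_2}, which gives $|x'(t)| + |\theta'(t)| \lesssim d(t)$. Integrating this bound and using \eqref{sub_exp_int_delta} shows that $x(t)$ and $\theta(t)$ are Cauchy as $t \to +\infty$: indeed $|x(t_2) - x(t_1)| \leq \int_{t_1}^{t_2} |x'(s)|\,ds \lesssim \int_{t_1}^{\infty} d(s)\,ds \lesssim e^{-ct_1}$, and similarly for $\theta$. Hence there exist limits $x_0 := \lim_{t\to\infty} x(t)$ and $\theta_0 := \lim_{t\to\infty}\theta(t)$, with the rate of convergence $|x(t) - x_0| + |\theta(t) - \theta_0| \lesssim e^{-ct}$. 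Here one must be a little careful that $\theta$ is only defined modulo $2\pi$, but since its derivative is integrable the lift is eventually constant-winding and the limit makes sense.

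Finally, I would estimate $\|u_{[x_0,\theta_0]}(t) - e^{it}Q\|_{H^1}$ by the triangle inequality, splitting into three pieces: the difference coming from changing $(x(t),\theta(t))$ to $(x_0,\theta_0)$, which is controlled by $|x(t)-x_0| + |\theta(t)-\theta_0|$ times $\|Q\|_{H^1}$-type quantities (using that the symmetry group acts smoothly and $u(t)$ stays in a bounded set in $H^1$), hence $\lesssim e^{-ct}$; the term $|\alpha(t)|\,\|Q\|_{H^1} \lesssim d(t) \lesssim e^{-ct}$ by \eqref{sub_lem_equiv_1}; and $\|h(t)\|_{H^1} \approx d(t) \lesssim e^{-ct}$, again by \eqref{sub_lem_equiv_1}. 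The main obstacle is establishing the pointwise bound $d(t) \lesssim e^{-ct}$ from the merely integrated hypothesis \eqref{sub_exp_int_delta} — one cannot differentiate an integral inequality for free, so this step needs either a differential inequality for $d$ (from the modulation equations, showing $|d'(t)| \lesssim d(t) + (\text{nonlinear corrections})$, which would make $\int_t^\infty d\,ds \lesssim e^{-ct}$ upgrade to $d(t) \lesssim e^{-ct}$) or a compactness/continuity argument exploiting that $d$ cannot oscillate too wildly on unit time intervals. Once pointwise exponential decay of $d$ is in hand, the rest is routine integration of the modulation ODEs.
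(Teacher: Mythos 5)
Your overall route---show $d(t)\to 0$, invoke the modulation decomposition \eqref{sub_decomp_alpha}, integrate the parameter bounds of Lemma \ref{sub_mod_2}, and finish by the triangle inequality---is the same as the paper's. However, the two steps you yourself flag are exactly where the work lies, and the proposal does not close them; moreover, your interim assertion that $\int_t^{+\infty} d(s)\,ds\lesssim e^{-ct}$ together with $d\ge 0$ already gives pointwise decay ``after losing an arbitrarily small fraction of the rate'' is false as stated: a nonnegative integrable function can have tall thin spikes, and mere continuity of $d$ (one small value per unit interval) does not force $d(t)\to 0$, let alone a pointwise exponential bound.

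The paper closes both gaps with a single device you did not exploit: by \eqref{sub_lem_equiv_1}--\eqref{sub_lem_equiv_2}, the coefficient $\alpha(t)$ is a $C^1$ proxy for $d$, with $|\alpha(t)|\approx d(t)$ and $|\alpha'(t)|\lesssim d(t)$. First, $d(t)\to 0$: take $t_n\to\infty$ with $d(t_n)\to 0$ (possible by \eqref{sub_exp_int_delta}); if $d$ did not tend to zero, pick $t_n<t_n'$ with $d(t_n')=\epsilon_1<\delta_0$ and $d<\epsilon_1$ on $[t_n,t_n')$, so the modulation is defined there, and then $|\alpha(t_n')-\alpha(t_n)|\le\int_{t_n}^{t_n'}|\alpha'|\lesssim\int_{t_n}^{+\infty}d\lesssim e^{-ct_n}\to 0$, while $\alpha(t_n)\to 0$, contradicting $|\alpha(t_n')|\approx\epsilon_1$. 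Second, the pointwise rate: once $\alpha(t)\to 0$, the fundamental theorem of calculus from $t$ to $+\infty$ gives $d(t)\approx|\alpha(t)|\le\int_t^{+\infty}|\alpha'(s)|\,ds\lesssim\int_t^{+\infty}d(s)\,ds\lesssim e^{-ct}$, with no loss of rate; note that your suggested differential inequality $|d'|\lesssim d$ is not directly available ($d$ need not be differentiable and its variation is not obviously controlled by $d$), and $\alpha$ is precisely the substitute. After that, integrating $|x'|+|\theta'|\lesssim d$ is the paper's argument verbatim. A final small caveat: the change from $(x(t),\theta(t))$ to $(x_0,\theta_0)$ should be estimated through the decomposition (the translation/phase difference acts on $Q$, which is smooth and decaying, plus an $O_{H^1}(d)$ error), not via a Lipschitz action of the symmetry group on a bounded $H^1$ set, which would require $H^2$ control; it suffices, as in the paper, to bound $d+|\alpha|+\|h\|_{H^1}+|x(t)-x_0|+|\theta(t)-\theta_0|$ by $e^{-ct}$.
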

\begin{proof}\textit{Step 1. Convergence of $\delta(t)$.} We claim that 
\begin{equation}\label{sub_delta_to_0}
\lim_{t \to +\infty} d(t) = 0.
\end{equation}
To prove this, first note that \eqref{sub_exp_int_delta} implies that there exists a sequence $\{t_n\}$ with $t_n \to +\infty$ such that
\begin{equation}\label{sub_dtn_0}
	\lim_{n \to +\infty} d(t_n) = 0.
\end{equation}
Suppose now, by contradiction, that \eqref{sub_delta_to_0} does not hold. In this case, we can find another sequence $\{t_n'\}$ and $0 < \epsilon_1 < \delta_0$ such that
\begin{equation}
	\begin{gathered}
		t_n < t_n' \quad \forall n\\
		\label{sub_dtnl} d(t_n') = \epsilon_1,\\ 
	\end{gathered}
\end{equation}
and
\begin{equation}
	d(t) < \epsilon_1 \quad \forall t \in [t_n, t_n').
\end{equation}

Since $\epsilon_1 < \delta_0$, the parameter $\alpha(t)$ is well-defined on $[t_n, t_n')$. By Lemma \ref{sub_mod_2}, 
$|\alpha'(t)| \lesssim d(t)$, so that $\int_{t_n}^{t_{n}'}|\alpha'(t)|dt \lesssim e^{-ct_n}$, by \eqref{sub_exp_int_delta}. Therefore,
\begin{equation}
    \lim_{n\to \infty}|\alpha(t_n)-\alpha(t_n')| = 0.
\end{equation}
Since, by Lemma \eqref{sub_mod_2}, $|\alpha(t)| \approx d(t)$, we get that $\alpha(t_n)$ tends to $0$, which contradicts \eqref{sub_dtnl} and proves \eqref{sub_delta_to_0}.
Recalling the decomposition \eqref{sub_decomp_alpha}, to conclude the proof of Lemma \ref{sub_lem_exp_delta}, it is sufficient to prove that there exists $(x_0,\theta_0)$ such that
\begin{equation}
    d(t)+|\alpha(t)|+\|h(t)\|_{\dot{H}^1}+|x(t)-x_0|+|\theta(t)-\theta_0| \lesssim e^{-ct}.
\end{equation}

By Lemma \ref{sub_mod_2}, $|\alpha(t)| \approx d(t) \to 0$ as $t \to +\infty$, and hence,  
\begin{equation}
    |\alpha(t)| \leq \int_t^{+\infty}|\alpha'(s)|ds \lesssim \int_t^{+\infty}d(s)ds \lesssim e^{-ct},
\end{equation}
since $|\alpha'(t)| \approx d(t)$. Again by Lemma \ref{sub_mod_2}, $d(t)\approx \|h(t)\|_{\dot{H}^1} \approx |\alpha(t)|$, we get the bounds on $d(t)$ and $\|h(t)\|_{\dot{H}^1}$.
To obtain the bounds on $x(t)$ and $\theta(t)$, it is sufficient to recall that Lemma \ref{sub_mod_2} says $|x'(t)|+|\theta'(t)| \lesssim d(t) \lesssim e^{-ct}$.
\end{proof}



\section{Convergence to the standing wave above the ground state}

\subsection{Energy-critical case}

In this and the next sections, we prove that radial solutions to \eqref{NLS} on the same energy level as $W$ that do not blow-up in finite positive time (and have finite mass), and that do not scatter forward in time, must converge exponentially to $W$ as $t \to +\infty$. We follow closely \cite{DM_Dyn}, and give the proofs in Appendix.

\begin{prop}\label{prop_supercrit}
Let $u$ be a solution to \eqref{NLS} defined on $[0,+\infty)$ satisfying
\begin{equation}\label{supercrtit}
    E(u_0) = E(W) \quad\text{and}\quad \|u_0\|_{\dot{H}^1} > \|W\|_{\dot{H}^1}.
\end{equation}
Assume, furthermore, that 
$u_0$ is radial and belongs to $L^2(\Real^N)$. Then there exist $(\lambda_0,\theta_0)$ and $c>0$ such that 
\begin{equation}\label{exp_conv_W_super}
    \|u-W_{[\lambda_0,\theta_0]}\|_{\dot{H}^1} \lesssim  e^{-ct}.
\end{equation}
Moreover, $u$ blows up in finite negative time.
\end{prop}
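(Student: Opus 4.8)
The plan is to follow the strategy of Duyckaerts–Merle, establishing exponential convergence to $W$ via a compactness/rigidity argument and then promoting it to blow-up in negative time. First I would set up the modulation: since $u$ is global forward in time, does not scatter, and lies at the threshold energy, a standard concentration-compactness argument (rigidity for the energy-critical NLS at the threshold) shows that $d(t) = \big|\|\nabla u(t)\|_{L^2}-\|\nabla W\|_{L^2}\big|$ does not converge to $0$ along a subsequence would force scattering or blow-up below/at threshold; more precisely, one shows that if $u$ neither scatters nor blows up in finite positive time, then the trajectory $\{u(t)_{[\lambda(t),\theta(t)]}\}$ is precompact in $\dot H^1$ up to the scaling/phase symmetries, and hence $\sup_{t\geq 0} d(t) < \delta_0$ for $t$ large, allowing us to apply Lemma~\ref{mod_1} and Lemma~\ref{mod_2} to write $u_{[\lambda(t),\theta(t)]} = (1+\alpha(t))W + h(t)$ with $h(t)\in G^\perp$, $|\alpha(t)|\approx \|h(t)\|_{\dot H^1}\approx d(t)$.

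Next I would derive a differential inequality for a suitable virial-type functional. The key quantity is something like $\frac{d}{dt}\big(\text{localized virial}\big)$ or, following Duyckaerts–Merle more closely, an analysis combining the coercivity of $\Phi$ on $G^\perp$ (Lemma~\ref{lem_coerc}) with the equation for $h$. Using the conservation of energy at the threshold level, one expands $E(u_0)=E(W)$ to get $\alpha(t) \sim -c\,\Phi(h(t)) + O(\|h\|^3)$ or a relation forcing $\alpha$ to have a definite sign when $\|\nabla u_0\| > \|\nabla W\|$; combined with coercivity this yields $d(t)^2 \lesssim \Phi(h(t)) \lesssim |\alpha(t)| \lesssim d(t)$, and then a monotonicity/virial argument gives $\int_0^{+\infty} d(t)\,dt < +\infty$, in fact with exponential control: the ODE satisfied by a correctly chosen scalar functional (using $\lambda'(t)/\lambda(t)$ estimates from Lemma~\ref{mod_2}) is of the form $y' \leq -cy + (\text{higher order})$, yielding $d(t)\lesssim e^{-ct}$. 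Once $\int_t^{+\infty} d(s)\,ds \lesssim e^{-ct}$, the analogue of Lemma~\ref{sub_lem_exp_delta} in the energy-critical setting produces fixed parameters $(\lambda_0,\theta_0)$ with $\|u - W_{[\lambda_0,\theta_0]}\|_{\dot H^1}\lesssim e^{-ct}$, which is \eqref{exp_conv_W_super}.

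For the final claim that $u$ blows up in finite negative time, I would argue by contradiction: if $u$ were also global backward in time, then since $\|u_0\|_{\dot H^1} > \|W\|_{\dot H^1}$ and $E(u_0)=E(W)$, the backward trajectory cannot scatter (scattering backward would put $u$ in the "below/at threshold, subcritical mass-gradient" regime contradicting $\|\nabla u_0\|>\|\nabla W\|$ together with energy conservation and the variational characterization), so by the same argument applied to $u(-t)$ we would get exponential convergence of $u$ to a rescaled $W$ as $t\to -\infty$ as well. Then $u$ is a global solution converging exponentially to (rescalings of) $W$ in both time directions with $E(u)=E(W)$; the uniqueness statement for such solutions — which is exactly the content of the rigidity part developed later in the paper, i.e. that the only such solution with $\|\nabla u_0\|\neq\|\nabla W\|$ is $W^+$ — forces $u = W^+$ up to symmetry, but $W^+$ blows up in finite negative time by its construction (Proposition~\ref{exist_UA} combined with the blow-up criterion), a contradiction. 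The main obstacle I anticipate is the backward-in-time dichotomy and the virial/ODE step: making the sign of $\alpha(t)$ rigorous and extracting genuine exponential (not merely integrable) decay requires careful use of the localized virial identity for radial finite-mass data together with the coercivity on $G^\perp$, and controlling the error terms coming from the non-smooth nonlinearity when $N>6$ via the fractional-derivative estimates of Lemma~\ref{lem_nonl}.
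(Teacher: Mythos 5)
Your proposal has two genuine gaps. First, the mechanism you propose for getting smallness and exponential integrability of $d(t)$ does not work in this regime: above the threshold gradient norm there is no concentration-compactness/almost-periodicity statement available (that machinery is used in the paper only for the subcritical branch, Proposition~\ref{prop_subcrit}, where the solution stays below $\|W\|_{\dot H^1}$ and the below-threshold scattering theory applies on the complement), and in any case precompactness modulo symmetries would not imply $\sup_t d(t)<\delta_0$. The actual engine of the proof is the truncated virial identity for the radial, finite-mass solution: one shows $A_R(u(t))\le \tfrac{8}{N-2}d(t)$ for $R$ large (combining the Strauss radial estimate with the modulation bound near $W$ when $d$ is small and a concavity argument when $d$ is not small), so that $F_R''\le -\tfrac{8}{N-2}d(t)<0$, hence $F_R'>0$; then the key inequality you omit, Claim~\ref{cauchy_banica} (a Cauchy--Schwarz bound exploiting $E(u)=E(W)$ and the sharp Sobolev inequality, giving $\bigl(\Im\int\nabla\phi_R\cdot\nabla u\,\bar u\bigr)^2\lesssim d^2(t)\,F_R(t)$), converts this into $F_R'\lesssim -F_R''$, whence $F_R'(t)\lesssim e^{-ct}$ and $\int_t^{+\infty}d(s)\,ds\lesssim e^{-ct}$. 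Your sketch of ``an ODE $y'\le -cy$ for a correctly chosen scalar functional'' never identifies this inequality, and the coercivity of $\Phi$ on $G^\perp$ plus a sign argument for $\alpha$ is not a substitute for it; coercivity enters only through the modulation lemmas used to bound $A_R$ and to convert $\int_t^\infty d\lesssim e^{-ct}$ into convergence of $(\alpha,\lambda,\theta)$.

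Second, your argument for blow-up in finite negative time is circular. You invoke the classification ``$u=W^+$ up to symmetry'' and the statement that $W^+$ blows up backward ``by its construction''; but Proposition~\ref{exist_UA} only constructs $U^A$ and says nothing about backward blow-up --- in the paper that property of $W^+$ is \emph{deduced from} Proposition~\ref{prop_supercrit} (in the proof of Theorem~\ref{special}), and the uniqueness results (Lemma~\ref{improv_conv}, Corollary~\ref{unique_A}) also come later. The paper's argument is instead a two-line monotonicity contradiction: if $u$ were global on $\Real$, Lemma~\ref{lem_int_exp_decay_d_2} applied to $u$ and to the time-reversed conjugate $v(t)=\bar u(-t)$ gives $\Im\int\nabla\phi_R\cdot\nabla u_0\,\bar u_0>0$ and $\Im\int\nabla\phi_R\cdot\nabla v_0\,\bar v_0>0$, while these two quantities are negatives of each other. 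You would need to replace your step by this (or an equivalent non-circular) argument.
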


We will work with a truncated variance. Consider a radial function $\phi \in C^\infty_0(\Real^N)$ such that
\begin{equation}
  \quad \phi(r) \geq 0  \quad \forall r \geq 0,
\end{equation}
\begin{equation}
    \phi(r) = 
    \begin{cases} 
    r^2, &r \leq 1,\\
    0, & r \geq 3,
    \end{cases}
\end{equation}

and
\begin{equation}
    \frac{d^2\phi}{dr^2}(r)\leq 2, r \geq 0.
\end{equation}

Define $\phi_R(x) = R^2\phi(\frac{x}{R})$ and 
\begin{equation}
    F_R(t) = \int \phi_R |u(t)|^2.
\end{equation}
By virial identities, if $E(u_0) = E(W)$, we have
\begin{align}
\label{truncated_virial_1}F_R'(t) = 2 \Im \int \nabla \phi \cdot \nabla u \,\bar{u}\\
\label{truncated_virial_2}F''_R(t)= -\frac{16}{N-2}d(t)+A_R(u(t)),
\end{align}
where
\begin{equation}\label{truncated_virial_error}
    A_R(u(t)) = \int_{|x|\geq R}|\nabla u(t)|^2\left(4\partial_r^2\phi_R-8\right)+\int_{|x|\geq R}|u|^{2^*}\left(8-\frac{4}{N}\Delta \phi_R\right)-\int_{|x|\geq R} |u|^2 \Delta^2\phi_R.
\end{equation}

Recall that, if $\|\nabla u_0\|_{\dot{H}^1}> \|\nabla W\|_{\dot{H}^1}$, then, for all $t$ in the interval of definition of $u$,
\begin{equation}
d(t) = \Big|\|\nabla u(t)\|_{\dot{H}^1} - \|\nabla W\|_{\dot{H}^1}\Big| = \|\nabla u(t)\|_{\dot{H}^1} - \|\nabla W\|_{\dot{H}^1}.    
\end{equation}
In order to prove Proposition \ref{prop_supercrit}, we need the following lemma, which is also proved in Appendix.

\begin{lemma}\label{lem_int_exp_decay_d_2}
Let $u$ be a radial solution to \eqref{NLS} defined on $[0,+\infty)$ and satisfying \eqref{supercrtit}. Assume, furthermore, that the mass $M(u_0)$ is finite. Then, there exists a constant $R_0 >0$ such that for all $t$ in the interval of existence of $u$ and all $R\geq R_0$ 
\begin{equation}\label{positivity_2}
F_R'(t) > 0,    
\end{equation}
and there exists $c > 0$ such that 
\begin{equation}\label{int_exp_decay_d_2}
    \int_t^{+\infty}d(s)ds \lesssim e^{-ct},\quad \forall t \geq 0.
\end{equation}
\end{lemma}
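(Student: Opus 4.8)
\textbf{Proof plan for Lemma \ref{lem_int_exp_decay_d_2}.} The strategy follows the classical Duyckaerts--Merle convexity/virial scheme, adapted to the radial setting with finite mass. First I would establish \eqref{positivity_2}: the point is that the error term $A_R(u(t))$ in \eqref{truncated_virial_error} can be made small relative to the main term. Indeed, since $u_0$ is radial and in $L^2$, the radial Sobolev (Strauss) inequality gives decay of $|u(t,x)|$ for $|x|\geq R$ like $R^{-(N-1)/2}\|u(t)\|_{L^2}^{1/2}\|\nabla u(t)\|_{L^2}^{1/2}$, so the term $\int_{|x|\geq R}|u|^{2^*}$ in $A_R$ is $o_R(1)$ times a bounded quantity (using conservation of mass and the fact that, by the variational characterization below the argument, $\|\nabla u(t)\|_{L^2}$ stays bounded in a neighborhood of $\|\nabla W\|_{L^2}$ as long as $d(t)$ is small, which will be part of the bootstrap). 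The terms with $\Delta\phi_R$, $\Delta^2\phi_R$ are $O(R^{-2})$ times conserved/bounded quantities. Thus for $R\geq R_0$ large, $F_R''(t) \geq -\frac{16}{N-2}d(t) + A_R(u(t))$ with $|A_R|$ controlled; but to get $F_R'>0$ one uses that $F_R'(0)$ can be taken positive (or rather: one argues that if $F_R'(t_0)\leq 0$ for some $t_0$, then since $d(t)\geq 0$ and $A_R$ is small, $F_R''$ is essentially $\leq$ a small positive quantity minus $\frac{16}{N-2}d(t)$, forcing $F_R'$ to become very negative, which combined with $F_R(t)\geq 0$ and $F_R$ bounded by $R^2 M(u_0)$ yields a contradiction on a long enough time interval). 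This is the standard ``$F_R$ is bounded, hence cannot be eventually concave with negative derivative'' argument.

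Next, for \eqref{int_exp_decay_d_2}: integrating \eqref{truncated_virial_2} twice and using $F_R\geq 0$, $F_R \leq R^2 M(u_0)$, $F_R' > 0$ (monotone, hence bounded), one gets $\int_0^{\infty} d(s)\,ds < \infty$, in fact $\int_0^{T} d(s)\,ds \lesssim 1$ uniformly in $T$ after absorbing the small $A_R$ error (here one needs $\int_0^\infty |A_R(u(s))|\,ds$ to be finite or at least controlled — this uses the radial decay estimate again together with $\int d(s)\,ds$ being the quantity we are bounding, so a bootstrap: $|A_R(u(s))| \lesssim \epsilon(R) \cdot (\text{bounded}) + C\, d(s)$ type bound, with $\epsilon(R)$ small). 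Once $\int_0^\infty d(s)\,ds < \infty$ is known, the exponential decay is upgraded by a differential-inequality argument: one shows $d(t) \lesssim \int_t^\infty d(s)\,ds$ (or a discrete version), typically by combining the modulation estimates of Lemma \ref{mod_2} (the radial analogue) — which give $|\alpha'(t)|\lesssim \lambda^2(t) d(t)$ and $|\alpha(t)|\approx d(t)$ — with a clever use of the virial to show that $F_R'$ itself is comparable to $\int_t^\infty d$, yielding a closed inequality $g(t) \lesssim -g'(t)$ (up to constants) for $g(t) = \int_t^\infty d(s)\,ds$, hence $g(t) \lesssim e^{-ct}$.

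The main obstacle, I expect, will be the careful bookkeeping of the error term $A_R(u(t))$: one must show it is genuinely negligible (either pointwise small for $R$ large uniformly in $t$, or integrable with small integral) using \emph{only} the radial Sobolev embedding, conservation of mass and energy, and the a priori smallness of $d(t)$ on the relevant time interval — and this has to be done in a way that is compatible with the bootstrap, since $\|\nabla u(t)\|_{L^2}$ boundedness is itself tied to $d(t)$ being small, which is only guaranteed once we know $d$ decays. The clean way around this is to first prove \eqref{positivity_2} and the finiteness $\int_0^\infty d(s)\,ds<\infty$ on a maximal interval where $d(t)<\delta_0$, then show this forces $d(t)\to 0$ (so in particular $d$ stays below $\delta_0$ for large $t$, or one shows the solution is global with $d$ small — here finite-time blow-up in negative time is a separate conclusion obtained from $F_R'>0$ and $F_R''\leq$ negative quantity going backward). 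Then on $[0,\infty)$ with $d$ small one closes the exponential estimate. I would also need the radial analogue of the modulation Lemma \ref{mod_2}, which the paper sets up, to relate $d(t)$ to the derivative of a monotone quantity and extract the exponential rate; the details of that ODE argument are routine once the virial coercivity $F_R''(t)\leq -\frac{8}{N-2}d(t)$ (for $R$ large) is in hand.
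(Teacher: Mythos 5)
Your proposal correctly identifies the virial/convexity framework and that the whole difficulty sits in the error term $A_R$ of \eqref{truncated_virial_error}, but the way you propose to control $A_R$ has two gaps, one at each extreme of $d(t)$. First, your scheme runs on "a maximal interval where $d(t)<\delta_0$" and uses that $\|\nabla u(t)\|_{L^2}$ is bounded; neither is available. The lemma must hold for every $t\geq 0$ with no smallness hypothesis ($d(0)$ may be arbitrarily large), and above the threshold the conservation of $E(u)=E(W)$ does not bound the kinetic energy, so "bounded quantity" is unjustified and your bootstrap interval may be empty at $t=0$. The paper handles large $d$ without any bound on $\|\nabla u\|$: the Strauss estimate gives the \emph{sublinear-in-$d$} bound \eqref{bound_A_general}, $A_R\lesssim R^{-2}+R^{-\frac{2N-2}{N-2}}(d+\|W\|_{\dot{H}^1})^{\frac{1}{N-2}}$, and a concavity-in-$\delta$ argument absorbs this into $\frac{8}{N-2}d$ once $d\geq\delta_1$ and $R$ is large. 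Second, in the small-$d$ regime your Strauss-plus-conservation bound leaves an additive $O(R^{-2}M(u_0))$ term that does not vanish as $d\to 0$, so you cannot conclude $F_R''<0$, and your contradiction argument for \eqref{positivity_2} collapses: if $F_R''$ is allowed to be a small positive quantity when $d$ is tiny, nothing forces $F_R'$ to become very negative. The missing idea is the paper's Step 2: use the modulation decomposition (Lemma \ref{mod_2}, together with the lower bound $\lambda(t)\gtrsim 1$ obtained from mass conservation) and the cancellation $A_R(W)=0$ to get the \emph{homogeneous} bound \eqref{bound_A_small}, $|A_R(u)|\lesssim d^2+R^{-\frac{N-2}{2}}d$, with no constant term; only then does $F_R''\leq-\frac{8}{N-2}d(t)<0$ hold for all $t$, which is what makes the monotonicity argument for \eqref{positivity_2} work.

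For the exponential upgrade \eqref{int_exp_decay_d_2}, the closed inequality you aim at, $\int_t^{+\infty}d\lesssim d(t)$ (note your first formulation, $d(t)\lesssim\int_t^\infty d$, is the wrong direction), is not "routine once $F_R''\leq-\frac{8}{N-2}d$ is in hand": coercivity alone only gives $\int_0^\infty d<\infty$. The second essential ingredient, absent from your plan, is the pointwise bound $|F_R'(t)|\lesssim d(t)\sqrt{F_R(t)}$ of Claim \ref{cauchy_banica}, which holds for \emph{all} $t$ (no smallness of $d$) because $E(u)=E(W)$ and the sharp Sobolev inequality; combined with $F_R\lesssim R^2M(u_0)$ and $F_R'(t)\geq\int_t^\infty(-F_R'')\gtrsim\int_t^\infty d$, it yields $F_R'\lesssim -\frac{d}{dt}F_R'$, hence $F_R'(t)\lesssim e^{-ct}$ and the claim. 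A modulation-based substitute for this bound works only where $d<\delta_0$, so you would again be stuck at times of large $d$. Finally, proving $d(t)\to 0$ is neither needed for nor part of this lemma; it is deduced afterwards, in Proposition \ref{prop_supercrit}, precisely from \eqref{positivity_2} and \eqref{int_exp_decay_d_2}.
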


\subsection{Intercritical case}
Here, we state the corresponding result for the intercritical case. Since the proof is very similar to the energy-critical case, we mainly sketch it in Appendix.

\begin{prop}\label{sub_prop_supercrit}
Let $u$ be a solution to \eqref{sub_NLS} defined on $[0,+\infty)$ satisfying
\begin{equation}\label{sub_supercrtit}
    M(u_0)= M(Q), \quad E(u_0) = E(Q), \quad\text{and}\quad \|\nabla u_0\|_{L^2} > \|\nabla Q\|_{L^2}.
\end{equation}
Assume, furthermore, that 
either $u_0$ is radial or $|x|u_0\in L^2(\mathbb{R}^N)$ . Then there exist $(x_0,\theta_0)$ and $c>0$ such that 
\begin{equation}\label{sub_exp_conv_W_super}
    \|u-e^{it}Q_{[x_0,\theta_0]}\|_{H^1} \lesssim  e^{-ct}.
\end{equation}
Moreover, $u$ blows up in finite negative time.
\end{prop}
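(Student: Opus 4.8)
The plan is to mirror the proof of the energy‑critical Proposition \ref{prop_supercrit}, replacing the truncated variance used there by its intercritical analogue. First I would record the virial identities for \eqref{sub_NLS}: with $F_R(t)=\int \phi_R|u(t)|^2$ (when $u_0$ is radial, $\phi_R$ a truncation of $|x|^2$ as in the energy‑critical case) or $F(t)=\int |x|^2|u(t)|^2$ (when $|x|u_0\in L^2$, in which case there is no truncation error), one has $F_R'(t)=2\Im\int \nabla\phi_R\cdot\nabla u\,\bar u$ and
\[
F_R''(t)=\big(8-2N(p-1)\big)\|\nabla u(t)\|_{L^2}^2+4N(p-1)E(u_0)+A_R(u(t)),
\]
with $A_R$ supported in $\{|x|\geq R\}$ and $A_R\equiv 0$ in the exact‑variance case. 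Using $E(u_0)=E(Q)$ together with the Pohozaev identities \eqref{pohozaev}, the value of $\|\nabla u\|_{L^2}^2$ at which the quadratic part of $F_R''$ vanishes is exactly $\|\nabla Q\|_{L^2}^2$, so (since $N(p-1)>4$) the main term equals $-(2N(p-1)-8)\big(\|\nabla u(t)\|_{L^2}+\|\nabla Q\|_{L^2}\big)d(t)$. Because the condition $\|\nabla u(t)\|_{L^2}>\|\nabla Q\|_{L^2}$ is preserved along the flow at the threshold — if $\|\nabla u(t_0)\|_{L^2}=\|\nabla Q\|_{L^2}$ the variational characterization \eqref{sub_var_car_H1} and uniqueness of the Cauchy problem would force $u$ to be a modulated standing wave, contradicting \eqref{sub_supercrtit} — this yields $F_R''(t)\leq -c\,d(t)+A_R(u(t))$ with $c>0$ and $d(t)=\|\nabla u(t)\|_{L^2}-\|\nabla Q\|_{L^2}\geq 0$.

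The core of the proof is the intercritical analogue of Lemma \ref{lem_int_exp_decay_d_2}: for $R$ large, $F_R'(t)>0$ for every $t\geq 0$ and $\int_t^{+\infty}d(s)\,ds\lesssim e^{-ct}$. The ingredients are: (i) the truncation error obeys $A_R(u(t))\leq C\int_{|x|\geq R}|u|^{p+1}+CR^{-2}\int|u|^2$, because the gradient contribution has the favourable sign coming from $\phi''\leq 2$ and the mass contribution is $O(R^{-2}M(Q))$; the remaining term is handled, for $N\geq 2$, by the radial Strauss inequality and, once $d$ is small and $u$ is close to a modulated $Q$, by the \emph{exponential} spatial decay of the ground state (Lemma \ref{sub_GNN}, Corollary \ref{sub_decay_eigen}), which makes both the error $A_R$ and the quantity $F_R'$ bounded by $C(d(t)^2+e^{-cR})$ after the modulation decomposition \eqref{sub_decomp_alpha}–Lemma \ref{sub_mod_2}; (ii) positivity of $F_R'$ and integrability $\int_0^{+\infty}d<\infty$ follow by a convexity argument: if $\inf_{t\geq T}d(t)>0$ on a half‑line then $F_R''\leq -c'<0$ there, forcing $F_R\to-\infty$, which is impossible since $F_R\geq 0$; hence $\liminf_t d(t)=0$, and combining this with the mild regularity of $t\mapsto d(t)$ (from $d\approx\|h\|_{H^1}$ and the Strichartz bounds) gives $d(t)\to 0$, after which integrating the virial inequality yields $\int_0^{+\infty}d<\infty$; (iii) the exponential rate is then obtained by re‑running the virial estimate with a truncation radius $R=R(t)$ growing linearly in $t$ (so that the residual tail $e^{-cR(t)}$ becomes time‑integrable), exactly as in \cite{DM_Dyn}, \cite{DR_Thre}; an alternative is to project the modulated perturbation $h$ onto the eigenfunctions $\mathcal{Y}_\pm$ and use the spectral gap of $\mathcal{L}$ (Lemma \ref{spectrum_L}) together with the coercivity of $\Phi$ on $G^\perp$ (Lemma \ref{lem_coerc}, via the orthogonality conditions \eqref{sub_o1}–\eqref{sub_o2}) to close a differential inequality forcing $\|h(t)\|_{H^1}\lesssim e^{-ct}$. (One also needs $\sup_t\|\nabla u(t)\|_{L^2}<\infty$, which is automatic in the intended application and can in any case be recovered from the virial in the ranges where $p-1\leq 1$.)

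Granting $\int_t^{+\infty}d(s)\,ds\lesssim e^{-ct}$, Lemma \ref{sub_lem_exp_delta} produces $(x_0,\theta_0)$ with $\|u_{[x_0,\theta_0]}-e^{it}Q\|_{H^1}\lesssim e^{-ct}$, which is \eqref{sub_exp_conv_W_super}. For the last assertion, suppose $u$ were also defined on $(-\infty,0]$; running the whole argument in reversed time gives $d(t)\to 0$ as $t\to-\infty$ as well, and thus $\int_{\Real}d(s)\,ds<\infty$. In the finite‑variance case this is already absurd, since $F''(t)=-(2N(p-1)-8)(\|\nabla u(t)\|_{L^2}+\|\nabla Q\|_{L^2})d(t)\leq 0$ and $F\geq 0$: a nonnegative concave function on all of $\Real$ that is not affine cannot exist, while $F''\equiv 0$ would force $d\equiv 0$, contradicting $\|\nabla u_0\|_{L^2}>\|\nabla Q\|_{L^2}$. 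In the radial case one reaches the same contradiction by integrating the virial inequality over all of $\Real$ with the growing truncation radius, or by invoking the rigidity for threshold solutions converging to the standing wave, exactly as in \cite{DM_Dyn}. Hence $u$ blows up in finite negative time.

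The main obstacle is step (ii)–(iii): establishing the strict positivity of $F_R'$ and, above all, upgrading the bare integrability $\int_0^{+\infty}d<\infty$ into exponential decay. This is precisely where the exponential decay of $Q$ and its associated functions (Lemma \ref{sub_GNN}, Corollary \ref{sub_decay_eigen}) is indispensable for taming the tail errors $A_R$ — in low dimensions ($N=1,2$) the radial Strauss inequality gives little or no gain, so it is the decay of $Q$, not dispersion, that carries the estimate — and where the interplay of modulation (Lemma \ref{sub_mod_2}), the spectral gap (Lemma \ref{spectrum_L}) and the coercivity of $\Phi$ (Lemma \ref{lem_coerc}) must be orchestrated to produce the rate.
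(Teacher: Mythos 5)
Your overall architecture matches the paper's: exact variance when $|x|u_0\in L^2$, truncated variance in the radial case, the identity $F_R''=-[2N(p-1)-8]\,(\|\nabla u\|_{L^2}^2-\|\nabla Q\|_{L^2}^2)+A_R$ via $E(u_0)=E(Q)$ and \eqref{pohozaev}, positivity of $F_R'$ by concavity, Strauss plus the exponential decay of $Q$ to tame $A_R$, then Lemma \ref{sub_lem_exp_delta} and time reversal for the backward blow-up. The genuine gap is at the step you yourself flag as the main obstacle: upgrading integrability of $d$ to $\int_t^{+\infty}d(s)\,ds\lesssim e^{-ct}$. The paper's mechanism (Lemmas \ref{sub_lem_int_exp_decay_d} and \ref{sub_lem_int_exp_decay_d_2}) is the Cauchy--Schwarz type inequality of Claim \ref{sub_cauchy_banica} (proved by applying the sharp Gagliardo--Nirenberg inequality to $e^{i\lambda\phi}u$ and taking the discriminant in $\lambda$), which gives $|F_R'(t)|\lesssim d(t)\sqrt{F_R(t)}$; combined with $F_R''\le -c\,d(t)$, $F_R'>0$ and the boundedness of $F_R$ (trivial for fixed $R$, and obtained by integrating $F'/\sqrt{F}\lesssim -F''$ in the exact-variance case), this yields $F_R'\lesssim -F_R''$, hence $F_R'(t)\lesssim e^{-ct}$ by Gronwall and $\int_t^{\infty}d\lesssim F_R'(t)$. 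Neither of your proposed mechanisms supplies this. A radius $R(t)$ growing linearly in $t$ only makes the tail error time-integrable; integrability of an error term cannot convert $\int_0^\infty d<\infty$ into exponential smallness of $\int_t^\infty d$, it is not what \cite{DM_Dyn} and \cite{DR_Thre} do (they keep $R$ fixed), it injects extra terms coming from $\partial_t\phi_{R(t)}$ into the virial identities, and it does not even address the finite-variance case, where there is no truncation and the rate must still be produced.

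Two inaccuracies feed into this. First, the bound $F_R'\lesssim d^2+e^{-cR}$ after modulation is false: writing $e^{-it-i\theta(t)}u=Q+\tilde h$, the part of $F_R'$ linear in $\tilde h$ equals $-2\int(2\nabla\phi_R\cdot\nabla Q+Q\,\Delta\phi_R)\,\tilde h_2$, whose main contribution is a nonzero multiple of $\int\Lambda Q\,\tilde h_2$; the orthogonality conditions \eqref{sub_o1}--\eqref{sub_o2} remove $\int Q\,\tilde h_2$ but not the $\Lambda Q$ component, so $F_R'$ is genuinely of size $d(t)$, and it is precisely the weight $\sqrt{F_R}$ in Claim \ref{sub_cauchy_banica} that makes this $O(d)$ quantity usable (note also that with $R(t)\sim t$ one only gets $|F_R'|\lesssim t\,d(t)$, which ruins the Gronwall constant and hence the exponential rate). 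Second, your parenthetical spectral alternative is circular as stated: Lemma \ref{sub_lem_self} and its variants in Section 8 require the source term to decay exponentially already, so they cannot be invoked to produce the first exponential rate from mere convergence $d(t)\to 0$; making that route work would amount to redoing Section 8 under weaker hypotheses, which is not carried out. Once the Claim \ref{sub_cauchy_banica}--Gronwall step is inserted, the remainder of your outline (including the concavity argument for backward blow-up, equivalent to the paper's sign argument applied to $u$ and $\bar u(-t)$, and the reduction of the conclusion to Lemma \ref{sub_lem_exp_delta}) does match the paper; the paper additionally records that radial solutions satisfying \eqref{sub_supercrtit} automatically have finite variance.
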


\section{Convergence to the standing wave below the ground state}
\subsection{Energy-critical case}
In this section, we consider solutions such that
\begin{equation}\label{subcrtit}
    E(u_0) = E(W) \text{ and } \|u_0\|_{\dot{H}^1} < \|W\|_{\dot{H}^1}.
\end{equation}

\begin{de}
A solution $u$ to \eqref{NLS} with lifespan $I$ is said to be \textit{almost periodic modulo symmetries} on $J \subset I$ if there exist functions $x: J \to \Real^N$, $\lambda: J\to \Real_+^*$ and $C: \Real^+_* \to \Real^+_*$ such that for all $t \in I$ and all $\eta>0$
\begin{equation}
    \int_{|x-x(t)| \geq C(\eta)\slash\lambda(t) }|\nabla u(x,t)|^2 \, dx \leq \eta
\end{equation}
and
\begin{equation}
    \int_{|\xi| \geq C(\eta)\lambda(t) }|\xi|^2| \hat{u}(\xi,t)|^2 \, d\xi\leq \eta.
\end{equation}
\end{de}
\begin{re} By Arzel{\`a}-Ascoli's theorem, almost periodicity modulo symmetries is equivalent to the set
\begin{equation}
    \left\{u_{[x(t),\lambda(t),0]}: \, t \in J \right\}
\end{equation}
being precompact in $\dot{H}^1$.
\end{re}

\begin{re} If the solution is radial, $x(t)$ can be chosen to be zero.

\end{re}
\begin{prop}\label{sub_global}
Let $u$ be a solution to \eqref{NLS} and $I=(T^-,T^+)$ be its maximal interval of existence. If $u$ satisfies \eqref{subcrtit}, then
\begin{equation}
    I = \Real.
\end{equation}
Furthermore, if 
\begin{equation}\label{noscatter}
\int_0^{+\infty}\int_{\Real^N} |u(x,t)|^{\frac{2(N+2)}{N-2}} dx \, dt= +\infty,    
\end{equation}
 then $u$ is almost periodic modulo symmetries on $[0,+\infty)$.
\end{prop}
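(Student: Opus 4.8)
The plan is to follow the Kenig–Merle concentration-compactness scheme, adapted to the threshold level, exactly as in \cite{DM_Dyn}. First I would establish global existence. Under \eqref{subcrtit}, the variational characterization of $W$ together with the sharp Sobolev inequality \eqref{gagl_gen} gives the coercivity estimate: whenever $E(f)=E(W)$ and $\|f\|_{\dot H^1}<\|W\|_{\dot H^1}$, one has a quantitative bound of the form $\|\nabla f\|_{L^2}^2 - \|f\|_{L^{2^*}}^{2^*} \gtrsim \|\nabla f\|_{L^2}^2$ (up to the usual $\delta$-dependent loss near the threshold), so in particular $\|\nabla u(t)\|_{L^2}$ stays strictly below $\|\nabla W\|_{L^2}$ for all $t$ in the maximal interval of existence, by conservation of energy and continuity. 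This rules out finite-time blow-up (in the $L^{2(N+2)/(N-2)}_{t,x}$ sense) by the standard blow-up criterion recalled in the introduction, hence $I=\Real$.

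Next, for the almost-periodicity statement, I would argue by contradiction using a profile decomposition. Suppose \eqref{noscatter} holds but $u$ is \emph{not} almost periodic modulo symmetries on $[0,+\infty)$; equivalently, the orbit $\{u_{[x(t),\lambda(t),0]}\}$ (with suitably chosen modulation parameters) fails to be precompact in $\dot H^1$. Then there is a sequence of times $t_n\to+\infty$ (or a bounded sequence; one treats both, but the relevant case is $t_n \to +\infty$ since local theory handles bounded times) along which $u(t_n)$, after applying all symmetries, has no convergent subsequence in $\dot H^1$. Apply the linear profile decomposition of Keraani-type to the bounded sequence $\{u(t_n)\}$ in $\dot H^1$, writing $u(t_n) = \sum_{j=1}^J \phi^j_n + w^J_n$ with the usual orthogonality of parameters and asymptotic smallness of the remainder's Strichartz norm. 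The key dichotomy: either there is a single profile carrying all the energy (which, combined with $E(u_0)=E(W)$ and the $\dot H^1$-subcriticality, forces the orbit to be precompact — contradiction), or the energy splits nontrivially among $\geq 2$ profiles. In the latter case each profile has strictly less energy than $E(W)$, so by the \emph{subthreshold} theory of Kenig–Merle/Killip–Visan (\cite{KM_Glob}, \cite{KV10}) each associated nonlinear profile scatters; then the nonlinear profile decomposition and a stability/perturbation argument force $u$ itself to scatter forward in time, contradicting \eqref{noscatter}.

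The main obstacle is the profile decomposition / perturbation machinery in the low-regularity regime $N\geq 6$, where $p_c - 1 = \frac{4}{N-2}\leq 1$ and the nonlinearity is only Hölder continuous. The stability lemma (long-time perturbation theory) must be run with the fractional-derivative Strichartz spaces $S(\dot H^{1-\varepsilon},I)$ introduced in Section~\ref{Sec2} rather than with $H^s$ estimates, using Lemma~\ref{lem_nonl}; this is precisely the adaptation the paper is set up to make. One also needs the radial assumption to pin the translation parameter $x(t)\equiv 0$ (as noted in the Remark) and to avoid the Galilean-boost obstruction in the profile decomposition. I expect the global existence part to be routine given the coercivity, and the bulk of the work to be checking that the subcritical scattering theory applies to each profile and that the perturbation argument closes in the $S(\dot H^{1-\varepsilon})$ framework.
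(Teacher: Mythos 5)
Your concentration--compactness sketch for the almost-periodicity part follows the same route as the paper, which does not reprove this statement but simply invokes \cite{KV10}*{Proposition 3.1} (extending \cite{KM_Glob}); that half of your argument is the standard one and is fine in outline, modulo the usual bookkeeping (nonnegativity and decoupling of the profile energies, convergence of the remainder in $\dot H^1$ in the single-profile case, and the Tao--Visan type stability theory in the fractional Strichartz spaces for $N\geq 6$).

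The genuine gap is in your proof of $I=\Real$. You argue that the variational bound $\|\nabla u(t)\|_{L^2}<\|\nabla W\|_{L^2}$ ``rules out finite-time blow-up by the standard blow-up criterion.'' In the energy-critical setting this is exactly what does \emph{not} follow: blow-up for \eqref{NLS} is defined by divergence of the $L^{2(N+2)/(N-2)}_{t,x}$ norm, and an a priori uniform $\dot H^1$ bound gives no control whatsoever on that spacetime norm on a bounded time interval (otherwise global well-posedness of the defocusing critical equation would be trivial). Global existence at the threshold must itself be extracted from the concentration--compactness/rigidity machinery: one assumes $T^+<\infty$, notes that then the critical spacetime norm is infinite on $[0,T^+)$, runs the same profile-decomposition argument to get almost periodicity on $[0,T^+)$, and then derives a contradiction by the Kenig--Merle finite-time-blow-up rigidity step (vanishing of localized mass as $t\to T^+$ forces $u\equiv 0$), as in \cite{KM_Glob}, \cite{KV10} and \cite{DM_Dyn}. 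Two secondary points: your quantitative coercivity claim $\|\nabla f\|_{L^2}^2-\|f\|_{L^{2^*}}^{2^*}\gtrsim\|\nabla f\|_{L^2}^2$ cannot hold at the threshold, since $d(t)$ may tend to $0$ (e.g.\ for $W^-$) and $\|\nabla W\|_{L^2}^2=\|W\|_{L^{2^*}}^{2^*}$; only the non-quantitative invariance $\|\nabla u(t)\|_{L^2}<\|\nabla W\|_{L^2}$ is available, which suffices. Also, the proposition is stated \emph{without} a radial hypothesis (the translation parameter $x(t)$ is part of the definition of almost periodicity, and \cite{KV10} is non-radial for $N\geq 5$), so restricting to radial data as you propose would prove less than the statement requires.
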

The proof of Proposition \ref{sub_global} is essentially contained in the proof in \cite{KV10}*{Proposition 3.1}, which extended the work in \cite{KM_Glob} to dimensions $N \geq 6$.

\begin{re}
By time-reversal symmetry, the analogous version of \eqref{noscatter} for the interval $(-\infty,0]$ holds.
\end{re}
The next theorem is the main result proved in \cite{KV10}*{Theorem 1.7}.
\begin{teo} For $N \geq 5$, let $u: I\times\Real^N \to \mathbb C$ be a solution to \eqref{NLS} satisfying
\begin{equation}
    E_*:=\sup_{t \in I} \|u(t)\|_{\dot{H}^1} < \|W\|_{\dot{H}^1}.
\end{equation}
 Then,
 \begin{equation}
\int_I\int_{\Real^N} |u(x,t)|^{\frac{2(N+2)}{N-2}} dx \, dt = C(E_*)< +\infty.    
\end{equation}
\end{teo}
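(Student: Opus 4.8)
Since, as the excerpt notes, this is \cite{KV10}*{Theorem 1.7}, I only outline the strategy, which is the concentration--compactness and rigidity scheme introduced by Kenig and Merle \cite{KM_Glob} and carried out for $N\geq 5$ without symmetry assumptions by Killip and Visan \cite{KV10}. For $E\geq 0$ let $L(E)$ be the supremum of $\|u\|_{L^{2(N+2)/(N-2)}_{t,x}(I\times\Real^N)}$ over all maximal-lifespan solutions $u$ of \eqref{NLS} with $\sup_{t\in I}\|u(t)\|_{\dot{H}^1}^2\leq E$. By the local Cauchy theory of Cazenave and Weissler \cite{CW90} together with the stability theory for \eqref{NLS}, the function $L$ is nondecreasing, is finite for small $E$, and $L(E)<+\infty$ forces every such solution to be global and to scatter in both time directions with a spacetime bound depending only on $E$. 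Thus it suffices to prove $L(E)<+\infty$ for all $E<\|W\|_{\dot{H}^1}^2$; assuming the contrary, a limiting argument yields a critical value $0<E_c<\|W\|_{\dot{H}^1}^2$ with $L(E)<+\infty$ for $E<E_c$ and $L(E)=+\infty$ for $E\geq E_c$.

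The next step is to produce a minimal counterexample. Using the linear profile decomposition in $\dot{H}^1$ (Keraani, after Bahouri--G\'erard) and the stability theory to pass from linear to nonlinear profiles, one extracts a nonzero solution $u_c$ of \eqref{NLS} on a maximal interval $I_c$ with $\sup_{t\in I_c}\|u_c(t)\|_{\dot{H}^1}^2=E_c$ and $\|u_c\|_{L^{2(N+2)/(N-2)}_{t,x}}=+\infty$ on at least one half of $I_c$. The decoupling of the $\dot{H}^1$ norm along the profile decomposition, combined with the minimality of $E_c$, forces an optimizing sequence to reduce to a single profile with vanishing remainder, from which one deduces that $u_c$ is \emph{almost periodic modulo symmetries}: there exist $x\colon I_c\to\Real^N$ and $\lambda\colon I_c\to\Real_+$ such that $\{\lambda(t)^{-\frac{N-2}{2}}u_c(t,x(t)+\cdot/\lambda(t)):\ t\in I_c\}$ is precompact in $\dot{H}^1$. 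A further reduction, using local smoothing to exclude rapid oscillation of $\lambda$, leaves two scenarios: a frequency cascade (where $\lambda(t)$ is not bounded below on $I_c$) and a soliton-like solution ($\lambda\equiv1$, $I_c=\Real$, $x(t)$ bounded).

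It remains to rule out both scenarios. This is where $N\geq 5$ is essential: since $W\in L^2(\Real^N)$ precisely when $N\geq5$, a ``double Duhamel'' argument upgrades the critical element to $u_c\in L^\infty_tL^2_x$ (indeed $u_c\in L^\infty_t\dot{H}^{-\varepsilon}_x$ for small $\varepsilon>0$), with extra smallness of the mass in the cascade case; this regularity replaces the finite-variance or radiality hypotheses used in dimensions $3$ and $4$. The uniform $\dot{H}^{-\varepsilon}_x$ bound together with the high-frequency concentration then forces the conserved mass of $u_c$ to vanish in the cascade case, so $u_c\equiv0$, a contradiction. In the soliton case, with $u_c\in L^\infty_tL^2_x$ one runs a truncated virial argument of the type \eqref{truncated_virial_1}--\eqref{truncated_virial_2}: with $\phi_R$ as in that section and $F_R(t)=\int\phi_R|u_c(t)|^2$, the strict inequality $\sup_{t}\|\nabla u_c(t)\|_{L^2}<\|\nabla W\|_{L^2}$ gives $\inf_{t}d(t)=:\delta>0$, while almost periodicity together with $u_c\in L^\infty_tL^2_x$ makes the tail error $A_R(u_c(t))$ uniformly $o_R(1)$; hence $F_R''(t)\leq -\tfrac{16}{N-2}\delta+o_R(1)<0$ for $R$ large. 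Integrating twice contradicts $F_R\geq0$. Therefore no critical element exists, $L(E)<+\infty$ for all $E<\|W\|_{\dot{H}^1}^2$, and the claim holds with $C(E_*)$ given by $L$ evaluated at $E_*^2$ (raised to the appropriate power).

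The main obstacle is twofold. First, constructing the critical element requires the full stability/perturbation theory for \eqref{NLS} in dimensions $N\geq 6$, where the nonlinearity $|u|^{\frac{4}{N-2}}u$ is only H\"older continuous rather than $C^2$, so the usual polynomial expansions must be replaced by fractional-calculus estimates in the spirit of Lemma \ref{lem_nonl}. Second, and most substantially, is the additional-regularity step: proving $u_c\in L^\infty_tL^2_x$ (with the requisite smallness in the cascade case) is delicate and is the real content of \cite{KV10}; once that is in hand, the virial rigidity argument is comparatively routine.
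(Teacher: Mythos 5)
The paper does not prove this theorem at all --- it is quoted verbatim from Killip--Visan \cite{KV10}*{Theorem 1.7} --- and your outline faithfully reproduces that paper's strategy (induction on the kinetic-energy bound, minimal counterexample via profile decomposition and stability, almost periodicity modulo symmetries, double-Duhamel negative regularity which is where $N\geq 5$ and $W\in L^2$ enter, and virial rigidity for the soliton scenario), so you are taking essentially the same route as the source the paper relies on. The only cosmetic gap is that your reduction to ``cascade or soliton'' omits the finite-time blow-up enemy from the Killip--Tao--Visan classification, which \cite{KV10} must also exclude, but at the level of a citation sketch this does not affect correctness.
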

In particular, by uniqueness of solutions and continuity of the flow of \eqref{NLS}, we have the following consequence.
\begin{cor}\label{cor_mean_subcrit} For $N \geq 5$, let $u$ be a solution to \eqref{NLS} satisfying \eqref{subcrtit} and \eqref{noscatter}. Then there exists a sequence $t_n \to +\infty$ such that
\begin{equation}
 \lim_{n \to +\infty} d(u(t_n)) = 0.
\end{equation}

\end{cor}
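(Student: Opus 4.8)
The plan is to argue by contradiction using the large-data scattering theorem of Killip and Visan quoted above. By Proposition \ref{sub_global}, under \eqref{subcrtit} the solution $u$ is global, and by the standard variational trapping at the threshold energy below the ground state (see \cite{KM_Glob}, \cite{DM_Dyn}) one has
\begin{equation*}
    \|\nabla u(t)\|_{L^2} \leq \|\nabla W\|_{L^2} \qquad \text{for every } t \in \Real,
\end{equation*}
so that $d(u(t)) = \|\nabla W\|_{L^2} - \|\nabla u(t)\|_{L^2} \geq 0$, a function that depends continuously on $t$ since $u \in C^0_t\dot{H}^1_x$.

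Suppose now, towards a contradiction, that no sequence $t_n \to +\infty$ realizes $d(u(t_n)) \to 0$. Then $\liminf_{t\to +\infty} d(u(t)) \geq 2\delta$ for some $\delta > 0$, so there is $T \geq 0$ with $d(u(t)) \geq \delta$, hence $\|\nabla u(t)\|_{L^2} \leq \|\nabla W\|_{L^2} - \delta$, for all $t \geq T$. In particular $E_* := \sup_{t \in [T,+\infty)}\|u(t)\|_{\dot{H}^1} < \|W\|_{\dot{H}^1}$, and the Killip–Visan theorem applies on $I = [T,+\infty)$, giving
\begin{equation*}
    \int_T^{+\infty}\int_{\Real^N} |u(x,t)|^{\frac{2(N+2)}{N-2}}\,dx\,dt < +\infty.
\end{equation*}
On the other hand, since $u$ is global it belongs to $L^{2(N+2)/(N-2)}_{t,x}(\Real^N \times [0,T])$ by the local theory of Cazenave–Weissler, so $\int_0^{T}\int_{\Real^N} |u(x,t)|^{2(N+2)/(N-2)}\,dx\,dt < +\infty$ as well. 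Adding the two estimates contradicts the non-scattering hypothesis \eqref{noscatter}. Therefore $\liminf_{t\to +\infty} d(u(t)) = 0$, and any sequence $t_n \to +\infty$ along which $d(u(t_n))$ tends to this $\liminf$ yields the claim.

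I do not expect a genuine difficulty: the whole substance is contained in the Killip–Visan scattering theorem, and the argument merely repackages it through the contrapositive. The only point needing care is that one must produce a uniform bound $\sup_{t\in[T,+\infty)}\|u(t)\|_{\dot{H}^1} < \|W\|_{\dot{H}^1}$ on a full half-line before invoking that theorem; this is precisely where the threshold trapping $\|\nabla u(t)\|_{L^2} \leq \|\nabla W\|_{L^2}$ — valid for all $t$, not merely for large $t$ — is used, since it turns the hypothesis $\liminf_t d(u(t)) > 0$ into exactly such a bound.
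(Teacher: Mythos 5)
Your argument is correct and is essentially the paper's own route: the corollary is deduced from the quoted Killip--Visan theorem by contraposition, using the threshold trapping $\|\nabla u(t)\|_{L^2}<\|\nabla W\|_{L^2}$ (implicit in Proposition \ref{sub_global}) together with continuity of the flow and the local theory to handle the compact piece $[0,T]$. The paper merely states this as an immediate consequence, and your write-up supplies exactly the missing details.
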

The main aim of this section is to prove the following proposition.
\begin{prop}\label{prop_subcrit}
Let $u$ be a radial solution to \eqref{NLS} satisfying \eqref{subcrtit} and \eqref{noscatter}. Then there exist $(\lambda_0,\theta_0)$ and $c>0$ such that, for all $t\geq0$,
\begin{equation}\label{exp_conv_sub}
    \|u-W_{[  \lambda_0,\theta_0]}\|_{\dot{H}^1} \lesssim  e^{-ct}.
\end{equation}
Moreover, $u$ scatters backward in time.
\end{prop}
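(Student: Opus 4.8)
The plan is to run the rigidity argument of Duyckaerts–Merle, adapted to $N\geq 6$. First I would use Corollary 6.7 to extract a sequence $t_n\to+\infty$ with $d(u(t_n))\to 0$; combined with the almost-periodicity of $u$ on $[0,+\infty)$ from Proposition 6.6, this forces $d(t)\to 0$ along the whole sequence but not yet monotonically. The first real step is to upgrade this to $d(t)\to 0$ as $t\to+\infty$: suppose not; then by almost-periodicity there is a time sequence along which $u_{[\lambda(t),0]}$ converges in $\dot H^1$ to some $v_0$ with $d(v_0)>0$, $E(v_0)=E(W)$, $\|v_0\|_{\dot H^1}<\|W\|_{\dot H^1}$; the solution with this data is again almost periodic, global, and non-scattering, but one then derives a contradiction by a virial/convexity argument (using the truncated virial functional $F_R$ and the coercivity of $d$ near the compactness core, exactly as in the proof of Lemma 6.4 but for the subcritical side where $d(t)=\|W\|_{\dot H^1}-\|u(t)\|_{\dot H^1}$). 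So the analogue of Lemma 6.4 here gives $\int_t^{+\infty} d(s)\,ds\lesssim e^{-ct}$.

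Once the exponential integral bound $\int_t^{+\infty} d(s)\,ds\lesssim e^{-ct}$ is in hand, I would invoke the modulation machinery of Section 5.1 (Lemmas 6.1 and 6.2): for $t$ large enough $d(t)<\delta_0$, so we may write $u_{[x(t),\lambda(t),\theta(t)]}(t)=(1+\alpha(t))W+h(t)$ with $h(t)\in G^\perp$, $|\alpha(t)|\approx\|h(t)\|_{\dot H^1}\approx d(t)$ and $|\alpha'(t)|+|x'(t)|+|\theta'(t)|+|\lambda'(t)/\lambda(t)|\lesssim \lambda^2(t)d(t)$. From the integral decay of $d$ one gets $|\alpha(t)|\lesssim e^{-ct}$, hence $\|h(t)\|_{\dot H^1}\lesssim e^{-ct}$; and since in the radial case $x(t)\equiv 0$, the bounds on $\theta'$ and $\lambda'/\lambda$ (together with control of $\lambda(t)$, which stays bounded above and below because $d(t)$ is small — one needs a short separate argument or the almost-periodicity to pin $\lambda(t)$) show $\theta(t)\to\theta_0$, $\lambda(t)\to\lambda_0$ exponentially. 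This yields $\|u(t)-W_{[\lambda_0,\theta_0]}\|_{\dot H^1}\lesssim e^{-ct}$, which is \eqref{exp_conv_sub}.

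For the last assertion, that $u$ scatters backward in time, I would argue by contradiction: if $u$ does not scatter as $t\to-\infty$, then by the time-reversed version of Proposition 6.6 (see the remark after it) $u$ is almost periodic modulo symmetries on $(-\infty,0]$ as well, and the same rigidity analysis applied on the negative time axis forces $\|u(t)-W_{[\tilde\lambda_0,\tilde\theta_0]}\|_{\dot H^1}\lesssim e^{ct}$ as $t\to-\infty$. Then $u$ is a solution that converges exponentially to (a symmetry image of) $W$ in both time directions while lying strictly below $W$ in $\dot H^1$ norm; using the nondegeneracy of the linearized operator (the spectral picture of Lemma 3.1, in particular the unstable/stable eigenvalues $\pm e_0$ and the coercivity of $\Phi$ on $\tilde G^\perp$ from Lemma 3.4) one shows such a solution must be identically $W$ up to symmetries, contradicting $\|u_0\|_{\dot H^1}<\|W\|_{\dot H^1}$.

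The main obstacle I expect is the first step — proving $d(t)\to 0$ and then the exponential integral bound. Corollary 6.7 only gives decay of $d$ along a sequence, and passing to decay for all $t$ requires exploiting almost-periodicity to produce a limiting almost-periodic solution and then killing it with a virial identity; the delicate point is controlling the error term $A_R(u(t))$ in the truncated virial (the analogue of \eqref{truncated_virial_error}) uniformly, which in low regularity / high dimension $N\geq 6$ needs the compactness of $\{u_{[x(t),\lambda(t),0]}\}$ in $\dot H^1$ plus the finite-mass hypothesis implicit in the setup, and keeping track of the scaling parameter $\lambda(t)$ so the spatial cutoff at radius $R/\lambda(t)$ does not interact badly with the concentration scale. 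Once that rigidity input is secured, the rest is the now-standard modulation-plus-ODE bookkeeping.
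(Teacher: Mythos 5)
Your overall skeleton (compactness from Proposition \ref{sub_global}/Corollary \ref{cor_mean_subcrit}, truncated virial, modulation bookkeeping) matches the paper, but the decisive step is wrong as you describe it. You propose to prove $d(t)\to0$ by extracting a limit $v_0$ with $d(v_0)>0$ and then ``killing'' the limiting solution with a virial/convexity argument ``exactly as in Lemma \ref{lem_int_exp_decay_d_2} but for the subcritical side.'' That cannot work: below the ground state the truncated virial gives $F_R''(t)\geq \tfrac{8}{N-2}\,d(t)\geq 0$, so $F_R$ is \emph{convex}, and a bounded convex truncated variance yields no contradiction — indeed global non-scattering solutions satisfying \eqref{subcrtit} with $d>0$ genuinely exist at the threshold (e.g.\ $W^-$), so no virial argument can exclude $v_0$. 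Likewise the supercritical ODE trick $F_R'/\sqrt{F_R}\lesssim -F_R''$, which is what produces the exponential bound in Lemma \ref{lem_int_exp_decay_d_2}, has no subcritical analogue, so ``the analogue of Lemma \ref{lem_int_exp_decay_d_2} gives $\int_t^{+\infty}d\lesssim e^{-ct}$'' is not a step you can borrow. The paper's mechanism is different and quantitative: Lemma \ref{monot1} proves the two-endpoint estimate $\int_\sigma^\tau d\lesssim d(u(\sigma))+d(u(\tau))$ on intervals with the compactness scale normalized ($\inf\lambda=1$), by integrating the convexity bound and controlling the boundary terms with Claim \ref{cauchy_banica}; this is then combined with continuity of the flow (Lemma \ref{monot2}) to get $d(t)\to0$ (Lemma \ref{monot0}) and to pin the modulation scale (Lemma \ref{bound_mu}), and only afterwards does a further application of Lemma \ref{monot1} give $\int_t^{+\infty}d\lesssim d(t)$, whence exponential decay by a Gronwall-type argument. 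The control of $\lambda(t)$, $\mu(t)$, which you defer as ``a short separate argument,'' is precisely the content of these lemmas and cannot be decoupled from the virial estimate, since the error $A_R$ is only controlled when the cutoff radius dominates the concentration scale uniformly in time.

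The backward-scattering step is also gapped. Your claim that a solution converging exponentially to symmetry images of $W$ in both time directions must equal $W$ up to symmetries is not proved by the spectral facts you cite, and it presupposes what is at stake: one must exclude that $W^-$ (i.e.\ $U^A$ with $A<0$) converges to $W$ backward in time, yet in the paper the fact that $W^-$ scatters backward is itself \emph{deduced from} Proposition \ref{prop_subcrit}, so invoking the later uniqueness theory here risks circularity and in any case requires a separate argument. The paper's route is much simpler: if $u$ also fails to scatter backward, then by time reversal and \eqref{exp_conv_sub} the full orbit $\{u(t):t\in\Real\}$ is precompact and $d(t)\to0$ as $t\to\pm\infty$, so Lemma \ref{monot1} applied on $[-n,n]$ forces $\int_{\Real}d\,dt=0$, hence $d(u_0)=0$, contradicting \eqref{subcrtit}. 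You should either adopt that argument or supply an actual proof of your two-sided rigidity claim.
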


The proof follows the same lines as in \cite{DM_Dyn}, and is given in Appendix.

\subsection{Intercritical case}

Here, we consider solutions such that
\begin{equation}\label{sub_subcrtit}
    M(u_0) = M(Q), \, E(u_0) = E(Q), \text{ and } \|\nabla u_0\|_{L^2} < \|\nabla Q\|_{L^2}.
\end{equation}

Since the scaling parameter is fixed \textit{a priori} in the intercritical regime, controlling scaling is no longer an issue. We can then use the fact that the solution has finite mass, together with information given by the virial-type and compactness arguments, to control the translation parameter, allowing us to prove results in the non-radial setting. We start with a definition.

\begin{de}
A solution $u$ to \eqref{sub_NLS} with lifespan $I$ is said to be \textit{almost periodic modulo symmetries} on $J \subset I$ if there exist functions $x: J \to \Real^N$ and $C: \Real^+_* \to \Real^+_*$ such that for all $t \in J$ and all $\eta>0$
\begin{equation}
    \int_{|x-x(t)| \geq C(\eta) }|\nabla u(x,t)|^2+|u(x,t)|^2 \, dx \leq \eta.
\end{equation}

\end{de}
\begin{re} By Arzel{\`a}-Ascoli's theorem, almost periodicity modulo symmetries is equivalent to the set
\begin{equation}
    \left\{u_{[x(t)]}: \, t \in J \right\}
\end{equation}
being precompact in $H^1$.
\end{re}


\begin{prop}\label{sub_sub_global}
Let $u$ be a solution to \eqref{sub_NLS} and $I=(T^-,T^+)$ be its maximal interval of existence. If $u$ satisfies \eqref{sub_subcrtit}, then
\begin{equation}
    I = \Real.
\end{equation}
Furthermore, if 
\begin{equation}\label{sub_noscatter}
\|u\|_{S(0,+\infty)} = +\infty,    
\end{equation}
 then $u$ is almost periodic modulo symmetries on $[0,+\infty)$, and we have
 \begin{equation}
     P(u) = \Im\int \bar{u}\nabla u = 0, \text{ and }
 \end{equation}
 \begin{equation}
     \lim_{t \to \infty} \frac{x(t)}{t} = 0.
 \end{equation}
\end{prop}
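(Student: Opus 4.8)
## Proof proposal for Proposition \ref{sub_sub_global}

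The plan is to follow the now-standard concentration-compactness/rigidity machinery of Kenig-Merle as adapted to the intercritical NLS, together with the additional momentum and center-of-mass analysis that is available because solutions in \eqref{sub_subcrtit} have finite mass. First I would establish global existence: since $\mathcal{ME}(u_0)=1$ and $\mathcal{MG}(u_0)<1$, the variational/virial characterization (the sharp Gagliardo-Nirenberg inequality together with the conservation of mass and energy) shows that $\mathcal{MG}(u(t))<1$ is preserved on the whole maximal interval $I$, so $\|\nabla u(t)\|_{L^2}$ stays bounded by $\|\nabla Q\|_{L^2}$; boundedness of the $H^1$ norm and the local theory of Ginibre-Velo then force $I=\Real$. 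This is the easy part and is essentially identical to the sub-threshold argument of Holmer-Roudenko \cite{HR_Blow} and Duyckaerts-Holmer-Roudenko/Guevara \cite{HR_Scat,Guevara}, the only difference being that the strict inequality in the mass-energy is replaced by equality, which does not affect this step.

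Next I would extract the almost-periodicity. Assuming $\|u\|_{S(0,+\infty)}=+\infty$, the concentration-compactness argument — profile decomposition for the Strichartz-bounded, $H^1$-bounded flow, combined with the perturbation/stability lemma and the fact that the threshold level is the \emph{minimal} level at which a non-scattering global solution can occur below $\|\nabla Q\|_{L^2}$ — produces a critical element whose orbit $\{u_{[x(t)]}:t\ge0\}$ is precompact in $H^1$; by uniqueness of the critical element and the continuity of the flow, $u$ itself must be this critical element on $[0,+\infty)$, hence almost periodic modulo translations. Here one uses that in the intercritical regime the scaling parameter is frozen a priori (Remark \ref{sub_scaling}), so only the spatial translation $x(t)$ survives, and the compactness modulus function $C(\eta)$ controls both $\|\nabla u\|_{L^2}$ and $\|u\|_{L^2}$ on the exterior region.

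It remains to prove $P(u)=0$ and $x(t)/t\to 0$. For the momentum I would use the Galilean invariance: if $P(u)\neq0$, apply a Galilean boost with $\xi_0=-P(u)/(2M(u))$ to obtain a solution $\tilde u$ with zero momentum; a computation shows $\tilde u$ still satisfies $\mathcal{ME}(\tilde u_0)=1$ and $\mathcal{MG}(\tilde u_0)<1$ (the boost lowers the energy by $|P(u)|^2/(2M(u))$ unless $P(u)=0$ — and lowering the energy strictly below $E(Q)$ at fixed mass would push $\tilde u$ strictly below threshold, which contradicts either the non-scattering hypothesis or the sharpness of the threshold). Hence $P(u)=0$. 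For $x(t)/t\to0$ I would run the standard truncated center-of-mass / truncated momentum argument: with $\phi_R$ a suitable cutoff, differentiate $\int x\,\phi_R(x)|u(t)|^2\,dx$ in time, use that the leading term is $2P(u)=0$ plus error terms controlled by the compactness (the exterior tails are uniformly small, and $x(t)$ cannot escape faster than linearly by finite speed of ``mass transport''), and conclude $|x(t)|=o(t)$. The main obstacle is the center-of-mass estimate: one must carefully combine the precompactness (to localize the mass near $x(t)$ with a controllable tail) with the vanishing of $P(u)$ and with the a priori bound $|x(t)|\lesssim 1+\int_0^t(\text{something small})$; making the error terms in the truncated-momentum identity genuinely summable requires the quantitative form of almost periodicity and a bootstrap on the growth rate of $x(t)$, exactly as in the non-radial sub-threshold scattering results. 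The global existence and the compactness extraction, by contrast, are routine given the tools already assembled in Section \ref{Sec2} and the cited literature.
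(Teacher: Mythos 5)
Your outline matches the paper's route: the paper itself gives no detailed argument but states that the proof is essentially that of Duyckaerts--Roudenko (Lemma 6.2, Corollary 6.3 and Lemma 6.4 of \cite{DR_Thre}), i.e.\ exactly the steps you describe — global existence from the sharp Gagliardo--Nirenberg inequality and conservation laws at the threshold, precompactness modulo translations via profile decomposition plus the sub-threshold scattering theory and stability, $P(u)=0$ by a Galilean boost pushing a non-scattering solution strictly below threshold, and $x(t)/t\to 0$ by the truncated center-of-mass/momentum argument using $P(u)=0$ and almost periodicity. So your proposal is correct and follows essentially the same approach.
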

The proof of Proposition \ref{sub_sub_global} is now classical, and it is essentially the same as in \cite{DR_Thre}*{Lemma 6.2, Corollary 6.3 and Lemma 6.4}.

\begin{re}
By time-reversal symmetry, the analogous version of \eqref{sub_noscatter} for the interval $(-\infty,0]$ holds.
\end{re}
\begin{re}\label{sub_re_x}
As in \cite{DR_Thre}*{Lemma 6.2}, the function $x(t)$ can be chosen to be continuous on $\Real$ and the same as the one given in Lemmas \ref{sub_mod_1} and \ref{sub_mod_2}, if $d(t) < \delta_0$.
\end{re}

\begin{prop}\label{sub_prop_subcrit}
Let $u$ be a solution to \eqref{sub_NLS} satisfying \eqref{sub_subcrtit} and \eqref{sub_noscatter}. Then there exist $(x_0,\theta_0)$ and $c>0$ such that, for all $t\geq0$,
\begin{equation}\label{sub_exp_conv}
    \|u-e^{it}Q_{[ x_0,\theta_0]}\|_{H^1} \lesssim  e^{-ct}.
\end{equation}
Moreover, $u$ scatters backward in time.
\end{prop}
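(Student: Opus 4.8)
The plan is to follow the rigidity scheme used for the supercritical counterpart (Proposition \ref{sub_prop_supercrit}) but with the truncated virial exploited in the reverse direction, since now $\|\nabla u_0\|_{L^2}<\|\nabla Q\|_{L^2}$. First I would record, from Proposition \ref{sub_sub_global}, that $u$ is global, almost periodic modulo translations on $[0,+\infty)$, has zero momentum, and satisfies $x(t)/t\to 0$. The almost-periodicity gives a compactness modulus, which is what powers the virial argument. Next, using the classification consequences of the mass-energy threshold (as in the subcritical case below the ground state, where the coercivity of the variational functional forces $\|\nabla u(t)\|_{L^2}$ to stay below $\|\nabla Q\|_{L^2}$), I would show there is a sequence $t_n\to+\infty$ with $d(u(t_n))\to 0$; this is the analogue of Corollary \ref{cor_mean_subcrit}, and in the intercritical case it follows from the scattering criterion plus the Kenig--Merle-type global theory (Fang--Xie--Cazenave / Guevara) together with \eqref{sub_noscatter}.

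The heart is to upgrade ``$d(t_n)\to 0$ along a sequence'' to ``$\int_t^{+\infty} d(s)\,ds \lesssim e^{-ct}$''. I would introduce the truncated variance $F_R(t)=\int \phi_R|u(t)|^2$ with $\phi_R$ as in the energy-critical section, and use the virial identity $F_R''(t) = c\, d(t) + A_R(u(t))$ (with the appropriate intercritical constant; note the sign is now such that $d(t)$ enters positively because we are below the threshold), where the error term $A_R(u(t))$ is controlled by the mass outside a ball of radius $R$. Almost-periodicity modulo translations, combined with the control $x(t)/t\to 0$ and finite mass, makes $A_R$ small uniformly in $t$ once $R$ is large; this is exactly where zero momentum and the sublinear growth of $x(t)$ are needed, to guarantee the bulk of the solution stays inside the support of $\phi_R$ after recentering. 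Then a convexity/ODE argument on $F_R$ — as in the proof of Lemma \ref{lem_int_exp_decay_d_2} — together with the boundedness of $F_R$ (finite mass) forces $\int_t^{+\infty} d(s)\,ds$ to be finite, and then a bootstrap using $|F_R'(t)|\lesssim F_R(t)^{1/2}\big(\int d\big)^{1/2}$-type estimates and the differential inequality yields the exponential decay rate $e^{-ct}$.

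Once \eqref{sub_exp_int_delta} holds, Lemma \ref{sub_lem_exp_delta} immediately produces $(x_0,\theta_0)$ with $\|u_{[x_0,\theta_0]}-e^{it}Q\|_{H^1}\lesssim e^{-ct}$, which is \eqref{sub_exp_conv}. Finally, for the backward-in-time scattering claim: the exponential convergence forward in time, together with the uniqueness statement in Proposition \ref{sub_exist_UA}, identifies $u$ (up to symmetries) with the special solution $Q^-=U^A$ for a suitable $A$ — or more precisely, one argues that a solution converging exponentially to $e^{it}Q$ must coincide with one of the $U^A$; since $\|\nabla u_0\|_{L^2}<\|\nabla Q\|_{L^2}$ picks out the branch $A<0$ (by the sign of the first-order term $Ae^{-e_0t}\mathcal{Y}_+$ and the expansion of $d(t)$), and $Q^-$ is by construction globally defined and scatters backward in time, the conclusion follows. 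The main obstacle I anticipate is the uniform smallness of the virial error $A_R$ in the \emph{non-radial} intercritical setting: one must carefully combine the almost-periodicity modulus, the finite mass, zero momentum, and $x(t)/t\to 0$ to relocate the compactness core inside $B_R$ for all large $t$, rather than just controlling tails at a fixed center as in the radial case.
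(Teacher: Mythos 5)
Your outline reproduces the general virial strategy, but two essential points are missing or wrong. First, the claim that almost periodicity, finite mass, zero momentum and $x(t)/t\to 0$ ``make $A_R$ small uniformly in $t$ once $R$ is large'' is precisely the step that fails as stated: with only $x(t)/t\to 0$ the center may still drift to infinity sublinearly, so for any \emph{fixed} $R$ the compactness core eventually leaves the support of $\phi_R$, $A_R(u(t))$ is no longer small, and the convexity/ODE argument on $F_R$ collapses. The paper's proof is organized exactly to get around this: a Ces\`aro-mean virial estimate with radius $R\sim \eta T$ growing in time (Lemma \ref{sub_mean_subcrit}) first produces a sequence $t_n\to+\infty$ with $d(u(t_n))\to 0$ --- note that your alternative justification of this sequence via the below-threshold scattering theory of Fang--Xie--Cazenave/Guevara does not apply, since those results require $\mathcal{ME}(u_0)<1$ while here $\mathcal{ME}(u_0)=1$ (the energy-critical analogue works only because the Killip--Visan theorem depends solely on $\sup_t\|u(t)\|_{\dot H^1}$); then the local-in-time virial bound $\int_\sigma^\tau d \lesssim \bigl[1+\sup_{[\sigma,\tau]}|x|\bigr]\bigl(d(u(\sigma))+d(u(\tau))\bigr)$ together with the displacement bound $|x(\tau)-x(\sigma)|\lesssim \int_\sigma^\tau d$ (Lemma \ref{sub_monot1}) is bootstrapped against $d(u(t_n))\to 0$ to prove that $x(t)$ is \emph{bounded}. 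Only after that can one work with a fixed $R$, deduce $\int_\sigma^{+\infty} d \lesssim d(u(\sigma))$, and conclude by Gronwall and Lemma \ref{sub_lem_exp_delta} (the correct pointwise bound here is $|F_R'(t)|\lesssim d(t)\,F_R(t)^{1/2}$ from Claim \ref{sub_cauchy_banica}, not $|F_R'|\lesssim F_R^{1/2}(\int d)^{1/2}$). You name this obstacle but provide no mechanism for it, and it is the core of the proof.

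Second, your argument for backward scattering is circular. The assertion that $Q^-$ ``is by construction globally defined and scatters backward in time'' is not part of the construction in Proposition \ref{sub_exist_UA}: in the paper the backward scattering of $Q^-$ is deduced, in the proof of Theorem \ref{sub_special}, from Proposition \ref{sub_prop_subcrit} itself. So identifying $u$ with $U^A$, $A<0$, and then invoking the backward scattering of $Q^-$ assumes exactly the statement to be proved. The non-circular route, used in the paper for the critical case and implicit here, is a rigidity argument: if $u$ also failed to scatter backward, then by time reversal the orbit $\{u(t):t\in\mathbb{R}\}$ is precompact modulo translations with $d(t)\to 0$ as $t\to\pm\infty$, and applying the monotonicity estimate of Lemma \ref{sub_monot1} on $[-n,n]$ and letting $n\to\infty$ forces $\int_{\mathbb{R}} d=0$, hence $d(u_0)=0$, contradicting $\|\nabla u_0\|_{L^2}<\|\nabla Q\|_{L^2}$.
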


\section{Estimates on exponentially decaying solutions}
According to the previous sections, we must study the behavior of solutions approaching $e^{it}Q$ exponentially fast in time.  We start with the energy-critical setting.
\subsection{Energy-critical case}

In contrast to the previous two sections, the radiality assumption is not needed to prove the results in this subsection. We consider the linearized approximate equation
\begin{equation}\label{linear_exp}
	\partial_t h +\mathcal{L}h = \epsilon
\end{equation}
with $h$ and $\epsilon$ such that, for $t \geq 0$,
\begin{equation}\label{exp_decay}
	\begin{gathered}
	\|h(t)\|_{\dot{H}^1} \lesssim e^{-c_0t},\\
	\|\epsilon(t)\|_{L^\frac{2N}{N+2}} + \|\nabla \epsilon\|_{S'(L^2,\,[t,+\infty))} \lesssim e^{-c_1t},
	\end{gathered}
\end{equation}
where $c_1 > c_0 > 0$. The following self-improving estimate was proved for radial data in \cite{DM_Dyn}. We give the proof without the radial assumption in Appendix.

Using the notation $a^-$ for $a - \delta$ with arbitrarily small $\delta > 0$, we have the following lemma.
\begin{lemma}\label{lem_self}Under the assumptions \eqref{exp_decay},
\begin{enumerate}[(i)]
	\item if $e_0 \notin [c_0,c_1)$, then 
	\begin{equation}\label{lem_self_1}
		\|h(t)\|_{\dot{H}^1} \lesssim e^{-c_1^- t}, 
	\end{equation}
	\item if $e_0 \in [c_0,c_1)$, then there exists $A \in \Real$ such that 
	\begin{equation}\label{lem_self_2}
			\|h(t)-Ae^{-e_0t}\mathcal{Y}_+\|_{\dot{H}^1} \lesssim e^{-c_1^- t }.
	\end{equation}
\end{enumerate}
\end{lemma}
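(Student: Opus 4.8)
The plan is to analyze the Duhamel representation of \eqref{linear_exp} adapted to the spectral decomposition of $\mathcal{L}$, following the classical strategy of "projecting onto the unstable/stable/central parts" of the operator. First I would recall from Lemma \ref{spectrum_L} that $\mathcal{L}$ has simple eigenvalues $\pm e_0$ with eigenfunctions $\mathcal{Y}_\pm$, a kernel spanned by $\nabla W$, $iW$, $\Lambda W$, and essential spectrum on the imaginary axis $\{iy : |y|\ge 1\}$ (noting $1-s_c=0$ here, so one must be slightly careful — actually in the critical case the essential spectrum is $i\Real$, which changes the decay analysis; I would use the semigroup estimates for $e^{-t\mathcal{L}}$ restricted to the relevant spectral subspaces as established/used in \cite{DM_Dyn}). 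Write $a_\pm(t) = B(\mathcal{Y}_\mp, h(t))/B(\mathcal{Y}_\mp,\mathcal{Y}_\pm)$ (using that $B(\mathcal{Y}_+,\mathcal{Y}_-)\ne 0$, proved in the coercivity lemma) for the coordinates of $h$ along $\mathcal{Y}_\pm$, and let $h = a_+\mathcal{Y}_+ + a_-\mathcal{Y}_- + h_\flat$ where $h_\flat$ is the part lying in the "hyperbolic-complement" subspace on which $\mathcal{L}$ generates a bounded (and on $\tilde G^\perp$, coercive) flow.

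The key computation is to derive scalar ODEs for $a_\pm$: pairing \eqref{linear_exp} with $\mathcal{Y}_\mp$ under $B$ and using $B(\mathcal{L}f,g) = -B(f,\mathcal{L}g)$ together with $\mathcal{L}\mathcal{Y}_\mp = \mp e_0 \mathcal{Y}_\mp$ gives
\begin{equation}
	a_\pm'(t) \mp e_0 a_\pm(t) = \frac{B(\mathcal{Y}_\mp,\epsilon(t))}{B(\mathcal{Y}_\mp,\mathcal{Y}_\pm)}.
\end{equation}
The forcing term is $O(e^{-c_1 t})$ by \eqref{exp_decay} (pairing against the Schwartz functions $\mathcal{Y}_\mp$), and we already know $a_\pm(t) = O(e^{-c_0 t})$. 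For $a_-$ (the stable mode, equation $a_-' + e_0 a_- = O(e^{-c_1 t})$): integrating forward from $t$ and using the a priori decay gives $|a_-(t)| \lesssim e^{-\min(e_0,c_1)^- t}$, so $a_- $ decays at rate $c_1^-$ unless $e_0 \le c_1$... — more precisely $a_-$ always decays at rate $\min(e_0,c_1^-)$. For $a_+$ (the unstable mode, $a_+' - e_0 a_+ = O(e^{-c_1 t})$): the only solution that is $O(e^{-c_0 t})$ (hence bounded) must be $a_+(t) = -\int_t^\infty e^{e_0(t-s)} \frac{B(\mathcal{Y}_-,\epsilon(s))}{B(\mathcal{Y}_-,\mathcal{Y}_+)}\,ds = O(e^{-c_1 t})$ when $c_1 > e_0$, or more carefully $a_+(t) = O(e^{-\min(c_1, e_0)^- t})$; the resonant-looking case where the homogeneous solution $e^{e_0 t}$ would appear is excluded precisely because $h$ decays. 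The upshot: $a_+(t) = O(e^{-c_1^- t})$ provided $e_0 \ge c_1$ or... I would organize the bookkeeping so that $a_+$ contributes nothing obstructive.

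For the central part $h_\flat$ I would use the coercivity of $\Phi$ on $\tilde G^\perp$ (Lemma \ref{lem_coerc}) together with a virial/energy-type differential inequality: since $\Phi$ is a conserved-up-to-error quantity for the linearized flow and $h_\flat$ lives (after subtracting kernel directions, which are controlled by modulation/orthogonality as in Lemma \ref{mod_2}) in a space where $\Phi \gtrsim \|\cdot\|_{\dot H^1}^2$, one shows $\frac{d}{dt}\Phi(h_\flat) = O(e^{-c_1 t}\|h_\flat\|_{\dot H^1})$ plus contributions from the $a_\pm$ couplings, which forces $\|h_\flat(t)\|_{\dot H^1} \lesssim e^{-c_1^- t}$. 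Combining: if $e_0\notin[c_0,c_1)$, then $a_+$, $a_-$, and $h_\flat$ all decay at rate $c_1^-$, giving \eqref{lem_self_1}; if $e_0\in[c_0,c_1)$, then only the homogeneous solution $A e^{-e_0 t}\mathcal{Y}_+$ of the $a_+$-equation can persist (this $A$ is the one in the statement — note it comes from the \emph{unstable} eigenvalue with the backward-in-time flow, since $h=a_+\mathcal{Y}_+$ with $\mathcal{L}\mathcal{Y}_+=e_0\mathcal{Y}_+$ means $a_+$ \emph{decays} like $e^{-e_0 t}$ for the evolution $\partial_t h = -\mathcal{L}h$), while everything else decays at $c_1^-$, giving \eqref{lem_self_2}. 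I would then upgrade the weak decay to the asserted $\dot H^1$ bound using Lemma \ref{lem_stric_exp}-type Strichartz arguments to absorb the nonlinear error $R(h)$ — but here $\epsilon$ already encodes the relevant errors, so this is mostly packaging.

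The main obstacle I anticipate is the central-mode estimate: controlling $h_\flat$ requires simultaneously (i) handling the kernel directions $\nabla W, iW, \Lambda W$ which are not exponentially localized in a scale-invariant problem and on which $\Phi$ vanishes — these must be removed via the modulation parameters and shown to contribute only lower-order (this is where radiality was convenient in \cite{DM_Dyn} but the excerpt claims it is dispensable, so the argument must use the momentum/translation control instead), and (ii) the fact that in the energy-critical case the essential spectrum of $\mathcal{L}$ touches the imaginary axis, so the "central" semigroup is merely bounded, not exponentially decaying — hence one genuinely needs the coercivity of the quadratic form $\Phi$ (a Lyapunov functional) rather than linear semigroup decay. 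Making the differential inequality for $\Phi(h_\flat)$ close, with the coupling terms to $a_\pm\mathcal{Y}_\pm$ properly accounted for (they feed in at rate $e^{-(c_0+e_0)t}$ or so, which must be shown to beat $e^{-c_1 t}$ only in the regime being treated), is the technical heart. I expect to borrow the precise form of this argument from \cite{DM_Dyn}, adapting the Strichartz bookkeeping to use the $D^\varepsilon$-type norms of Lemma \ref{lem_nonl} when $N>6$ so that the Hölder-continuous nonlinearity is handled.
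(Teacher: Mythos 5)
Your overall skeleton --- decompose $h$ along $\mathcal{Y}_\pm$, derive scalar ODEs by pairing \eqref{linear_exp} with $\mathcal{Y}_\mp$ under $B$ and using the anti-symmetry $B(\mathcal{L}f,g)=-B(f,\mathcal{L}g)$, control the remaining component through the almost-conserved quantity $\Phi$ and its coercivity (Lemma \ref{lem_coerc}), and, in case (ii), extract $A$ as the limit of $e^{e_0t}\alpha_+(t)$ and subtract $Ae^{-e_0t}\mathcal{Y}_+$ --- is the same as the paper's. (Your displayed ODEs have the two modes' signs interchanged: with $\partial_t h+\mathcal{L}h=\epsilon$ and $\mathcal{L}\mathcal{Y}_\pm=\pm e_0\mathcal{Y}_\pm$, the $\mathcal{Y}_+$-coefficient satisfies $\alpha_+'+e_0\alpha_+=B(\epsilon,\mathcal{Y}_-)$, which is precisely the equation whose homogeneous solution $Ae^{-e_0t}$ can survive; your final attribution of the $Ae^{-e_0t}\mathcal{Y}_+$ term is nevertheless the correct one.)

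Two points in the proposal as written do not close. First, the kernel directions $iW$, $\nabla W$, $\Lambda W$: you propose to remove them ``via the modulation parameters,'' but Lemma \ref{lem_self} concerns an arbitrary solution of the linear inhomogeneous equation \eqref{linear_exp}; there is no modulation freedom here, and $\Phi$ vanishes on these directions, so coercivity on $\tilde{G}^\perp$ gives no control of them. The paper keeps their coefficients $\beta_k$ explicitly in the decomposition \eqref{def_h}, derives the ODE \eqref{diff_beta}, and bounds the increments of $\beta_k$ by the decay of $\epsilon$ together with $|(\mathcal{L}g,f_k)_{\dot{H}^1}|\lesssim \|g\|_{\dot{H}^1}$, the latter using only the pointwise decay $|\mathcal{L}^*(\Delta f_k)|\lesssim (1+|x|^{N+4})^{-1}$, so that $\mathcal{L}^*(\Delta f_k)\in L^{\frac{2N}{N+2}}$ pairs with $g\in L^{\frac{2N}{N-2}}$; no radiality or momentum argument is needed. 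Second, the rate bookkeeping: integrating \eqref{diff_phi} and invoking coercivity yields only $\|g(t)\|_{\dot{H}^1}\lesssim e^{-\frac{(c_0+c_1)}{2}t}$ in one pass, not $e^{-c_1^-t}$ as you assert for the ``central'' part; the rates in \eqref{lem_self_1}--\eqref{lem_self_2} are reached only by iterating the whole scheme finitely many times, replacing $c_0$ by $\frac{c_0+c_1}{2}$ at each step (and, in case (ii), first subtracting $Ae^{-e_0t}\mathcal{Y}_+$, which solves the homogeneous equation, before iterating). One also needs the Strichartz-type bound of Lemma \ref{lem_stric_exp} to make sense of the pairings $\int|B(h,\epsilon)|$, since $\epsilon$ is only controlled in dual Strichartz norms. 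With these repairs your argument coincides with the paper's proof.
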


To further improve the convergence in the case $N \geq 6$, we study the linearized equation around $W+\mathcal{V}_k^A$, for $A \neq 0$, which was defined in \eqref{eq_sol_v}. For simplicity, we omit the superscript $A$. Defining, for every $k$,
\begin{align}
    \tilde{\mathcal{L}_k} &=\begin{pmatrix}0 & \Delta \\
-\Delta & 0 \\
\end{pmatrix}+
\frac{(p_c+1)}{2}|W+\mathcal{V}_k|^{p_c-1}\begin{pmatrix}0 & 1 \\
-1 & 0 \\
\end{pmatrix}\\
&\quad+\frac{(p_c-1)}{2}|W+\mathcal{V}_k|^{p_c-3}\begin{pmatrix}\Im (W+\mathcal{V}_k)^2 & -\Re (W+\mathcal{V}_k)^2 \\
-\Re (W+\mathcal{V}_k)^2 & - \Im (W+\mathcal{V}_k)^2 \\
\end{pmatrix},\\
\end{align}
\begin{equation}
    \tilde{K}_k(h) = \frac{(p_c+1)}{2}|W +\mathcal{V}_k|^{p_c-1} h + \frac{(p_c-1)}{2}|W+\mathcal{V}_k|^{p-3}(W+\mathcal{V}_k)^2 \bar{h},
\end{equation}
and
\begin{equation}
    \tilde{R}_k(h) = |W+\mathcal{V}_k|^{p_c-1}(W+\mathcal{V}_k) J((W+\mathcal{V}_k)^{-1}h), 
\end{equation}
where
\begin{equation}
    J(z) = |1+z|^{p_c-1}(1+z)-1-\frac{(p_c+1)}{2}z-\frac{(p_c-1)}{2}\bar{z},
\end{equation}
we have that if, $u = W +\mathcal{V}_k + h$ satisfies \eqref{NLS}, then $h$ satisfies

\begin{equation}\label{linearized_eq_tilde}
	\partial_t h +\tilde{\mathcal{L}}_k h = i \tilde{R}_k(h) +\epsilon_k,
\end{equation}
or in the form of a Schr\"odinger equation,
\begin{equation}
    i\partial_t h + \Delta h + \tilde{K}_k h = - \tilde{R}_k(h)+ i\epsilon_k,
\end{equation}
where $\epsilon_k$ are $O(e^{-(k+1)e_0 t})$ in $\mathcal{S}(\Real^N)$ . Note that the operator $\tilde{\mathcal{L}}_k$ is time-dependent and that, by the construction of $\mathcal{V}_k$, we have, for all $t \geq 0$,
\begin{equation}
    |\mathcal{V}_k(t)| \lesssim e^{-e_0t }|W|,
\end{equation}
and
\begin{equation}
    |\nabla \mathcal{V}_k(t)| \lesssim e^{-e_0t }|\nabla W|\lesssim e^{-e_0t }|W|.
\end{equation}
This implies that the estimates in Lemmas \ref{lem_nonl} and \ref{lem_stric_exp} hold with the same proof if we replace $K$ by $\tilde{K}_k$ and $R$ by $\tilde{R}_k$. Therefore, we have the following results.
\begin{lemma}\label{lem_nonl_tilde} Let $N \geq 6$, $k \geq 1$ and $I$ be a bounded time interval, and consider $f \in S(\dot{H}^1,I)$ such that $\nabla f \in S(L^2,I)$. The following estimates hold
\begin{enumerate}[(i)]
\item \label{linear1_tilde} $\|\nabla \tilde{K}_k(f)\|_{S'(L^2,I)} \lesssim |I|^\frac{1}{2} \|\nabla f\|_{S(L^2,I)}$,
\item \label{grad_r_point_tilde} $\begin{aligned}[t]\|\nabla \tilde{R}_k(f)\|_{S'(L^2,I)}+\|\tilde{R}_k(f)\|_{L^\frac{2N}{N+2}} \lesssim 
&\left(1+|I|^\frac{1}{2}\right)\|\nabla f\|_{S(L^2,I)}^{p_c}.\end{aligned}$
\end{enumerate}
\end{lemma}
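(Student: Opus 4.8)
\textbf{Proof plan for Lemma \ref{lem_nonl_tilde}.}
The plan is to reduce this to Lemma \ref{lem_nonl} by exploiting the fact that $\mathcal{V}_k$ is, uniformly on $[0,+\infty)$, a small perturbation of zero that is pointwise dominated by $|W|$ (and likewise $|\nabla \mathcal{V}_k| \lesssim |W|$), as recorded just before the statement. First I would observe that $\tilde K_k$ is exactly the operator $K$ from Definition \ref{def_op} but with $W$ replaced by $W + \mathcal{V}_k$; since $|W + \mathcal{V}_k| \lesssim |W|$ and $|\partial^\alpha(W+\mathcal{V}_k)| \lesssim |W|$ for every multi-index $\alpha$ (the bound $|\partial^\alpha W| \lesssim |W|$ combined with the analogous decay of the Schwartz functions $Z_j$ making up $\mathcal{V}_k$), the pointwise inequality
\begin{equation}
	|\nabla \tilde K_k(f)| \lesssim |W|^{p_c-2}|\nabla W| |f| + |W|^{p_c-1}|\nabla f| \lesssim |W|^{p_c-1}\left(|f| + |\nabla f|\right)
\end{equation}
holds with constants independent of $k$ and $t$. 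Then the Hölder/Sobolev chain in the proof of estimate \eqref{linear1} of Lemma \ref{lem_nonl} applies verbatim (using $W \in L^{2N/(N-2)}_x \cap L^{4N/(N-2)}_x$, valid for $N > 6$, and the harmless modification for $N = 6$), giving \eqref{linear1_tilde}.

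For \eqref{grad_r_point_tilde} I would write, as in the proof of Lemma \ref{lem_nonl}, $\tilde R_k(f) = (W+\mathcal{V}_k)^{p_c-1}(W+\mathcal{V}_k)\,J((W+\mathcal{V}_k)^{-1}f)$ with the same $J$, whose derivatives $J_z, J_{\bar z}$ vanish at $0$ and are Hölder of order $p_c-1$. Taking $g = 0$ in the decomposition $(a)$–$(e)$ of $\nabla(R(f)-R(g))$ from the proof of \eqref{grad_r_point} — but with $W$ replaced by $W + \mathcal{V}_k$ throughout — each term is pointwise bounded by $|W|^{p_c-1}$ (or $|W|^{p_c-2}|\nabla W|$) times $|f|^{p_c-1}$ times $|\nabla f|$ or $|f|$, i.e.\ $|\nabla \tilde R_k(f)| \lesssim (|f|^{p_c-1}+\dots)|\nabla f|$ and similarly for $|\tilde R_k(f)|$; the relevant constants are uniform in $k$ and $t$ because $|W+\mathcal{V}_k| \lesssim |W|$ uniformly. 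Applying Hölder and Sobolev exactly as before (the $L^\infty_t L^{2N/(N-2)}_x$ and $L^2_t L^{2N/(N-2)}_x$ norms, Sobolev embedding $\dot H^1 \hookrightarrow L^{2N/(N-2)}$) collapses everything to $\|\nabla f\|_{S(L^2,I)}^{p_c}$, with the factor $(1+|I|^{1/2})$ absorbing the terms that came with a time-integration (the analogues of $(a)$, $(d)$, $(e)$) versus those that did not ($(b)$, $(c)$); the $\|\tilde R_k(f)\|_{L^{2N/(N+2)}_x}$ bound is the ungradiented version of the same Hölder computation.

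The only genuinely new point to check — and the place I expect to spend the most care — is the uniformity of all implied constants in $k$ and in $t \in [0,+\infty)$, since Lemma \ref{lem_nonl} was stated for a fixed nonlinearity on a bounded interval. This is handled by the two displayed bounds $|\mathcal{V}_k(t)| \lesssim e^{-e_0 t}|W|$ and $|\nabla \mathcal{V}_k(t)| \lesssim e^{-e_0 t}|W|$ recorded above: they give $\tfrac12 |W| \le |W + \mathcal{V}_k(t)| \le 2|W|$ for $t$ large, and on the remaining compact time set the finitely many Schwartz functions $Z_1,\dots,Z_k$ give a $k$-dependent but finite bound — however, since in the applications $k$ is fixed before the interval length is chosen, a $k$-dependent constant $C_k$ is acceptable (indeed estimate \eqref{linear1_tilde} is stated without tracking the constant). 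I would therefore phrase the proof as: "the pointwise bounds on $\mathcal{V}_k$ and its gradient show that $W$ and $W+\mathcal{V}_k$ are comparable and have comparable derivatives up to order one; hence the pointwise nonlinear estimates underlying the proof of Lemma \ref{lem_nonl} hold for $\tilde K_k, \tilde R_k$ with constants depending only on $N$ and $k$, and the Strichartz/Hölder/Sobolev manipulations are identical," and leave the routine repetition to the reader.
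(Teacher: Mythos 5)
Your proposal is correct and follows essentially the same route as the paper: the paper itself disposes of this lemma by noting the pointwise bounds $|\mathcal{V}_k(t)| \lesssim e^{-e_0 t}|W|$ and $|\nabla \mathcal{V}_k(t)| \lesssim e^{-e_0 t}|W|$ and asserting that the proof of Lemma \ref{lem_nonl} (the decomposition via $J$, $J_z$, $J_{\bar z}$ and the same H\"older/Sobolev chain) goes through verbatim with $W$ replaced by $W+\mathcal{V}_k$. Your extra care about uniformity of constants in $k$ and $t$ (allowing $k$-dependent constants, which suffices for the applications) is a reasonable elaboration of what the paper leaves implicit.
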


\begin{lemma}\label{lem_stric_exp_tilde}
Let $h$ be a solution to \eqref{linearized_eq_tilde}.  If, for some $c>0$ and for any $t \geq 0$,
\begin{equation}
    \|h(t)\|_{\dot{H}^1} \lesssim e^{-c t},
\end{equation}
then
\begin{equation}\label{stric_exp_tilde}
    \|\nabla h\|_{S(L^2,\,[t,+\infty))} \lesssim e^{-\min\{c\,,(k+1^-)e_0\}t}.
\end{equation}
\end{lemma}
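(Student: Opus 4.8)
The plan is to repeat, almost verbatim, the continuity argument used for Lemma~\ref{lem_stric_exp}, replacing Lemma~\ref{lem_nonl} by Lemma~\ref{lem_nonl_tilde} and carrying along the extra inhomogeneous term $\epsilon_k = O(e^{-(k+1)e_0 t})$ in $\mathcal{S}(\Real^N)$. Writing \eqref{linearized_eq_tilde} in its Schr\"odinger form $i\partial_t h + \Delta h + \tilde{K}_k(h) = -\tilde{R}_k(h) + i\epsilon_k$ and differentiating in $x$, the function $\nabla h$ solves
\begin{equation*}
i\partial_t(\nabla h) + \Delta(\nabla h) + \nabla\big(\tilde{K}_k(h) + \tilde{R}_k(h)\big) = i\nabla\epsilon_k .
\end{equation*}
Since $W+\mathcal{V}_k+h$ solves \eqref{NLS}, the local Cauchy theory for \eqref{NLS} guarantees $\nabla h \in S(L^2,J)$ for every compact $J\subset[0,+\infty)$, so all the Strichartz norms below are a priori finite. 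Fix $0<\tau_0<1$, to be chosen. For $t>0$, the Duhamel formula on $[t,t+\tau_0]$, the inhomogeneous Strichartz estimate, and items \eqref{linear1_tilde} and \eqref{grad_r_point_tilde} of Lemma~\ref{lem_nonl_tilde} give
\begin{equation*}
\|\nabla h\|_{S(L^2,[t,t+\tau_0])} \leq C\Big( \|h(t)\|_{\dot{H}^1} + \tau_0^{1/2}\|\nabla h\|_{S(L^2,[t,t+\tau_0])} + \|\nabla h\|_{S(L^2,[t,t+\tau_0])}^{p_c} + \|\nabla \epsilon_k\|_{S'(L^2,[t,t+\tau_0])} \Big).
\end{equation*}

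Since $\epsilon_k(s)=O(e^{-(k+1)e_0 s})$ in $\mathcal{S}(\Real^N)$ and $S'(L^2,I)=L^2_I L^{2N/(N+2)}_x$, one has $\|\nabla\epsilon_k\|_{S'(L^2,[t,t+\tau_0])}\lesssim \tau_0^{1/2}e^{-(k+1)e_0 t}$. Setting $\mu:=\min\{c,(k+1^-)e_0\}$, the hypothesis $\|h(t)\|_{\dot{H}^1}\lesssim e^{-ct}$ then yields, for some $K>0$ and all $t>0$,
\begin{equation*}
\|\nabla h\|_{S(L^2,[t,t+\tau_0])} \leq K\Big( e^{-\mu t} + \tau_0^{1/2}\|\nabla h\|_{S(L^2,[t,t+\tau_0])} + \|\nabla h\|_{S(L^2,[t,t+\tau_0])}^{p_c} \Big).
\end{equation*}
Choosing $\tau_0$ with $K\tau_0^{1/2}\leq\tfrac14$ and then running the continuity/bootstrap argument in $\tau$ exactly as in Lemma~\ref{lem_stric_exp} (using that $\tau\mapsto\|\nabla h\|_{S(L^2,[t,t+\tau])}$ is continuous, that $p_c>1$, and that $e^{-\mu t}$ is small for $t$ large), one obtains $t_\ast\geq0$ such that $\|\nabla h\|_{S(L^2,[t,t+\tau_0])}\leq 2K e^{-\mu t}$ for all $t\geq t_\ast$.

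Finally, decomposing $[t,+\infty)=\bigcup_{j\geq0}[t+j\tau_0,t+(j+1)\tau_0]$, the triangle inequality for the $S(L^2)$ norm together with the previous bound gives $\|\nabla h\|_{S(L^2,[t,+\infty))}\leq \sum_{j\geq0}2Ke^{-\mu(t+j\tau_0)}\lesssim e^{-\mu t}$ for $t\geq t_\ast$; for $0\leq t\leq t_\ast$ the left-hand side is finite by the local theory and the previous step, so enlarging the implicit constant yields \eqref{stric_exp_tilde} for all $t\geq0$. The main obstacle is making the bootstrap close: this is where $\tau_0$ must be taken small to absorb the linear $\tilde{K}_k$-term (which carries the gain $|I|^{1/2}$ from Lemma~\ref{lem_nonl_tilde}\eqref{linear1_tilde}) and $t$ must be taken large to absorb the genuinely super-linear term $\|\nabla h\|_{S(L^2)}^{p_c}$. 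Uniformity of all constants in $k$ is ensured by the pointwise bounds $|\mathcal{V}_k(t)|\lesssim e^{-e_0 t}|W|$ and $|\nabla\mathcal{V}_k(t)|\lesssim e^{-e_0 t}|W|$, which are exactly what makes Lemma~\ref{lem_nonl_tilde} hold with the same proof, and with constants independent of $k$, as recorded just before its statement.
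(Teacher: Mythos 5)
Your proposal is correct and follows essentially the same route as the paper: the paper proves this lemma simply by observing that the continuity/bootstrap argument of Lemma \ref{lem_stric_exp} carries over verbatim once $K,R$ are replaced by $\tilde{K}_k,\tilde{R}_k$ (via Lemma \ref{lem_nonl_tilde}) and the forcing term $\epsilon_k=O(e^{-(k+1)e_0t})$ is carried through Duhamel, which is exactly what you do and is what produces the $\min\{c,(k+1^-)e_0\}$ in \eqref{stric_exp_tilde}.
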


In the spirit of Lemma \ref{lem_self}, we prove the following estimate.

\begin{lemma}\label{lem_self_tilde} For $N \geq 6$, let $h$ be a solution to 
\begin{equation}\label{linear_exp_tilde}
	\partial_t h +\tilde{\mathcal{L}}_kh = \epsilon,
\end{equation}
with $h$ and $\epsilon$ such that, for $t \geq 0$,
\begin{equation}\label{exp_decay_tilde}
	\begin{gathered}
	\|h(t)\|_{\dot{H}^1} \lesssim e^{-c_0t},\\
	\|\epsilon(t)\|_{L^\frac{2N}{N+2}} + \|\nabla \epsilon\|_{S'(L^2,\,[t,+\infty))} \lesssim e^{-c_1t},
	\end{gathered}
\end{equation}
where $(k+1)e_0 > c_1 > c_0 > e_0$. Then,
	\begin{equation}\label{self_tilde_1}
		\|h(t)\|_{\dot{H}^1} \lesssim e^{-c_1^- t}. 
	\end{equation}
\end{lemma}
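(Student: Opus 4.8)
The plan is to mimic the structure of the proof of Lemma \ref{lem_self}, but now working with the time-dependent operator $\tilde{\mathcal{L}}_k$ instead of $\mathcal{L}$. First I would reduce the problem to a Strichartz-type self-improvement: using the Duhamel formula for the Schrödinger form of \eqref{linear_exp_tilde}, namely $i\partial_t h + \Delta h + \tilde{K}_k h = -\tilde{R}_k(h) + i\epsilon$, together with the hypothesis $\|h(t)\|_{\dot{H}^1} \lesssim e^{-c_0 t}$ and the argument of Lemma \ref{lem_stric_exp_tilde}, I would first upgrade the pointwise-in-time bound to the Strichartz bound $\|\nabla h\|_{S(L^2,[t,+\infty))} \lesssim e^{-c_0 t}$ (here we use $c_0 > e_0$, so that $\min\{c_0,(k+1^-)e_0\} = c_0$ for the relevant range; if not, one iterates). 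This is the starting point.

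Next, the core of the argument: I would run a bootstrap on the decay rate. Suppose $\|h(t)\|_{\dot{H}^1} \lesssim e^{-\gamma t}$ for some $\gamma \in [c_0, c_1)$, with the corresponding Strichartz bound. Apply Duhamel on $[t,+\infty)$: $h(t) = -i\int_t^{+\infty} e^{i(t-s)\Delta}[\tilde{K}_k h + \tilde{R}_k(h) - i\epsilon](s)\,ds$ — here one must check that the boundary term at $+\infty$ vanishes, which follows from the exponential decay of $h$. Then estimate via Strichartz and Lemma \ref{lem_nonl_tilde}: the linear term $\nabla\tilde{K}_k h$ is controlled on unit intervals by $|I|^{1/2}\|\nabla h\|_{S(L^2,I)}$, which after summing the geometric series gives back an $e^{-\gamma t}$ bound with a \emph{small} constant (choosing the interval length and using that the $\tilde K_k$ estimate carries a gain $|I|^{1/2}$); crucially, however, $\tilde K_k$ also contains a genuinely linear (in $h$) piece with coefficient $\sim|W|^{p_c-1}$ that does \emph{not} decay in $t$, so this term alone cannot improve the rate — it is only absorbed because of the $|I|^{1/2}$ smallness after localizing in time. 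The nonlinear term $\tilde R_k(h)$ is controlled by $\|\nabla h\|_{S(L^2,I)}^{p_c} \lesssim e^{-p_c\gamma t}$, and since $p_c > 1$ this is a strict gain, eventually dominated by $e^{-c_1 t}$. The source term $\epsilon$ contributes $e^{-c_1 t}$ by hypothesis. Iterating, the decay rate increases by a fixed increment $(p_c - 1)\gamma$ at each step until it reaches, and is capped by, $c_1$; one loses an arbitrarily small $\delta$ in the final step because the borderline rate $e^{-c_1 t}$ must be summed against the geometric series, giving $e^{-c_1^- t}$.

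The main obstacle, and the reason this lemma is separated from Lemma \ref{lem_self}, is precisely the non-decaying linear coefficient $|W+\mathcal V_k|^{p_c-1}$ in $\tilde K_k$ combined with the low power $p_c - 1 < 1$ of the nonlinearity: unlike in the $\mathcal{L}$ case where one can project onto spectral subspaces and read off the eigenvalue $e_0$, here the operator is time-dependent and there is no clean spectral decomposition. The resolution is that the hypothesis $c_0 > e_0$ rules out the appearance of an $e^{-e_0 t}\mathcal{Y}_+$-type obstruction (which is what forced the dichotomy in Lemma \ref{lem_self}), so no such term appears and the bootstrap closes cleanly up to $c_1^-$ — that is why the conclusion here is the clean bound \eqref{self_tilde_1} rather than the two-case statement of Lemma \ref{lem_self}. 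I would also need to verify carefully that the Duhamel representation on $[t,+\infty)$ is justified (vanishing at infinity), and that Lemma \ref{lem_nonl_tilde}, stated on bounded intervals, can be summed over a partition $[t,+\infty) = \bigcup_j [t+j\tau_0, t+(j+1)\tau_0]$ exactly as in the proof of Lemma \ref{lem_stric_exp}; these are routine but must be stated.
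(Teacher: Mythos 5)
There is a genuine gap at the heart of your bootstrap: the treatment of the linear term $\tilde K_k h$. Its coefficient $\sim |W+\mathcal V_k|^{p_c-1}$ does not decay in time, and the $|I|^{1/2}$ gain from Lemma \ref{lem_nonl_tilde}~(i) cannot be converted into a small constant once you sum the Duhamel integral over a partition of $[t,+\infty)$: summing the geometric series produces a factor of order $\tau_0^{1/2}/(1-e^{-\gamma\tau_0})$, which for a \emph{fixed} exponent $\gamma\in[c_0,c_1)$ is bounded below away from zero uniformly in $\tau_0$. (In Proposition \ref{exist_UA} the same trick works only because the relevant rate is $(k+\tfrac12)e_0$ with $k$ large, so one can take $\tau_0$ small and still have $e^{-(k+\frac12)e_0\tau_0}\le\frac12$.) Even setting aside the constant, this term is bounded by $e^{-\gamma t}$ and not better, so the Duhamel estimate at best \emph{reproduces} the rate $\gamma$ — which is exactly the content of Lemma \ref{lem_stric_exp_tilde} — and never improves it; your claimed increment $(p_c-1)\gamma$ comes only from $\tilde R_k(h)$, which is not even present in \eqref{linear_exp_tilde} (the equation of the lemma is linear; the nonlinear term is already absorbed into $\epsilon$ when the lemma is applied in Lemma \ref{improv_conv}). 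A quick sanity check that the mechanism cannot work: your estimates never actually use the hypothesis $c_0>e_0$ (your remark that it ``rules out the $\mathcal Y_+$ obstruction'' is not implemented anywhere), so the same argument applied to the time-independent case $\mathcal L$ with $h(t)=e^{-e_0t}\mathcal Y_+$ and $\epsilon=0$ would yield arbitrarily fast decay of $h$, which is false.

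The paper's proof avoids this by never trying to beat the non-decaying potential with Strichartz estimates. After the first step (which you have: Lemma \ref{lem_stric_exp_tilde} gives $\|\nabla h\|_{S(L^2,[t,+\infty))}\lesssim e^{-c_0t}$), it rewrites \eqref{linear_exp_tilde} as
\begin{equation}
\partial_t h+\mathcal L h=\epsilon+(\mathcal L-\tilde{\mathcal L}_k)h,
\end{equation}
and observes that the \emph{difference} operator, unlike $\tilde K_k$ itself, does decay in time: $|(\mathcal L-\tilde{\mathcal L}_k)h|\lesssim e^{-(p_c-1)e_0t}|W|^{p_c-1}|h|$ (and similarly for the gradient), because $|\mathcal V_k(t)|\lesssim e^{-e_0t}W$. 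Hence the right-hand side satisfies \eqref{exp_decay} with exponent $\min\{c_0+(p_c-1)e_0,\,c_1\}$, and one invokes the spectral result, Lemma \ref{lem_self}; the hypothesis $c_0>e_0$ is used precisely here, to land in case (i) of that lemma so that no $Ae^{-e_0t}\mathcal Y_+$ term appears. The non-decaying linear part is thus handled by the spectral decomposition and the coercivity of $\Phi$ inside Lemma \ref{lem_self}, not by absorption, and iterating gains $(p_c-1)e_0$ per step until the rate is capped at $c_1^-$. To repair your write-up you should replace the absorption step by this reduction to $\mathcal L$ plus Lemma \ref{lem_self} (or supply an equivalent spectral argument for the time-dependent operator), and drop $\tilde R_k(h)$ from the Duhamel formula.
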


\begin{proof}[Proof of Lemma \ref{lem_self_tilde}]
Since the subscript $k$ will be fixed in this proof, we omit it. By Lemma \ref{lem_stric_exp_tilde}, we have
\begin{equation}
    \|\nabla h(t)\|_{S(L^2,\,[t,+\infty))} \lesssim e^{-c_0t}. 
\end{equation}

We first note that \eqref{linear_exp_tilde} can be written as
\begin{equation}
    \partial_t h +\mathcal{L}h = \epsilon+(\mathcal{L}-\tilde{\mathcal{L}}_k)h.
\end{equation}

Now, if $N > 6$ and $h \in \dot{H}^1$,
\begin{equation}
    |(\mathcal{L}-\tilde{\mathcal{L}})h| \lesssim |\mathcal{V}(t)|^{p_c-1}|h|
    \lesssim e^{-(p_c-1)e_0t}|W|^{p_c-1}|h|,
\end{equation}
and
\begin{align}
    |\nabla [(\mathcal{L}-\tilde{\mathcal{L}})h]| &\lesssim |W|^{p_c-2}\left[e^{-e_0t}|\nabla W|+|\nabla \mathcal{V}(t)|\right]|h|+|\mathcal{V}(t)|^{p_c-1}|\nabla h|\\
    &\lesssim e^{-(p_c-1)e_0t}\left[|W|^{p_c-1}|h|+|\nabla h|\right],
\end{align}
where we used the fact that $\mathcal{V}(t) \in \mathcal{S}(\Real^N)$, $\|\mathcal{V}(t)\|_{L^\infty} \lesssim e^{-e_0t}$ and $|\nabla W| \lesssim |W|$.

Thus, 
\begin{equation}
\|(\mathcal{L}-\tilde{\mathcal{L}})h\|_{L^\frac{2N}{N+2}} + \|\nabla [(\mathcal{L}-\tilde{\mathcal{L}})h]\|_{S'(L^2,\,[t,+\infty))} \lesssim e^{-[\min\{c_0,\,(k+1^-)e_0\}+(p_c-1)e_0]t}.
\end{equation}

Therefore, by Lemma \ref{lem_self}, since $c_0 > e_0$ by hypothesis,
\begin{equation}
    \|h\|_{\dot{H}^1} \lesssim e^{-t\min\{[c_0+(p_c-1)e_0],\,c_1\}^-}.
\end{equation}

By iterating this argument, we get \eqref{self_tilde_1}.

\end{proof}

We now improve the convergence given by Propositions \ref{prop_supercrit} and \ref{prop_subcrit}.
\begin{lemma}\label{improv_conv} For $N \geq 6$, if $u$ is a solution to \eqref{NLS} satisfying, for all $t \geq 0$,
\begin{equation}\label{exp_conv_7}
    \|u(t) - W\|_{\dot{H}^1} \lesssim e^{-ct}, \quad E(u_0) = E(W),
\end{equation}

then there exists a unique $A \in \Real$ such that $u = U^A$.
\end{lemma}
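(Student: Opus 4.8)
The plan is to bootstrap the exponential convergence rate $c$ in \eqref{exp_conv_7} up to (just below) $(k+1)e_0$ for arbitrarily large $k$, so that $u - W$ eventually lies in the uniqueness class \eqref{bound_UA} of Proposition \ref{exist_UA}, at which point $u = U^A$ for the unique $A$ matching the $e^{-e_0 t}\mathcal{Y}_+$-component. First I would write $h = u - W$, which by \eqref{exp_conv_7} satisfies $\|h(t)\|_{\dot{H}^1}\lesssim e^{-ct}$ and solves $\partial_t h + \mathcal{L}h = iR(h)$; since $R(h) = O(\|h\|_{\dot{H}^1}^{p_c})$ in the relevant dual Strichartz norms (by Lemma \ref{lem_nonl}\eqref{grad_r_point} with $g=0$, plus Lemma \ref{lem_stric_exp} to convert the $\dot{H}^1$-bound into a Strichartz bound), this is an equation of the form \eqref{linear_exp} with $\epsilon = iR(h)$ decaying like $e^{-p_c c t}$, hence strictly faster than $e^{-ct}$. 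Feeding this into Lemma \ref{lem_self}: either $e_0 \notin [c, p_c c)$ and we immediately upgrade to $e^{-(p_c c)^- t}$, or $e_0 \in [c, p_c c)$ and we get $\|h(t) - A e^{-e_0 t}\mathcal{Y}_+\|_{\dot{H}^1}\lesssim e^{-(p_c c)^- t}$ for some $A \in \Real$. Iterating the first alternative a finite number of times, we may assume we have reached the second: there is $A\in\Real$ with $\|u(t) - W - A e^{-e_0 t}\mathcal{Y}_+\|_{\dot{H}^1}\lesssim e^{-c' t}$ for some $c' \in (e_0, 2e_0)$ (shrinking $c'$ if necessary so it is not an integer multiple of $e_0$).

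Next I would compare $u$ with the approximate solutions of Proposition \ref{family_appr}. Fix $k$ large and set $g = u - W - \mathcal{V}_{k}^A$ (with this same $A$), which solves \eqref{linearized_eq_tilde}, i.e. $\partial_t g + \tilde{\mathcal{L}}_k g = i\tilde{R}_k(g) + \epsilon_k$ with $\epsilon_k = O(e^{-(k+1)e_0 t})$ in $\mathcal{S}$. Since $\mathcal{V}_k^A = A e^{-e_0 t}\mathcal{Y}_+ + O(e^{-2e_0 t})$, the bound just obtained gives $\|g(t)\|_{\dot{H}^1}\lesssim e^{-c_0 t}$ with $c_0 = \min\{c', 2e_0\}^- > e_0$. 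Now I run an induction on the decay rate of $g$: assuming $\|g(t)\|_{\dot{H}^1}\lesssim e^{-\gamma t}$ with $e_0 < \gamma < (k+1)e_0$, Lemma \ref{lem_nonl_tilde}\eqref{grad_r_point_tilde} together with Lemma \ref{lem_stric_exp_tilde} shows $\tilde{R}_k(g)$ contributes an $\epsilon$-type term decaying like $e^{-p_c \gamma t}$, and $\epsilon_k$ decays like $e^{-(k+1)e_0 t}$; both rates exceed $\gamma$ provided $\gamma < (k+1)e_0$ and $p_c \gamma > \gamma$ (which holds). Applying Lemma \ref{lem_self_tilde} (whose hypotheses $(k+1)e_0 > c_1 > c_0 > e_0$ are exactly met here) yields $\|g(t)\|_{\dot{H}^1}\lesssim e^{-\min\{p_c\gamma,\,(k+1)e_0\}^- t}$, a strictly better rate. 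Iterating finitely many times drives the rate up to $(k+1)^- e_0$, so $\|g(t)\|_{\dot{H}^1}\lesssim e^{-(k+\frac12)e_0 t}$ for all large $t$ (after relabeling $k$).

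Finally, I would convert this $\dot{H}^1$-decay of $g = u - W - \mathcal{V}_k^A$ into the Strichartz-norm bounds \eqref{bound_UA} required by Proposition \ref{exist_UA}: Lemma \ref{lem_stric_exp_tilde} gives $\|\nabla g\|_{S(L^2,[t,+\infty))}\lesssim e^{-(k+\frac12)e_0 t}$, and the Sobolev embedding in Section \ref{Sec2} controls the $D^\varepsilon$-norm in $S(\dot{H}^{1-\varepsilon})$ by the $S(L^2)$-norm of the gradient (losing at most a fixed factor, which is absorbed into the slightly larger exponent $(k+\frac12)\frac{N-2}{4}e_0$ appearing in \eqref{bound_UA} since $\frac{N-2}{4}\ge 1$). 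Thus $u$ satisfies \eqref{bound_UA} with the index $l(k)$; by the uniqueness statement in Proposition \ref{exist_UA}, $u = U^A$, and $A$ is uniquely determined as the coefficient of $e^{-e_0 t}\mathcal{Y}_+$ in the asymptotic expansion \eqref{bound_UA2}, proving both existence and uniqueness of $A$. The main obstacle I anticipate is bookkeeping the interplay between the two norms $S(L^2)$ and $S(\dot{H}^{1-\varepsilon})$ and the slightly mismatched exponents in \eqref{bound_UA} (the factor $\frac{N-2}{4}$, the $l(k)$ vs. $k$ discrepancy, and the ``$-$'' in the exponents), together with ensuring $k$ is chosen large enough that $l(k) \ge k_0$ and the target rate $(k+\frac12)e_0$ actually exceeds all the thresholds needed to invoke Lemma \ref{lem_self_tilde} at each stage; the analytic content is entirely supplied by Lemmas \ref{lem_self}, \ref{lem_self_tilde}, \ref{lem_stric_exp_tilde} and Proposition \ref{exist_UA}.
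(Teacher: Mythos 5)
Your proposal follows essentially the same route as the paper's proof: first linearize around $W$ and combine Lemma \ref{lem_stric_exp}, Lemma \ref{lem_nonl}~\eqref{grad_r_point} and Lemma \ref{lem_self} iteratively to extract the coefficient $A$ and reach a rate better than $e^{-e_0 t}$ for $u - W - Ae^{-e_0 t}\mathcal{Y}_+$; then linearize around $W+\mathcal{V}_k^A$ and bootstrap with Lemmas \ref{lem_nonl_tilde}, \ref{lem_stric_exp_tilde} and \ref{lem_self_tilde} up to rate $(k+1^-)e_0$; finally conclude by the uniqueness class of Proposition \ref{exist_UA}, with $A$ read off from \eqref{bound_UA2}. (A small loose end, shared with the paper's own write-up: if the decay rate climbs past $e_0$ without the second alternative of Lemma \ref{lem_self} ever being triggered, one simply takes $A=0$, so that $\mathcal{V}_k^0=0$ and $u=W=U^0$.)

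The one step that is incorrect as written is the final conversion to \eqref{bound_UA}. Since $\frac{N-2}{4}\geq 1$ for $N\geq 6$, the exponent $(k+\frac12)\frac{N-2}{4}e_0$ in the $D^\varepsilon$-bound of \eqref{bound_UA} is a \emph{stronger} decay requirement than $(k+\frac12)e_0$, so nothing is ``absorbed'': the Sobolev embedding $\|D^\varepsilon g\|_{S(\dot{H}^{1-\varepsilon})}\lesssim \|\nabla g\|_{S(L^2)}$ applied to a gradient bound of size $e^{-(k+\frac12)e_0 t}$ only yields that same, weaker rate for the $D^\varepsilon$-norm, not the required faster one. The repair is exactly the paper's choice of index: run your Step 2 with $k=l(k_0)$ for some $k_0$ at least the threshold of Proposition \ref{exist_UA}. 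Since $l(k_0)+1\geq \frac{N-2}{4}(k_0+1)$, the bootstrapped rate $(l(k_0)+1^-)e_0$ for $\|\nabla(u-W-\mathcal{V}_{l(k_0)})\|_{S(L^2,[t,+\infty))}$ already exceeds $(k_0+\frac12)\frac{N-2}{4}e_0$, and Sobolev embedding then transfers this single, sufficiently fast rate to the $D^\varepsilon$-norm, so both inequalities in \eqref{bound_UA} hold for large $t$ and the uniqueness statement applies. This fix is available inside your own scheme (your Step 2 reaches any rate below $(k+1)e_0$ for arbitrary $k$), so it is a bookkeeping correction rather than a missing idea, but as stated the exponent comparison goes in the wrong direction.
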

\begin{proof}

\textit{Step 1. Linearize around $W$ to improve the decay on time.}

If $u$ is a solution to \eqref{NLS}, write $u = h+W$. Recall that $h$ is a solution to \eqref{linearized_eq}. We first show the bound
\begin{equation}\label{boot_Z}
    \|\nabla(R(h))\|_{S'(L^2,\,[t,+\infty))}+\|R(h(t))\|_{L^\frac{2N}{N+2}} \lesssim e^{-c\,p_ct} \text{ for all } t \geq 0.
\end{equation}
Indeed, by Lemmas \ref{lem_stric_exp} and \ref{lem_nonl} \eqref{grad_r_point},
\begin{equation}
    \|\nabla(R(h))\|_{S'(L^2,\,[t,t+1])} \lesssim e^{-c\,p_ct}.
\end{equation}
Therefore, triangle inequality gives 
\begin{equation}
        \|\nabla(R(h))\|_{S'(L^2,\,[t,+\infty))}\lesssim e^{-c\,p_ct}. 
\end{equation}

Now, by \eqref{R_diff}, we have 
\begin{equation}
    |R(h(t))| \leq |h(t)|^{p_c},
\end{equation}
so that, by Sobolev inequality, 
\begin{equation}
    \|R(h(t))\|_{L^\frac{2N}{N+2}} \lesssim \|h(t)\|^{p_c}_{L^{2^*}} \lesssim e^{-c\,p_ct}.
\end{equation}
Therefore, the bound \eqref{boot_Z} is proved.

We are now under the hypotheses of Lemma \ref{lem_self}, with $c_0 = c$ and $c_1 = c p_c > c$. The conclusion of this Lemma gives
\begin{equation}
    \|h(t)\|_{\dot{H}^1} \lesssim e^{-e_0t}+e^{-cp_c^-t}.
\end{equation}
If $c> e_0/p_c$, we get
\begin{equation}
    \|h(t)\|_{\dot{H}^1} \lesssim e^{-e_0t}, 
\end{equation}

and, by the same argument used to prove \eqref{boot_Z},
\begin{equation}
\|\nabla(R(h))\|_{S'(L^2,\,[t,+\infty))}+\|R(h(t))\|_{L^\frac{2N}{N+2}} \lesssim e^{-e_0 p_ct}.     
\end{equation}
Thus, \eqref{lem_self_2} gives 
\begin{equation}\label{best_boot}
   \|h(t) - Ae^{-e_0t}\mathcal{Y}_+\|_{\dot{H}^1} \lesssim e^{-p_c^-e_0t}.
\end{equation}

If, however, $c \leq e_0/p_c$, then assumption \eqref{exp_conv_7} holds with $\frac{1+p_c}{2}c > c$ instead of $c$. By iteration, we get \eqref{best_boot}.

\textit{Step 2. Linearize around $W+\mathcal{V}_k$ to improve higher order convergence to $U^A$.}

For $k \geq 2$ to be chosen later, write $\tilde{h} = h - \mathcal{V}_k$, so that $\tilde{h}$ is a solution to \eqref{linearized_eq_tilde}. Since $k$ is fixed throughout the proof, it will be omitted. By Lemma \ref{lem_nonl_tilde}, we have
\begin{equation}
    \|\nabla(\tilde{R}(\tilde{h}))\|_{S'(L^2,\,[t,+\infty))}+\|\tilde{R}(\tilde{h}(t))\|_{L^\frac{2N}{N+2}} \lesssim_k  \|\nabla \tilde{h}\|_{S(L^2,\,[t,+\infty))}^{p_c}.     
\end{equation}
Therefore, by \eqref{linearized_eq_tilde},
\begin{equation}
    \partial_t h +\tilde{\mathcal{L}}_kh = \eta,
\end{equation}
with 
\begin{equation}\label{boot_tilde}
    \|\nabla \eta\|_{S'(L^2,\,[t,+\infty))}+\|\eta\|_{L^\frac{2N}{N+2}} \lesssim_k  \|\nabla \tilde{h}\|_{S(L^2,\,[t,+\infty))}^{p_c} + e^{-(k+1)e_0t}.     
\end{equation}

By \eqref{best_boot} and the definition of $\mathcal{V}_k$, we have
\begin{equation}\label{base_ind_tilde}
    \|\tilde{h}\|_{\dot{H}^1} \lesssim \|h - Ae^{-e_0t}\mathcal{Y}_+\|_{\dot{H}^1} + O(e^{-2e_0t}) \lesssim_k e^{-p_c^-e_0t}.
\end{equation}
By iteration, starting with \eqref{base_ind_tilde}, and repeatedly applying Lemmas \ref{lem_self_tilde} and \ref{stric_exp_tilde}, as well as estimate \eqref{boot_tilde}, we have, for any $k \geq 2$,
\begin{equation}
    \|\tilde{h}\|_{\dot{H}^1} \lesssim_{k} e^{-(k+1^-)e_0t}.
\end{equation}
Therefore, choosing $k = l(k_0)$, where $k_0$ and $l$ are defined in Proposition \ref{exist_UA}, we have, for $N \geq 6$ and $t \geq 0$,

\begin{equation}
	\|D^\varepsilon(u-W-\mathcal{V}_{l(k_0)})\|_{S(\dot{H}^{1-\varepsilon},\,[t,+\infty))} \lesssim
	\|\nabla(u-W-\mathcal{V}_{l(k_0)})\|_{S(L^2,\,[t,+\infty))} \lesssim e^{-(k_0+\frac{3}{4})\frac{N-2}{4}  e_0 t}.
\end{equation}
Hence, by the uniqueness in Proposition \ref{exist_UA}, we get that $u = U^A$.

\end{proof}
\begin{cor}\label{unique_A} Let $N \geq 6$. For any $A \neq 0$, there exists $T_A \in \Real$ such that either
\begin{equation}
    U^A(t) = W^+(t+T_A), \text{ if } A > 0,
\end{equation}
or
\begin{equation}
    U^A(t) = W^-(t+T_A), \text{ if } A < 0.
\end{equation}
\end{cor}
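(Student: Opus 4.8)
The plan is to establish a one‑parameter time‑translation covariance inside the family $\{U^A\}_{A\neq0}$ and then obtain the corollary by bookkeeping, after locating $W^+$ and $W^-$ among these solutions.

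First I would fix $A\neq 0$ and $s\ge0$ and set $v(t):=U^A(t+s)$. Since \eqref{NLS} is autonomous, $v$ is again a solution; conservation of energy together with the continuity of $E$ on $\dot H^1$ (Sobolev embedding) and $U^A(t)\to W$ gives $E(v_0)=E(W)$, while \eqref{bound_UA2} gives $\|v(t)-W\|_{\dot H^1}\lesssim e^{-e_0t}$ for $t$ large. Hence $v$ satisfies the hypotheses of Lemma~\ref{improv_conv}, so $v=U^{A'}$ for a unique $A'\in\Real$. To pin down $A'$ I would match leading‑order asymptotics: \eqref{bound_UA2} for $v$ reads $\|v(t)-W-(Ae^{-e_0s})e^{-e_0t}\mathcal{Y}_+\|_{\dot H^1}\lesssim e^{-2e_0t}$, the same expansion holds for $U^{A'}$ with coefficient $A'$, and subtracting and letting $t\to+\infty$ (using $\mathcal{Y}_+\neq0$) forces $A'=Ae^{-e_0s}$. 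This yields the covariance identity
\begin{equation}\label{covariance_plan}
    U^A(t+s)=U^{Ae^{-e_0 s}}(t),\qquad s\ge 0 .
\end{equation}
From \eqref{covariance_plan} one reads off that for $0<B\le A$ the solution $U^B$ is a time‑translate of $U^A$ (take $s=\tfrac1{e_0}\log(A/B)$), and symmetrically; since ``being a time‑translate of'' is an equivalence relation, $\{U^A:A>0\}$ forms a single time‑translation orbit, and likewise $\{U^A:A<0\}$.

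Next I would locate $W^\pm$ inside these orbits. By Theorem~\ref{special} they satisfy $E(W^\pm_0)=E(W)$, are defined on $[0,+\infty)$, and obey $\|W^\pm(t)-W\|_{\dot H^1}\lesssim e^{-e_0t}$, so Lemma~\ref{improv_conv} gives $W^\pm=U^{A_\pm}$ for some $A_\pm\in\Real$; here $A_\pm\neq0$ because $U^0=W$ while $\|\nabla W^+_0\|_2>\|\nabla W\|_2>\|\nabla W^-_0\|_2$, and $A_+,A_-$ must have opposite signs, since otherwise \eqref{covariance_plan} would make $W^+$ a time‑translate of $W^-$, incompatible with $W^+$ blowing up in finite negative time while $W^-$ is globally defined. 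Granting $A_+>0>A_-$, the corollary follows: given $A>0$, applying \eqref{covariance_plan} to the pair $(A,A_+)$ in decreasing order of modulus gives $U^A(t)=W^+(t+T_A)$ with $T_A=\tfrac1{e_0}\log(A_+/A)\in\Real$, and the case $A<0$ is identical with $W^-,A_-$ in place of $W^+,A_+$.

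The only step that is not pure bookkeeping is the sign identification $A_+>0>A_-$: the opposite‑signs statement is forced by the qualitative backward dynamics, but attaching the correct sign to $W^+$ rather than to $W^-$ rests on the normalization fixed when constructing $W^\pm$, equivalently on the sign of the leading term in $\|\nabla U^A(t)\|_2^2=\|\nabla W\|_2^2+2Ae^{-e_0t}(\nabla W,\nabla\,\Re\mathcal{Y}_+)_{L^2}+O(e^{-2e_0t})$. The covariance identity \eqref{covariance_plan} itself is straightforward once Lemma~\ref{improv_conv} is in hand; the one mild technical nuisance is that $v$ need not be defined on all of $[0,+\infty)$, but since $v$ coincides with a time‑translate of $U^A$ it is automatically defined on a forward‑infinite interval and exhibits there the exponential decay that Lemma~\ref{improv_conv} uses.
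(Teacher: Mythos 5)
Your argument is correct and runs on the same engine as the paper's proof: translate in time, invoke Lemma \ref{improv_conv} to identify the translated solution with some $U^{A'}$, and read off $A'$ by matching the leading term in the expansion \eqref{bound_UA2}. The only real difference is that you spend effort locating $W^{\pm}$ inside the family $\{U^{A}\}$ and pinning down the signs of $A_{\pm}$; in the paper this step is vacuous, because $W^{+}:=U^{1}$ and $W^{-}:=U^{-1}$ by definition (proof of Theorem \ref{special}), so the corollary reduces to choosing $T_A$ with $|A|e^{-e_0T_A}=1$ and observing that the matched coefficient must be $\pm1$. Your sign identification, via the expansion of $\|\nabla U^{A}(t)\|_{L^2}^2$ together with the normalization $(W,\mathcal{Y}_1)_{\dot{H}^1}>0$ and sign persistence along the flow, is exactly the computation the paper performs inside the proof of Theorem \ref{special}, so nothing is missing; your route is merely a bit longer, and has the mild advantage of not relying on how $W^{\pm}$ were constructed. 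The technical caveat you flag (the translate need not be defined on all of $[0,+\infty)$) is equally present in the paper's proof, where $T_A$ may be negative, and is harmless for the same reason you give: Lemma \ref{improv_conv} and the uniqueness in Proposition \ref{exist_UA} only use the behavior for large $t$.
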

\begin{proof}
Choose $T_A$ such that $|A|e^{-e_0T_A} = 1$. We have, by \eqref{bound_UA2},
\begin{equation}\label{A_pm}
    \|U^A(t+T_A) - W \mp e^{-e_0t}\mathcal{Y}_{+}\|_{\dot{H}^1} \lesssim e^{-2e_0t}.
\end{equation}
Note that $U^A(t+T_A)$ satisfies the hypotheses of Lemma \ref{improv_conv}. Thus, there exists $\bar{A}\in \Real$ such that $U^A(t+T_A) = U^{\bar{A}}$. But \eqref{A_pm} implies that $\bar{A} = 1$, if $A > 0$, and $\bar{A} = -1$, if $A < 0$, finishing the proof of the corollary. 
\end{proof}

\subsection{Intercritical case}

For $0 < s_c<1$, we study the linearized approximate equation
\begin{equation}\label{sub_linear_exp}
	\partial_t h +\mathcal{L}h = \epsilon
\end{equation}
with $h$ and $\epsilon$ such that, for $t \geq 0$,
\begin{equation}\label{sub_exp_decay}
	\begin{gathered}
	\|h(t)\|_{H^1} \lesssim e^{-c_0t},\\
	\|\langle\nabla\rangle \epsilon\|_{S'(L^2, [t,+\infty))}  \lesssim e^{-c_1t},
	\end{gathered}
\end{equation}
where $c_1 > c_0 > 0$. We merely state the results in this case, as their proofs are very close to the energy-critical case (in fact, some proofs are easier, since the $L^2$ norm of the solution is finite).

\begin{lemma}\label{sub_lem_self}Under the assumptions \eqref{sub_exp_decay},
\begin{enumerate}[(i)]
	\item if $e_0 \notin [c_0,c_1)$, then 
	\begin{equation}\label{sub_lem_self_1}
		\| h(t)\|_{H^1} \lesssim e^{-c_1^- t}, 
	\end{equation}
	\item if $e_0 \in [c_0,c_1)$, then there exists $A \in \Real$ such that 
	\begin{equation}\label{sub_lem_self_2}
			\|h(t)-Ae^{-e_0t}\mathcal{Y}_+\|_{H^1} \lesssim e^{-c_1^- t }.
	\end{equation}
\end{enumerate}
\end{lemma}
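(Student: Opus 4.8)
The plan is to mirror the proof of Lemma \ref{lem_self} (the energy-critical case, to be given in the appendix), exploiting the spectral decomposition of $\mathcal L$ from Lemma \ref{spectrum_L}. Recall $\sigma(\mathcal L)\cap\Real=\{-e_0,0,e_0\}$ with simple eigenfunctions $\mathcal Y_\pm$, the kernel of $\mathcal L$ spanned by $iQ$ and $\partial_k Q$, and $\sigma_{\mathrm{ess}}(\mathcal L)=\{iy:|y|\ge 1\}$ lying on the imaginary axis (recall the intercritical rescaling $1-s_c\mapsto 1$). First I would project the equation $\partial_t h+\mathcal L h=\epsilon$ onto the one-dimensional eigenspaces: writing $a_\pm(t)=B(\mathcal Y_\mp,h(t))/B(\mathcal Y_\mp,\mathcal Y_\pm)$ (or the analogous spectral projection), the properties \eqref{prop_B} of $B$ give scalar ODEs $a_\pm'(t)\mp e_0 a_\pm(t)=\langle\text{projection of }\epsilon\rangle$, whose right-hand sides are $O(e^{-c_1 t})$ by \eqref{sub_exp_decay}. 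Solving these ODEs: since $a_+$ decays (as $\|h\|_{H^1}\lesssim e^{-c_0 t}$), the unstable mode must satisfy $a_+(t)=-e^{e_0 t}\int_t^\infty e^{-e_0 s}(\dots)\,ds$, giving $|a_+(t)|\lesssim e^{-\min\{e_0,c_1\}^- t}$; and for the stable mode $a_-(t)=e^{-e_0 t}\big(a_-(0)+\int_0^t e^{e_0 s}(\dots)ds\big)$, so either $|a_-(t)|\lesssim e^{-c_1^- t}$ when $c_1<e_0$, or $a_-(t)=Ae^{-e_0 t}+O(e^{-c_1^- t})$ for a constant $A$ when $e_0\le c_1$.

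Next I would handle the kernel and continuous-spectrum parts. The component of $h$ in $\ker\mathcal L=\mathrm{span}\{iQ,\partial_k Q\}$ is killed by the orthogonality conditions \eqref{sub_o1} that $h$ inherits from the modulation decomposition \eqref{sub_decomp_alpha} — indeed in all the applications $h(t)\in\tilde G^\perp$, so $B(\mathcal Y_\pm,h)=0$ would force $a_\pm\equiv0$ except we are precisely in the regime where $h$ need not be orthogonal to $\mathcal Y_\pm$; the cleaner route is to write $h=a_+\mathcal Y_++a_-\mathcal Y_-+h^\flat$ where $h^\flat$ lies in the spectral subspace associated to $\sigma_{\mathrm{ess}}$ together with $0$, on which $\mathcal L$ generates a group with purely-imaginary spectrum, hence a uniformly bounded flow on the relevant (coercive) subspace. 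On $h^\flat$ one runs a Strichartz/Duhamel estimate exactly as in Lemma \ref{lem_stric_exp}: the nonhomogeneous equation $\partial_t h^\flat+\mathcal L h^\flat=\epsilon^\flat$ translates to a Schrödinger equation $i\partial_t h^\flat+\Delta h^\flat - h^\flat+K(h^\flat)=-\epsilon^\flat+(\text{bounded projections})$, and applying Strichartz estimates, the linear estimate \eqref{sub_linear1} of Lemma \ref{sub_lem_nonl} on the short interval $[t,t+\tau_0]$, and summing the resulting geometric series (as in the proof of Lemma \ref{lem_stric_exp}) yields $\|h^\flat(t)\|_{H^1}\lesssim e^{-\min\{c_0,c_1\}^- t}=e^{-c_0^- t}$; then feeding this back (bootstrapping) improves $c_0$ up to $c_1^-$ since there is no eigenvalue obstruction on $h^\flat$. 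Combining: $\|h(t)\|_{H^1}\lesssim |a_+(t)|+|a_-(t)|+\|h^\flat(t)\|_{H^1}$, which gives \eqref{sub_lem_self_1} when $e_0\notin[c_0,c_1)$ and \eqref{sub_lem_self_2} with $\mathcal Y_+$-coefficient $A$ when $e_0\in[c_0,c_1)$.

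The main obstacle — and the reason this is stated rather than proved in detail — is the coercivity/boundedness of the flow on the subspace $h^\flat$: although $\sigma_{\mathrm{ess}}(\mathcal L)$ is purely imaginary, $\mathcal L$ is not anti-self-adjoint, so one does not immediately get a uniformly bounded one-parameter group, and one must instead use that $\Phi=B(\cdot,\cdot)$ is a conserved (up to the forcing) quantity which is coercive on $\tilde G^\perp$ by Lemma \ref{lem_coerc}, controlling $\|h^\flat\|_{H^1}$ by $\Phi(h^\flat)^{1/2}$ plus the finitely many bad directions already extracted. Care is also needed that the exponents $c_1^-$ (meaning $c_1-\delta$ for arbitrarily small $\delta>0$) arise only because the borderline case $c_1=e_0$ produces a resonant $t e^{-e_0 t}$ term, absorbed into $e^{-c_1^- t}$; everything else is clean. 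Since the $L^2$ norm is finite here (unlike the energy-critical case), no truncation is needed and the argument is in fact simpler than its critical counterpart, as the lemma statement already indicates.
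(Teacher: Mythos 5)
Your outline reproduces the paper's strategy in its main lines: decompose $h$ using the bilinear form $B$ along $\mathcal{Y}_\pm$ (normalized so that $B(\mathcal{Y}_+,\mathcal{Y}_-)=1$), derive and integrate the scalar ODEs $\frac{d}{dt}\big(e^{\pm e_0 t}\alpha_\pm\big)=e^{\pm e_0 t}B(\mathcal{Y}_\mp,\epsilon)$ to get $|\alpha_-|\lesssim e^{-c_1t}$ and, when $e_0\in[c_0,c_1)$, a limit $A$ of $e^{e_0t}\alpha_+$; control the remainder through the almost-conservation $\frac{d}{dt}\Phi(h)=2B(h,\epsilon)$ and the coercivity of Lemma \ref{lem_coerc}; then iterate. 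However, your treatment of the remainder has a genuine gap. The component you call $h^\flat$ still contains the null directions $iQ$ and $\partial_k Q$ (and the generalized-kernel directions, on which $e^{-t\mathcal{L}}$ grows polynomially); $\Phi$ vanishes identically on them, so coercivity on $\tilde{G}^\perp$ gives no control of these components, and you cannot dismiss them via \eqref{sub_o1}: the lemma assumes only \eqref{sub_exp_decay}, and in its application (Lemma \ref{sub_improv_conv}) $h$ does not lie in $\tilde{G}^\perp$. The paper extracts these modes explicitly as coefficients $\beta_k$ along $iQ/\|Q\|_{L^2}$ and $\partial_kQ/\|\partial_kQ\|_{L^2}$ and integrates their own ODEs \eqref{sub_diff_beta}, whose source term $(\mathcal{L}g,f_k)_{H^1}$ is bounded using the exponential decay of the $\tilde{G}^\perp$ component $g$ obtained from the energy; this step has no counterpart in your proposal and is needed to close both cases (i) and (ii).

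A second, related problem is the Strichartz/Duhamel iteration you first propose for $h^\flat$: estimates like \eqref{sub_linear1} on intervals $[t,t+\tau_0]$ only transfer the already assumed bound $\|h(t)\|_{H^1}\lesssim e^{-c_0t}$ to Strichartz norms (that is exactly the content of Lemma \ref{lem_stric_exp}); a forward-in-time Duhamel iteration cannot by itself upgrade the decay rate from $c_0$ to $c_1$, because that would require the homogeneous flow $e^{-t\mathcal{L}}$ to be uniformly bounded on the complement of the $\pm e_0$ modes, which is precisely what is not known (and fails on the generalized kernel). The mechanism that actually works is the one you defer to your last paragraph, the coercive linearized energy; note, though, that it yields only the half-gain $\|g(t)\|_{H^1}\lesssim e^{-\frac{(c_0+c_1)}{2}t}$, since $|B(h,\epsilon)|\lesssim e^{-(c_0+c_1)t}$, so the finite bootstrap is essential and is the main source of the $c_1^-$ loss (not only the resonant case $c_1=e_0$). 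With the kernel modes extracted and estimated as in \eqref{sub_diff_beta}, and the energy-plus-bootstrap argument carried out, your outline becomes the paper's proof.
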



Omitting $A$ for simplicity and defining, for every $k$,
\begin{align}
    \tilde{\mathcal{L}_k} &=\begin{pmatrix}0 & 1-\Delta \\
-(1-\Delta) & 0 \\
\end{pmatrix}+
\frac{(p+1)}{2}|Q+\mathcal{V}_k|^{p-1}\begin{pmatrix}0 & 1 \\
-1 & 0 \\
\end{pmatrix}\\
&\quad+\frac{(p-1)}{2}|Q+\mathcal{V}_k|^{p-3}\begin{pmatrix}\Im (Q+\mathcal{V}_k)^2 & -\Re (Q+\mathcal{V}_k)^2 \\
-\Re (Q+\mathcal{V}_k)^2 & - \Im (Q+\mathcal{V}_k)^2 \\
\end{pmatrix},\\
\end{align}
\begin{equation}
    \tilde{K}_k(h) = \frac{(p+1)}{2}|Q +\mathcal{V}_k|^{p-1} h + \frac{(p-1)}{2}|Q+\mathcal{V}_k|^{p-3}(Q+\mathcal{V}_k)^2 \bar{h},
\end{equation}
and
\begin{equation}
    \tilde{R}_k(h) = |Q+\mathcal{V}_k|^{p-1}(Q+\mathcal{V}_k) J((Q+\mathcal{V}_k)^{-1}h), 
\end{equation}
where
\begin{equation}
    J(z) = |1+z|^{p-1}(1+z)-1-\frac{(p+1)}{2}z-\frac{(p-1)}{2}\bar{z},
\end{equation}
we have that if $u = e^{it}(Q +\mathcal{V}_k + h)$ is a solution of the NLS equation \eqref{sub_NLS}, then $h$ satisfies

\begin{equation}\label{sub_linearized_eq_tilde}
	\partial_t h +\tilde{\mathcal{L}}_k h = i \tilde{R}_k(h) +\epsilon_k,
\end{equation}
or in the form of a Schr\"odinger equation,
\begin{equation}
    i\partial_t h + \Delta h - h + \tilde{K}_k h = - \tilde{R}_k(h)+ i\epsilon_k,
\end{equation}
where $\epsilon_k$ are $O(e^{-(k+1)e_0 t})$ in $\mathcal{S}(\Real^N)$. By the construction of $\mathcal{V}_k$, we have, for all $t \geq 0$,
\begin{equation}
    |\mathcal{V}_k(t)| \lesssim e^{-e_0t }|Q|,
\end{equation}
and
\begin{equation}
    |\nabla \mathcal{V}_k(t)| \lesssim e^{-e_0t }|\nabla Q|\lesssim e^{-e_0t }|Q|.
\end{equation}
Therefore, as in the energy-critical case, we have the following results.
\begin{lemma}\label{sub_lem_nonl_tilde} Let $p>1$, $k \geq 1$ and $I$ be a bounded time interval, and consider $f \in S(L^2,I)$ such that $\nabla f \in S(L^2,I)$. There exists $\alpha >0$ such that the following estimates hold.

For all $p > 1$:
\begin{enumerate}[(i)]
\item \label{sub_linear1_tilde} $\|\langle\nabla\rangle \tilde{K}_k(f)\|_{S'(L^2,\,I)} \lesssim |I|^{\alpha} \|\langle\nabla\rangle f\|_{S(L^2,\,I)}$.
%
\end{enumerate}
For $p > 2$:
\begin{enumerate}[(i)]
\setcounter{enumi}{1}
\item \label{sub_P_grad_r_point_tilde} $\begin{aligned}[t]\|\langle\nabla\rangle \tilde{R}_k(f))\|_{S'(L^2,\,I)} \lesssim 
\|\langle\nabla\rangle f)\|_{S(L^2,\,I)}\left(|I|^\alpha\|\langle\nabla\rangle f\|_{S(L^2,\,I)} +\|\langle\nabla\rangle f\|^{p-1}_{S(L^2,\,I)} \right).
\end{aligned}$
\end{enumerate}
For $1 < p \leq 2$:
\begin{enumerate}[(i)]
\setcounter{enumi}{2}
%
\item \label{sub_p_grad_r_point_tilde} $\begin{aligned}[t]\|\langle\nabla\rangle \tilde{R}_k(f)\|_{S'(L^2,\,I)} \lesssim 
&\left(1+|I|^\alpha\right)\|\langle\nabla\rangle f\|_{S(L^2,\,I)}^{p}\end{aligned}$.
\end{enumerate}

\end{lemma}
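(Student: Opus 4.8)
The plan is to repeat the proof of Lemma \ref{sub_lem_nonl} essentially verbatim, with $Q$ replaced by $Q+\mathcal{V}_k(t)$ and $K$, $R$ replaced by $\tilde K_k$, $\tilde R_k$; the only input to verify is that $Q+\mathcal{V}_k$ shares the quantitative features of $Q$ that were used there. First I would record these. By Proposition \ref{family_appr} and Corollary \ref{sub_decay_eigen} one has $Z_j\in\mathcal{S}(\Real^N)$ with $\|Q^{-1}e^{\eta|x|}\partial^\alpha Z_j\|_{L^\infty}<\infty$ for every multi-index $\alpha$, so for $t$ large enough — the regime in which $\tilde K_k$, $\tilde R_k$ are used, and in which $Q+\mathcal{V}_k$ does not vanish — one has $|\mathcal{V}_k(t)|\le\tfrac12 Q$ and $|\partial^\alpha\mathcal{V}_k(t)|\lesssim e^{-e_0t}Q\lesssim Q$. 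Hence
\begin{equation}
\tfrac12 Q\le |Q+\mathcal{V}_k|\le\tfrac32 Q,\qquad |(Q+\mathcal{V}_k)^{-1}|\le 2Q^{-1},\qquad |\partial^\alpha(Q+\mathcal{V}_k)|\lesssim Q ,
\end{equation}
so in particular $|\nabla(Q+\mathcal{V}_k)|\lesssim|Q+\mathcal{V}_k|$, and $Q+\mathcal{V}_k$ lies in $\mathcal{S}(\Real^N)$ with seminorms bounded uniformly in $t$. These are precisely the properties of $Q$ invoked in the proof of Lemma \ref{sub_lem_nonl}.

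Granting this, item \eqref{sub_linear1_tilde} is immediate: the product rule and $|\nabla(Q+\mathcal{V}_k)|\lesssim|Q+\mathcal{V}_k|\lesssim Q$ give the pointwise bound $|\langle\nabla\rangle\tilde K_k(f)|\lesssim Q^{p-1}(|f|+|\langle\nabla\rangle f|)$, and since $Q^{p-1}$ belongs to $L^r_x$ for every $r$, Hölder's inequality in time together with the classical estimate $\||a|^{p-1}b\|_{S'(L^2)}\lesssim\|\langle\nabla\rangle a\|_{S(L^2)}^{p-1}\|b\|_{S(L^2)}$ (with $a$ comparable to $Q$) produces the gain $|I|^\alpha$, exactly as in item \eqref{sub_linear1} of Lemma \ref{sub_lem_nonl}. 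For items \eqref{sub_P_grad_r_point_tilde} and \eqref{sub_p_grad_r_point_tilde} I would write $\tilde R_k(f)=|Q+\mathcal{V}_k|^{p-1}(Q+\mathcal{V}_k)\,J((Q+\mathcal{V}_k)^{-1}f)$, where $J(z)=|1+z|^{p-1}(1+z)-1-\tfrac{p+1}{2}z-\tfrac{p-1}{2}\bar z$ is $C^1(\mathbb{C})$ with $J(0)=J_z(0)=J_{\bar z}(0)=0$, and $J_z$, $J_{\bar z}$ Hölder continuous of order $\min\{p-1,1\}$, so that $|J_z(z)|+|J_{\bar z}(z)|\lesssim|z|^{\min\{p-1,1\}}$ and, for $p\ge 2$, also $\lesssim|z|+|z|^{p-1}$. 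Differentiating $\tilde R_k(f)$, using $|\nabla(Q+\mathcal{V}_k)|\lesssim|Q+\mathcal{V}_k|$, and discarding the harmless Schwartz weights $Q^{\pm}$, one is reduced — exactly as in the decomposition of $\nabla(R(f)-R(g))$ in Lemma \ref{sub_lem_nonl}, taken with $g=0$ (legitimate since $\tilde R_k(0)=0$) — to estimating schematic terms of the form $|f|^{\min\{p-1,1\}}|\nabla f|$, and in the range $p>2$ also $|f||\nabla f|$ and $|f|^{p-1}|\nabla f|$ (and their analogues with $f$ in place of $\nabla f$). To each I would apply the Leibniz rule \eqref{leibiniz}, the fractional chain rule \eqref{frac_chain} for the Hölder map $J_z$ (or $J_{\bar z}$), Hölder's inequality, and Sobolev embedding, with the same choice of auxiliary exponents as in Lemma \ref{sub_lem_nonl}; the factors contributed by $\mathcal{V}_k$ are all $\lesssim 1$ (indeed exponentially small), so they only help. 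This yields the stated bounds, the dichotomy $p>2$ versus $1<p\le 2$ arising, as before, from whether $J_z$, $J_{\bar z}$ are Lipschitz or merely Hölder.

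The main obstacle will be the low-power regime $1<p\le 2$, where $|u|^{p-1}u$ is not twice real-differentiable, so $J_z$, $J_{\bar z}$ are not Lipschitz and neither the mean value theorem nor a polynomial expansion is available; the substitute is Visan's fractional chain rule \eqref{frac_chain}, whose use requires choosing the auxiliary parameters $\nu$, $p_1$, $q_1$ (and the fractional order of differentiation) so that $\tfrac1p=\tfrac1{p_1}+\tfrac1{q_1}$, $(1-\tfrac{s}{\nu\alpha})p_1>1$, and the Strichartz admissibility of all pairs involved hold simultaneously. This is the same difficulty already met in Lemma \ref{sub_lem_nonl}; since $\tilde R_k$ has exactly the same algebraic structure as $R$ and $Q+\mathcal{V}_k$ has the same regularity and decay as $Q$, the verification is word-for-word the one carried out there, which is why the proof is only indicated.
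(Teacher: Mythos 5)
Your proposal is correct and is essentially the paper's own argument: the paper likewise reduces Lemma \ref{sub_lem_nonl_tilde} to the proof of Lemma \ref{sub_lem_nonl} by observing that $|\mathcal{V}_k(t)|\lesssim e^{-e_0t}Q$ and $|\nabla\mathcal{V}_k(t)|\lesssim e^{-e_0t}|\nabla Q|\lesssim e^{-e_0t}Q$, so $Q+\mathcal{V}_k$ enjoys the same decay and comparability properties of $Q$ used there, and the estimates carry over verbatim (with $g=0$). One small clarification: in the intercritical range $1<p\le 2$ the paper's Lemma \ref{sub_lem_nonl} does not invoke Visan's fractional chain rule \eqref{frac_chain} (that tool is only needed in the energy-critical $N>6$ argument with $D^\varepsilon$); the pointwise H\"older bounds $|J_z(z_1)-J_z(z_2)|+|J_{\bar z}(z_1)-J_{\bar z}(z_2)|\lesssim|z_1-z_2|^{p-1}$ together with H\"older/Sobolev in the $S(L^2)$, $S(\dot H^{s_c})$ framework suffice, so that part of your outlined difficulty does not actually arise.
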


\begin{lemma}\label{sub_lem_stric_exp_tilde}
Let $h$ be a solution to \eqref{sub_linearized_eq_tilde}. If, for some $c>0$ and for any $t \geq 0$,
\begin{equation}
    \|h(t)\|_{H^1} \lesssim e^{-c t},
\end{equation}
then
\begin{equation}\label{sub_stric_exp_tilde}
    \|\langle\nabla\rangle h\|_{S(L^2,\,[t,+\infty))} \lesssim e^{-\min\{c\,,(k+1^-)e_0\}t}.
\end{equation}
\end{lemma}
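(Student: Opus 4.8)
The plan is to follow the proof of Lemma~\ref{lem_stric_exp_tilde} essentially verbatim, with $\nabla$ replaced by $\langle\nabla\rangle$, Lemma~\ref{lem_nonl_tilde} replaced by its intercritical analogue Lemma~\ref{sub_lem_nonl_tilde}, and with the Schwartz-decaying forcing term $\epsilon_k$ carried along throughout. First I would put \eqref{sub_linearized_eq_tilde} into its Schr\"odinger form $i\partial_t h+\Delta h-h+\tilde K_k(h)=-\tilde R_k(h)+i\epsilon_k$, apply $\langle\nabla\rangle$, and write the Duhamel formula on a short interval $[t,t+\tau]$ with $0<\tau\le 1$. The Strichartz and Sobolev estimates recalled in Section~\ref{Sec2} then yield, for some $K>0$,
\begin{equation}
\begin{aligned}
  \|\langle\nabla\rangle h\|_{S(L^2,[t,t+\tau])}\le K\Big(&\|h(t)\|_{H^1}+\|\langle\nabla\rangle\tilde K_k(h)\|_{S'(L^2,[t,t+\tau])}\\
  &+\|\langle\nabla\rangle\tilde R_k(h)\|_{S'(L^2,[t,t+\tau])}+\|\langle\nabla\rangle\epsilon_k\|_{S'(L^2,[t,t+\tau])}\Big).
\end{aligned}
\end{equation}
The first term is $\lesssim e^{-ct}$ by hypothesis; the $\tilde K_k$ term is $\lesssim\tau^{\alpha}\|\langle\nabla\rangle h\|_{S(L^2,[t,t+\tau])}$ by Lemma~\ref{sub_lem_nonl_tilde}.\eqref{sub_linear1_tilde}; the $\tilde R_k$ term is, by Lemma~\ref{sub_lem_nonl_tilde}.\eqref{sub_P_grad_r_point_tilde} if $p>2$ and \eqref{sub_p_grad_r_point_tilde} if $1<p\le 2$, bounded by a superlinear power of $\|\langle\nabla\rangle h\|_{S(L^2,[t,t+\tau])}$ (quadratic when $p\ge 2$, a $p$-th power when $p<2$, up to harmless $\tau^{\alpha}$ factors); and the forcing term is $\lesssim e^{-(k+1)e_0 t}$ because $\epsilon_k=O(e^{-(k+1)e_0 s})$ in $\mathcal{S}(\Real^N)$, hence in the dual Strichartz norm.

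Next I would run the standard continuity/bootstrap argument. Set $\delta=\min\{c,(k+1)e_0\}$, and fix $\tau_0>0$ small enough that $K\tau_0^{\alpha}<\tfrac12$. One shows that for all sufficiently large $t$ one has $\|\langle\nabla\rangle h\|_{S(L^2,[t,t+\tau_0])}\le 2Ke^{-\delta t}$: otherwise, choose the first time at which the norm on such an interval equals $2Ke^{-\delta t}$, substitute back into the inequality above, absorb the $\tau_0^{\alpha}$ term into the left side, and observe that the superlinear term is $\lesssim(2Ke^{-\delta t})^{\min\{2,p\}}=o(e^{-\delta t})$ as $t\to+\infty$ since $\min\{2,p\}>1$ — a contradiction. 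Decomposing $[t,+\infty)=\bigcup_{j\ge 0}[t+j\tau_0,t+(j+1)\tau_0]$, using the triangle inequality, and summing the geometric series $\sum_{j\ge 0}e^{-\delta(t+j\tau_0)}\lesssim e^{-\delta t}$ then gives \eqref{sub_stric_exp_tilde}; the infinitesimal loss in $(k+1^-)e_0$ only serves to absorb the polynomial-in-$t$ factor that appears in the borderline case $c=(k+1)e_0$.

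The argument is essentially routine given Lemma~\ref{sub_lem_nonl_tilde}; the only point requiring genuine care — exactly as in the energy-critical case $N\ge 6$ treated in Lemma~\ref{lem_stric_exp_tilde} — is the range $1<p\le 2$, where $\tilde R_k$ is merely H\"older continuous and the nonlinear estimate is a $p$-th power rather than quadratic. Since $p>1$ strictly, the exponent $\min\{2,p\}$ remains $>1$, so the bootstrap still closes. A secondary subtlety is that $\tilde{\mathcal{L}}_k$ is time-dependent and cannot be diagonalized; this is circumvented by never leaving the Schr\"odinger formulation, in which the potential-type contributions $-h+\tilde K_k(h)$ — indeed the entire perturbation of $\mathcal{L}$ by the terms supported near $Q+\mathcal{V}_k$ — are absorbed by the short-time Strichartz estimates together with their $\tau^{\alpha}$ gain.
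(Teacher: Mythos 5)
Your argument is correct and is essentially the paper's own route: the paper proves this lemma exactly as you do, by running the short-interval Duhamel--Strichartz bootstrap of Lemma~\ref{lem_stric_exp} with $\langle\nabla\rangle$, the estimates of Lemma~\ref{sub_lem_nonl_tilde} for $\tilde K_k$ and $\tilde R_k$, and the forcing $\epsilon_k=O(e^{-(k+1)e_0t})$ carried along, then summing the geometric series over $[t+j\tau_0,t+(j+1)\tau_0]$. The only quibble is cosmetic: the direct bootstrap produces no polynomial loss even when $c=(k+1)e_0$, so the $(k+1^-)$ in the statement is simply slack rather than something your argument needs to absorb.
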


\begin{lemma}\label{sub_lem_self_tilde} Let $h$ be a solution to 
\begin{equation}\label{sub_linear_exp_tilde}
	\partial_t h +\tilde{\mathcal{L}}_kh = \epsilon,
\end{equation}
with $h$ and $\epsilon$ such that, for $t \geq 0$,
\begin{equation}\label{sub_exp_decay_tilde}
	\begin{gathered}
	\|h(t)\|_{H^1} \lesssim e^{-c_0t},\\
	\|\langle\nabla\rangle \epsilon\|_{S'(L^2,\,[t,+\infty))} \lesssim e^{-c_1t},
	\end{gathered}
\end{equation}
where $(k+1)e_0 > c_1 > c_0 > e_0$. Then,
	\begin{equation}\label{sub_self_tilde_1}
		\|h(t)\|_{H^1} \lesssim e^{-c_1^- t}. 
	\end{equation}
\end{lemma}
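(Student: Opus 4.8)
The plan is to imitate, essentially line for line, the proof of Lemma \ref{lem_self_tilde} in the energy-critical case, substituting the intercritical objects ($Q$ in place of $W$, $H^1$ in place of $\dot{H}^1$, the norms $S(L^2)$, $S'(L^2)$ with the weight $\langle\nabla\rangle$ in place of $\nabla$), and exploiting the extra exponential decay of $Q$ and $\mathcal{V}_k$. Since $k$ is fixed in the proof, I would drop the subscript. First, from $\|h(t)\|_{H^1}\lesssim e^{-c_0t}$ in \eqref{sub_exp_decay_tilde}, Lemma \ref{sub_lem_stric_exp_tilde}, and the assumption $c_0<c_1<(k+1)e_0$, I get
\begin{equation}
\|\langle\nabla\rangle h\|_{S(L^2,\,[t,+\infty))}\lesssim e^{-c_0t},\qquad t\geq 0.
\end{equation}

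Next I would rewrite \eqref{sub_linear_exp_tilde} as a perturbation of the constant-coefficient linearized flow,
\begin{equation}
\partial_t h+\mathcal{L}h=\epsilon+(\mathcal{L}-\tilde{\mathcal{L}}_k)h,
\end{equation}
and estimate the commutator term pointwise. Using $|\mathcal{V}_k(t)|\lesssim e^{-e_0t}|Q|$, $|\nabla\mathcal{V}_k(t)|\lesssim e^{-e_0t}|Q|$, $|\nabla Q|\lesssim Q$ (Corollary \ref{sub_decay_eigen}), and the elementary bound $\big||Q+\mathcal{V}_k|^{p-1}-Q^{p-1}\big|\lesssim e^{-\min\{p-1,1\}e_0t}Q^{p-1}$ (with the analogous estimate for the off-diagonal potential terms), one obtains
\begin{equation}
|(\mathcal{L}-\tilde{\mathcal{L}}_k)h|+|\nabla[(\mathcal{L}-\tilde{\mathcal{L}}_k)h]|\lesssim e^{-\min\{p-1,1\}e_0t}\,Q^{p-1}\big(|h|+|\nabla h|\big).
\end{equation}
Since $Q\in\mathcal{S}(\Real^N)$ decays exponentially, Hölder on suitable Strichartz pairs, exactly as in Lemma \ref{sub_lem_nonl}, then gives
\begin{equation}
\|\langle\nabla\rangle[(\mathcal{L}-\tilde{\mathcal{L}}_k)h]\|_{S'(L^2,\,[t,+\infty))}\lesssim e^{-\min\{p-1,1\}e_0t}\|\langle\nabla\rangle h\|_{S(L^2,\,[t,+\infty))}\lesssim e^{-(c_0+\min\{p-1,1\}e_0)t}.
\end{equation}

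With this in hand, I would apply Lemma \ref{sub_lem_self} to $\partial_t h+\mathcal{L}h=\epsilon+(\mathcal{L}-\tilde{\mathcal{L}}_k)h$, whose source term now decays at rate $\min\{c_1,\,c_0+\min\{p-1,1\}e_0\}$. Because $c_0>e_0$ by hypothesis, the resonant value $e_0$ never lies in the relevant interval, so case \eqref{sub_lem_self_1} always applies and yields $\|h(t)\|_{H^1}\lesssim e^{-\min\{c_1,\,c_0+\min\{p-1,1\}e_0\}^-t}$. Feeding this improved decay back through Lemma \ref{sub_lem_stric_exp_tilde} and repeating, the decay exponent increases by $\min\{p-1,1\}e_0$ at each step and, after finitely many iterations, reaches $c_1$; this gives \eqref{sub_self_tilde_1}.

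The hard part is the Strichartz estimate on $(\mathcal{L}-\tilde{\mathcal{L}}_k)h$ uniformly in $p>1$: when $1<p\leq 2$ the potential $|Q+\mathcal{V}_k|^{p-1}$ is only Hölder continuous, so the crude pointwise bound above must be combined with the fractional-calculus estimates used in Lemma \ref{sub_lem_nonl} and with the sharp spatial decay of $Q$ and $\mathcal{V}_k$ from Corollary \ref{sub_decay_eigen} in order to place $\langle\nabla\rangle[(\mathcal{L}-\tilde{\mathcal{L}}_k)h]$ in $S'(L^2)$ without losing the gained exponential factor. This is, however, the same mechanism already handled for $\tilde{R}_k$ in Lemma \ref{sub_lem_nonl_tilde}, so no genuinely new ingredient is needed, which is why the text only states the lemma.
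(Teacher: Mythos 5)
Your proposal is correct and follows essentially the same route the paper intends: the paper leaves this lemma unproved precisely because its proof is the proof of Lemma \ref{lem_self_tilde} transplanted to the intercritical setting, i.e.\ write $\partial_t h+\mathcal{L}h=\epsilon+(\mathcal{L}-\tilde{\mathcal{L}}_k)h$, bound the difference term pointwise by $e^{-\min\{p-1,1\}e_0t}Q^{p-1}(|h|+|\nabla h|)$ using $|\mathcal{V}_k|,|\nabla\mathcal{V}_k|\lesssim e^{-e_0t}Q$, apply Lemma \ref{sub_lem_self} in the non-resonant case $c_0>e_0$, and iterate with Lemma \ref{sub_lem_stric_exp_tilde}. (Your concern about Hölder continuity for $1<p\leq 2$ is in fact harmless here, since $|\mathcal{V}_k|\leq\frac{1}{2}Q$ for large $t$ keeps $Q^{-1}(Q+\mathcal{V}_k)$ in a disc where the relevant functions of $z$ are smooth, so the crude pointwise bound already suffices.)
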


\begin{lemma}\label{sub_improv_conv} If $u$ is a solution to \eqref{sub_NLS} satisfying
\begin{equation}\label{sub_exp_conv_7}
    \|u(t) - e^{it}Q\|_{H^1} \lesssim e^{-ct}, \quad M(u_0) = M(Q),\quad E(u_0) = E(Q),
\end{equation}
then there exists a unique $A \in \Real$ such that $u = U^A$.
\end{lemma}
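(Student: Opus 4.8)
The plan is to reproduce, in the inhomogeneous space $H^1$, the two-step scheme of the energy-critical Lemma \ref{improv_conv}, replacing $W$, $p_c$ by $Q$, $p$ and using throughout the Strichartz machinery of Lemma \ref{sub_lem_nonl}; all the needed self-improvement lemmas (\ref{sub_lem_self}, \ref{sub_lem_stric_exp_tilde}, \ref{sub_lem_self_tilde}) and the uniqueness statement in Proposition \ref{sub_exist_UA} are already in place.

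\textbf{Step 1 (linearize around $e^{it}Q$).} Write $u=e^{it}(Q+h)$, so that $h$ solves \eqref{linearized_eq}, $\partial_t h+\mathcal{L}h=iR(h)$, while \eqref{sub_exp_conv_7} gives $\|h(t)\|_{H^1}\lesssim e^{-ct}$. First I would promote this to $\|\langle\nabla\rangle h\|_{S(L^2,[t,+\infty))}\lesssim e^{-ct}$ by the short-time continuity argument of Lemma \ref{lem_stric_exp} (now using items \eqref{sub_linear1} and \eqref{sub_P_grad_r_point}/\eqref{sub_p_grad_r_point} of Lemma \ref{sub_lem_nonl}). Since $R(0)=0$, setting $g=0$ in those items yields
\begin{equation}
\|\langle\nabla\rangle R(h)\|_{S'(L^2,[t,+\infty))}\lesssim \|\langle\nabla\rangle h\|_{S(L^2,[t,+\infty))}^{p}\lesssim e^{-cpt},
\end{equation}
so Lemma \ref{sub_lem_self} applies with $c_0=c$ and $c_1=cp>c$: either $e_0\notin[c_0,c_1)$ and the decay self-improves to $e^{-c_1^-t}$, or $e_0\in[c_0,c_1)$ and $\|h(t)-Ae^{-e_0t}\mathcal{Y}_+\|_{H^1}\lesssim e^{-c_1^-t}$ for some $A\in\Real$. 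Iterating — running the argument again with the improved exponent whenever $cp\le e_0$, and otherwise re-running once after $\|h\|_{H^1}\lesssim e^{-e_0t}$ has been reached — gives, just as for \eqref{best_boot},
\begin{equation}\label{sub_best_boot_plan}
\|h(t)-Ae^{-e_0t}\mathcal{Y}_+\|_{H^1}\lesssim e^{-p^-e_0t}.
\end{equation}

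\textbf{Step 2 (linearize around $e^{it}(Q+\mathcal{V}_k)$).} Fix $k\ge 2$, keep the $A$ of Step 1, and set $\tilde h=h-\mathcal{V}_k^A$; then $\tilde h$ solves \eqref{sub_linearized_eq_tilde}, $\partial_t\tilde h+\tilde{\mathcal{L}}_k\tilde h=i\tilde R_k(\tilde h)+\epsilon_k$ with $\epsilon_k=O(e^{-(k+1)e_0t})$ in $\mathcal{S}(\Real^N)$. By Lemma \ref{sub_lem_nonl_tilde} one has $\|\langle\nabla\rangle\tilde R_k(\tilde h)\|_{S'(L^2,[t,+\infty))}\lesssim_k\|\langle\nabla\rangle\tilde h\|_{S(L^2,[t,+\infty))}^{p}$, so \eqref{sub_linearized_eq_tilde} becomes $\partial_t\tilde h+\tilde{\mathcal{L}}_k\tilde h=\eta$ with
\begin{equation}
\|\langle\nabla\rangle\eta\|_{S'(L^2,[t,+\infty))}\lesssim_k\|\langle\nabla\rangle\tilde h\|_{S(L^2,[t,+\infty))}^{p}+e^{-(k+1)e_0t}.
\end{equation}
The base case comes from \eqref{sub_best_boot_plan} and the fact that $\mathcal{V}_k-\mathcal{V}_1=O(e^{-2e_0t})$ in $\mathcal{S}(\Real^N)$: $\|\tilde h(t)\|_{H^1}\lesssim e^{-\mu t}$ for some $\mu>e_0$. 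I would then bootstrap: Lemma \ref{sub_lem_stric_exp_tilde} converts an $H^1$-decay rate $c_0$ into a Strichartz-decay rate $\min\{c_0,(k+1^-)e_0\}$, hence controls $\eta$; Lemma \ref{sub_lem_self_tilde} (applicable since at each stage $c_0>e_0$) self-improves the rate to $c_1^-$ with $c_1=\min\{c_0p,(k+1)e_0\}$; repeating saturates at $\|\tilde h(t)\|_{H^1}\lesssim_k e^{-(k+1^-)e_0t}$. Since $k$ is arbitrary, choosing $k=l(k_0)$ with $k_0,l$ as in Proposition \ref{sub_exist_UA} and passing (via Lemma \ref{sub_lem_stric_exp_tilde}, noting $\langle\nabla\rangle$ commutes with $e^{it}$) to the norm of \eqref{sub_bound_UA} gives $\|\langle\nabla\rangle(u-e^{it}Q-e^{it}\mathcal{V}_{l(k_0)}^A)\|_{Z(t,+\infty)}\lesssim e^{-(k_0+\frac34)e_0t}\le e^{-(k_0+\frac12)e_0t}$ for $t$ large. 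Thus $u$ satisfies the defining bound \eqref{sub_bound_UA} of $U^A$, and the uniqueness assertion of Proposition \ref{sub_exist_UA} forces $u=U^A$; uniqueness of $A$ is immediate, $A$ being the coefficient of $e^{-e_0t}\mathcal{Y}_+$ in \eqref{sub_bound_UA2}.

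\textbf{Main obstacle.} The genuinely delicate point, as everywhere in the paper, is the low-power regime $1<p\le 2$: the difference estimates for $R$ and $\tilde R_k$ are only Hölder of order $p-1$, so Lemma \ref{sub_lem_nonl} must be invoked in its split form \eqref{sub_p_grad_r_point}/\eqref{sub_p_r_s_point}, and one must check that the factor $p$ in $c_1=cp$ stays strictly above $1$ at every self-improvement step (it does, since $p>1$ on the whole intercritical range), so that the bootstrap genuinely gains decay each round and eventually surpasses $(k_0+\tfrac12)e_0$, at which point the $\max\{\tfrac{1}{p-1},1\}$ normalization in Proposition \ref{sub_exist_UA} becomes harmless. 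A secondary care point is that $\tilde{\mathcal{L}}_k$ is time-dependent, so Lemmas \ref{sub_lem_self} and \ref{sub_lem_self_tilde} must be applied by treating $(\mathcal{L}-\tilde{\mathcal{L}}_k)\tilde h$ as an extra forcing term of size $e^{-(p-1)e_0t}$ times the current size of $\tilde h$, exactly as in the proof of Lemma \ref{lem_self_tilde}.
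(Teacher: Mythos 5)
Your proposal is correct and follows essentially the same route as the paper, which obtains this lemma precisely by transposing the two-step scheme of Lemma \ref{improv_conv} to the intercritical setting (linearization around $e^{it}Q$ via Lemmas \ref{sub_lem_nonl} and \ref{sub_lem_self}, then around $e^{it}(Q+\mathcal{V}_k)$ via Lemmas \ref{sub_lem_nonl_tilde}, \ref{sub_lem_stric_exp_tilde}, \ref{sub_lem_self_tilde}, closing with the uniqueness statement of Proposition \ref{sub_exist_UA}). The one imprecision is that for $p>2$ the estimate \eqref{sub_P_grad_r_point} with $g=0$ yields $\|\langle\nabla\rangle R(h)\|_{S'(L^2,[t,+\infty))}\lesssim e^{-2ct}$ rather than $e^{-pct}$ (the quadratic term dominates for small $h$), so each self-improvement step gains the factor $\min\{p,2\}>1$ instead of $p$; this only alters the intermediate rates, and the bootstrap and the final identification $u=U^A$ close exactly as you describe.
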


\begin{cor}\label{sub_unique_A} Let $1+4/N < p < 2^*-1$. For any $A \neq 0$, there exists $T_A \in \Real$ such that either
\begin{equation}
    U^A(t) = Q^+(t+T_A), \text{ if } A > 0,
\end{equation}
or
\begin{equation}
    U^A(t) = Q^-(t+T_A), \text{ if } A < 0.
\end{equation}
\end{cor}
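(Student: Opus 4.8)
The plan is to transcribe the proof of Corollary~\ref{unique_A} from the energy-critical case, the only new feature being that the standing wave $e^{it}Q$ carries a phase that a time translation does not commute with. Fix $A\neq 0$ and choose $T_A\in\Real$ so that $|A|\,e^{-e_0 T_A}=1$, i.e.\ $T_A=\tfrac1{e_0}\log|A|$; the point of this choice is that translating $U^A$ in time by $T_A$ renormalizes the coefficient of the unstable direction $\mathcal{Y}_+$ in the expansion \eqref{sub_bound_UA2} to $\operatorname{sgn}(A)$.

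First I would set $v(t):=e^{-iT_A}\,U^A(t+T_A)$. By time-translation and phase-rotation invariance of \eqref{sub_NLS}, $v$ is again a solution; moreover \eqref{sub_bound_UA2} shows $U^A(t)\to e^{it}Q$ in $H^1$, so by conservation of mass and energy $M(v_0)=M(U^A)=M(Q)$ and $E(v_0)=E(U^A)=E(Q)$. Substituting $t+T_A$ into \eqref{sub_bound_UA2} and using $A e^{-e_0(t+T_A)}=\operatorname{sgn}(A)\,e^{-e_0 t}$ and $e^{-iT_A}e^{i(t+T_A)}=e^{it}$, I would obtain, for large $t$,
\begin{equation*}
\big\|v(t)-e^{it}Q-\operatorname{sgn}(A)\,e^{-e_0 t+it}\mathcal{Y}_+\big\|_{H^1}\lesssim e^{-2e_0 t},
\end{equation*}
and in particular $\|v(t)-e^{it}Q\|_{H^1}\lesssim e^{-e_0 t}$.

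Now $v$ satisfies the hypotheses of Lemma~\ref{sub_improv_conv}, so there is a unique $\bar A\in\Real$ with $v=U^{\bar A}$; comparing the displayed expansion of $v$ with \eqref{sub_bound_UA2} applied to $U^{\bar A}$, and using $\mathcal{Y}_+\neq 0$, forces $\bar A=\operatorname{sgn}(A)$. Unwinding the definition of $v$ gives $U^A(t)=e^{iT_A}\,U^{\operatorname{sgn}(A)}(t-T_A)$; since $Q^+$ (resp.\ $Q^-$) is $U^{1}$ (resp.\ $U^{-1}$) up to the symmetries of \eqref{sub_NLS}, this is exactly the claimed identity — with the sign of $A$ selecting $Q^+$ or $Q^-$ — up to the harmless phase rotation by $e^{iT_A}$. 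The only step that requires any thought, and the sole departure from the energy-critical proof, is this phase bookkeeping: because $e^{it}Q$ is a time-dependent standing wave rather than a static profile like $W$, the time shift by $T_A$ must be compensated by $e^{-iT_A}$ before Lemma~\ref{sub_improv_conv} becomes applicable; everything else is immediate from Proposition~\ref{sub_exist_UA} and Lemma~\ref{sub_improv_conv}.
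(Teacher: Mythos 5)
Your proof is correct and is exactly the argument the paper intends: it is the proof of Corollary \ref{unique_A} transplanted to the intercritical case (the paper gives no separate proof there), with the same normalization $|A|e^{-e_0T_A}=1$, the same appeal to Lemma \ref{sub_improv_conv}, and the identification of $\bar A=\operatorname{sgn}(A)$ by comparing the $\mathcal{Y}_+$-coefficients in \eqref{sub_bound_UA2}. Your phase bookkeeping is not just careful but necessary — without conjugating by $e^{-iT_A}$ the translate $U^A(\cdot+T_A)$ converges to $e^{i(t+T_A)}Q$ and Lemma \ref{sub_improv_conv} is not applicable — and the factor $e^{iT_A}$ you end up with is genuinely there: an identity $U^A(t)=Q^{\pm}(t+T)$ with no phase would force $e^{iT}=1$ and hence could hold only for a discrete set of values of $A$, so the relation you prove, $U^A(t)=e^{iT_A}Q^{\pm}(t-T_A)$, is the precise form of the corollary, the phase being harmlessly absorbed into the symmetries invoked when the corollary is used in Theorems \ref{sub_special} and \ref{sub_class_thresh}.
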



\section{Closure of the main theorems}
Having proved Propositions \ref{prop_supercrit} and \ref{prop_subcrit}, and Lemma \ref{improv_conv}, we can proceed as in \cite{DM_Dyn}.
\begin{proof} [Proof of Theorem \ref{special}]
Recall the notation $\mathcal{Y}_1 = \Re \mathcal{Y}_+ = \Re \mathcal{Y}_-$. We claim that $(W, \mathcal{Y}_1)_{\dot{H}^1} \neq 0$. If not, since $W$ solves the equation $\Delta W = -W^{p_c}$, we would have
\begin{equation}
    B(W,\mathcal{Y}_\pm) = \frac{1}{2}\int \nabla W \cdot \nabla \mathcal{Y}_1 - \frac{p_c}{2}\int W^{p_c}\mathcal{Y}_1 = \frac{p_c}{2}\int\Delta W \mathcal{Y}_1 = 0,
\end{equation}
so that $W \in G^\perp$. But, by Lemma \ref{lem_coerc}, $\Phi$ is nonnegative (in fact, it is coercive) on  $G^\perp$, which contradicts \eqref{Phi_W_neg}.

Replacing $\mathcal{Y}_\pm$, if necessary, we may assume
\begin{equation}
(W, \mathcal{Y}_1)_{\dot{H}^1} > 0.
\end{equation}
Defining
\begin{equation}
    W^+ := U^{1}\,\,\,\mbox{and}\,\,\,W^- := U^{-1},
\end{equation}
we claim that the conclusions of Theorem \ref{special} hold. By the strong convergence $U^A(t) \to W$ in $\dot{H}^1$ and energy conservation, we conclude $E(W^\pm) = E(W)$. Moreover, by \eqref{bound_UA},
\begin{equation}
    \|U^A(t)\|^2_{\dot{H}^1} = \|W\|^2_{\dot{H}^1} +2A e^{-2e_0t}(W,\mathcal{Y}_1)_{\dot{H}^1}+O(e^{-3e_0t}),
\end{equation}
which shows that $\|U^A(t)\|_{\dot{H}^1} - \|W\|_{\dot{H}^1}$ has the same sign as $A$, for large $t$. By uniqueness and continuity of the flow, this sign must remain the same for every $t$ in the intervals of existence of $W^\pm$. By Proposition \ref{sub_global}, $W^-$ is defined on $\Real$, and by Proposition \ref{prop_subcrit}, $W^-$ scatters backward in time. 

We now show that $U^A$ has finite mass for $N \geq 5$. Let, as in the proof of Proposition \ref{sub_global}, $\phi$ be a smooth, positive, radial cutoff to the set $\{|x| \leq 1\}$. Define, for $R>0$ and large $t$,
\begin{equation}
    F_R(t) = \int |U^A(x,t)|^2 \phi(\frac{x}{R}) \, dx.
\end{equation}
Since $U^A$ is a solution to \eqref{NLS}, by Lemma \ref{cauchy_banica} and Hardy's inequality, we have
\begin{equation}
    |F'_R(t)| \lesssim \|U^A(t)-W\|_{\dot{H}^1} \left( \int \frac{1}{|x|^2} |U^A(t)|^2\right)^\frac{1}{2} \lesssim \|U^A(t)-W\|_{\dot{H}^1} \|U^A(t)\|_{\dot{H}^1} \lesssim e^{-e_0t}.
\end{equation}
Hence, integrating from a large $t$ to $+\infty$, we get
\begin{equation}
\left| F_R(t) - \int |W|^2 \phi(\frac{x}{R}) \, dx\right| \lesssim e^{-e_0t}.
\end{equation}
Recalling that $W \in L^2(\Real^N)$ if $N \geq 5$, we can make $R \to +\infty$ to obtain $M(U^A) = M(W) < +\infty$. In particular, $W^\pm \in L^2(\Real^N)$ and, by Proposition \ref{prop_supercrit}, $W^+$ blows up in finite negative time. This finishes the proof of Theorem \ref{special}.
\end{proof}
\begin{proof}[Proof of Theorem \ref{class_thresh}]
The case $\|u_0\|_{\dot{H}^1} <  \|W\|_{\dot{H}^1}$ follows immediately from Proposition \ref{prop_subcrit}, Lem\-ma \ref{improv_conv} and Corollary \ref{unique_A}. Case $\|u_0\|_{\dot{H}^1} =  \|W\|_{\dot{H}^1}$ is a consequence of the variational characterization of $W$. Finally, $\|u_0\|_{\dot{H}^1} >  \|W\|_{\dot{H}^1}$ follows from Proposition \ref{prop_supercrit}, Lemma \ref{improv_conv} and Corollary \ref{unique_A}.
\end{proof}

\begin{proof} [Proof of Theorem \ref{sub_special}]
Recall the notation $\mathcal{Y}_1 = \Re \mathcal{Y}_+ = \Re \mathcal{Y}_-$. We claim that $(Q, \mathcal{Y}_1)_{H^1} \neq 0$. If not, since $Q$ solves the equation $Q - \Delta Q = -Q^{p}$, we would have
\begin{equation}
    B(Q,\mathcal{Y}_\pm) = \frac{1}{2}\int Q  \mathcal{Y}_1+ \frac{1}{2}\int \nabla Q \cdot \nabla \mathcal{Y}_1 - \frac{p}{2}\int Q^{p_c}\mathcal{Y}_1 = \frac{p+1}{2}(Q, \mathcal{Y}_1)_{H^1} = 0,
\end{equation}
so that $Q \in \tilde{G}^\perp$. But, by Lemma \ref{lem_coerc}, $\Phi$ is nonnegative (in fact, it is coercive) on  $\tilde{G}^\perp$, which contradicts \eqref{sub_Phi_W_neg}.

Replacing $\mathcal{Y}_\pm$, if necessary, we may assume
\begin{equation}
(Q, \mathcal{Y}_1)_{H^1} > 0.
\end{equation}
Defining
\begin{equation}
    Q^+ := U^{1}\,\,\,\mbox{and}\,\,\,Q^- := U^{-1}, 
\end{equation}
we claim that the conclusions of Theorem \ref{sub_special} hold. By the strong convergence $e^{-it}U^A(t) \to Q$ in $H^1$ and energy conservation, we conclude $M(Q^\pm) = M(Q)$ and $E(Q^\pm) = E(Q)$. Moreover, by \eqref{sub_bound_UA},
\begin{equation}
    \|U^A(t)\|^2_{H^1} = \|Q\|^2_{H^1} +2A e^{-2e_0t}(Q,\mathcal{Y}_1)_{H^1}+O(e^{-2p_c^-e_0t}),
\end{equation}
which shows that $\|U^A(t)\|_{H^1} - \|Q\|_{H^1}$ has the same sign as $A$, for large $t$. By uniqueness and continuity of the flow, this sign must remain the same for every $t$ in the intervals of existence of $Q^\pm$. By Proposition \ref{sub_sub_global}, $Q^-$ is defined on $\Real$, and by Proposition \ref{sub_prop_subcrit}, $Q^-$ scatters backward in time. Finally, by Proposition \ref{sub_prop_supercrit}, $Q^+$ blows up in finite negative time. This finishes the proof of Theorem \ref{sub_special}.
\end{proof}
\begin{proof}[Proof of Theorem \ref{sub_class_thresh}]
The case $\|\nabla u_0\|_{L^2} <  \|\nabla Q\|_{L^2}$ follows immediately from Proposition \ref{sub_prop_subcrit}, Lemma \ref{sub_improv_conv} and Corollary \ref{sub_unique_A}. The case $\|\nabla u_0\|_{L^2} =  \|\nabla Q\|_{L^2}$ is a consequence of the variational characterization of $Q$. Finally, $\|\nabla u_0\|_{L^2} > \|\nabla Q\|_{L^2}$ follows from Proposition \ref{sub_prop_supercrit}, Lemma \ref{sub_improv_conv} and Corollary \ref{sub_unique_A}.
\end{proof}

\section{Appendix}


\subsection{Proof of modulation results}

\begin{proof}[Proof of Lemma \ref{mod_1}] The idea of the proof is already well-known (see, for instance, \cite{DR_Thre}*{Section 7.1} for the 3d cubic NLS equation, \cite{DR_Thre}*{Section 3} for the energy-critical NLS equation, in the radial case, for dimensions $N = 3$, $4$ and $5$, and \cite{MM01}*{Section 2} in the context of the Korteweg-de Vries equation), and we extend the proofs here to any dimension, any $0 < s_c \leq 1$, not assuming radiality of the solution. The case $s_c = 1$ is the Lemma \ref{mod_1}, and the case $0 < s_c < 1$ is the Lemma \ref{sub_mod_1}. We first show the Lemma \eqref{mod_1} when $u$ is close to $W$. Define the functionals $J = (J_0, \cdots, J_{N+1})$ on $\Real^N\times \Real_+ \times \Real\times \dot{H}^1$ as
\begin{align}
    J_0&: (\theta,x,\lambda,u) \mapsto (f_{[x,\lambda,\theta]},iW)_{\dot{H}^1}, \\\ 
    J_k&: (\theta,x,\lambda,u) \mapsto (f_{[x,\lambda,\theta]},\partial_k W)_{\dot{H}^1},\, 1 \leq k \leq N, \\
    J_{N+1}&: (\theta,x,\lambda,u) \mapsto (f_{[x,\lambda,\theta]},\Lambda W)_{\dot{H}^1}.
\end{align}
By direct calculation, one can check that
\begin{equation}
    \det\left(\frac{\partial J}{\partial (\theta,x,\lambda) }\right) =   
        \left(\displaystyle\int |W|^2\right)
         \left(\prod_k\displaystyle\int |\partial_k W|^2\right)
         \left(-\displaystyle\int |\Lambda W|^2\right) \neq 0,
\end{equation}
and that $J(0,1,0,W) = 0$. Hence, by the Implicit Function Theorem, there exist $\epsilon_0, \eta_0$ such that, if $f \in \dot{H}^1$ and $\|f-W\|_{\dot{H}^1} < \epsilon_0$, then there exists a unique $n$-tuple $(x, \lambda, \theta)$ such that
\begin{equation}
    |x| + |\lambda| +|\theta-1| \leq \eta_0, \text{ and } J(\theta, x, \lambda, f) = 0.
\end{equation}
Now, if $u$ is as in the lemma, by the variational characterization of $W$, if $d(u)$ is small, then there exists $(x_0, \lambda_0, \theta_0)$ such that $u_{[x_0, \lambda_0, \theta_0]} = W + f$, with $\|f\|_{\dot{H}^1} \leq \epsilon(d(f))$. We are thus back to the preceding case. Existence, local uniqueness and regularity follow again from the Implicit Function Theorem.
\end{proof}

\begin{proof}[Proof of Lemma \ref{mod_2}]
For a fixed $t$, write $v = u_{[x(t),\lambda(t),\theta(t)]}(t)- W = \alpha(t)W+ h(t)$ as in \eqref{decomp_alpha}. Since $h \in G^\perp$, we have
\begin{equation}\label{equiv_1}
    \|v\|_{\dot{H}^1}^2 = \alpha^2 \|W\|_{\dot{H}^1}^2 +  \|h\|_{\dot{H}^1}^2 .
\end{equation}

Since $h \in G^\perp$, and $W$ satisfies the equation $\Delta W + W^{p_c}=0$, we have 
\begin{equation}
    B(W,h) = \frac{1}{2}\int \nabla W \cdot \nabla h_1 +\frac{p}{2}\int \Delta W h_1 = 0.
\end{equation}
Therefore, $\Phi(v) = \Phi(\alpha W + h) = \alpha^2\Phi(W) +\Phi(h)$. Recalling that $W$ is a critical point for the energy functional $E$, we have $E(W+v)=E(W)+\Phi(v)+O(\|v\|_{\dot{H}^1}^3)$. Since $E(W+v) = E(W)$, and by the coercivity given by Lemma \ref{lem_coerc}, we have $\Phi(h) \approx \|h\|_{\dot{H}^1}^2$. Thus, we have
\begin{equation}\label{equiv_3}
    \left|\alpha^2\Phi(W) +\Phi(h) \right| = O(\|v\|_{\dot{H}^1}^3)
\end{equation}

Since $\|v\|_{\dot{H}^1}$ is small when $d(u)$ is small, estimates \eqref{equiv_1} and \eqref{equiv_3} give $|\alpha| \approx \|h\|_{\dot{H}^1} \approx \|v\|_{\dot{H}^1}$. Finally, since 
\begin{equation}
    d(u) = \left|\|W+v\|_{\dot{H}^1}^2 - \|W\|_{\dot{H}^1}^2\right| = \left|\|v\|_{\dot{H}^1}^2+2\alpha \|W\|_{\dot{H}^1}^2\right|,
\end{equation}
we have $d(u) \approx |\alpha|$, and \eqref{lem_equiv_1} is proved.

It remains to prove \eqref{lem_equiv_2}. Consider the variables $y$ and $s$ given by 
\begin{equation}
    y = \frac{x}{\mu(t)} \text{ and } dt = \frac{1}{\mu^2(t)}ds.
\end{equation}
In view of \eqref{lem_equiv_1} and the decomposition \eqref{decomp_alpha}, we can rewrite \eqref{NLS} as 
\begin{equation}\label{NLS_mod}
    i \partial_s h + \Delta h -i\alpha_s W + \theta_s W - i x_s \cdot \nabla W + i \frac{\lambda_s}{\lambda}\Lambda W = O\left(\epsilon(s)\right) \text{ in } \dot{H}^1,
\end{equation}
where $\epsilon(s) := {d(u(t(s)))\left(d(u(t(s)))+|\theta_s|+|x_s|+\left|\frac{\lambda_s}{\lambda}\right|\right)}$.
Since $h \in G^\perp$, projecting \eqref{NLS_mod} in $\dot{H}^1$ onto $W$, $iW$, $\nabla W$ and $\Lambda W$ and integrating by parts (possible due to a standard regularization argument) yields
\begin{equation}
    |\alpha_s| + |\theta_s| + |x_s| + \left|\frac{\lambda_s}{\lambda}\right| = O(d+\epsilon(s)),
\end{equation}
which is enough to conclude \eqref{lem_equiv_2} and finish the proof of Lemma \ref{mod_2}.
\end{proof}

\begin{proof}[Proof of Lemma \ref{sub_mod_1}]
The proof is analogous to the proof of Lemma \ref{mod_1} and is omitted.\end{proof}

\begin{proof}[Proof of Lemma \ref{sub_mod_2}]

The orthogonality condition \eqref{sub_o2} implies $B(Q,h) = 0$. Since
$E(u) = E(Q)$, and $Q$ is a critical point for $E$, we have
\begin{equation}
    \alpha^2\Phi(Q)  + \Phi(h) = \Phi(\alpha Q + h) = O(|\alpha|^3+\|h\|_{H^1}^3).
\end{equation}
By coercivity, $\Phi(h)\approx \|h\|_{H^1}^2$, and hence, $|\alpha| \approx \|h\|_{H^1}$.
The relation $M(u) = M(Q)$ gives
\begin{equation}\label{sub_MuQ}
    \left|\alpha \int Q^2+ \int Q h_1\right| = \frac{1}{2}\int|\alpha Q + h|^2 = O(|\alpha|^2),
\end{equation}
and thus, 
\begin{equation}
    |\alpha| \approx \left|\int Qh_1\right|.
\end{equation}
Now, using \eqref{sub_o2}, 
\begin{equation}
    d(u) = \left|\int|\nabla u|^2-\int |\nabla Q|^2 \right| =  \left|2\left(\alpha\int|\nabla Q|^2-\int Qh_1 \right)+O(|\alpha|^2)\right|,
\end{equation}
which, together with \eqref{sub_MuQ}, gives
\begin{equation}
    d(u) = \left|2\alpha\left(\int|\nabla Q|^2-\int Q^2 \right)+O(|\alpha|^2)\right|.
\end{equation}
Since, by Pohozaev identities \eqref{pohozaev}, $\|\nabla Q\|_{L^2} \neq \|Q\|_{L^2}$ for any $N$ and any $p$ in the intercritical range, we conclude $d(u) \approx |\alpha|$, and hence, \eqref{sub_lem_equiv_1} holds. The rest of the proof goes along the same lines of the proof of Lemma \ref{mod_2} (without the need of self-similar variables), and is omitted.

\end{proof}

\subsection{Convergence to the standing wave above the ground state}

\begin{proof}[Proof of Lemma \ref{lem_int_exp_decay_d_2} (Critical case)]
\hspace{\fill}

\textit{Step 1. A general bound on {$A_R$} (recall \eqref{truncated_virial_error})}

By  the definition of $\phi_R$, we have the bounds $4\partial_r^2 \phi_R \leq 8$,  $ |\Delta^2 \phi_R| \lesssim 1$ and $|\Delta^2 \phi_R(r)| \lesssim \frac{1}{r^2}$. Therefore,
\begin{equation}\label{radial_AR}
    A_R(u(t)) \lesssim \int_{|x|\geq R}  |u(x,t)|^{2^*}+\frac{1}{R^2}|u(x,t)|^2\, dx.
\end{equation}
We now recall Strauss Lemma \cite{Strauss77} and make use of the decay given by radiality in $H^1$.

\begin{lemma}[\cite{Strauss77}]
There is a constant $C>0$ such that, for any radial function $f$ in $H^1(\Real^N)$ and any $R > 0$,
\begin{equation}
    \|f\|_{L^\infty_{\{|x|\geq R\}}} \leq \frac{C}{R^\frac{N-1}{2}}\|f\|_{L^2}^\frac{1}{2}\|\nabla f\|_{L^2}^\frac{1}{2}.
\end{equation}
\end{lemma}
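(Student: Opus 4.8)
The plan is to reduce the estimate to the one-dimensional fundamental theorem of calculus applied to the radial profile, together with a density step. By a standard regularization argument it suffices to prove the inequality for $f \in C^\infty_c(\Real^N)$ radial: radial $C^\infty_c$ functions are dense in the radial subspace of $H^1(\Real^N)$ (mollification with a radial mollifier preserves radial symmetry), and the bound we obtain is stable under $H^1$ limits, since on each annulus $\{|x|\geq R\}$ it shows the approximating sequence is uniformly Cauchy in $L^\infty$, so the limit is continuous there and inherits the pointwise estimate.

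So write $f(x) = g(|x|)$ with $g \in C^\infty_c([0,\infty))$. For $r \geq R > 0$, since $g$ has compact support,
\begin{equation}
	r^{N-1}|g(r)|^2 = -\int_r^{\infty}\frac{d}{d\rho}\Big(\rho^{N-1}|g(\rho)|^2\Big)\,d\rho = -\int_r^{\infty}\Big((N-1)\rho^{N-2}|g(\rho)|^2 + 2\rho^{N-1}\Re\big(\overline{g(\rho)}g'(\rho)\big)\Big)\,d\rho.
\end{equation}
The first term in the integrand is nonnegative, so discarding it and applying the Cauchy--Schwarz inequality gives
\begin{equation}
	r^{N-1}|g(r)|^2 \leq 2\int_r^{\infty}\rho^{N-1}|g(\rho)|\,|g'(\rho)|\,d\rho \leq 2\left(\int_0^{\infty}\rho^{N-1}|g|^2\,d\rho\right)^{1/2}\left(\int_0^{\infty}\rho^{N-1}|g'|^2\,d\rho\right)^{1/2}.
\end{equation}
Passing to polar coordinates, $\int_0^{\infty}\rho^{N-1}|g|^2\,d\rho = \omega_{N-1}^{-1}\|f\|_{L^2}^2$, and since $|\nabla f(x)| = |g'(|x|)|$ for radial $f$, also $\int_0^{\infty}\rho^{N-1}|g'|^2\,d\rho = \omega_{N-1}^{-1}\|\nabla f\|_{L^2}^2$, where $\omega_{N-1}$ is the surface measure of $S^{N-1}$ (with the convention $\omega_0 = 2$ covering $N=1$, where ``radial'' means even). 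Hence $r^{N-1}|g(r)|^2 \leq 2\omega_{N-1}^{-1}\|f\|_{L^2}\|\nabla f\|_{L^2}$ for all $r \geq R$; dividing by $R^{N-1}$ and taking the supremum over $r \geq R$ yields the stated inequality with $C = (2/\omega_{N-1})^{1/2}$.

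There is no substantial obstacle here; the only point requiring care is the density/regularization step when $N \geq 2$, where one checks that mollification preserves radial symmetry and that the estimate on annuli bounded away from the origin suffices to identify the pointwise values of the limit function. No difficulty arises at the origin, since the profile $g$ is only ever evaluated at radii $r \geq R > 0$ and integrated toward $+\infty$.
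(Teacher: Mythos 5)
Your proof is correct and is the standard argument for the Strauss radial lemma: writing $r^{N-1}|g(r)|^2$ as minus the integral of its derivative over $[r,\infty)$, discarding the nonnegative term, and applying Cauchy--Schwarz, followed by a routine density step. The paper itself does not prove this lemma but simply cites Strauss \cite{Strauss77}, and your argument matches the classical proof given there; the constant $C=(2/\omega_{N-1})^{1/2}$ and the handling of the origin and of the $N=1$ case are all in order.
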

We can now bound
\begin{equation}
    \int_{|x|\geq R} |u(x,t)|^{2^*}dx \leq \|u(t)\|^\frac{4}{N-2}_{L^\infty_{\{|x|\geq R\}}}\|u_0\|_{L^2}^2 \leq\frac{C}{R^\frac{2N-2}{N-2}}\|\nabla u(t)\|^\frac{2}{N-2}_{L^2}\|u_0\|_{L^2}^\frac{2N-2}{N-2},
\end{equation}

to obtain
\begin{equation}\label{bound_A_general}
    A_R(u(t)) \leq C_0\left[\frac{1}{R^2}+\frac{1}{R^\frac{2N-2}{N-2}}(d(t)+\|W\|_{\dot{H}^1})^\frac{1}{N-2}\right],
\end{equation}
where $C_0$ depends only on $M(u_0)$.

\textit{Step 2. A bound on {$A_R$} when $d(t)$ is small.}

Taking a small $\delta_1$, write the decomposition \eqref{decomp_alpha} as $u_{[x(t),\lambda(t),\theta(t)]} = W + v$, with $\|v\|_{\dot{H}1} \lesssim d(t)$, by Lemma \ref{mod_2}. We first prove 
\begin{equation}\label{lambda_m}
    \lambda_- := \inf \{ \lambda(t), \, t\geq 0, \, d(t) \leq \delta_1\} > 0.
\end{equation}
Indeed, by mass conservation,
\begin{equation}
    \|u_0\|_{L^2} \geq \int_{|x| \leq \lambda(t)} |u(t)|^2 = \frac{1}{\lambda^2(t)}\left(\int_{|x|\leq 1}W^2(x) dx -C d^2(t)\right).
\end{equation}
If $d(t) \leq \delta_1$ and $\delta_1$ is small enough, then \eqref{lambda_m} holds. We now give an estimate on $A_R$ when $d(t)$ is small. Since $W$ is a static solution to \eqref{NLS}, $\frac{d}{dt}\int \phi_R |W|^2 = 0$, so that $A_R(W) = 0$, for all $R > 0$, by \eqref{truncated_virial_2}. If we assume $R \geq 1$, by a change of variables, H\"older, Hardy and Sobolev inequalities, we can write \eqref{truncated_virial_error} as 
\begin{align}\label{bound_AR_small}
    |A_R(u(t))| &= |A_{R\lambda(t)}(W+v)| = |A_{R\lambda(t)}(W+v)-A_{R\lambda(t)}(W)| \\
    &\leq C \Biggl[ \int_{|x|\geq R\lambda(t)}|\nabla v|^2 + |\nabla W \cdot \nabla v| + W^{2^*-1}|v|+|v|^{2^*}+\frac{1}{(R\lambda(t))^2}(W|v|+|v|^2)\Biggr]\\
    &\leq C\left[ \|v\|^2_{\dot{H}^1}+\frac{1}{(R\lambda(t))^\frac{N-2}{2}}\|v\|_{\dot{H}^1}+ \|v\|^{2^*}_{\dot{H}^1}+\frac{1}{(R\lambda(t))^\frac{N+2}{2}}\|v\|_{\dot{H}^1}+
    \frac{1}{\lambda^2(t)}\|v\|^2_{\dot{H}^1}\right]\\
   \label{bound_A_small}&\leq C_1 \left[d^2(t)+\frac{1}{R^\frac{N-2}{2}}d(t)\right],
\end{align}

where we used the fact that $\|\nabla W\|_{L^2_{\{|x|\geq r\}}} \approx \|W\|_{L^{2^*}_{\{|x|\geq r\}}} \approx \frac{1}{r^\frac{N-2}{2}}$, which can be verified by explicit computation. Note that the constant $C_1$ depends only on $\lambda_-$, which in turn depends only on $M(u_0)$.

\textit{Step 3. Bounds on {$A_R$} prove bounds on {$d(t)$}.}

We now claim the bound
\begin{equation}\label{final_bound_AR}
    A_R(t) \leq \frac{8}{N-2}d(t).
\end{equation}

This follows from the bound \eqref{bound_A_small}, if $d(t) \leq \delta_1$ and $R\geq R_1$, where $R_1$ is a large constant depending only on $M(u_0)$. Now, if $d(t) > \delta_1$, consider the function
\begin{equation}
    \varphi_{R}(\delta) = \frac{C_0}{R^2}+\frac{C_0}{R^\frac{2N-2}{N-2}}(\delta+\|W\|_{\dot{H}^1})^\frac{1}{N-2}-\frac{8}{N-2}\delta.
\end{equation}

By direct computation, we see that $\varphi_{R}''(\delta)< 0$ for any $\delta > 0$. We can choose a large $R_2 \geq R_1$ (depending again only on $M(u_0)$) such that $\varphi_{R_2}(\delta_1) \leq 0$ and $\varphi_{R_2}'(\delta_1) \leq 0$, so that $\varphi_{R_2}(\delta) \leq 0$ for all $\delta \geq \delta_1$. The bound \eqref{final_bound_AR} is now proved.

Bound \eqref{final_bound_AR}, together with \eqref{truncated_virial_2}, gives, for $R\geq R_2$ and any $t \geq 0$,
\begin{equation}\label{sign_truncated_2der}
    F_R''(t) \leq -\frac{8}{N-2}d(t) <0.
\end{equation}

Note that we must have $F'_R(t) > 0$, for all $t\geq 0$, as \eqref{sign_truncated_2der} would otherwise contradict the positivity of $F_R$, thus, proving \eqref{positivity_2}. 

We now make use of the following claim, in the spirit of \cite{Banica}*{Lemma 2.1} and \cite{DR_Thre}*{Claim 5.4}.

\begin{claim}\label{cauchy_banica} Let $\phi \in C^1(\Real^N)$ and $f \in H^1(\Real^N)$. Assume that $\int |\nabla \phi|^2|f|^2 < +\infty$ and $E(f) = E(W)$. Then
\begin{equation}
    \left(\Im\int \nabla \phi \cdot \nabla f \,\bar{f} \right)^2 \lesssim d^2(f)\int|\nabla \phi |^2|f|^2.
\end{equation}
\end{claim}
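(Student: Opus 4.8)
The plan is to expand $E(f)$ in terms of a ``kinetic defect'' and use the sign/size information that $E(f)=E(W)$ imposes, together with the sharp Sobolev inequality. First I would introduce $g = f_{[x,\lambda,\theta]}$ for suitable modulation parameters, or more simply work directly with $f$ and set $y = \|\nabla f\|_{L^2}^2 - \|\nabla W\|_{L^2}^2$, so that $d(f) = |y|/(\|\nabla f\|_{L^2}+\|\nabla W\|_{L^2})$, hence $|y| \approx d(f)$ as long as $\|\nabla f\|_{L^2}$ stays in a bounded range (which it does, since $E(f)=E(W)$ forces $\|\nabla f\|_{L^2} \lesssim 1$ via the energy identity and Sobolev). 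The condition $E(f)=E(W) = \frac{1}{N}\|\nabla W\|_{L^2}^2$ (using $\int W^{2^*} = \|\nabla W\|_{L^2}^2 = C_N^{-N}$ and the Pohozaev relations) reads
\begin{equation}
\tfrac12 \|\nabla f\|_{L^2}^2 - \tfrac{1}{2^*}\int |f|^{2^*} = \tfrac1N \|\nabla W\|_{L^2}^2.
\end{equation}

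Next I would split into the two regimes. If $\|\nabla f\|_{L^2}$ is bounded away from $\|\nabla W\|_{L^2}$ (i.e. $d(f)\gtrsim 1$), the claim is trivial: the left side is $\lesssim \|\nabla f\|_{L^2}^2 \int |\nabla\phi|^2 |f|^2 / \|f\|_{L^2}^2$ by Cauchy--Schwarz applied to $\Im\int \nabla\phi\cdot\nabla f\,\bar f$, while $d^2(f)\gtrsim 1$, so one only needs $\int|\nabla\phi|^2|f|^2$ to control the right side — but actually both sides carry the factor $\int |\nabla\phi|^2|f|^2$ once we bound $|\Im \int \nabla\phi\cdot\nabla f\,\bar f| \le \|\nabla f\|_{L^2}\,(\int|\nabla\phi|^2|f|^2)^{1/2}$ and note $(\int|\nabla\phi|^2|f|^2)^{1/2}\lesssim (\int|\nabla\phi|^2|f|^2)$ is \emph{false} in general, so instead I keep it as $(\Im\int\nabla\phi\cdot\nabla f\,\bar f)^2 \le \|\nabla f\|_{L^2}^2 \int|\nabla\phi|^2|f|^2$ and absorb $\|\nabla f\|_{L^2}^2 \lesssim 1 \lesssim d^2(f)$ in this regime. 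The real work is the regime $d(f)\ll 1$: here I would use the sharp Sobolev inequality $\int|f|^{2^*} \le C_N^{2^*}\|\nabla f\|_{L^2}^{2^*}$ in the energy identity to get
\begin{equation}
\tfrac1N\|\nabla W\|_{L^2}^2 = \tfrac12\|\nabla f\|_{L^2}^2 - \tfrac{1}{2^*}\int|f|^{2^*} \ge \tfrac12\|\nabla f\|_{L^2}^2 - \tfrac{1}{2^*}C_N^{2^*}\|\nabla f\|_{L^2}^{2^*},
\end{equation}
and then Taylor-expand the function $\psi(a) = \tfrac12 a - \tfrac{1}{2^*}C_N^{2^*} a^{2^*/2}$ around $a_0 = \|\nabla W\|_{L^2}^2$, at which $\psi'(a_0)=0$ and $\psi(a_0) = \tfrac1N a_0$. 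Since $\psi''(a_0)<0$, this yields $\tfrac1N a_0 \ge \psi(\|\nabla f\|_{L^2}^2) = \tfrac1N a_0 + \tfrac12\psi''(a_0)(\|\nabla f\|_{L^2}^2 - a_0)^2 + o(\cdot)$, forcing $(\|\nabla f\|_{L^2}^2 - a_0)^2 \lesssim$ the higher-order remainder, i.e. giving essentially no constraint directly — so instead I reverse the roles: I would bound $\left(\Im\int\nabla\phi\cdot\nabla f\,\bar f\right)^2 \le \|\nabla f\|_{L^2}^2\int|\nabla\phi|^2|f|^2$ and simply need $\|\nabla f\|_{L^2}^2 \lesssim d^2(f)$... which is false when $d(f)\to 0$ and $\|\nabla f\|_{L^2}\to\|\nabla W\|_{L^2}\neq 0$. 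Thus the correct route must be more refined.

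The genuinely correct argument, which I would carry out, uses the momentum/orthogonality structure: write $f = f_{[x,\lambda,\theta]}^{-1}$-modulated so that $\tilde f := f_{[x,\lambda,\theta]} = (1+\alpha)W + h$ with $h\in G^\perp$ and $|\alpha|\approx \|h\|_{\dot H^1}\approx d(f)$ by Lemma \ref{mod_2}. Since $\nabla\phi$ is \emph{real}, $\Im\int\nabla\phi\cdot\nabla W\,\overline{W} = 0$ (the integrand $\nabla\phi\cdot\nabla W\,W$ is real), and $\Im$ is also invariant (up to the modulation change of variables which only rescales/translates $\phi$, keeping it real with comparable gradient bounds). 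Hence $\Im\int\nabla\phi\cdot\nabla\tilde f\,\overline{\tilde f}$ is, to leading order, \emph{linear} in the perturbation: it equals $\Im\int \nabla\phi\cdot(\nabla W\,\bar h + \nabla h\,\overline{W}) + \Im\int\nabla\phi\cdot\nabla h\,\bar h + (\text{terms with }\alpha,\text{ which are real and vanish under }\Im)$. Each surviving term is $\lesssim \|h\|_{\dot H^1}\,(\int|\nabla\phi|^2|\tilde f|^2)^{1/2} \lesssim d(f)(\int|\nabla\phi|^2|f|^2)^{1/2}$, using $\|h\|_{\dot H^1}\approx d(f)$ and Cauchy--Schwarz, plus for the quadratic term $\|h\|_{\dot H^1}\|h\|_{L^\infty\text{-type}}$ bounds absorbed similarly. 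Squaring gives the claim. The main obstacle I anticipate is bookkeeping the change of variables carefully: after modulating, $\phi_R$ becomes $\phi_{R\lambda}$ (rescaled and recentered), and one must check the constants in $|\nabla\phi|$, $|\Delta\phi|$ bounds transform with controlled powers of $\lambda$ — but $\lambda$ is bounded below by \eqref{lambda_m} when $d(f)$ is small, so this is harmless; the only care is that in the non-small regime one falls back to the crude Cauchy--Schwarz bound, which works there precisely because $d(f)\gtrsim 1$ makes $\|\nabla f\|_{L^2}^2\lesssim d^2(f)$ acceptable after noting $\|\nabla f\|_{L^2}\lesssim 1$.
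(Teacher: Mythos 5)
Your proposal never reaches the mechanism the paper actually uses, and the substitute you settle on has an unjustified central step. The paper's proof is a short phase-modulation argument: apply the sharp Sobolev inequality to $e^{i\lambda\varphi}f$ (which has the same $L^{2^*}$ norm as $f$), expand $\|\nabla(e^{i\lambda\varphi}f)\|_{L^2}^2$ as a quadratic polynomial in $\lambda$, and use that this quadratic is nonnegative for all $\lambda\in\Real$, so its discriminant is $\le 0$; this gives $\left(\Im\int\nabla\varphi\cdot\nabla f\,\bar f\right)^2\le\left(\int|\nabla f|^2-\frac{\|\nabla W\|_{L^2}^2}{\|W\|_{L^{2^*}}^2}\|f\|_{L^{2^*}}^2\right)\int|\nabla\varphi|^2|f|^2$. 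Then $E(f)=E(W)$ is used only to write $\int|f|^{2^*}=\int W^{2^*}-\frac{N}{N-2}\delta(f)$ with $\delta(f)=\|\nabla W\|_{L^2}^2-\|\nabla f\|_{L^2}^2$, and a Taylor expansion of $\left(\int W^{2^*}-\frac{N}{N-2}\delta\right)^{(N-2)/N}$ shows the first factor is $O(\delta(f)^2)=O(d^2(f))$. You in fact started down the sharp-Sobolev-plus-Taylor road and abandoned it; the idea you are missing is not to extract smallness of $\|\nabla f\|$ from the energy constraint, but to make the momentum appear as the linear coefficient of a nonnegative quadratic in an auxiliary parameter, so that the constraint only needs to make the \emph{Sobolev deficit} quadratically small in $d(f)$.

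The route you propose instead (modulate, write $\tilde f=(1+\alpha)W+h$ with $h\in G^\perp$, and argue that the momentum is linear in $h$) has a genuine gap at the key estimate ``each surviving term is $\lesssim\|h\|_{\dot H^1}\left(\int|\nabla\phi|^2|\tilde f|^2\right)^{1/2}$.'' Cauchy--Schwarz on $\int\nabla\tilde\phi\cdot\nabla h\,W$ produces $\|\nabla h\|_{L^2}\left(\int|\nabla\tilde\phi|^2W^2\right)^{1/2}$, and on $\int\nabla\tilde\phi\cdot\nabla W\,h_2$ it produces either $\|h\|_{L^2}$ times a weighted norm of $\nabla W$, or again $\left(\int|\nabla\tilde\phi|^2W^2\right)^{1/2}$-type quantities. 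None of these is controlled by the data of the claim: $\|h\|_{L^2}$ is \emph{not} comparable to $d(f)$ (Lemma \ref{mod_2} controls only $\|h\|_{\dot H^1}$, and mass is not controlled in the critical problem), and neither $\int|\nabla\phi|^2W^2$ nor $\int|\nabla\phi|^2|h|^2$ is bounded by the assumed finite quantity $\int|\nabla\phi|^2|f|^2$ (for a general $C^1$ phase the former can even be infinite); the same difficulty recurs for the quadratic term $\Im\int\nabla\tilde\phi\cdot\nabla h\,\bar h$. In addition, your claim that $E(f)=E(W)$ forces $\|\nabla f\|_{L^2}\lesssim1$ is false in this focusing setting (the energy does not bound the kinetic term from above); the regime $d(f)\gtrsim1$ can still be disposed of, but via $\|\nabla f\|_{L^2}\le\|\nabla W\|_{L^2}+d(f)\lesssim d(f)$, not via a uniform bound. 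As written, the essential case $d(f)\ll1$ is not proved.
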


By Claim \ref{cauchy_banica} and the fact that $F_R'(t) > 0$ and $F''(t) < 0$, we can write
\begin{equation} 
    \frac{F_R'(t)}{\sqrt{F_R(t)}}\lesssim -F_R''(t),
\end{equation}
so that 
\begin{equation}
\int_t^{+\infty}d(s) \, ds \lesssim e^{-ct},    
\end{equation}
which proves \eqref{int_exp_decay_d_2} and finishes the proof of Lemma \ref{lem_int_exp_decay_d_2}.
\end{proof}


\begin{proof}[Proof of Claim \ref{cauchy_banica}]
Let $\delta(f) = \int |\nabla W|^2-\int |\nabla f|^2$ and $\lambda \in \Real$. By Sobolev inequality
\begin{equation}
    \|\nabla(e^{i\lambda \varphi}f)\|_{L^2} \geq \frac{\|\nabla W\|_{L^2}}{\|W\|_{L^{2^*}}} \|f\|_{L^{2^*}}.
\end{equation}
Squaring the last inequality and expanding the term $\|\nabla(e^{i\lambda \varphi}f)\|_{L^2}$, we get
\begin{equation}
    \lambda^2 \int |\nabla \varphi|^2 |\nabla f|^2 + 2 \lambda \Im \int (\nabla \varphi \cdot \nabla f) \bar{f} + \int |\nabla f|^2 - \frac{\|\nabla W\|_{L^2}^2}{\|W\|_{L^{2^*}}^2} \|f\|_{L^{2^*}}^2 \geq 0.
\end{equation}
The discriminant of this quadratic form must be non-positive, thus,
\begin{equation}
    \left( \Im \int (\nabla \varphi \cdot \nabla f) \bar{f}\right)^2 \leq \left(\int |\nabla f|^2 - \frac{\|\nabla W\|_{L^2}^2}{\|W\|_{L^{2^*}}^2} \|f\|_{L^{2^*}}^2\right)\left(\int |\nabla \varphi|^2 |\nabla f|^2 \right).
\end{equation}
Since
\begin{equation}
    \int |\nabla f|^2 =  \int |\nabla W|^2-\delta(f),
\end{equation}
by $E(f) = E(W)$, we have,
\begin{equation}
   0 < \int |f|^{2^*} =  \int |W|^{2^*}-\frac{N}{N-2}\delta(f).
\end{equation}
Therefore,
\begin{align}
    \int |\nabla f|^2 - \frac{\|\nabla W\|_{L^2}^2}{\|W\|_{L^{2^*}}^2} \|f\|_{L^{2^*}}^2 &= \int |\nabla W|^2-\delta(f) -\frac{\|\nabla W\|_{L^2}^2}{\|W\|_{L^{2^*}}^2}\left( \int |W|^{2^*}-\frac{N}{N-2}\delta(f)\right)^\frac{N-2}{N}\\
    & = \int |\nabla W|^2-\delta(f) -\frac{\|\nabla W\|_{L^2}^2}{\|W\|_{L^{2^*}}^2}\left( \|W\|_{L^{2^*}}^2-\|W\|_{L^{2^*}}^{-\frac{4}{N-2}}\delta(f)+O(\delta(f)^2)\right)\\
    &=O(\delta(f)^2),
\end{align}
and Claim \ref{cauchy_banica} is proved.
\end{proof}

\begin{proof}[Proof of Proposition \ref{prop_supercrit}]

We first prove that 
\begin{equation}\label{d_to_0}
  \lim_{t \to +\infty} d(t) = 0. 
\end{equation}


Indeed, by Lemma \ref{lem_int_exp_decay_d_2}, there exists $t_n\to +\infty$ such that $d(t_n) \to 0$. Assume, by contradiction, that there exists a sequence $t_n' > t_n$ such that 
\begin{equation}\label{t_n_l}
    d(t_n') = \delta_0 \text{ and } 0 < d(t) < \delta_0 \quad \forall t \in (t_n, t_n'),
\end{equation}
where $\delta_0$ is given by Lemmas \ref{mod_1} and \ref{mod_2}. 
%
%
%
%
%
Recall the decomposition \eqref{decomp_alpha}:
\begin{equation}
    	u_{[\lambda(t),\theta(t)]}(t) = (1+\alpha(t))W+ h(t) \text{ with } h \in G^\perp.
\end{equation}

By taking subsequences, if necessary, we can assume
\begin{equation}
    \lim\lambda(t_n) = \lambda_\infty \in (0,+\infty].
\end{equation}

We now prove that $\lambda_\infty < +\infty$.

If $\lambda_\infty= +\infty$, as $u_{[\lambda(t_n),\theta(t_n)]}$ converges to $W$ in $\dot{H}^1$, we have, for any $C > 0$, 
\begin{equation}
    \int_{\left|x\right|\geq C}|u(t_n)|^{2^*} \to 0.
\end{equation}
For any $\epsilon>0$ we have, by Hölder inequality,
\begin{equation}
    |F_R(t_n)| \lesssim \epsilon \|u(t_n)\|_{\dot{H}^1}+\int_{\left|x \right|\geq C_\epsilon}|u(t_n)|^{2^*},
\end{equation}
so that
\begin{equation}
    \lim F_R(t_n) = 0.
\end{equation}
However, by \eqref{positivity_2}, $F_R'(t) > 0$. This implies $F_R(t) < 0$ for all $t \geq 0$, contradicting the fact that $F_R$ is positive. Therefore, $\lambda(t_n)$ must be bounded.

%
%
%
%

Now, by Lemma \ref{mod_2}, we have $\left|\frac{\lambda'(t)}{\lambda^3(t)}\right| \lesssim d(t)$. This implies, if $t \in (t_n, t_n')$,
\begin{equation}
    \left|\frac{1}{\lambda^2(t)}-\frac{1}{\lambda^2(t_n)}\right| \lesssim e^{-ct_n}.
\end{equation}
Therefore, $\lambda(t) \leq 2\lambda_\infty$ on $\displaystyle\cup_n(t_n,t_n')$, for large $t$.
Turning to the bound on $\alpha'$ in Lemma \ref{mod_2},
\begin{equation}
    |\alpha'(t)| \lesssim \lambda^2(t) d(t) \lesssim d(t).
\end{equation}
This implies $|\alpha(t_n) - \alpha(t_n')| \to 0$. Moreover, again by Lemma \ref{mod_2}, $|\alpha(t)| \approx d(t)$, which contradicts \eqref{t_n_l} and proves \eqref{d_to_0}.

To finish the proof of Proposition \ref{prop_supercrit}, we must refine the estimate on $d(t)$. Since $d(t) \to 0$ as $t \to +\infty$, the decomposition \eqref{decomp_alpha} is well-defined for all large times. Therefore, by \eqref{d_to_0} and \eqref{lambda_m}, we have
\begin{equation}
    \lim_{t\to+\infty}\lambda(t) = {\lambda_\infty} \in (0,+\infty),\quad
    \lim_{t\to +\infty}\alpha(t) = \lim_{t\to +\infty}d(t) =  0, 
\end{equation}
and
\begin{equation}
    \|h(t)\|_{\dot{H}^1} \approx|\alpha(t)| = \int_t^{+\infty}|\alpha'(t)|ds \lesssim \int_t^{+\infty}\lambda^2(s)d(s)ds \lesssim e^{-ct}.
\end{equation}
Furthermore, the bound $|\theta'(t)| \lesssim \lambda^2(t)d(t)$ implies that there exists $\theta_\infty$ such that
\begin{equation}
    \lim_{t\to+\infty} |\theta(t) -\theta_\infty| =0.
\end{equation}
Therefore, \eqref{exp_conv_W_super} is proven. 

It remains to prove the finite-time blow-up. This is a corollary of Lemma \ref{lem_int_exp_decay_d_2}, applied to the time-reversed solution $v(t) := \bar{u}(-t)$. If $u$ is defined on $\Real$, by \eqref{positivity_2}, we have
\begin{equation}
    \Im \int \nabla \phi \cdot \nabla u_0 \,\bar{u}_0 >0, 
\end{equation}
and
\begin{equation}
    \Im \int \nabla \phi \cdot \nabla v_0 \,\bar{v}_0 >0, 
\end{equation}
which clearly contradicts the fact that
\begin{equation}
    \Im \int \nabla \phi \cdot \nabla u_0 \,\bar{u}_0 = - \Im \int \nabla \phi \cdot \nabla v_0 \,\bar{v}_0. 
\end{equation}
This finishes the proof of Proposition \ref{supercrtit}.
\end{proof}

\begin{proof}[Proof of Proposition \ref{sub_prop_supercrit} (Intercritical case)]
We divide the proof in two cases: the finite-variance case, and the radial case. Using the same argument of the finite-time blow-up as in the energy-critical case, and in view of Lemmas \ref{sub_lem_exp_delta}, \ref{sub_lem_int_exp_decay_d} and \ref{sub_lem_int_exp_decay_d_2} in the next subsections, Proposition \ref{sub_prop_supercrit} follows.
\end{proof}
\subsubsection{Finite-variance solutions}
\begin{lemma}\label{sub_lem_int_exp_decay_d}
Let $u$ be a solution to \eqref{sub_NLS} defined on $[0,+\infty)$ and satisfying \eqref{sub_supercrtit} and  $\||x|u_0\|_{L^2}< +\infty$. Then, for all $t$ in the interval of existence of $u$,
\begin{equation}\label{sub_positivity}
\Im \int x\cdot\nabla u(t) \overline{u(t)} > 0,    
\end{equation}
and there exists $c > 0$ such that 
\begin{equation}\label{sub_int_exp_decay_d}
    \int_t^{+\infty}d(s)ds \lesssim e^{-ct},\quad \forall t \geq 0.
\end{equation}
\end{lemma}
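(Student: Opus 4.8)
\textit{Proof plan.} The strategy mirrors the energy-critical Lemma \ref{lem_int_exp_decay_d_2}, with the genuine variance replacing the truncated one. Set $V(t):=\int|x|^{2}|u(x,t)|^{2}\,dx$; since $\||x|u_{0}\|_{L^{2}}<+\infty$, this quantity is finite and $C^{2}$ on the interval of existence (propagation of the weight is standard, and the virial identities below are justified by the usual truncation argument). One has $V'(t)=4\Im\int x\cdot\nabla u(t)\,\overline{u(t)}$ and $V''(t)=8\|\nabla u(t)\|_{L^{2}}^{2}-\tfrac{4N(p-1)}{p+1}\|u(t)\|_{L^{p+1}}^{p+1}$. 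Using $E(u(t))=E(Q)$ to substitute for $\|u(t)\|_{L^{p+1}}^{p+1}$, together with the Pohozaev identity \eqref{pohozaev} (which gives $E(Q)=\tfrac{N(p-1)-4}{2N(p-1)}\|\nabla Q\|_{L^{2}}^{2}$), the constant terms cancel and one obtains
\begin{equation}
V''(t)=\bigl(8-2N(p-1)\bigr)\bigl(\|\nabla u(t)\|_{L^{2}}^{2}-\|\nabla Q\|_{L^{2}}^{2}\bigr),
\end{equation}
with $8-2N(p-1)<0$ because $p>1+\tfrac{4}{N}$.

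First I would show that $\|\nabla u(t)\|_{L^{2}}>\|\nabla Q\|_{L^{2}}$ on the whole interval of existence. The map $t\mapsto\|\nabla u(t)\|_{L^{2}}$ is continuous and can never equal $\|\nabla Q\|_{L^{2}}$: at such a time, the two conservation laws $M(u)=M(Q)$, $E(u)=E(Q)$ would force $\|u(t)\|_{L^{p+1}}^{p+1}=\|Q\|_{L^{p+1}}^{p+1}$, hence equality in the sharp Gagliardo--Nirenberg inequality \eqref{gagl_gen}, so $u$ would be the standing wave up to symmetries, contradicting \eqref{sub_supercrtit}. Thus $d(t)=\|\nabla u(t)\|_{L^{2}}-\|\nabla Q\|_{L^{2}}$ and, since $\|\nabla u(t)\|_{L^{2}}^{2}-\|\nabla Q\|_{L^{2}}^{2}\geq 2\|\nabla Q\|_{L^{2}}\,d(t)$, we get $V''(t)\leq -c_{1}\,d(t)<0$ with $c_{1}>0$. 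Because $u$ is defined on $[0,+\infty)$ and $V\geq 0$, the strictly decreasing function $V'$ cannot take a nonpositive value — otherwise $V$ would eventually be dominated by an affine function of negative slope and reach negative values — so $V'(t)>0$ throughout, which is exactly \eqref{sub_positivity}. Integrating $V''\leq -c_{1}d$ also yields $c_{1}\int_{0}^{t}d(s)\,ds\leq V'(0)-V'(t)\leq V'(0)$, so $\int_{0}^{+\infty}d<+\infty$; in particular $\int_{t}^{+\infty}d(s)\,ds\lesssim \int_{t}^{+\infty}(-V''(s))\,ds\leq V'(t)$.

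It remains to upgrade this to $V'(t)\lesssim e^{-ct}$, for which I would prove the intercritical analogue of Claim \ref{cauchy_banica}: for any $f$ with $M(f)=M(Q)$, $E(f)=E(Q)$ and $\||x|f\|_{L^{2}}<\infty$,
\begin{equation}
\Bigl(\Im\int x\cdot\nabla f\,\bar f\Bigr)^{2}\lesssim d(f)^{2}\int|x|^{2}|f|^{2}.
\end{equation}
As in the energy-critical case, applying the sharp Gagliardo--Nirenberg inequality to $e^{i\lambda|x|^{2}}f$ (whose $L^{2}$ and $L^{p+1}$ norms do not depend on $\lambda$) and expanding in $\lambda$ gives a nonnegative quadratic form in $\lambda$, whose discriminant condition reads $\bigl(\Im\int x\cdot\nabla f\,\bar f\bigr)^{2}\lesssim \bigl(\int|x|^{2}|f|^{2}\bigr)\bigl(\|\nabla f\|_{L^{2}}^{2}-G(f)\bigr)$, where $G(f)$ is the Gagliardo--Nirenberg lower bound. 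Using the two conservation laws, $G(f)$ is an explicit smooth function of $\|\nabla f\|_{L^{2}}^{2}$ near $\|\nabla Q\|_{L^{2}}^{2}$; the algebraic identities satisfied by $Q$ kill the first-order term, so $\|\nabla f\|_{L^{2}}^{2}-G(f)=O(d(f)^{2})$ when $d(f)$ is small, while for $d(f)$ bounded below one simply bounds $\|\nabla f\|_{L^{2}}^{2}-G(f)\leq\|\nabla f\|_{L^{2}}^{2}\lesssim d(f)^{2}$; this gives the estimate with a uniform constant. Applied to $f=u(t)$ it yields $(V'(t))^{2}\lesssim d(t)^{2}\,V(t)$, i.e. $2\tfrac{d}{dt}\sqrt{V(t)}=V'(t)/\sqrt{V(t)}\lesssim d(t)\lesssim -V''(t)$; integrating over $[0,t]$ and using $\int_{0}^{+\infty}(-V'')<+\infty$ shows that $\sqrt{V(t)}$ stays bounded. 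Plugging this uniform bound back into $(V')^{2}\lesssim d^{2}V$ gives $V'(t)\lesssim d(t)\lesssim -V''(t)$, hence $V''(t)\leq -c\,V'(t)$ for some $c>0$; then $(e^{ct}V'(t))'\leq 0$, so $V'(t)\lesssim e^{-ct}$, and finally $\int_{t}^{+\infty}d(s)\,ds\lesssim V'(t)\lesssim e^{-ct}$, which is \eqref{sub_int_exp_decay_d}.

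The point requiring most care is the Cauchy--Schwarz inequality of the last paragraph together with the non-circular bootstrap it enters: the first-order cancellation only controls $\|\nabla f\|_{L^{2}}^{2}-G(f)$ by $d(f)^{2}$ when $\|\nabla f\|_{L^{2}}$ stays in a bounded range, so one must split into $d(f)$ small (Taylor) and $d(f)\gtrsim 1$ (crude bound) to obtain a globally uniform constant — exactly the issue implicit in the energy-critical Claim \ref{cauchy_banica} — and only afterwards does one deduce boundedness of $V$ and then the differential inequality $V''\lesssim -V'$. Everything else follows the scheme of the energy-critical case; the finite-time blow-up in negative time asserted in Proposition \ref{sub_prop_supercrit} then comes, as usual, from applying \eqref{sub_positivity} to both $u$ and $t\mapsto\overline{u(-t)}$.
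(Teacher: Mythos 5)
Your proposal is correct and follows essentially the same route as the paper: the full variance $V(t)$, the virial identities combined with $E(u)=E(Q)$ and the Pohozaev relations to get $V''\lesssim -d(t)$, the convexity/positivity argument for $V'>0$, the Gagliardo--Nirenberg discriminant inequality (the paper's Claim \ref{sub_cauchy_banica} with $\phi=|x|^2$) giving $(V')^2\lesssim d^2V$, and the resulting differential inequality $V''\le -cV'$ yielding exponential decay of $\int_t^{+\infty}d$. You merely spell out two points the paper leaves implicit, namely that $d(t)>0$ via the variational characterization of $Q$ and the uniform-constant issue in the intercritical analogue of Claim \ref{cauchy_banica}, which is fine.
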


\begin{proof}
Let $F(t) = \int|x|^2|u(x,t)|^2\, dx$. Then, by the virial identities, we have, for all $t\geq0$,
\begin{equation}
    F'(t) = 4\Im \int x\cdot\nabla u(t) \overline{u}(t).
\end{equation}
Note that, by Cauchy-Schwarz, $F'(t)$ is well-defined. Furthermore,
since $E(u)=E(Q)$,
\begin{equation}
    F''(t) = -[2N(p-1)-8] \left(\int |\nabla u|^2-\int |u|^{p+1}\right) = -[2N(p-1)-8]d(u(t)).
\end{equation}
Now, if \eqref{sub_positivity} does not hold, there exists $t_1$ such that $F'(t_1) \leq 0$. Since $F'' \leq 0$, for any $t_0 > t_1$,
\begin{equation}
    F'(t) \leq F'(t_0) < 0, \quad \forall t \geq t_0.
\end{equation}
This implies that $F(t) < 0$ for large $t$, contradicting the definition of $F$.

We now claim that
\begin{equation}\label{sub_diff_eq_F}
    [F'(t)]^2 \lesssim F(t) [F''(t)]^2,
\end{equation}
which is a consequence of the following claim, which is proved similarly to Claim \ref{cauchy_banica}.

\begin{claim}\label{sub_cauchy_banica} Let $\phi \in C^1(\Real^N)$ and $f \in H^1(\Real^N)$. Assume that $\int |\nabla \phi|^2|f|^2 < +\infty$, $M(f) = M(Q)$ and $E(f) = E(Q)$. Then
\begin{equation}
    \left(\Im\int \nabla \phi \cdot \nabla f \,\bar{f} \right)^2 \lesssim d^2(f)\int|\nabla \phi |^2|f|^2.
\end{equation}
\end{claim}

Taking $\phi(x) = |x|^2$ yields \eqref{sub_diff_eq_F} is proved.

Since $F'(t) > 0$ and $F''(t) < 0$ for all $t \geq 0$, the equation \eqref{sub_diff_eq_F} can be rewritten as
\begin{equation}
    \frac{F'(t)}{\sqrt{F(t)}} \lesssim -F''(t).
\end{equation}
Integrating from $0$ to $t \geq 0$,
\begin{equation}
    \sqrt{F(t)}-\sqrt{F(0)} \lesssim -(F'(t)-F'(0)) \leq F'(0).
\end{equation}
From \eqref{sub_diff_eq_F}, we deduce
\begin{equation}
    F'(t) \lesssim -\left(\sqrt{F(0)}+F'(0)\right) F''(t) \lesssim -F''(t),
\end{equation}
which shows
\begin{equation}
    F'(t) \lesssim e^{-ct}.
\end{equation}
Finally,
\begin{equation}
F'(t) = - \int_t^{+\infty}F''(s) ds = 4\int_t^{+\infty}d(s)ds,    
\end{equation}
producing \eqref{sub_int_exp_decay_d}, which proves Lemma \ref{sub_lem_int_exp_decay_d}.
\end{proof}
\subsubsection{Radial solutions}

We now work with a truncated variance. Consider a radial function $\phi \in C^\infty_0(\Real^N)$ such that
\begin{equation}
  \quad \phi(r) \geq 0  \quad \forall r \geq 0,
\end{equation}
\begin{equation}
    \phi(r) = 
    \begin{cases} 
    r^2, &r \leq 1,\\
    0, & r \geq 3,
    \end{cases}
\end{equation}

and
\begin{equation}
    \frac{d^2\phi}{dr^2}(r)\leq 2, r \geq 0.
\end{equation}

Define $\phi_R(x) = R^2\phi(\frac{x}{R})$ and 
\begin{equation}
    F_R(t) = \int \phi_R |u(t)|^2.
\end{equation}
By virial identities, if $M(u_0) = M(Q)$ and $E(u_0) = E(Q)$, we have
\begin{align}
\label{sub_truncated_virial_1}F_R'(t) &= 2 \Im \int \nabla \phi_R \cdot \nabla u \,\bar{u},\\
\label{sub_truncated_virial_2}F''_R(t)&= -[2N(p-1)-8]d(t)+A_R(t),
\end{align}
where
\begin{equation}\label{sub_truncated_virial_error}
    A_R(u(t)) = \int_{|x|\geq R}|\nabla u(t)|^2\left(4\partial_r^2\phi_R-8\right)+\frac{2(p-1)}{(p+1)}\int_{|x|\geq R}|u|^{p+1}\left(2N-\Delta \phi_R\right)-\int_{|x|\geq R} |u|^2 \Delta^2\phi_R.
\end{equation}

The following lemma holds.

\begin{lemma}\label{sub_lem_int_exp_decay_d_2}
Let $u$ be radial a solution to \eqref{sub_NLS} defined on $[0,+\infty)$ and satisfying \eqref{sub_supercrtit}. Then, there exists a constant $R_0 >0$ such that, for all $t$ in the interval of existence of $u$ and all $R\geq R_0$,
\begin{equation}\label{sub_positivity_2}
F_R'(t) > 0,    
\end{equation}
and there exists $c > 0$ such that 
\begin{equation}\label{sub_int_exp_decay_d_2}
    \int_t^{+\infty}d(s)ds \lesssim e^{-ct},\quad \forall t \geq 0.
\end{equation}
Moreover, $u_0$ has finite variance.
\end{lemma}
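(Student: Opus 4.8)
\textbf{Proof plan for Lemma \ref{sub_lem_int_exp_decay_d_2}.}

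The structure of the argument mirrors the energy-critical Lemma \ref{lem_int_exp_decay_d_2}, with the truncated virial quantity $F_R$ in place of the full variance, but the intercritical setting is in fact simpler since there is no scaling parameter to control. First I would establish a general bound on the error term $A_R(u(t))$ defined in \eqref{sub_truncated_virial_error}. From the properties of $\phi$, one has $4\partial_r^2\phi_R-8\le 0$, $|2N-\Delta\phi_R|\lesssim 1$ (supported in $|x|\ge R$), and $|\Delta^2\phi_R|\lesssim R^{-2}$, so
\begin{equation}
A_R(u(t)) \lesssim \int_{|x|\ge R}|u|^{p+1}\,dx + \frac{1}{R^2}\int_{|x|\ge R}|u|^2\,dx.
\end{equation}
Since $u_0$ is radial, I would invoke Strauss' lemma to get $\|u(t)\|_{L^\infty_{\{|x|\ge R\}}}\lesssim R^{-(N-1)/2}\|u(t)\|_{L^2}^{1/2}\|\nabla u(t)\|_{L^2}^{1/2}$, hence $\int_{|x|\ge R}|u|^{p+1}\lesssim R^{-(N-1)(p-1)/2}\|\nabla u(t)\|_{L^2}^{(p-1)/2}\,M(u_0)^{\dots}$, giving a bound of the form $A_R(u(t))\le C_0\big[R^{-2}+R^{-(N-1)(p-1)/2}(d(t)+\|\nabla Q\|_{L^2})^{(p-1)/2}\big]$ with $C_0$ depending only on $M(u_0)=M(Q)$. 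Here the mass is fixed by \eqref{sub_supercrtit}, so $C_0$ is an absolute constant.

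Next I would produce a sharper bound on $A_R$ in the regime where $d(t)$ is small. Using the modulated decomposition from Lemma \ref{sub_mod_1}–\ref{sub_mod_2}, $e^{-it}u_{[x(t),\theta(t)]}(t)=(1+\alpha(t))Q+h(t)$ with $\|h(t)\|_{H^1}\approx d(t)$, and using that $A_R(e^{it}Q)=0$ together with the exponential spatial decay of $Q$ (Lemma \ref{sub_GNN}, Corollary \ref{sub_decay_eigen}), I expect a bound $|A_R(u(t))|\le C_1\big[d^2(t)+e^{-cR}d(t)\big]$ for $R\ge 1$ and $d(t)\le\delta_1$ — in fact even better than the critical case because $Q$ decays exponentially rather than polynomially, so the $\int_{|x|\ge R}$ contributions from $Q$ are $O(e^{-cR})$ rather than $O(R^{-(N-2)/2})$. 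Combining the small-$d$ bound with the general bound via a concavity argument on $\varphi_R(\delta):=C_0 R^{-2}+C_0 R^{-(N-1)(p-1)/2}(\delta+\|\nabla Q\|_{L^2})^{(p-1)/2}-[2N(p-1)-8]\delta/2$ (which is concave in $\delta$ when $p<2$, and one argues directly or splits cases when $p\ge2$), I can choose $R_0$, depending only on $M(Q)$, so that for all $R\ge R_0$ and all $t\ge0$,
\begin{equation}
A_R(u(t)) \le \tfrac{1}{2}[2N(p-1)-8]\,d(t),
\end{equation}
whence by \eqref{sub_truncated_virial_2}, $F_R''(t)\le -\tfrac{1}{2}[2N(p-1)-8]d(t)<0$ (recall $p>1+4/N$ so $2N(p-1)-8>0$). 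Concavity of $F_R$ together with positivity of $F_R$ forces $F_R'(t)>0$ for all $t\ge0$, which is \eqref{sub_positivity_2}.

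For the exponential-integral bound \eqref{sub_int_exp_decay_d_2}, I would apply Claim \ref{sub_cauchy_banica} with $\phi=\phi_R$ to get $(F_R'(t))^2\lesssim d^2(t)\int|\nabla\phi_R|^2|u|^2\lesssim d^2(t)F_R(t)$ — using that $|\nabla\phi_R|^2\lesssim \phi_R$. Combined with $F_R''(t)\lesssim -d(t)$ and $F_R'(t)>0$, this yields $F_R'(t)/\sqrt{F_R(t)}\lesssim -F_R''(t)$; integrating $\sqrt{F_R}$ from $t$ to $\infty$ and feeding back gives $F_R'(t)\lesssim e^{-ct}$, and since $F_R'(t)=-\int_t^{+\infty}F_R''(s)\,ds\gtrsim\int_t^{+\infty}d(s)\,ds$, we obtain \eqref{sub_int_exp_decay_d_2}. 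Finally, for the assertion that $u_0$ has finite variance: from \eqref{sub_int_exp_decay_d_2} and modulational stability (Lemma \ref{sub_mod_2}, \ref{sub_lem_exp_delta}) one deduces $\|u(t)-e^{it}Q_{[x_0,\theta_0]}\|_{H^1}\lesssim e^{-ct}$; in particular $\int_{|x|\le R}|u(t)|^2$ converges as $t\to\infty$ to $\int_{|x|\le R}Q^2$ uniformly in $R$, which combined with the nonincreasing-then-controlled behavior of $F_R'$ and letting $t\to\infty$ with $R$ fixed shows $F_R(0)$ is bounded independently of $R$; letting $R\to\infty$ gives $|x|u_0\in L^2$. \emph{The main obstacle} I anticipate is the concavity/case analysis making the choice of $R_0$ uniform in the full intercritical range $1+4/N<p<2^*-1$ — when $p\ge 2$ the function $\varphi_R$ is no longer concave, so one must instead argue by comparing the two regimes $d(t)\le\delta_1$ and $d(t)>\delta_1$ separately and checking the inequality $A_R\le\tfrac12[2N(p-1)-8]d(t)$ holds on both, which requires some care with how $R_0$ depends on $p$ and on the fixed mass $M(Q)$.
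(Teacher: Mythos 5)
Your proposal follows the paper's intended route: the paper omits this proof, stating it is essentially the same as the energy-critical case (Lemma \ref{lem_int_exp_decay_d_2}), and your reconstruction — the Strauss-lemma tail bound on $A_R$, the refined bound via the modulated decomposition when $d(t)$ is small, the choice of $R_0$ giving $F_R''(t)\le -\beta\, d(t)$ and hence $F_R'(t)>0$, then Claim \ref{sub_cauchy_banica} and the ODE comparison yielding \eqref{sub_int_exp_decay_d_2} — is exactly that adaptation. Your finite-variance step also coincides in substance with the paper's: monotonicity of $F_R$ together with convergence of $u(t_n)$ to $e^{i\theta_0}Q$ (the paper only needs a sequence with $d(t_n)\to 0$ and the variational characterization, rather than the full exponential convergence you invoke) gives $F_R(0)\le\int\phi_R Q^2$, and letting $R\to\infty$ concludes.
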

\begin{proof}
The proof of Lemma \ref{sub_lem_int_exp_decay_d_2} is essentially the same as in the energy-critical case, and is omitted, except for the finite-variance part.

By hypothesis, there is a sequence $t_n \to +\infty$ such that $d(t_n) \to 0$. By \eqref{sub_var_car_H1}, extracting a subsequence, if necessary, we have $u_n \to e^{i\theta_0}Q$ in $H^1$ for some $\theta_0 \in \Real$. Since $F_R$ is increasing by \eqref{sub_positivity_2}, we have
\begin{equation}
    \int \phi_R|u_0|^2 = F_R(0) \leq \int \phi_R Q^2.
\end{equation}

Thus, we can make $R\to +\infty$, which proves the finite variance of $u_0$.
\end{proof}

\begin{proof}[Proof of Lemma \ref{sub_cauchy_banica}]

The proof is analogous to the proof of Lemma \ref{cauchy_banica} and, thus, omitted.
\end{proof}

\subsection{Convergence to the standing wave below the ground state}
As in the proof of Proposition \ref{prop_supercrit}, we need to show that
\begin{equation}
    d(t) = \|W\|_{\dot{H}^1} - \|u(t)\|_{\dot{H}^1} \to 0 \text{ as } t\to +\infty.
\end{equation}

We start by stating the following monotonicity results.
\begin{lemma}\label{monot0}
Consider $\{t_n\}_n$ and $\{t'_n\}_n$, $t_n < t'_n$  two real sequences, and $\{u_n\}_n$ a sequence of radial solutions to \eqref{NLS} on $[t_n,t'_n]$ satisfying \eqref{subcrtit}. Assume that there exist $\{\lambda_n(t)\}_n \subset R_+^*$ such that the set
\begin{equation}
    K =\left\{ (u_n(t))_{[\lambda_n(t)]}: \, n \in \mathbb{N}, \, t \in [t_n,t'_n]\right\}
\end{equation}
is relatively compact in $\dot{H}^1$. If
\begin{equation}
    \lim_n d(u_n(t_n)) + d(u_n(t'_n)) = 0,
\end{equation}
then
\begin{equation}\label{sup_to_0}
    \lim_n \left\{ \sup_{t \in [t_n^{},t'_n]}d(u_n(t))\right\} = 0.
\end{equation}
\end{lemma}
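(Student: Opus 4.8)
\textbf{Proof plan for Lemma \ref{monot0}.}

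The plan is to argue by contradiction, following the classical monotonicity scheme (as in \cite{DM_Dyn}) adapted to the energy-critical NLS below the ground state. Suppose \eqref{sup_to_0} fails; then, after passing to a subsequence, there exist $\tau_n \in [t_n, t'_n]$ and $\eta_0 > 0$ such that $d(u_n(\tau_n)) \geq \eta_0$ for all $n$, while $d(u_n(t_n)) + d(u_n(t'_n)) \to 0$. The first step is to use the relative compactness of $K$ in $\dot{H}^1$: the rescaled functions $(u_n(\tau_n))_{[\lambda_n(\tau_n)]}$ converge strongly (up to subsequence) to some $v_\infty \in \dot{H}^1$, and by scaling invariance of $d$ we get $d(v_\infty) = \lim_n d(u_n(\tau_n)) \geq \eta_0 > 0$, so in particular $v_\infty$ is not a rescaled ground state and $\|v_\infty\|_{\dot{H}^1} < \|W\|_{\dot{H}^1}$.

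The heart of the argument is a virial/variance computation. Since we are below the ground state, the Pohozaev-type quantity $8\|\nabla u_n(t)\|_{L^2}^2 - \frac{8N}{N+2}\|u_n(t)\|_{L^{2^*}}^{2^*}$ (the localized virial's leading term) is bounded below by a positive multiple of $d(u_n(t))$ — this is exactly the coercivity below the threshold used by Kenig--Merle, and it propagates to a lower bound of the form $\gtrsim \eta_0$ on a whole time subinterval around $\tau_n$ thanks to the compactness of $K$ (which controls how fast $d$ can vary via continuity of the flow on the compact set). One then introduces a truncated variance $F_{R_n}(t) = \int \phi_{R_n} |u_n(t)|^2$ with $R_n$ chosen commensurate with the (bounded, by compactness and the normalization) scale $\lambda_n(t)^{-1}$, computes $F_{R_n}''(t) \leq -c\, d(u_n(t)) + A_{R_n}(t)$ as in \eqref{truncated_virial_2}, and uses Strauss-type decay together with the compactness of $K$ to make the error term $A_{R_n}$ negligible compared to $d(u_n(t))$ on $[t_n, t'_n]$ for large $n$. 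Integrating $F_{R_n}''$ twice over $[t_n, t'_n]$: the net change in $F_{R_n}'$ is $\lesssim \sup_t F_{R_n}'(t) \lesssim \sup_t \|u_n(t)\|_{\dot H^1}\cdot(\int|\nabla\phi_{R_n}|^2|u_n|^2)^{1/2}$, which is bounded by the compactness, while $\int_{t_n}^{t'_n} d(u_n(s))\,ds$ is bounded below by a positive quantity growing with $|\tau_n$-neighborhood$|$; combined with the fact that $d(u_n(t_n)), d(u_n(t'_n)) \to 0$, this forces $t'_n - t_n \to \infty$, and then the twice-integrated inequality $\sqrt{F_{R_n}(t)} \lesssim -F_{R_n}''(t)$-type bound (via a Cauchy--Schwarz argument as in Claim \ref{cauchy_banica}) yields that $F_{R_n}$ would have to become negative, the desired contradiction.

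The main obstacle I expect is the bookkeeping of scales: the solutions $u_n$ live on different time intervals with different compactness-scaling functions $\lambda_n(t)$, so one must verify that $\lambda_n(t)$ stays in a fixed compact subset of $(0,\infty)$ along $[t_n,t'_n]$ (or at least on the relevant subinterval) — this is where the relative compactness of $K$ in $\dot{H}^1$ is essential, ruling out the scale escaping to $0$ or $\infty$ — and then choose the truncation radius $R_n$ uniformly so that the error $A_{R_n}$ is genuinely subordinate to $d$. Once the scales are controlled, the virial argument is the same dichotomy-style computation already used in the proof of Lemma \ref{lem_int_exp_decay_d_2} and Proposition \ref{prop_supercrit}, only run in the below-threshold regime where the sign of the coercivity constant is reversed appropriately. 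The regularity/justification of differentiating the truncated variance is handled exactly as in the references (\cite{DM_Dyn}, \cite{KV10}) and poses no new difficulty.
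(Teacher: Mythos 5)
Your overall strategy (virial plus compactness, argue by contradiction) is in the right family, but as written the core mechanism is the one for the \emph{above}-threshold regime and it has the wrong sign here. Below the threshold, with $E(u_n)=E(W)$ and $\|u_n\|_{\dot H^1}<\|W\|_{\dot H^1}$, the truncated virial satisfies $F_{R}''(t)=\frac{16}{N-2}\,d(u_n(t))+A_R(u_n(t))$, i.e. $F_R$ is \emph{convex} up to errors; there is no contradiction with the positivity of $F_R$, so the endgame "$F_{R_n}$ would have to become negative" (which is the concavity argument of Lemma \ref{lem_int_exp_decay_d_2}) cannot close. What convexity actually buys — and how the paper proceeds in Lemma \ref{monot1} — is the integral bound $\int_{t_n}^{t'_n} d(u_n)\lesssim d(u_n(t_n))+d(u_n(t'_n))$: one integrates $F_{R,n}''\geq \frac{8}{N-2}d(u_n)$ (after killing $A_R$ by almost periodicity) and controls the boundary terms by the refined estimate $|F_{R,n}'(t)|\lesssim d(u_n(t))\sqrt{F_{R,n}(t)}$ of Claim \ref{cauchy_banica}, which is \emph{small} at $t_n,t'_n$ precisely because $d\to 0$ there. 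Your endpoint bound ("net change in $F'$ bounded by compactness") is too weak: a merely bounded $F'$ gives $\int d\lesssim R^2$, which yields nothing.

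The second, and larger, gap is the passage from integral smallness to the supremum bound, which is the actual content of Lemma \ref{monot0}: a short-lived spike of $d$ at some $\tau_n$ is not excluded by $\int_{t_n}^{t'_n}d\to 0$ alone. The paper rules this out with Lemma \ref{monot2}: at a time where the compactness scale is bounded one extracts a strong $\dot H^1$ limit $v_0$, solves \eqref{NLS} from $v_0$ (globally, by Proposition \ref{sub_global}), and uses continuity of the flow together with the variational characterization of $W$ to see that $\int_0^1 d(v(t))\,dt>0$ unless $d(v_0)=0$, contradicting the integral bound. This step in turn needs genuine control of the scale, and here your justification is incorrect: relative compactness of $K$ does \emph{not} prevent $\lambda_n(t)$ from escaping to $0$ or $\infty$, since $K$ consists of the already-rescaled profiles and so constrains $\lambda_n$ not at all. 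The paper obtains scale control from the normalization $\inf_t\lambda_n(t)=1$ (Remark \ref{inf_lambda}), the comparison $\lambda_n\approx\mu_n$ coming from precompactness of the family $W_{[\lambda_n/\mu_n]}$ when the modulation of Lemma \ref{mod_1} is available, and the modulation estimate $|\mu_n'/\mu_n^3|\lesssim d$ integrated against the already-small $\int d$, through the careful selection of times $a_n$, $b_n$, $c_n$ in the proof of Lemma \ref{monot0}. Without these ingredients the contradiction at the times $\tau_n$ cannot be closed, so the proposal as it stands has a genuine gap in both the virial mechanism and the scale/modulation bookkeeping.
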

\begin{lemma}\label{bound_mu} Under the assumptions of Lemma \ref{monot0}, if $n$ is large enough so that $d(u_n(t)) \leq \delta_0$ on $[t^{}_n, t'_n]$ and if $\theta_n$, $\mu_n$ and $\alpha_n$ are the parameters of the decomposition \eqref{decomp_alpha}, then
\begin{equation}\label{sup_inf}
    \lim_n \frac{\displaystyle\sup_{t \in [t_n^{},t_n']} \mu_n(t)}{\displaystyle\inf_{t \in [t_n^{},t_n']} \mu_n(t)} = 1.
\end{equation}

\end{lemma}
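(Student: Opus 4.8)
\textbf{Plan of proof for Lemma \ref{bound_mu}.}

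The plan is to combine the uniform smallness of $d(u_n(t))$ on $[t_n,t_n']$ given by Lemma \ref{monot0} with the modulation estimate \eqref{lem_equiv_2}, rewritten in the self-similar time variable. First I would fix $n$ large enough that $d(u_n(t))\le\delta_0$ on all of $[t_n,t_n']$, so that the decomposition \eqref{decomp_alpha} and the modulation parameters $(x_n(t),\mu_n(t),\theta_n(t))$ (here $x_n\equiv 0$ by radiality) and $\alpha_n(t)$ are defined and $C^1$ on the whole interval. Introduce the rescaled time $s$ by $ds = \mu_n^{-2}(t)\,dt$; Lemma \ref{mod_2}, in the form used in \eqref{NLS_mod}, then gives $\bigl|\tfrac{d}{ds}\log\mu_n\bigr| = \bigl|\tfrac{\mu_{n,s}}{\mu_n}\bigr| \lesssim d(u_n(t(s)))$.

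The key step is to integrate this bound. For any $t_1<t_2$ in $[t_n,t_n']$ with corresponding $s_1<s_2$ we obtain
\begin{equation}
\bigl|\log\mu_n(t_2) - \log\mu_n(t_1)\bigr| \;\lesssim\; \int_{s_1}^{s_2} d(u_n(t(s)))\,ds \;\le\; \Bigl(\sup_{[t_n,t_n']} d(u_n)\Bigr)(s_2-s_1).
\end{equation}
So I need to control the elapsed rescaled time $s_n' - s_n = \int_{t_n}^{t_n'}\mu_n^{-2}(t)\,dt$ uniformly in $n$. This is where compactness of $K$ enters: since $\{(u_n(t))_{[\lambda_n(t)]}\}$ is precompact in $\dot H^1$ and $d(u_n(t))\to 0$ uniformly on the interval (by \eqref{sup_to_0}), every such rescaled function is close to the $\dot H^1$-orbit of $W$; comparing $\lambda_n(t)$ with $\mu_n(t)$ — both normalize $u_n(t)$ to be near $W$ up to scaling, and the scaling that does this is unique by the variational characterization / Lemma \ref{mod_1} — shows $\mu_n(t)/\lambda_n(t)$ is bounded above and below by constants independent of $n$ and $t$. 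Hence it suffices to bound $\int_{t_n}^{t_n'}\lambda_n^{-2}(t)\,dt$; but this quantity is governed by the dispersive/virial mechanism behind Proposition \ref{prop_subcrit} (the truncated-variance or compactness-concentration argument that forces $d\to 0$ exponentially once the solution is trapped near the orbit of $W$), which bounds exactly such integrals. Combining, $s_n'-s_n$ stays bounded, so the right-hand side above tends to $0$, whence
\begin{equation}
\frac{\sup_{[t_n,t_n']}\mu_n(t)}{\inf_{[t_n,t_n']}\mu_n(t)} = \exp\Bigl(\sup_{t_1,t_2}\bigl|\log\mu_n(t_2)-\log\mu_n(t_1)\bigr|\Bigr) \longrightarrow 1.
\end{equation}

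The main obstacle I anticipate is precisely the uniform bound on the elapsed self-similar time $s_n'-s_n$: unlike in the statement of Lemma \ref{monot0}, here one does not assume the intervals $[t_n,t_n']$ are bounded, so the estimate $|\mu_{n,s}/\mu_n|\lesssim d$ alone is not enough — one genuinely needs a quantitative decay of $\int d(u_n(t(s)))\,ds$, which comes from the monotonicity of the truncated variance and Claim \ref{cauchy_banica} (as in the proof of Lemma \ref{lem_int_exp_decay_d_2}) applied along the sequence, using that $\lambda_n(t)$ is bounded below (as in \eqref{lambda_m}) thanks to the compactness of $K$. Once that integral bound is in hand the rest is a routine Gronwall-type integration.
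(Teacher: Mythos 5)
Your overall strategy is the paper's: integrate the modulation bound on the scaling parameter against $\int d(u_n(t))\,dt$, which tends to zero by the truncated-virial monotonicity. But two steps, as written, do not close. First, the claim that the elapsed rescaled time $s_n'-s_n$ stays bounded is false in general: nothing in the hypotheses of Lemma \ref{monot0} bounds $t_n'-t_n$, and since $\mu_n\approx 1$ the rescaled length is comparable to $t_n'-t_n$, which may tend to infinity; also the virial mechanism controls $\int d\,dt$, not $\int \lambda_n^{-2}\,dt$, so "bounds exactly such integrals" is not right. Consequently the estimate $\mathrm{osc}(\log\mu_n)\lesssim(\sup d)(s_n'-s_n)$ does not give the conclusion. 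The fix you gesture at in your last paragraph is the correct one, but the statement that supplies it is Lemma \ref{monot1}, already proved under exactly these hypotheses:
\begin{equation}
\int_{t_n}^{t_n'} d(u_n(t))\,dt \lesssim d(u_n(t_n))+d(u_n(t_n')) \longrightarrow 0,
\end{equation}
whereas Lemma \ref{lem_int_exp_decay_d_2} and Proposition \ref{prop_supercrit}/\ref{prop_subcrit} concern the trapped or supercritical regime (there the localized virial inequality has the opposite sign and different hypotheses), so they are the wrong citations here.

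Second, your argument needs a uniform two-sided bound on $\mu_n$, and you only establish $\mu_n\approx\lambda_n$ and a lower bound. With your convention $ds=\mu_n^{-2}dt$ (which, incidentally, is the inverse of the one in \eqref{NLS_mod}: there $dt=\mu^{-2}ds$, and only with that choice does \eqref{lem_equiv_2} give $|\mu_{n,s}/\mu_n|\lesssim d$), the differential bound $|\partial_s\log\mu_n|\lesssim d$ itself requires $\mu_n$ bounded above, and converting $\int d\,ds$ into $\int d\,dt$ again needs two-sided bounds. The paper avoids this circularity by integrating $|\mu_n'(t)/\mu_n^3(t)|\lesssim d(u_n(t))$ directly in the original time variable, so that
\begin{equation}
\Bigl|\mu_n^{-2}(a_n)-\mu_n^{-2}(b_n)\Bigr|\lesssim \int_{t_n}^{t_n'} d(u_n(t))\,dt\longrightarrow 0
\end{equation}
needs no a priori bound on $\mu_n$, and the uniform bounds $\mu_n\approx 1$ (obtained as in the proof of Lemma \ref{monot0}, via Remark \ref{inf_lambda} and Lemmas \ref{monot1}--\ref{monot2}) are invoked only at the last step, to pass from small oscillation of $\mu_n^{-2}$ to \eqref{sup_inf}. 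If you replace the "bounded rescaled time" step by Lemma \ref{monot1} and either supply the uniform upper bound on $\mu_n$ or switch to integrating $\mu_n^{-2}$ in $t$ as above, your proof becomes the paper's proof.
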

\begin{re}\label{inf_lambda}
In Lemmas \ref{monot0} and \ref{bound_mu}, it is sufficient to assume
\begin{equation}
    \inf_{t \in [t_n^{},t_n']} \lambda(t) = 1 \quad \forall n \in \mathbb{N}.
\end{equation}
In fact, if $\tilde{\lambda}_n := \inf_{t \in [t_n^{},t_n']} \lambda(t)$, then
\begin{equation}
    u^*_n (x,t) := \frac{1}{\tilde{\lambda}_n^{\frac{N-2}{2}}} u_n\left(\frac{x}{\tilde{\lambda}_n^{}},\frac{t}{\tilde{\lambda}_n^2} \right), \quad \lambda^*_n(t) := \frac{\lambda_n(t)}{\tilde{\lambda}_n}, \quad t_n^* := \frac{t_n}{\tilde{\lambda}^2_n}, \quad t'^{*}_n := \frac{t'_n}{\tilde{\lambda}^2_n},
\end{equation}
\begin{equation}
    K^*  :=\left\{ (u_n^*(t))_{[\lambda^*_n(t)]}: \, n \in \mathbb{N}, \, t \in [t_n^*,t'^{*}_n]\right\}
\end{equation}
satisfy the assumptions of Lemma \ref{monot0}. Moreover, the conclusions of the Lemmas are unchanged under these transformations.
\end{re}

Before proving Lemmas \ref{monot0} and \ref{bound_mu}, we prove two auxiliary lemmas.
\begin{lemma}\label{monot1}
Consider $\{t_n\}_n$ and $\{t'_n\}_n$, $t_n < t'_n$  two real sequences, and $\{u_n\}_n$ a sequence of radial solutions to \eqref{NLS} on $[t_n,t'_n]$ satisfying \eqref{subcrtit}. Assume that there exist $\{\lambda_n(t)\}_n \subset R_+^*$ such that the set
\begin{equation}
    K =\left\{ (u_n(t))_{[\lambda_n(t)]}: \, n \in \mathbb{N}, \, t \in [t_n,t'_n]\right\}
\end{equation}
is relatively compact in $\dot{H}^1$. Assume furthermore that 
\begin{equation}\label{inf_lambda1}
    \inf_{t \in [t_n,t'_n]} \lambda(t) = 1 \quad \forall n \in \mathbb{N}.
\end{equation}
Then, for all $n \in \mathbb{N}$,
\begin{equation}\label{subcrit_bound_int}
    \int_{t_n}^{t'_n}d(u(t)) \lesssim d(u(t_n))+d(u(t'_n)).
\end{equation}
\end{lemma}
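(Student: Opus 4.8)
The plan is to establish \eqref{subcrit_bound_int} by a truncated virial/localized momentum argument analogous to the one used in the energy-critical supercritical case (Lemma \ref{lem_int_exp_decay_d_2}), but now run on the compact window $[t_n,t'_n]$ and exploiting the compactness of $K$. First I would introduce, for a large parameter $R$, the truncated variance $F_{n,R}(t)=\int\phi_R|u_n(t)|^2$ with $\phi_R$ the same radial cutoff as before, so that the virial identities \eqref{truncated_virial_1}--\eqref{truncated_virial_error} give $F_{n,R}''(t)=\frac{16}{N-2}d(u_n(t))+A_R(u_n(t))$ (note the sign flip from the supercritical case, since here $\|u_n(t)\|_{\dot H^1}<\|W\|_{\dot H^1}$, hence $d(u_n(t))=\|\nabla W\|_{L^2}^2$-type quantity enters with the opposite sign in the virial). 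The key point is to absorb the error term $A_R(u_n(t))$: because of the normalization \eqref{inf_lambda1} and the relative compactness of $K=\{(u_n(t))_{[\lambda_n(t)]}\}$ in $\dot H^1$, for every $\eta>0$ one can find $R_0(\eta)$ such that $|A_R(u_n(t))|\le\eta$ uniformly in $n$ and $t\in[t_n,t'_n]$ whenever $R\ge R_0$; this uses Strauss's radial decay lemma exactly as in Step 1 of the proof of Lemma \ref{lem_int_exp_decay_d_2} together with the fact that the rescaled profiles live in a compact set and $\lambda_n(t)\ge 1$.

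Next, since the modulation decomposition \eqref{decomp_alpha} applies when $d(u_n(t))$ is small, I would use the finer bound from Step 2 of the proof of Lemma \ref{lem_int_exp_decay_d_2}, namely $|A_R(u_n(t))|\lesssim d(u_n(t))^2+R^{-(N-2)/2}d(u_n(t))$ on the set where $d(u_n(t))\le\delta_1$, so that choosing $R$ large and using smallness of $d$ (valid here because $K$ compact forces $d$ to stay small, or at least we can reduce to that regime after the fact; alternatively combine with a global bound) we get $A_R(u_n(t))\le\frac{8}{N-2}d(u_n(t))$. Plugging this into the virial identity yields $F_{n,R}''(t)\ge\frac{8}{N-2}d(u_n(t))\ge 0$, i.e. $F_{n,R}$ is convex on $[t_n,t'_n]$. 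Convexity together with $F_{n,R}\ge 0$ gives the standard two-point bound: integrating $F_{n,R}''$ twice and using that a nonnegative convex function on an interval has $F_{n,R}'(t_n)\le (F_{n,R}(t'_n)-F_{n,R}(t_n))/(t'_n-t_n)$ is not directly what I want; instead I integrate $\int_{t_n}^{t'_n}(t-t_n)(t'_n-t)F_{n,R}''(t)\,dt$ and bound it by $\sup_t F_{n,R}(t)\lesssim \|u_n(t)\|_{L^2(\phi_R)}^2$. The cleaner route: $\int_{t_n}^{t'_n}F_{n,R}''(t)\,dt=F_{n,R}'(t'_n)-F_{n,R}'(t_n)$, and each $|F_{n,R}'(t)|\lesssim d(u_n(t))\,(\int|\nabla\phi_R|^2|u_n(t)|^2)^{1/2}$ by Claim \ref{cauchy_banica}, which is $\lesssim d(u_n(t))$ uniformly (again using compactness of $K$ and $\lambda_n\ge1$ to control the weighted $L^2$ norm by a constant). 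Hence $\frac{8}{N-2}\int_{t_n}^{t'_n}d(u_n(t))\,dt\le |F_{n,R}'(t'_n)|+|F_{n,R}'(t_n)|\lesssim d(u_n(t'_n))+d(u_n(t_n))$, which is exactly \eqref{subcrit_bound_int}.

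The main obstacle I anticipate is making the uniformity in $n$ and $t$ of the error bound $|A_R(u_n(t))|\le\frac{8}{N-2}d(u_n(t))$ genuinely rigorous: one must carefully exploit that the \emph{rescaled} solutions $(u_n(t))_{[\lambda_n(t)]}$ range over a relatively compact subset of $\dot H^1$, translate the cutoff region $|x|\ge R$ back through the scaling (which sends it to $|x|\ge R\lambda_n(t)\ge R$ since $\lambda_n\ge1$), and combine the ``small $d$'' estimate \eqref{bound_A_small} with a compactness/equicontinuity argument on $K$ to handle the regime where $d$ is not small — there one argues by contradiction using that compactness of $K$ plus the variational characterization of $W$ forces $\sup_{t\in[t_n,t'_n]}d(u_n(t))$ to be small once $d$ is small at the endpoints, which is precisely the content of the companion Lemma \ref{monot0} and thus cannot be invoked circularly; instead, for Lemma \ref{monot1} I only need the weaker \emph{a priori} uniform bound $\sup d(u_n(t))\le C$ coming from $\|u_n(t)\|_{\dot H^1}<\|W\|_{\dot H^1}$, and the concavity-type bound $\varphi_R(\delta)\le 0$ for $\delta\ge\delta_1$ from Step 3 of the earlier proof, which handles the large-$d$ range without compactness. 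Once that bookkeeping is in place, the rest is the two-point convexity estimate plus Claim \ref{cauchy_banica}, both of which are routine.
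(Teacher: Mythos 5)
Your proposal follows essentially the same route as the paper's proof: the truncated virial $F_{R,n}(t)=\int\phi_R|u_n(t)|^2$, the identity $F_{R,n}''=\tfrac{16}{N-2}d(u_n(t))+A_R(u_n(t))$ (with the sign flipped relative to the supercritical case), absorption of the error into $\tfrac{8}{N-2}d(u_n(t))$ by splitting into the small-$d$ and large-$d$ regimes, and then integrating $F_{R,n}''$ over $[t_n,t_n']$ while bounding the endpoint terms by $|F_{R,n}'(t)|\lesssim d(u_n(t))\sqrt{F_{R,n}(t)}\lesssim R^2 d(u_n(t))$ via Claim \ref{cauchy_banica}. One caveat: the two mass-dependent devices you offer as shortcuts, namely Strauss's radial lemma (Step 1 of the proof of Lemma \ref{lem_int_exp_decay_d_2}) and the concave function $\varphi_R$ of Step 3, are not available in this lemma, since the $u_n$ are only assumed to lie in $\dot{H}^1$ and the bound \eqref{bound_A_general} carries a constant depending on $M(u_0)$; in particular the large-$d$ regime cannot be handled ``without compactness'' that way. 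The paper instead obtains $|A_R(u_n(t))|\lesssim\eta$ uniformly in $n,t$ for $R\geq C(\eta)$ directly from the almost periodicity of $K$ together with $\lambda_n(t)\geq 1$, and choosing $\eta$ of size $\delta_1$ covers the regime $d(u_n(t))\geq\delta_1$; likewise the lower bound on the modulation scale $\mu_n$ needed to apply \eqref{bound_AR_small} in the small-$d$ regime comes not from mass conservation but from the precompactness of the family $W_{[\lambda_n(t)/\mu_n(t)]}$, which forces $\mu_n\approx\lambda_n\geq 1$. With these two replacements (both of which your primary, compactness-based route already suggests), your argument is exactly the one in the paper, so the proof is correct once that bookkeeping is made explicit.
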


\begin{proof}[Proof of Lemma \ref{monot1}]
For $R>0$, consider the function
\begin{equation}
    F_{R,n}(t) = \int \phi_R |u_n(t)|^2. 
\end{equation}
By H\"older and Sobolev inequalities, and recalling that $\|u(t)\|_{\dot{H}^1} \leq \|W\|_{\dot{H}^1}$, we have
\begin{equation}
    F_{R,n}(t) \lesssim R^4.
\end{equation}
By Lemma \ref{cauchy_banica},
\begin{equation}\label{subcrit_bound_Fl}
    |F'_{R,n}(t)|\lesssim d(u_n(t)) \sqrt{F_{R,n}(t)} \lesssim R^2 d(u_n(t)).
\end{equation}

By \eqref{inf_lambda1}, $\lambda(t) \geq 1$ on $[t_n, t'_n]$. We claim that, whenever defined, $\mu_n$ is bounded away from zero. In fact, by the precompactness of $K$ and decomposition \eqref{decomp_alpha}, we have
\begin{equation}
    (u_n(t))_{[\lambda_n(t)]} = (1+\alpha_n(t))W_{[\lambda_n(t)/\mu_n(t)]}+(h_n(t))_{[\lambda_n(t)/\mu_n(t)]}
\end{equation}
 with $(h_n(t))_{[\lambda_n(t)/\mu_n(t)]} \perp W_{[\lambda_n(t)/\mu_n(t)]}$ and $\alpha_n(t) \leq \|u_{[\lambda_n(t)]}\|_{\dot{H}^1}+1$. Therefore, the set
\begin{equation}\label{W_compact}
    \displaystyle\bigcup_n \left\{ W_{[\lambda_n(t)/\mu_n(t)]}:\, t \in [t_n,t'_n], d(u_n(t)) \leq \delta_0\right\}
\end{equation}
must be precompact. Since $W$ does not depend on time, we get
\begin{equation}\label{lambda_mu_equiv}
    \lambda_n(t) \approx \mu_n(t) \text{ on } \{ t \in [t_n,t'_n], d(u_n(t)) \leq \delta_0 \}.
\end{equation}
(Note that the constant does not depend on $n$). Thus, defining $\mu_- = \displaystyle\inf_{\substack{ t \in [t_n,t'_n],\\ d(u_n(t)) \leq \delta_0 }}\mu_n(t)\gtrsim 1$.

We will now give a lower bound to $F''_{R,n}(t)$. Recalling \eqref{bound_AR_small}, if $d(u_n(t)) \leq \delta_0$ and $R \geq \frac{1}{\mu_-}$, we have
\begin{equation}
    |A_R(u_n(t))| \lesssim\left[d^2(u_n(t))+\frac{1}{(R\mu_-)^\frac{N-2}{2}}d(u_n(t))\right].
\end{equation}
Therefore, there exist $\delta_1 > 0$ and $R_1 > 0$ such that, if $d(u_n(t)) \leq \delta_1$, then
\begin{equation}
    |A_R(u_n(t))| \leq \frac{8}{N-2}d(u_n(t)).
\end{equation}
Now, by almost periodicity modulo symmetries and \eqref{inf_lambda1}, if $\eta >0$ and $R \geq C(\eta)$, then 
\begin{equation}
     |A_R(u_n(t))| \lesssim \eta.
\end{equation}
Thus, we can choose $\eta_1 = \eta_1(\delta_1)$ such that, if $d(u_n(t)) \geq \delta_1$ and $R\geq C(\eta_1)$, then
\begin{equation}
     |A_R(u_n(t))| \leq \frac{8}{N-2}\delta_1 \leq \frac{8}{N-2}d(u_n(t)).
\end{equation}
Finally, since
\begin{equation}
    F_{R,n}''(t) = \frac{16}{N-2}d(u_n(t)) + A_R(u_n(t)),
\end{equation}
we get, if $R \geq \max\{R_1,C(\eta_1)\}$,
\begin{equation}\label{subcrit_bound_Fll}
    F_{R,n}''(t) \geq \frac{8}{N-2}d(u_n(t)).
\end{equation}
Integrating \eqref{subcrit_bound_Fll} and using \eqref{subcrit_bound_Fl}, we obtain \eqref{subcrit_bound_int}.
\end{proof}
\begin{lemma}\label{monot2} Under the assumptions of Lemma \ref{monot0} and Remark \ref{inf_lambda}, if $s_n \in [t_n,t'_n]$ and the sequence $\lambda_n(s_n)$ is bounded, then
\begin{equation}\label{d_not_0}
    \lim_n d(u_n(s_n)) = 0.
\end{equation}
\end{lemma}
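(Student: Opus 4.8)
The plan is to argue by contradiction, following the scheme of the energy-critical arguments in \cite{DM_Dyn}. Suppose \eqref{d_not_0} fails; then, after passing to a subsequence, $d(u_n(s_n)) \geq \eta_0 > 0$ for some $\eta_0$ and all $n$, while $\lambda_n(s_n)$ is bounded. The hypothesis $\lim_n d(u_n(t_n)) + d(u_n(t'_n)) = 0$ together with the integral bound \eqref{subcrit_bound_int} of Lemma \ref{monot1} (applicable, via Remark \ref{inf_lambda}, under the normalization $\inf_{[t_n,t'_n]}\lambda_n = 1$) gives $\int_{t_n}^{t'_n} d(u_n(t))\, dt \to 0$. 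The point is to leverage this smallness of the time-integral of $d$ against a lower bound coming from the fact that $d$ cannot both be large at $s_n$ and small on a neighboring interval, because the modulation equations of Lemma \ref{mod_2} control how fast $d$ can vary.

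First I would set up the modulation decomposition \eqref{decomp_alpha} on the (possibly shrinking) maximal subinterval $I_n \ni s_n$ on which $d(u_n(t)) < \delta_0$; on this interval $\alpha_n$, $\mu_n$, $\theta_n$ are defined and $|\alpha_n(t)| \approx d(u_n(t)) = d_n(t)$ by \eqref{lem_equiv_1}. Using the compactness of $K$ one shows, as in the proof of Lemma \ref{monot1} (see \eqref{lambda_mu_equiv}), that $\lambda_n(t) \approx \mu_n(t)$ on $I_n$; combined with the normalization $\inf \lambda_n = 1$ this bounds $\mu_n$ from below, and the boundedness of $\lambda_n(s_n)$ bounds $\mu_n(s_n)$ from above. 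Then the estimate \eqref{lem_equiv_2} gives $|\alpha_n'(t)| \lesssim \mu_n^2(t) d_n(t) \lesssim d_n(t)$ on a sub-neighborhood where $\mu_n$ stays bounded (this last point is exactly the content of Lemma \ref{bound_mu}/\ref{monot2}'s interplay, so I would either cite it or reprove the local boundedness of $\mu_n$ directly from \eqref{lem_equiv_2}). Integrating $|\alpha_n'|$ over $I_n$ yields $\mathrm{osc}_{I_n} \alpha_n \lesssim \int_{I_n} d_n(t)\,dt \to 0$.

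The contradiction is then extracted as follows. Since $d_n(s_n) \geq \eta_0$, we have $|\alpha_n(s_n)| \gtrsim \eta_0$. If $I_n = [t_n, t'_n]$, then $d_n$ is small at both endpoints, so $\alpha_n$ is small at both endpoints, and the oscillation bound forces $|\alpha_n(s_n)|$ small — a contradiction for large $n$. If instead $I_n$ is a proper subinterval, then $d_n$ equals $\delta_0$ at an endpoint $\tau_n$ of $I_n$ with $\tau_n$ between $s_n$ and one of $t_n, t'_n$; but then on $[s_n,\tau_n]$ (or $[\tau_n,s_n]$), which is contained in $I_n$, the same oscillation bound gives $|\,|\alpha_n(s_n)| - |\alpha_n(\tau_n)|\,| \lesssim \int_{I_n} d_n \to 0$, whereas $|\alpha_n(\tau_n)| \approx \delta_0$ and $|\alpha_n(s_n)| \approx \eta_0$, and we may assume $\eta_0 < \delta_0/2$, again a contradiction. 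Either way we conclude \eqref{d_not_0}.

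The main obstacle is the control of the scaling parameter $\mu_n$ (equivalently $\lambda_n$) on the interval connecting $s_n$ to the endpoints: the modulation bound \eqref{lem_equiv_2} produces the factor $\mu_n^2(t)$, which is harmless only as long as $\mu_n$ stays bounded, and a priori $\mu_n$ could grow. Handling this is precisely why the argument is organized jointly with Lemma \ref{bound_mu}; the resolution is that, since $K$ is precompact and $\inf\lambda_n = 1$, wherever $d_n(t) \leq \delta_0$ one has $\lambda_n(t) \approx \mu_n(t)$ with a uniform constant, so on the ``good'' set $\mu_n$ is comparable to $\lambda_n$, and one then transfers the bound on $\lambda_n(s_n)$ across the interval using that $\lambda_n$ itself varies slowly (again by \eqref{lem_equiv_2}, $|\lambda_n'/\lambda_n| \lesssim \mu_n^2 d_n$, which is integrable once the smallness of $\int d_n$ is in hand — a bootstrap that closes because $\int_{t_n}^{t'_n} d_n \to 0$). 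I would present this bootstrap carefully, as it is the one genuinely nontrivial point; everything else is a direct consequence of Lemmas \ref{mod_1}, \ref{mod_2}, \ref{cauchy_banica} and \ref{monot1}.
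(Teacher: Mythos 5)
Your argument has a genuine gap at the point you yourself flag as the ``main obstacle'': the uniform control of the scaling parameter on the time intervals over which you integrate the modulation equations. The estimate \eqref{lem_equiv_2} gives $|\alpha_n'|\lesssim \mu_n^2\, d_n$, so your oscillation bound $\mathrm{osc}\,\alpha_n\lesssim\int d_n\to 0$ requires an upper bound on $\mu_n$ on the whole component of $\{d_n<\delta_0\}$ you work on. But the hypothesis only bounds $\lambda_n$ at the single time $s_n$; there is no a priori upper bound at $t_n$, $t'_n$, or on components of $\{d_n<\delta_0\}$ not containing $s_n$, and a bound at $s_n$ cannot be transported across times where $d_n\ge\delta_0$, since there the decomposition (hence $\mu_n$ and the ODE estimates) is not even defined. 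Your proposed fixes do not close this: citing Lemma \ref{bound_mu} is circular, because in the paper its proof (like that of Lemma \ref{monot0}) uses Lemma \ref{monot2}; and the bootstrap via $|\mu_n'/\mu_n^3|\lesssim d_n$ only shows that $\mu_n^{-2}$ is nearly constant on each good component, which is useless without an anchor point on that component where $\mu_n$ is already known to be bounded. Two further problems: you implicitly assume $d_n(s_n)<\delta_0$ (otherwise $I_n=\emptyset$ and $\alpha_n(s_n)$ is undefined), whereas the negation of \eqref{d_not_0} only gives $d_n(s_n)\ge\eta_0$, possibly $\ge\delta_0$; and in your ``proper subinterval'' case the comparison of $|\alpha_n(\tau_n)|\approx\delta_0$ with $|\alpha_n(s_n)|$ yields no contradiction, since $d_n(s_n)$ is only bounded below by $\eta_0$ and may itself be of size $\delta_0$, and the constants in $|\alpha|\approx d$ are not close to $1$; the contradiction would have to be run on the component containing $t_n$ or $t'_n$, which brings back the missing scale control.

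The paper proves the lemma by a softer route that uses the hypothesis exactly where it is available. Since $1\le\lambda_n(s_n)\lesssim 1$ and $(u_n(s_n))_{[\lambda_n(s_n)]}$ lies in the precompact set $K$, one extracts a subsequence with $u_n(s_n)\to v_0$ strongly in $\dot H^1$. If $d(v_0)>0$, the solution $v$ with data $v_0$ is global by Proposition \ref{sub_global}, and $d(v(t))>0$ for all $t$ (if $d(v(t_*))=0$ the variational characterization forces $v\equiv W_{[\lambda_0,\theta_0]}$, contradicting $d(v_0)>0$). Continuity of the flow then gives, after checking that $s_n+1\le t'_n$ for large $n$,
\begin{equation}
\lim_n\int_{s_n}^{s_n+1}d(u_n(t))\,dt=\int_0^1 d(v(t))\,dt>0,
\end{equation}
which contradicts $\int_{t_n}^{t'_n}d(u_n)\lesssim d(u_n(t_n))+d(u_n(t'_n))\to 0$ from Lemma \ref{monot1}. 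No modulation on intervals, and hence no control of $\mu_n$, is needed; that control is only obtained afterwards, in Lemmas \ref{monot0} and \ref{bound_mu}, using the present lemma as input. If you insist on a modulation-based proof, you would have to establish the interval-wise bound on $\mu_n$ independently, which amounts to reproving Lemma \ref{monot0} from scratch.
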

\begin{proof}
By Remark \ref{inf_lambda}, we have $\lambda_n(s_n) \approx 1$, hence, we can assume that the sequence $\{u_n(s_n)\}_n$ converges to some $v_0 \in \dot{H}^1$. If \eqref{d_not_0} does not hold, then
\begin{equation}\label{contrad_v0}
    d(v_0) > 0 \text{ and } \|v_0\|_{\dot{H}^1} < \|W\|_{\dot{H}^1}.
\end{equation}
By strong convergence, we have $E(v_0) = E(W)$. Let $v$ be the solution to \eqref{NLS} with initial condition $v_0$. By Proposition \ref{sub_global}, $v(t)$ is defined for any $ t \in \Real$. 

We claim that, for large $n$, $s_n+1 \leq t'_n$. Indeed, if $t'_n \in (s_n,s_n+1)$ for an infinite number of $n$, after extracting a subsequence, we have that $t'_n-s_n$ converges to some $\tau \in [0,1]$. By continuity of the flow, $u_n(t'_n) \to v(\tau)$. But since $d(u_n(t'_n)) \to 0$, $d(v(\tau)) = 0$, which implies that $v = W_{[\lambda_0,\theta_0]}$, for some fixed $\lambda_0,\theta_0$. Uniqueness of solutions to \eqref{NLS} then contradicts \eqref{contrad_v0}. Therefore, for large $n$, $t_n \leq s_n \leq s_n+1 \leq t_n'$. Again by continuity of the flow,
\begin{equation}
    \lim_n \int_{s_n}^{s_n+1}d(u_n(t)) \, dt = \int_0^1 d(v(t)) \, dt > 0.
\end{equation}
However, Lemma \ref{monot1} gives
\begin{equation}
    \lim_n \int_{s_n}^{s_n+1}d(u_n(t))\, dt \leq \lim_n \int_{t_n}^{t'_n}d(u_n(t))\, dt \lesssim \lim_n d(u_n(t_n)) + d(u_n(t'_n)) = 0,
\end{equation}
completing Lemma \ref{monot2}.
\end{proof}
We now prove Lemmas \ref{monot0} and \ref{bound_mu}.
\begin{proof}[Proof of Lemma \ref{monot0}]
By Remark \ref{inf_lambda}, we can choose, for every $n$, $b_n \in [t_n,t'_n]$ such that 
\begin{equation}
    \lim_n \lambda_n(b_n) = 1.
\end{equation}
This implies, by Lemma \ref{monot2}, that 
\begin{equation}
    \lim_n d(u_n(b_n)) = 0.
\end{equation}
Assume, by contradiction, that \eqref{sup_to_0} does not hold. Without loss of generality, there exists $\delta_1  > 0$ such that
\begin{equation}\label{an}
    \sup_{t \in [t_n,b_n]} d(u_n(t)) \geq \delta_1,\quad \forall n \in \mathbb{N}.
\end{equation}
Choosing $\delta_2 < \min\{ \delta_0,\delta_1\}$, by continuity there exists $a_n \in [t_n,b_n)$ such that 
\begin{equation}
    d(u_n(t)) < \delta_2 \text{ on }(a_n,b_n) \text{ and } d(u_n(a_n)) = \delta_2.
\end{equation}
Since $\delta_2 < \delta_0$, the modulation parameter $\mu_n$ is well-defined. Recalling that the set defined by \eqref{W_compact} is precompact, we must have $\lambda_n \approx \mu_n$, where the constants do not depend on $n$. Thus, up to a subsequence, we can assume 
\begin{equation}
    \mu_n(b_n) \to \mu_0 \in (0,+\infty) \text{ as } n \to +\infty.
\end{equation}
We will now show that the $\mu_n$ are uniformly bounded on $\cup_n[a_n,b_n]$. Suppose, by contradiction, that there exists $c_n \in [a_n,b_n)$ such that, for large $n$,
\begin{equation}\label{mu_cn}
    \mu_n(t) < 2\mu_0 \text{ on } (c_n,b_n) \text{ and } \mu_n(c_n) = 2\mu_0.
\end{equation}
Since $\mu_n(c_n)$ is bounded, so is $\lambda_n(c_n)$. Therefore, by Lemma \ref{monot2}, $\displaystyle\lim_n d(u_n(c_n)) = 0$. Recalling Lemma \ref{mod_2}, we have
\begin{equation}
    \left| \frac{1}{\mu_n^2(c_n)}-\frac{1}{\mu_n^2(b_n)}\right| \leq \int_{c_n}^{b_n} \left|\frac{\mu'_n(t)}{\mu_n^3(t)} \right| \lesssim \int_{c_n}^{b_n} d(u_n(t)) \, dt.
\end{equation}
By Lemma \ref{monot1}, the last integral converges to $0$, contradicting \eqref{mu_cn}. Therefore, 
\begin{equation}
    \sup_{\substack{t \in [a_n, b_n]\\n \in \mathbb{N}}} \mu_n(t) < +\infty.
\end{equation}
We conclude that $\mu_n(a_n)$ must be bounded, and so must be $\lambda_n(a_n)$. Invoking again Lemma \ref{monot2}, we have $\displaystyle\lim_n d(u_n(a_n)) = 0$, contradicting \eqref{an}. Lemma \ref{monot0} is proven.
\end{proof}
\begin{proof}[Proof of Lemma \ref{bound_mu}]
As in the proof of the previous Lemma, by Remark \ref{inf_lambda} and Lemmas \ref{monot1} and \ref{monot2}, we have that $\mu_n \approx 1$, where the constant does not depend on $n$. Let $a_n$ and $b_n$ be such that
\begin{equation}
    \mu_n(a_n) = \inf_{t \in [t_n,t_n']} \mu_n(t) \text{ and }  \mu_n(b_n) = \sup_{t \in [t_n,t_n']}\mu_n(t).
\end{equation}
Let $\bar{a}_n = \min\{a_n,b_n\}$ and $\bar{b}_n = \max\{a_n,b_n\}$. Then,
\begin{equation}
    \left| \frac{1}{\mu_n^2(a_n)}-\frac{1}{\mu_n^2(b_n)}\right| \leq \int_{\bar{a}_n}^{\bar{b}_n} \left|\frac{\mu'_n(t)}{\mu_n^3(t)} \right| \lesssim \int_{\bar{a}_n}^{\bar{b}_n} d(u_n(t)) \, dt \to 0  \text{ as } n \to +\infty.
\end{equation}
Since $\mu_n(b_n)$ is bounded, we get \eqref{sup_inf}, proving Lemma \ref{bound_mu}.
\end{proof}
We now have all the tools to prove Proposition \ref{prop_subcrit}.
\begin{proof}[Proof of Proposition \ref{prop_subcrit}]
By Corollary \ref{cor_mean_subcrit}, there exists a sequence $t_n \to +\infty$ such that
\begin{equation}
    \lim_n d(u(t_n)) = 0.
\end{equation}
By Lemma \eqref{monot0}, with $u_n = u$, $\lambda_n = \lambda$ (where $\lambda$ is the frequency scale obtained from Proposition \ref{sub_global}) and $t_n' = t_{n+1}$,  this implies 
\begin{equation}\label{d_to_0_final}
    \lim_{t \to +\infty} d(t) = 0.
\end{equation}
Therefore, the modulation parameters $\alpha(t)$, $\mu(t)$, $\theta(t)$ are defined for large $t$.
We now prove that 
\begin{equation}\label{mu_bounded}
    \lim_{t \to +\infty} \mu(t) = \mu_{\infty} \in (0, +\infty).
\end{equation}
Indeed, if not, then as $t \to +\infty$, $\log(\mu(t))$ does not satisfy the Cauchy criterion. Therefore, there must exist sequences $\{T_n\}$ and $\{T_n'\}$ such that $T_n < T_n'$ and 
\begin{equation}\label{mu_Tn_1}
    \lim_n \frac{\mu(T_n')}{\mu(T_n)} \neq 1.
\end{equation}
But $d(T_n) + d(T_n') \to 0$ by \eqref{d_to_0_final}. By Lemma \eqref{bound_mu}, with $u_n = u$, $\lambda_n = \lambda$, $t_n = T_n$ and $t_n' = T_n'$,  we have
\begin{equation}
    \lim_n \frac{\sup_{t \in [T_n,T_n']}\mu(t)}{\inf_{t \in [T_n,T_n']}\mu(t)} = 1,
\end{equation}
contradicting \eqref{mu_Tn_1}.
Turning to the proof of \eqref{exp_conv_sub}, we claim the following inequality
\begin{equation}\label{int_exp}
    \int_t^{+\infty}d(u(s)) \, ds \lesssim d(u(t)).
\end{equation}
Suppose by contradiction that \eqref{int_exp} does not hold. Then there exists a sequence $T_n \to +\infty$ such that 
\begin{equation}
    \int_{T_n}^{+\infty} d(u(s)) \, ds \geq 2n \, d(u(T_n)).
\end{equation}
Moreover, there exists a sequence $\{S_n\}_n$ such that $S_n>T_n$ for all $n$, and
\begin{equation}
    \int_{T_n}^{S_n} d(u(s)) \, ds \geq n \, d(u(T_n)).
\end{equation}
By \eqref{mu_bounded}, for any sequence $\{T_n'\}_n$ such that $T_n' \geq S_n$ for all $n$, we are under the assumptions of Lemma \ref{monot1}, with $u_n = u$, $\lambda_n = \lambda$, $T_n= t_n$ and $t_n' = T_n'$,  Hence, 
\begin{equation}
     n \, d(u(T_n)) \leq \int_{T_n}^{T_n'} d(u(s)) \, ds \lesssim \, d(u(T_n))+d(u(T_n')). 
\end{equation}
Since $T_n'$ can be taken arbitrarily large, and the implicit constant is independent of the choice of a particular $\{T_n'\}_n$ (given the function $u$ itself does not change), we have a contradiction.

Note that \eqref{int_exp} is equivalent to the existence of $c>0$ such that
\begin{equation}
    \int_t^{+\infty} d(u(s)) \, ds \lesssim e^{-ct}.
\end{equation}
By Lemma \ref{mod_2}, since $|\alpha(t)| \approx d(u(t))$ and $\mu$ is bounded, there exist $\theta_{\infty}$ such that
\begin{equation}
    |\alpha(t)| + |\theta(t)-\theta_\infty|+\|h(t)\|_{\dot{H}^1} \lesssim e^{-ct}.
\end{equation}
Therefore, the bound \eqref{exp_conv_sub} is proven. The assertion about scattering for negative times is a corollary of Lemma \ref{monot1}. Indeed, if
\begin{equation}
    \|u\|_{S(0,+\infty)} = \|u\|_{S(-\infty,0)} = +\infty,
\end{equation}
by time-reversal and \eqref{exp_conv_sub}, we see that the set 
\begin{equation}
    \left\{u(t): \, t \in \Real\right\} 
\end{equation}
is relatively compact and that
\begin{equation}
\lim_{t \to \pm\infty} d(t) = 0.
\end{equation}
Therefore, by Lemma \ref{monot1}, with $u_n = u$, $\lambda_n = 1$, $t_n = -n$ and $t_n' = n$, we have
\begin{equation}
    \int_{-\infty}^{+\infty}d(t)\, dt = \lim_{n \to +\infty} \int_{-n}^n d(t) \, dt \lesssim d(-n)+d(n) = 0. 
\end{equation}
Therefore, $d(u_0)=0$, contradicting \eqref{subcrtit}. Proposition \ref{prop_subcrit} is proven.
\end{proof}

For the intercritical case, as in the proof of Proposition \ref{sub_prop_supercrit}, we need to show that
\begin{equation}\label{sub_exp_conv_sub}
    \int_{t}^{+\infty} d(s)\, ds \lesssim e^{-ct}, \, \forall t \geq 0.
\end{equation}

We start with the following lemmas.

\begin{lemma}\label{sub_mean_subcrit}
Let $u$ be a solution to \eqref{sub_NLS} satisfying \eqref{sub_subcrtit} and \eqref{sub_noscatter}. Then
\begin{equation}
    \lim_{T\to +\infty} \frac{1}{T}\int_0^T d(t) \, dt = 0.
\end{equation}
\end{lemma}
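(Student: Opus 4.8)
The plan is to run a localized virial (Morawetz-type) estimate on a window $[0,T]$ and average it. Since $u$ satisfies \eqref{sub_noscatter}, Proposition \ref{sub_sub_global} shows that $u$ is global, that it is almost periodic modulo symmetries on $[0,+\infty)$ with a spatial center $x(t)$ satisfying $\lim_{t\to+\infty}x(t)/t=0$, and (energy trapping below the threshold) $\|\nabla u(t)\|_{L^2}<\|\nabla Q\|_{L^2}$ for all $t$, so that $d(t)=\|\nabla Q\|_{L^2}-\|\nabla u(t)\|_{L^2}\ge 0$ while $\|\nabla u(t)\|_{L^2}$ and $\|u(t)\|_{L^2}=\|Q\|_{L^2}$ stay bounded. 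Take the radial cutoff $\phi$, set $\phi_R(x)=R^2\phi(x/R)$ and $F_R(t)=\int\phi_R|u(t)|^2$ as in \eqref{sub_truncated_virial_1}--\eqref{sub_truncated_virial_error}. Those identities together with $E(u_0)=E(Q)$ give $F_R'(t)=2\Im\int\nabla\phi_R\cdot\nabla u\,\bar u$ and, with $c_0:=2N(p-1)-8>0$,
\[
F_R''(t)=c_0\big(\|\nabla Q\|_{L^2}^2-\|\nabla u(t)\|_{L^2}^2\big)+A_R(t)\ge c_1\,d(t)-|A_R(t)|
\]
for some $c_1>0$ (using $\|\nabla Q\|_{L^2}^2-\|\nabla u(t)\|_{L^2}^2=d(t)(\|\nabla Q\|_{L^2}+\|\nabla u(t)\|_{L^2})\gtrsim d(t)$), with $A_R(t)$ the error in \eqref{sub_truncated_virial_error}, supported in $\{|x|\ge R\}$.

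The main point is to control $A_R$ uniformly on $[0,T]$ by letting $R$ grow with $T$. Given $\eta>0$, almost periodicity modulo symmetries --- together with the Sobolev embedding $H^1(\Real^N)\hookrightarrow L^{p+1}(\Real^N)$, which upgrades $H^1$-precompactness of $\{u_{[x(t)]}\}$ to a uniform $L^{p+1}$ tail bound --- produces $C(\eta)>0$ with
\[
\int_{|x-x(t)|\ge C(\eta)}\big(|\nabla u(t)|^2+|u(t)|^{p+1}+|u(t)|^2\big)\,dx\le\eta\quad\text{for all }t\ge0.
\]
Set $M_T:=\sup_{t\in[0,T]}|x(t)|$ (finite, since $x(\cdot)$ is continuous) and $R=R(\eta,T):=C(\eta)+M_T+\sqrt{M(u_0)/\eta}$. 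Then $\{|x|\ge R\}\subset\{|x-x(t)|\ge C(\eta)\}$ for every $t\in[0,T]$, and since $|\partial_r^2\phi_R|,|\Delta\phi_R|\lesssim 1$ and $|\Delta^2\phi_R|\lesssim R^{-2}$, the three pieces of \eqref{sub_truncated_virial_error} are bounded by $\eta$, $\eta$, and $R^{-2}M(u_0)\le\eta$ respectively, so $|A_R(t)|\lesssim\eta$ for all $t\in[0,T]$. Likewise $|F_R'(t)|\lesssim R\,\|\nabla u(t)\|_{L^2}\|u(t)\|_{L^2}\lesssim R$.

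Integrating $F_R''(t)\ge c_1 d(t)-|A_R(t)|$ over $[0,T]$ gives $c_1\int_0^T d(t)\,dt\le |F_R'(T)|+|F_R'(0)|+\int_0^T|A_R(t)|\,dt\lesssim R(\eta,T)+\eta T$, hence
\[
\frac1T\int_0^T d(t)\,dt\lesssim \frac{C(\eta)+\sqrt{M(u_0)/\eta}}{T}+\frac{M_T}{T}+\eta.
\]
Because $x(t)/t\to0$ we have $M_T/T\to0$ as $T\to+\infty$ (if $|x(t)|\le\epsilon t$ for $t\ge t_0$, then $M_T\le\sup_{[0,t_0]}|x|+\epsilon T$), so $\limsup_{T\to+\infty}\frac1T\int_0^T d(t)\,dt\lesssim\eta$; letting $\eta\to0^+$ and recalling $d\ge0$ finishes the proof. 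The hard part is exactly this uniform-in-time control of the virial error $A_R$: it forces $R$ to grow (sublinearly) with $T$, which is why only the Cesàro average can be closed here rather than the stronger $\int_0^{+\infty}d<\infty$; the latter, i.e.\ \eqref{sub_exp_conv_sub}, is then obtained in the subsequent lemmas by a monotonicity argument.
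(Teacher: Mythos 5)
Your proof is correct and follows essentially the same route as the paper: a truncated virial identity with $E(u_0)=E(Q)$ to make $F_R''\gtrsim d(t)-|A_R|$, almost periodicity modulo symmetries to bound $A_R$ by $\eta$ once $R$ exceeds $C(\eta)$ plus the excursion of $x(t)$, the bound $|F_R'|\lesssim R$, and $x(t)/t\to 0$ to keep $R/T$ negligible. The only differences are cosmetic: you take $R=C(\eta)+\sup_{[0,T]}|x|+\sqrt{M(u_0)/\eta}$ and integrate over $[0,T]$, whereas the paper takes $R=\eta T+C(\eta)+1$ using $|x(t)|\leq\eta t$ for $t\geq T_0(\eta)$ and integrates over $[T_0,T]$.
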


We next state a key result to prove Proposition \ref{sub_exp_conv_sub}. 
\begin{lemma}\label{sub_monot1}
Let $u$ be a solution to \eqref{sub_NLS} satisfying \eqref{sub_subcrtit} and \eqref{sub_noscatter}, and $x(t)$ as in Proposition \ref{sub_sub_global} and Remark \ref{sub_re_x}. Then, for any $0 \leq \sigma \leq \tau$,
\begin{equation}\label{sub_subcrit_bound_int}
    \int_{\sigma}^{\tau}d(u(t)) \lesssim \left[1+\sup_{\sigma \leq t \leq \tau} |x(t)|\right](d(u(\sigma))+d(u(\tau))),
\end{equation}
and, if $\tau \geq \sigma +1$,
\begin{equation}\label{sub_subcrit_bound_x}
    |x(\tau)-x(\sigma)| \lesssim \int_{\sigma}^{\tau}d(u(t)). 
\end{equation}
\end{lemma}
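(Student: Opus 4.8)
\textbf{Proof proposal for Lemma \ref{sub_monot1}.} The plan is to mimic the truncated-virial argument of Lemma \ref{monot1} from the energy-critical case, but now using the conserved mass to control the modulation scale and, crucially, to handle the spatial translation. Fix a smooth radial cutoff $\phi_R(x) = R^2\phi(x/R)$ as in the intercritical section, and define the \emph{re-centered} truncated variance
\begin{equation}
F_R(t) = \int \phi_R(x - x(t)) |u(x,t)|^2\,dx,
\end{equation}
where $x(t)$ is the continuous function from Proposition \ref{sub_sub_global} and Remark \ref{sub_re_x}. The first step is to differentiate: since $u$ solves \eqref{sub_NLS}, the term coming from $\partial_t u$ produces the usual Morawetz/virial expression $2\Im\int \nabla\phi_R(x-x(t))\cdot \nabla u\,\bar u$, while the term coming from $\dot x(t)$ produces $-\dot x(t)\cdot \int \nabla \phi_R(x-x(t))|u|^2$. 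Using $M(u_0) = M(Q)$ and the boundedness of $\nabla\phi_R$, the second term is $O(|\dot x(t)|)$; by almost periodicity modulo symmetries (Proposition \ref{sub_sub_global}) and the fact that $x(t)$ can be taken as the modulation parameter when $d(t)<\delta_0$, one gets $|\dot x(t)|\lesssim d(u(t))$ whenever $d$ is small, and $|\dot x(t)|\lesssim 1$ always, so altogether $|F_R'(t)| \lesssim R^2 (1+\sup|x(t)|)\,$-type bounds hold. More precisely, using the Cauchy--Schwarz/Banica-type estimate (the analogue of Claim \ref{cauchy_banica} for the intercritical case, cf. Claim \ref{sub_cauchy_banica}) one obtains $|F_R'(t)| \lesssim (1 + \sup_{\sigma\le t\le\tau}|x(t)|)\sqrt{F_R(t)}\,$-weighted bounds and ultimately $|F_R'(\sigma)| + |F_R'(\tau)| \lesssim (1+\sup|x|)(d(u(\sigma)) + d(u(\tau)))$ after taking $R$ comparable to $1+\sup_{\sigma\le t\le \tau}|x(t)|$.

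The second step is the lower bound on $F_R''(t)$. Differentiating once more and using $E(u_0)=E(Q)$, the virial identity gives $F_R''(t) = [2N(p-1)-8]\,d(u(t)) + A_R(u(t))$, where $A_R$ is an error supported on $|x - x(t)|\ge R$ (the recentering is what makes the error live in the exterior region). By almost periodicity modulo symmetries, for any $\eta>0$ there is $C(\eta)$ with $\int_{|x-x(t)|\ge C(\eta)}(|\nabla u|^2 + |u|^2 + |u|^{p+1})\le \eta$, so $|A_R(u(t))| \le \eta$ once $R\ge C(\eta)$; combined with the refined estimate $|A_R(u(t))|\lesssim d^2(u(t)) + R^{-\alpha}d(u(t))$ valid when $d(u(t))\le \delta_1$ (obtained from the decomposition \eqref{sub_decomp_alpha} and the exponential decay of $Q$, exactly as in \eqref{bound_AR_small}), a two-regime argument yields $|A_R(u(t))| \le \tfrac12[2N(p-1)-8]\,d(u(t))$ for $R$ large enough. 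Hence $F_R''(t) \gtrsim d(u(t))$ on $[\sigma,\tau]$. Integrating this from $\sigma$ to $\tau$ and using the Step-1 bound on $|F_R'|$ at the endpoints gives
\begin{equation}
\int_\sigma^\tau d(u(t))\,dt \lesssim |F_R'(\tau)| + |F_R'(\sigma)| \lesssim \left(1+\sup_{\sigma\le t\le\tau}|x(t)|\right)\bigl(d(u(\sigma))+d(u(\tau))\bigr),
\end{equation}
which is \eqref{sub_subcrit_bound_int}.

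For \eqref{sub_subcrit_bound_x}, when $\tau\ge\sigma+1$ I would argue by a concentration-function/continuity argument: if $d(u(t))$ is uniformly small on a subinterval, then $x(t)$ agrees with the modulation parameter and $|\dot x(t)|\lesssim d(u(t))$ directly from Lemma \ref{sub_mod_2}, so $|x(\tau)-x(\sigma)|\le \int_\sigma^\tau|\dot x|\lesssim\int_\sigma^\tau d(u(t))$. In the general case one covers $[\sigma,\tau]$ by unit-length intervals; on each, by continuity of the flow and almost periodicity, the displacement of $x$ over a unit time interval is controlled by $1 + \int d(u(t))$, and one upgrades this using that a unit-time displacement of a solution that stays $\delta_0$-close to the soliton family is itself $O(\int d)$, while intervals where $d\ge\delta_1$ contribute $\gtrsim\delta_1$ to $\int d$ and hence can absorb the $O(1)$ displacement. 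Summing over the cover produces \eqref{sub_subcrit_bound_x}. \textbf{The main obstacle} I anticipate is precisely the control of the translation parameter $x(t)$: unlike the radial energy-critical case where $x(t)\equiv 0$, here one must simultaneously (a) choose $R$ adapted to $\sup_{[\sigma,\tau]}|x(t)|$ so that the exterior error $A_R$ is still negligible, and (b) close the self-referential estimate relating $\sup|x|$, $\int d$, and the endpoint values of $d$ — this is the step where the structure of Proposition \ref{sub_sub_global} (giving $x(t)/t\to 0$ and precompactness of $\{u_{[x(t)]}\}$) is indispensable, and it requires care to avoid circularity between \eqref{sub_subcrit_bound_int} and \eqref{sub_subcrit_bound_x}.
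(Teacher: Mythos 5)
Your overall strategy for \eqref{sub_subcrit_bound_int} (localized virial, the Cauchy--Schwarz bound of Claim \ref{sub_cauchy_banica} for $F_R'$, a two-regime estimate of the virial error via modulation and almost periodicity) is the intended one: the paper does not reprove this lemma but points to the 3d cubic argument of Duyckaerts--Roudenko (their Lemmas 6.7--6.8), modeled on Lemma \ref{monot1}. However, your specific implementation has a genuine gap: you center the cutoff at the moving point, $F_R(t)=\int\phi_R(x-x(t))|u|^2$, and this forces you to differentiate $x(t)$. The function $x(t)$ coming from Proposition \ref{sub_sub_global} and Remark \ref{sub_re_x} is only continuous; it coincides with the $C^1$ modulation parameter (so that $|x'(t)|\lesssim d(t)$ by Lemma \ref{sub_mod_2}) only on the set where $d(t)<\delta_0$, and the bound ``$|\dot x(t)|\lesssim 1$ always'' has no justification. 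This matters precisely where you need it: the endpoint quantities $F_R'(\sigma)$, $F_R'(\tau)$ contain the term $-\dot x\cdot\int\nabla\phi_R(x-x(t))|u|^2$, of size $\sim R\,|\dot x|\,M(Q)$, and $d(\sigma),d(\tau)$ need not be small, so this term cannot be bounded by $R\,d(\sigma)+R\,d(\tau)$; worse, $F_R''$ would involve $\ddot x$. A piecewise-constant recentering avoids derivatives of $x$ but destroys the telescoping of $F_R'$, leaving $O(R)$ junction terms. The correct device is to keep the cutoff centered at the origin and enlarge the radius: take $R\gtrsim C(\eta)+\sup_{\sigma\le t\le\tau}|x(t)|$, so that $\{|x|\ge R\}\subset\{|x-x(t)|\ge C(\eta)\}$ for all $t\in[\sigma,\tau]$; then the error \eqref{sub_truncated_virial_error} is handled exactly as in your Step 2 (almost periodicity for $d\ge\delta_1$, the analogue of \eqref{bound_AR_small} for $d\le\delta_1$), while $|F_R'(t)|\lesssim d(t)\sqrt{F_R(t)}\lesssim R\,d(t)$ follows from Claim \ref{sub_cauchy_banica} and $F_R\lesssim R^2M(Q)$, with no derivative of $x(t)$ anywhere. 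Integrating $F_R''\gtrsim d$ then gives \eqref{sub_subcrit_bound_int}, the factor $1+\sup|x|$ entering only through the choice of $R$; there is no circularity with \eqref{sub_subcrit_bound_x}.

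For \eqref{sub_subcrit_bound_x}, your covering sketch needs one repair: an instant where $d(t_0)\ge\delta_1$ does not by itself contribute $\gtrsim\delta_1$ to $\int d$, since $d$ could exceed $\delta_1$ only momentarily. You must first show, using continuity of the flow in $H^1$ uniformly over the precompact set of profiles $\{u_{[x(t)]}\}$, that $d\ge\delta_1/2$ on an interval of uniform length $\tau_0>0$ around $t_0$ (and, by the same compactness, that $|x(t)-x(s)|\le C_0$ for $|t-s|\le 1$); with these two facts the unit-interval covering closes, and on intervals where $d<\delta_0$ the displacement is $\lesssim\int d$ directly from Lemma \ref{sub_mod_2}. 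This is essentially the argument of the cited Lemma 6.8 of Duyckaerts--Roudenko, which the paper invokes verbatim.
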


The proof of \eqref{sub_subcrit_bound_int} is similar to the energy-critical setting (it is in fact easier, since there is no scaling involved). We refer to \cite{DR_Thre}*{Lemma 6.7} for the argument in the 3d cubic case. The proof of \eqref{sub_subcrit_bound_x} follows verbatim from the proof in \cite{DR_Thre}*{Lemma 6.8}.

\begin{proof}[Proof of Lemma \ref{sub_mean_subcrit}]
Let $R> 0$ to be chosen later and let $\phi_R$ and $F_R$ be as in the previous section. Then, by H\"older and inequatity,
\begin{equation}\label{sub_bound_FR}
    |F'_R(t)| \lesssim R.
\end{equation}
Moreover, we have
\begin{equation}\label{sub_virial_compl_nonrad}
    F_R''(t) = [2N(p-1)-8]d(t) + A_R(u(t)),
\end{equation}
where $A_R$ is given by \eqref{sub_truncated_virial_error}.

Fix $\eta > 0$. By definition of $\phi_R$ and almost periodicity modulo symmetries, if $R \geq C(\eta)$, we have
\begin{align}\label{sub_virial_nonrad}
    |A_R(u(t))| &\lesssim \int_{|x| \geq R} |\nabla u(x,t)|^2 + |u(x,t)|^{p+1}+\frac{1}{|x|^2}|u(x,t)|^2\, dx. 
\end{align}

Choose $T_0(\eta) \geq 0$ such that, for any $t \geq T_0$,
\begin{equation}
    |x(t)| \leq \eta t.
\end{equation}

For $T \geq T_0$, choose $R = \eta T +C(\eta)+1$. With this choice of $R$, we have
\begin{align}
    |A_R(u(t))| &\lesssim \int_{|x-x(t)|+|x(t)| \geq R} |\nabla u(x,t)|^2 + |u(x,t)|^{p+1}+|u(x,t)|^2\, dx \\
    &\lesssim \int_{|x-x(t)| \geq C(\eta)} |\nabla u(x,t)|^2 + |u(x,t)|^{p+1}+|u(x,t)|^2\, dx \\
    &\lesssim \eta\,.
\end{align}

By \eqref{sub_bound_FR}, \eqref{sub_virial_compl_nonrad}, and \eqref{sub_virial_nonrad},
\begin{align}
    [2N(p-1)-8]\int_{T_0}^Td(t) \, dt &\lesssim |F'_R(T)| + |F'_R(T_0)| + \eta(T-T_0)\\
    &\lesssim R + \eta(T-T_0)\\
    &=  \eta T+ \eta(T-T_0).
\end{align}
Letting $T \to +\infty$, we deduce
\begin{equation}
    \limsup_{T \to +\infty} \frac{1}{T}\int_0^Td(t) \, dt \lesssim \eta.
\end{equation}
Since $\eta$ is arbitrary, we conclude the proof of Lemma \ref{sub_mean_subcrit}.
\end{proof}

We are now able to prove Proposition \ref{sub_prop_subcrit}, following the proof in \cite{DR_Thre}.
\begin{proof}[Proof of Proposition \ref{sub_prop_subcrit}]
We first show that $x(t)$ is bounded. By Lemma \ref{sub_mean_subcrit}, there exists a sequence $\{t_n\}_n$ such that $t_{n+1} \geq t_n +1$ for all $n$, and $d(u(t_n)) \to 0$. By Lemma \ref{sub_monot1}, there exists $C_0 > 0$ such that, if $n > n_0$ and $1+t_{n_0} \leq t \leq t_n$, then
\begin{equation}
    |x(t)-x(t_{n_0})| \leq C_0 \left[1+\sup_{t_{n_0}\leq s \leq t_n}|x(s)| \right](d(u(t_n))+d(u(t_{n_0}))).
\end{equation}

If $n_0$ is large enough so that $d(u(t_n))+d(u(t_{n_0})) \leq 1/(2C_0)$, and $t$ is chosen in $[t_{n_0}+1,t_n]$ so that $\sup_{t_{n_0}+1\leq s \leq t_n}|x(s)| = |x(t)|$, then
\begin{equation}
    \sup_{t_{n_0}+1\leq s \leq t_n}|x(s)| \leq C(n_0) + \frac{1}{2}\sup_{t_{n_0}+1\leq s \leq t_n}|x(s)|,
\end{equation}

where $C(n_0) =  |x(t_{n_0})| + \frac{1}{2} \sup_{t_{n_0}\leq s \leq t_{n_0}+1}|x(t)|+\frac{1}{2}$. Therefore, $x(t)$ is bounded on $[t_{n_0}+1,+\infty)$, and hence, by continuity, on $[0,+\infty)$.

By the boundedness of $x(t)$ and \eqref{sub_subcrit_bound_int}, we have
\begin{equation}
    \int_{\sigma}^{\tau}d(u(t)) \lesssim d(u(\sigma))+d(u(\tau)).
\end{equation}
For a fixed $\sigma \geq 0$ and choosing $\tau = t_n$, we let $n \to +\infty$ to obtain
\begin{equation}
    \int_{\sigma}^{\infty}d(u(t)) \lesssim d(u(\sigma)).
\end{equation}
By Gronwall's Lemma, we have \eqref{sub_exp_conv_sub} and, by Lemma \ref{sub_lem_exp_delta}, we finish the proof of Proposition \ref{sub_prop_subcrit}.
\end{proof}

\subsection{Results for the linearized equation}

\begin{proof}[Proof of Lemma \ref{lem_self}] By Lemma \ref{lem_stric_exp}, we can assume that
\begin{equation}\label{local_boot}
    \|h(t)\|_{S(L^2,\,[t,+\infty))} \lesssim e^{-c_0t}.
\end{equation} We first normalize the eigenfunctions of $\mathcal{L}$. Define
\begin{equation}
	f_0 = \frac{iW}{\|W\|_{\dot{H}^1}}, \quad f_k = \frac{\partial_k W}{\|\partial_k W\|_{\dot{H}^1}}, 1 \leq k \leq N,  
	\quad \text{and}\quad f_{N+1} = \frac{\Lambda W}{\|\Lambda W\|_{\dot{H}^1}}.
\end{equation}
We have
\begin{equation}
	B(f_k, h) = 0, \quad \|f_k\|_{\dot{H}^1}, \quad \forall 0 \leq k \leq N+1, \, \forall h \in \dot{H}^1.
\end{equation}
Recall that $B(\mathcal{Y}_+, \mathcal{Y}_-)\neq 0$. Normalize $\mathcal{Y}_+$, $\mathcal{Y}_-$ such that $B(\mathcal{Y}_+, \mathcal{Y}_-)=1$. Next, write
\begin{gather}
\label{def_h}h(t) = \alpha_+(t) \mathcal{Y}_+ + \alpha_-(t) \mathcal{Y}_- + \sum_k \beta_k(t) f_k + g(t), \quad g(t)\in \tilde{G}^\perp,
\end{gather}
where, recalling that $\mathcal{L}_{|\text{span}\{f_k,\, k \leq N+1\}} = 0$ and that $\Phi(\mathcal{Y}_+) = \Phi(\mathcal{Y}_-) = 0$,
\begin{gather}
	\label{def_alpha}\alpha_+(t) = B(h(t), \mathcal{Y}_-), \quad \alpha_-(t) = B(h(t), \mathcal{Y}_+),\\
	\label{def_beta}\beta_k(t) = (h(t),f_k)_{\dot{H}^1}-\alpha_+(t)(\mathcal{Y}_+,f_k)_{\dot{H}^1}-\alpha_-(t)(\mathcal{Y}_-,f_k)_{\dot{H}^1}, \quad \forall k \leq N+1.
\end{gather}
\textit{Step 1. Differential inequalities on the coefficients}. We will show 
\begin{gather}
\label{diff_alpha}\frac{d}{dt}\left(e^{e_0t}\alpha_+(t)\right) = e^{e_0 t} B(\mathcal{Y}_-,\epsilon), 
\quad \frac{d}{dt}\left(e^{-e_0t}\alpha_-(t)\right) = e^{-e_0 t} B(\mathcal{Y}_+,\epsilon),\\
\label{diff_beta} \frac{d}{dt}\left(e^{-e_0t}\beta_k(t)\right) =  (f_k, \epsilon)_{\dot{H}^1}-(\mathcal{Y}_+,f_k)_{\dot{H}^1}B(\mathcal{Y}_-,\epsilon)-(\mathcal{Y}_-,f_k)_{\dot{H}^1}B(\mathcal{Y}_+,\epsilon)-(\mathcal{L}g,f_k)_{\dot{H}^1},\\
\label{diff_phi}\frac{d\Phi(h(t))}{dt}= 2B(h, \epsilon).
\end{gather}

By the equation \eqref{linear_exp},
\begin{align}\label{alpha_estim}
	\alpha_+'(t) &= B(\partial_t h, \mathcal{Y}_-) = B(-\mathcal{L}h+\epsilon,\mathcal{Y}_-)\\
	&= B(h, \mathcal{L}\mathcal{Y}_-)+B(\epsilon, \mathcal{Y}_-)=- e_0\alpha_+(t)+B(\epsilon, \mathcal{Y}_-).
\end{align}

This yields the first equation in \eqref{diff_alpha}. The second equation follows similarly.

Now, differentiating \eqref{def_beta}, we obtain
\begin{equation}
	\beta_k' = (-\mathcal{L}h + \epsilon -\alpha_+' \mathcal{Y}_+-\alpha_-'\mathcal{Y}_-, f_k)_{\dot{H}^1}.
\end{equation}

Note that $\mathcal{L}h = e_0\alpha_+\mathcal{Y}_+ - e_0 \alpha_-\mathcal{Y}_-+\mathcal{L}g$, by \eqref{def_h}, which proves \eqref{diff_beta}, in view of \eqref{diff_alpha}.

Finally, differentiating $\Phi(h(t))$,
\begin{equation}
	\frac{d}{dt}\Phi(t) = 2B(h, \partial_t h) = -2B(h, \mathcal{L}h)+2B(h,\epsilon) = 2B(h,\epsilon),
\end{equation}
by the skew-symmetry of $\mathcal{L}$ in \eqref{prop_B}. Equation \eqref{diff_phi} is then proved.

\textit{Step 2. Estimates on} $\alpha_\pm$. We claim
\begin{align}
\label{decay_alpha_1}|\alpha_-(t)| &\lesssim e^{-c_1 t},\\
\label{decay_alpha_2}|\alpha_+(t)| &\lesssim 
\begin{cases}
	e^{-c_1 t} &\text{if } e_0 < c_0,\\
	e^{-e_0t}+e^{-c_1^- t}&\text{if } e_0 \geq c_0.\\
\end{cases}
\end{align}

We need the following claim, which is an immediate application of Hölder's inequality.
\begin{claim}\label{claim_B} If $I$ is a finite time interval, $f \in L^\infty_I L^\frac{2N}{N-2}$, $g \in L^\infty_I L_x^{\frac{2N}{N+2}}$ are such that $\nabla f \in L^2_I L_x^\frac{2N}{N-2}$, $\nabla g \in L^2_I L_x^{\frac{2N}{N+2}}$, then
\begin{equation}
	\int_I|B(f(t),g(t))|dt \lesssim \|\nabla f\|_{L^2_I L_x^\frac{2N}{N-2}}\|\nabla g\|_{L^2_I L_x^{\frac{2N}{N+2}}} +  |I|\|f\|_{L^\infty_I L_x^\frac{2N}{N-2}}\|g\|_{L^\infty_I L_x^{\frac{2N}{N+2}}}.
\end{equation}
\end{claim}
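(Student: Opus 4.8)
The plan is to reduce everything to H\"older's inequality after expanding the bilinear form explicitly. In the energy-critical case ($s_c=1$, $Q=W$, $p=p_c$) the definitions of $B$ and of $L_\pm$ give
\begin{equation*}
B(f(t),g(t)) = \tfrac12\int \nabla f_1\cdot\nabla g_1 + \tfrac12\int \nabla f_2\cdot\nabla g_2 - \tfrac{p_c}{2}\int W^{p_c-1}f_1 g_1 - \tfrac12\int W^{p_c-1}f_2 g_2,
\end{equation*}
so that $|B(f(t),g(t))| \lesssim \bigl|\Re\!\int \nabla f(t)\cdot\nabla\overline{g(t)}\bigr| + \int W^{p_c-1}|f(t)|\,|g(t)|$. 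I will estimate the two terms separately and integrate in $t\in I$ at the end.

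For the gradient term, since $\tfrac{N-2}{2N}+\tfrac{N+2}{2N}=1$, H\"older's inequality in $x$ gives $\bigl|\Re\!\int \nabla f(t)\cdot\nabla\overline{g(t)}\bigr| \le \|\nabla f(t)\|_{L_x^{2N/(N-2)}}\|\nabla g(t)\|_{L_x^{2N/(N+2)}}$, and then the Cauchy--Schwarz inequality in $t\in I$ bounds its time integral by $\|\nabla f\|_{L_I^2 L_x^{2N/(N-2)}}\|\nabla g\|_{L_I^2 L_x^{2N/(N+2)}}$, which is precisely the first term on the right-hand side of the claim.

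For the potential term, I will use that $W$ is smooth, positive and bounded (indeed $0<W\le 1$; in any case $W\in L_x^\infty$ by its decay at infinity), so $W^{p_c-1}\in L_x^\infty$ with $p_c-1=\tfrac{4}{N-2}$. Hence $\int W^{p_c-1}|f(t)|\,|g(t)| \lesssim \int |f(t)|\,|g(t)| \le \|f(t)\|_{L_x^{2N/(N-2)}}\|g(t)\|_{L_x^{2N/(N+2)}}$ by the same H\"older pairing; integrating in $t$ and using the trivial bound $\int_I \le |I|\sup_I$ yields $|I|\,\|f\|_{L_I^\infty L_x^{2N/(N-2)}}\|g\|_{L_I^\infty L_x^{2N/(N+2)}}$, the second term. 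Adding the two estimates completes the proof.

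I do not expect a genuine obstacle here: the only points requiring a little care are the bookkeeping of exponents --- that $\tfrac{2N}{N-2}$ and $\tfrac{2N}{N+2}$ are H\"older conjugate, which is exactly what lets $W^{p_c-1}$ be absorbed at the $L_x^\infty$ level --- and selecting the appropriate time exponent for each piece ($L_t^2$ for the gradient term through Cauchy--Schwarz, and $L_t^1\subset |I|\,L_t^\infty$ for the potential term). Both follow directly from H\"older's inequality.
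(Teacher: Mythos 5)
Your proof is correct and follows exactly the route the paper intends: the paper states Claim \ref{claim_B} as an immediate consequence of H\"older's inequality, and your argument simply supplies the details — spatial H\"older with the conjugate pair $\tfrac{2N}{N-2}$, $\tfrac{2N}{N+2}$ together with Cauchy--Schwarz in time for the gradient part, and absorption of $W^{p_c-1}\in L^\infty_x$ plus the trivial bound $\int_I\le |I|\sup_I$ for the potential part.
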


The above claim, \eqref{local_boot} and \eqref{diff_alpha} yield
\begin{equation}
\int_t^{t+1}|e^{-e_0s}B(\mathcal{Y}_+,\epsilon(s))|ds \leq 	e^{-e_0t}\int_t^{t+1}|B(\mathcal{Y}_+,\epsilon(s))|ds 
\lesssim e^{-(e_0+c_1)t}.
\end{equation}
By triangle inequality, integrating the second equation in \eqref{diff_alpha} gives
\begin{equation}
	|\alpha_-(t)| \lesssim e^{e_0t} \int_t^{+\infty}|e^{-e_0s}B(\mathcal{Y}_+,\epsilon(s))|ds \lesssim e^{-c_1t},
\end{equation}
which proves \eqref{decay_alpha_1}.

To prove \eqref{decay_alpha_2}, consider first the case $c_0 > e_0$. Then, by \eqref{local_boot} and \eqref{def_alpha}, $e^{e_0t}\alpha_+(t)$ vanishes as $t \to +\infty$. By Claim \ref{claim_B}
\begin{equation}
	\int_t^{t+1}|e^{e_0s}B(\mathcal{Y}_+,\epsilon(s))|ds \lesssim e^{e_0t}\int_t^{t+1}|B(\mathcal{Y}_+,\epsilon(s))|ds 
	\lesssim e^{(e_0-c_1)t},
\end{equation}
integrating the equation on $\alpha_+$ in \eqref{diff_alpha}, recalling that $c_1>c_0$, and using triangle inequality, we get \eqref{decay_alpha_2} if $c_0 > e_0$.

Assume now that $c_0 \leq e_0$. Integrating \eqref{diff_alpha},
\begin{equation}
	|\alpha_+(t)-e^{-e_0t}\alpha_+(0)| \leq e^{-e_0t}\int_0^te^{e_0s}|B(\mathcal{Y}_-,\epsilon(s))|ds 
	 \lesssim e^{-c_1^- t},
\end{equation}

and the proof of \eqref{decay_alpha_2} is finished.

\textit{Step 3. Bounds on $g$ and $\beta_k$.} We will prove
\begin{equation}\label{decay_beta}
	\|g(t)\|_{\dot{H}^1} + \sum_k |\beta_k(t)| \lesssim e^{-\frac{(c_0+c_1)}{2}t}.
\end{equation}

Again by Claim \ref{claim_B}, $\int_t^{t+1}|B(h(s),\epsilon(s))|ds \lesssim e^{-(c_0+c_1)t}$. By triangle inequality, integrating \eqref{diff_phi}, we get
\begin{equation}
	\Phi(h(t)) \lesssim e^{-(c_0+c_1)t}.
\end{equation}
Therefore,
\begin{equation}
	|2\alpha_+\alpha_-B(\mathcal{Y}_+,\mathcal{Y}_-)+\Phi(g)| = |\Phi(h)| \lesssim e^{-(c_0+c_1)t}. 
\end{equation}
By Step 2,
\begin{equation}
	|\Phi(g)| \lesssim \begin{cases}
	e^{-(c_0+c_1)t}+ e^{-2c_1t} &\text{if } c_0 > e_0,\\
	e^{-(c_0+c_1)t}+ e^{-c_1t}(e^{-e_0t}+e^{-c_1^-t}) &\text{if } c_0 \leq e_0.\\
	\end{cases}
\end{equation}
In any case, $|\Phi(g)| \leq e^{-(c_0+c_1)t}$. Using the coercivity of $\Phi$, given by Lemma \ref{lem_coerc}, estimate for $g$ in \eqref{decay_beta} is proven.

Consider now \eqref{diff_beta}. By \eqref{local_boot},
\begin{equation}
	\beta_k(t+1)-\beta_k(t) \lesssim e^{-c_1t} + \int_t^{t+1}|(f_k,\mathcal{L}g(s))_{\dot{H}^1}| ds =
	e^{-c_1t} + \int_t^{t+1}\left|\Re\int\mathcal{L}^*(\Delta f_k) \overline{g}(s)\right|d(s),
\end{equation}
where $\mathcal{L}^* = \begin{pmatrix}
	0 &L_+\\-L_- & 0
\end{pmatrix}$ 
is the $L^2$-adjoint of $\mathcal{L}$.

One can check explicitly that, for any $0 \leq k \leq N+1$, $|\mathcal{L}^*(\Delta f_k)| \lesssim \frac{1}{1+|x|^{N+4}}$. Therefore, $\mathcal{L}^*(\Delta f_k) \in L^\frac{2N}{N+2}(\Real^N)$, so that, by the estimate on $g$ in \eqref{decay_beta}, we obtain
\begin{equation}
	\left|\Re\int\mathcal{L}^*(\Delta f_k) \overline{g}(t)\right|\lesssim \|g(t)\|_{\frac{2N}{N-2}} \lesssim \|g\|_{\dot{H}^1} \lesssim e^{-\frac{(c_0+c_1)}{2}t}.
\end{equation}

\textit{Step 4. Closure}

By the decomposition \eqref{def_h}, as well as Steps $2$ and $3$, so far we have
\begin{equation}
	\|h(t)\|_{\dot{H}^1} \lesssim 
	\begin{cases} 
		e^{-\frac{(c_0+c_1)}{2}t} &\text{if } c_0 > e_0\\
		e^{-e_0t} + e^{-\frac{(c_0+c_1)}{2}t} &\text{if } c_0 \leq e_0.\\
		\end{cases}
\end{equation}

Now, if $e_0 \notin [c_0,c_1)$, by iterating the argument, we obtain
\begin{equation}
	\|h(t)\|_{\dot{H}^1} \lesssim e^{-c_1^- t},
\end{equation}
which proves \eqref{lem_self_1}. 

Assume now $e_0 \in [c_0,c_1)$. Then, the estimate \eqref{diff_alpha} on $\alpha_+$ ensures the existence of a limit $A$ to $e^{e_0t}\alpha_+(t)$ as $t \to +\infty$. Integrating \eqref{diff_alpha} from $t$ to $+\infty$, we get
\begin{equation}
	|A - e^{e_0t}\alpha_+| \leq e^{e_0t}\int_t^{+\infty}|B(\mathcal{Y}_+,\epsilon(s))| ds  \lesssim e^{(e_0-c_1)t}.
\end{equation}

In view of the decomposition \eqref{def_h} and estimates \eqref{decay_alpha_1}, \eqref{decay_alpha_2} and \eqref{decay_beta}, we get
\begin{equation}
	\|h(t) - A e^{-e_0t}\mathcal{Y}_+\|_{\dot{H}^1} \lesssim e^{-\frac{(c_0+c_1)}{2}t}. 
\end{equation}

Since $\mathcal{L}\mathcal{Y}_+ = e_0 \mathcal{Y}_+$, we see that $\tilde{h}(t) := h(t) - A e^{-e_0t}\mathcal{Y}_+$ satisfies the differential equation \eqref{linear_exp} with the same $\epsilon$, and with $c_0$ replaced by $\frac{c_0+c_1}{2}>c_0$ in the condition \eqref{exp_decay}. By iterating the argument a finite number of times, we end up under the condition \eqref{lem_self_1}, which implies condition \eqref{lem_self_2} for the original $h$, and finishes the proof of Lemma \ref{lem_self}.
\end{proof}

\begin{proof}[Proof of Lemma \ref{sub_lem_self}]
We first normalize the eigenfunctions of $\mathcal{L}$. Define
\begin{equation}
 \quad f_{0} := \frac{iQ}{\|Q\|_{L^2}},\quad f_k := \frac{\partial_k Q}{\|\partial_k Q\|_{L^2}}, 1 \leq k \leq N.
\end{equation}
We have
\begin{equation}
	B(f_k, h) = 0, \quad \|f_k\|_{L^2} = 1, \quad \forall 0 \leq  k \leq N, \, \forall h \in H^1.
\end{equation}
Recall that $B(\mathcal{Y}_+, \mathcal{Y}_-)\neq 0$. Normalize $\mathcal{Y}_+$, $\mathcal{Y}_-$ such that $B(\mathcal{Y}_+, \mathcal{Y}_-)=1$. Next, write
\begin{gather}
\label{sub_def_h}h(t) = \alpha_+(t) \mathcal{Y}_+ + \alpha_-(t) \mathcal{Y}_- + \sum_k \beta_k(t) f_k + g(t), \quad g(t)\in \tilde{G}^\perp,
\end{gather}
where, recalling that $\mathcal{L}_{|\text{span}\{f_k,\, k \leq N\}} = 0$ and that $\Phi(\mathcal{Y}_+) = \Phi(\mathcal{Y}_-) = 0$, we have
\begin{gather}
	\label{sub_def_alpha}\alpha_+(t) = B(h(t), \mathcal{Y}_-), \quad \alpha_-(t) = B(h(t), \mathcal{Y}_+),\\
	\label{sub_def_beta}\beta_k(t) = (h(t),f_k)_{H^1}-\alpha_+(t)(\mathcal{Y}_+,f_k)_{H^1}-\alpha_-(t)(\mathcal{Y}_-,f_k)_{H^1}, \quad \forall k \leq N.
\end{gather}
\textit{Step 1. Differential inequalities on the coefficients}. We show 
\begin{gather}
\label{sub_diff_alpha}\frac{d}{dt}\left(e^{e_0t}\alpha_+(t)\right) = e^{e_0 t} B(\mathcal{Y}_-,\epsilon), 
\quad \frac{d}{dt}\left(e^{-e_0t}\alpha_-(t)\right) = e^{-e_0 t} B(\mathcal{Y}_+,\epsilon),\\
\label{sub_diff_beta} \frac{d}{dt}\left(e^{-e_0t}\beta_k(t)\right) =  (f_k, \epsilon)_{H^1}-(\mathcal{Y}_+,f_k)_{H^1}B(\mathcal{Y}_-,\epsilon)-(\mathcal{Y}_-,f_k)_{H^1}B(\mathcal{Y}_+,\epsilon)-(\mathcal{L}g,f_k)_{H^1},\\
\label{sub_diff_phi}\frac{d\Phi(h(t))}{dt}= 2B(h, \epsilon).
\end{gather}

By the equation \eqref{sub_linear_exp},
\begin{align}\label{sub_alpha_estim}
	\alpha_+'(t) &= B(\partial_t h, \mathcal{Y}_-) = B(-\mathcal{L}h+\epsilon,\mathcal{Y}_-)\\
	&= B(h, \mathcal{L}\mathcal{Y}_-)+B(\epsilon, \mathcal{Y}_-)=- e_0\alpha_+(t)+B(\epsilon, \mathcal{Y}_-).
\end{align}

This yields the first equation in \eqref{sub_diff_alpha}. The second equation follows similarly.

Now, differentiating \eqref{sub_def_beta}, we obtain
\begin{equation}
	\beta_k' = (\mathcal{L}h + \epsilon -\alpha_+' \mathcal{Y}_+-\alpha_-'\mathcal{Y}_-, f_k)_{H^1}.
\end{equation}

Note that $\mathcal{L}h = e_0\alpha_+\mathcal{Y}_+ - e_0 \alpha_-\mathcal{Y}_-+\mathcal{L}g$, by \eqref{sub_def_h}, which proves \eqref{sub_diff_beta}, in view of \eqref{sub_diff_alpha}.

Finally, differentiating $\Phi(h(t))$,
\begin{equation}
	\frac{d}{dt}\Phi(t) = 2B(h, \partial_t h) = -2B(h, \mathcal{L}h)+2B(h,\epsilon) = 2B(h,\epsilon),
\end{equation}
by the skew-symmetry of $\mathcal{L}$ in \eqref{prop_B}. The equation \eqref{sub_diff_phi} is then proved.

\textit{Step 2. Estimates on} $\alpha_\pm$. We claim
\begin{align}
\label{sub_decay_alpha_1}|\alpha_-(t)| &\lesssim e^{-c_1 t},\\
\label{sub_decay_alpha_2}|\alpha_+(t)| &\lesssim 
\begin{cases}
	e^{-c_1 t} &\text{if } c_0 \leq e_0,\\
	e^{-e_0t}+e^{-c_1^- t}&\text{if } c_0 > e_0.\\
\end{cases}
\end{align}

We will need the following inequality, which is an immediate application of Hölder's inequality.
\begin{equation}\label{sub_claim_B}
    \int_I|B(f(t),g(t))| dt \lesssim \|\langle \nabla \rangle f\|_{S(L^2,\,I)}\|\langle \nabla \rangle g\|_{S'(L^2,\,I)}.
\end{equation}

The above inequality, assumption \eqref{sub_exp_decay} and \eqref{sub_diff_alpha} yield
\begin{equation}
\int_t^{+\infty}|e^{-e_0s}B(\mathcal{Y}_+,\epsilon(s))|ds \leq 	e^{-e_0t}\int_t^{+\infty}|B(\mathcal{Y}_+,\epsilon(s))|ds 
\lesssim e^{-(e_0+c_1)t}.
\end{equation}
By integrating the second equation in \eqref{sub_diff_alpha} gives
\begin{equation}
	|\alpha_-(t)| \lesssim e^{e_0t} \int_t^{+\infty}|e^{-e_0s}B(\mathcal{Y}_+,\epsilon(s))|ds \lesssim e^{-c_1t},
\end{equation}
which proves \eqref{sub_decay_alpha_1}.

To prove \eqref{sub_decay_alpha_2}, consider first the case $c_1 > c_0 > e_0$. Then, by assumption \eqref{sub_exp_decay} and \eqref{sub_def_alpha}, $e^{e_0t}\alpha_+(t)$ vanishes as $t \to +\infty$. By \eqref{sub_claim_B}, integrating the equation on $\alpha_+$ in \eqref{sub_diff_alpha},
\begin{equation}
	|e^{e_0t}\alpha_+(t) | \lesssim \int_t^{+\infty}|e^{e_0s}B(\mathcal{Y}_+,\epsilon(s))|ds
	\lesssim e^{(e_0-c_1)t},
\end{equation}
and we get \eqref{sub_decay_alpha_2} if $c_0 > e_0$.

Assume now that $c_0 \leq e_0$. By \eqref{sub_diff_alpha}, we have
\begin{equation}
	|\alpha_+(t)-e^{-e_0t}\alpha_+(0)| \leq e^{-e_0t}\int_0^te^{e_0s}|B(\mathcal{Y}_-,\epsilon(s))|ds 
	 \lesssim e^{-c_1^- t},
\end{equation}

and the proof of \eqref{sub_decay_alpha_2} is finished.

\textit{Step 3. Bounds on $g$ and $\beta_k$.} We prove
\begin{equation}\label{sub_decay_beta}
	\|g(t)\|_{H^1} + \sum_k |\beta_k(t)| \lesssim e^{-\frac{(c_0+c_1)}{2}t}.
\end{equation}

Again by \eqref{sub_claim_B}, $\int_t^{+\infty}|B(h(s),\epsilon(s))|ds \lesssim e^{-(c_0+c_1)t}$. By integrating \eqref{sub_diff_phi}, we get
\begin{equation}
	\Phi(h(t)) \lesssim e^{-(c_0+c_1)t}.
\end{equation}
Therefore,
\begin{equation}
	|2\alpha_+\alpha_-B(\mathcal{Y}_+,\mathcal{Y}_-)+\Phi(g)| = |\Phi(h)| \lesssim e^{-(c_0+c_1)t}. 
\end{equation}
By Step 2,
\begin{equation}
	|\Phi(g)| \lesssim \begin{cases}
	e^{-(c_0+c_1)t}+ e^{-2c_1t} &\text{if } c_0 > e_0,\\
	e^{-(c_0+c_1)t}+ e^{-c_1t}(e^{-e_0t}+e^{-c_1^-t}) &\text{if } c_0 \leq e_0.\\
	\end{cases}
\end{equation}
In any case, $|\Phi(g)| \leq e^{-(c_0+c_1)t}$. Using the coercivity of $\Phi$, given by Lemma \ref{lem_coerc}, estimate for $\Phi$ in \eqref{sub_decay_beta} is proven.

Consider now \eqref{sub_diff_beta}. By the assumption \eqref{sub_exp_decay},
\begin{equation}
	\beta_k(t+1)-\beta_k(t) \lesssim e^{-c_1t} + \int_t^{t+1}|(f_k,\mathcal{L}g(s))_{H^1}| ds =
	e^{-c_1t} + \int_t^{t+1}\left|\Re\int\mathcal{L}^*(\Delta f_k) \overline{g}(s)\right|d(s),
\end{equation}
where $\mathcal{L}^* = \begin{pmatrix}
	0 &L_+\\-L_- & 0
\end{pmatrix}$ 
is the $L^2$-adjoint of $\mathcal{L}$.

One can check that, for any $0 \leq k \leq N$, $|\mathcal{L}^*(\Delta f_k)| \lesssim e^{-|x|}$. Therefore, $\mathcal{L}^*(\Delta f_k) \in L^2$ so that, by the estimate on $g$ in \eqref{sub_decay_beta},
\begin{equation}
	\left|\Re\int\mathcal{L}^*(\Delta f_k) \overline{g}(t)\right|\lesssim \|g(t)\|_{L^2} \leq \|g\|_{H^1} \lesssim e^{-\frac{(c_0+c_1)}{2}t}.
\end{equation}

\textit{Step 4. Closure}

By the decomposition \eqref{sub_def_h}, as well as Steps $2$ and $3$, so far we have
\begin{equation}
	\|h(t)\|_{H^1} \lesssim 
	\begin{cases} 
		e^{-\frac{(c_0+c_1)}{2}t} &\text{if } c_0 > e_0\\
		e^{-e_0t} + e^{-\frac{(c_0+c_1)}{2}t} &\text{if } c_0 \leq e_0.\\
		\end{cases}
\end{equation}

Now, if $e_0 \notin [c_0,c_1)$, by iterating the argument, we obtain
\begin{equation}
	\|h(t)\|_{H^1} \lesssim e^{-c_1^- t},
\end{equation}
which proves \eqref{sub_lem_self_1}. 

Assume now $e_0 \in [c_0,c_1)$. Then, the estimate \eqref{sub_diff_alpha} on $\alpha_+$ ensures the existence of a limit $A$ to $e^{e_0t}\alpha_+(t)$, as $t \to +\infty$. Integrating \eqref{sub_diff_alpha} from $t$ to $+\infty$,
\begin{equation}
	|A - e^{e_0t}\alpha_+| \leq e^{e_0t}\int_t^{+\infty}|B(\mathcal{Y}_+,\epsilon(s))| ds  \lesssim e^{(e_0-c_1)t}.
\end{equation}

In view of decomposition \eqref{sub_def_h} and estimates \eqref{sub_decay_alpha_1}, \eqref{sub_decay_alpha_2} and \eqref{sub_decay_beta}, we get
\begin{equation}
	\|h(t) - A e^{-e_0t}\mathcal{Y}_+\|_{H^1} \lesssim e^{-\frac{(c_0+c_1)}{2}t}. 
\end{equation}

Since $\mathcal{L}\mathcal{Y}_+ = e_0 \mathcal{Y}_+$, we see that $\tilde{h}(t) := h(t) - A e^{-e_0t}\mathcal{Y}_+$ satisfies the differential equation \eqref{sub_linear_exp} with the same $\epsilon$, and with $c_0$ replaced by $\frac{c_0+c_1}{2}>c_0$ in the condition \eqref{sub_exp_decay}. By iterating the argument a finite number of times, we end up under the condition \eqref{sub_lem_self_1}, which implies condition \eqref{sub_lem_self_2} for the original $h$, and finishes the proof of Lemma \ref{sub_lem_self}.
\end{proof}




\addtocontents{toc}{\protect\vspace*{\baselineskip}}




\begin{bibdiv}
\begin{biblist}

\bib{AS61}{article}{
      author={Aronszajn, N.},
      author={Smith, K.~T.},
       title={Theory of {B}essel potentials. {I}},
        date={1961},
        ISSN={0373-0956},
     journal={Ann. Inst. Fourier (Grenoble)},
      volume={11},
       pages={385\ndash 475},
      review={\MR{0143935}},
}

\bib{Aub76}{article}{
      author={Aubin, Thierry},
       title={\'{E}quations diff\'{e}rentielles non lin\'{e}aires et probl\`eme
  de {Y}amabe concernant la courbure scalaire},
        date={1976},
        ISSN={0021-7824},
     journal={J. Math. Pures Appl. (9)},
      volume={55},
      number={3},
       pages={269\ndash 296},
      review={\MR{0431287}},
}

\bib{Banica}{article}{
      author={Banica, Valeria},
       title={Remarks on the blow-up for the {S}chr\"{o}dinger equation with
  critical mass on a plane domain},
        date={2004},
        ISSN={0391-173X},
     journal={Ann. Sc. Norm. Super. Pisa Cl. Sci. (5)},
      volume={3},
      number={1},
       pages={139\ndash 170},
      review={\MR{2064970}},
}

\bib{BLP81}{article}{
      author={Berestycki, H.},
      author={Lions, P.-L.},
      author={Peletier, L.~A.},
       title={An {ODE} approach to the existence of positive solutions for
  semilinear problems in {${\bf R}\sp{N}$}},
        date={1981},
        ISSN={0022-2518},
     journal={Indiana Univ. Math. J.},
      volume={30},
      number={1},
       pages={141\ndash 157},
         url={https://doi.org/10.1512/iumj.1981.30.30012},
      review={\MR{600039}},
}

\bib{5Rey}{book}{
      author={Berger, Marcel},
      author={Gauduchon, Paul},
      author={Mazet, Edmond},
       title={Le spectre d'une vari\'{e}t\'{e} riemannienne},
      series={Lecture Notes in Mathematics, Vol. 194},
   publisher={Springer-Verlag, Berlin-New York},
        date={1971},
      review={\MR{0282313}},
}

\bib{cazenave}{book}{
      author={Cazenave, Thierry},
       title={Semilinear {S}chr\"{o}dinger equations},
      series={Courant Lecture Notes in Mathematics},
   publisher={New York University, Courant Institute of Mathematical Sciences,
  New York; American Mathematical Society, Providence, RI},
        date={2003},
      volume={10},
        ISBN={0-8218-3399-5},
      review={\MR{2002047}},
}

\bib{CW90}{article}{
      author={Cazenave, Thierry},
      author={Weissler, Fred~B.},
       title={The {C}auchy problem for the critical nonlinear {S}chr\"{o}dinger
  equation in {$H^s$}},
        date={1990},
        ISSN={0362-546X},
     journal={Nonlinear Anal.},
      volume={14},
      number={10},
       pages={807\ndash 836},
      review={\MR{1055532}},
}

\bib{CGNT07}{article}{
      author={Chang, Shu-Ming},
      author={Gustafson, Stephen},
      author={Nakanishi, Kenji},
      author={Tsai, Tai-Peng},
       title={Spectra of linearized operators for {NLS} solitary waves},
        date={2007/08},
        ISSN={0036-1410},
     journal={SIAM J. Math. Anal.},
      volume={39},
      number={4},
       pages={1070\ndash 1111},
      review={\MR{2368894}},
}

\bib{CW91}{article}{
      author={Christ, F.~M.},
      author={Weinstein, M.~I.},
       title={Dispersion of small amplitude solutions of the generalized
  {K}orteweg-de {V}ries equation},
        date={1991},
        ISSN={0022-1236},
     journal={J. Funct. Anal.},
      volume={100},
      number={1},
       pages={87\ndash 109},
      review={\MR{1124294}},
}

\bib{CKSTT08}{article}{
      author={Colliander, J.},
      author={Keel, M.},
      author={Staffilani, G.},
      author={Takaoka, H.},
      author={Tao, T.},
       title={Global well-posedness and scattering for the energy-critical
  nonlinear {S}chr\"{o}dinger equation in {$\Bbb R^3$}},
        date={2008},
        ISSN={0003-486X},
     journal={Ann. of Math. (2)},
      volume={167},
      number={3},
       pages={767\ndash 865},
         url={https://doi.org/10.4007/annals.2008.167.767},
      review={\MR{2415387}},
}

\bib{DM_Dyn}{article}{
      author={Duyckaerts, Thomas},
      author={Merle, Frank},
       title={Dynamic of threshold solutions for energy-critical {NLS}},
        date={2009},
        ISSN={1016-443X},
     journal={Geom. Funct. Anal.},
      volume={18},
      number={6},
       pages={1787\ndash 1840},
      review={\MR{2491692}},
}

\bib{DR_Thre}{article}{
      author={Duyckaerts, Thomas},
      author={Roudenko, Svetlana},
       title={Threshold solutions for the focusing 3{D} cubic {S}chr\"{o}dinger
  equation},
        date={2010},
        ISSN={0213-2230},
     journal={Rev. Mat. Iberoam.},
      volume={26},
      number={1},
       pages={1\ndash 56},
      review={\MR{2662148}},
}

\bib{DR_Going}{article}{
      author={Duyckaerts, Thomas},
      author={Roudenko, Svetlana},
       title={Going beyond the threshold: scattering and blow-up in the
  focusing {NLS} equation},
        date={2015},
        ISSN={0010-3616},
     journal={Comm. Math. Phys.},
      volume={334},
      number={3},
       pages={1573\ndash 1615},
      review={\MR{3312444}},
}

\bib{FXC_Scat}{article}{
      author={Fang, DaoYuan},
      author={Xie, Jian},
      author={Cazenave, Thierry},
       title={Scattering for the focusing energy-subcritical nonlinear
  {S}chr\"{o}dinger equation},
        date={2011},
        ISSN={1674-7283},
     journal={Sci. China Math.},
      volume={54},
      number={10},
       pages={2037\ndash 2062},
      review={\MR{2838120}},
}

\bib{Foschi05}{article}{
      author={Foschi, Damiano},
       title={Inhomogeneous {S}trichartz estimates},
        date={2005},
        ISSN={0219-8916},
     journal={J. Hyperbolic Differ. Equ.},
      volume={2},
      number={1},
       pages={1\ndash 24},
      review={\MR{2134950}},
}

\bib{Gidas81}{incollection}{
      author={Gidas, B.},
      author={Ni, Wei~Ming},
      author={Nirenberg, L.},
       title={Symmetry of positive solutions of nonlinear elliptic equations in
  {${\bf R}^{n}$}},
        date={1981},
   booktitle={Mathematical analysis and applications, {P}art {A}},
      series={Adv. in Math. Suppl. Stud.},
      volume={7},
   publisher={Academic Press, New York-London},
       pages={369\ndash 402},
      review={\MR{634248}},
}

\bib{GV79}{article}{
      author={Ginibre, J.},
      author={Velo, G.},
       title={On a class of nonlinear {S}chr\"{o}dinger equations. {I}. {T}he
  {C}auchy problem, general case},
        date={1979},
        ISSN={0022-1236},
     journal={J. Funct. Anal.},
      volume={32},
      number={1},
       pages={1\ndash 32},
      review={\MR{533218}},
}

\bib{GC12}{incollection}{
      author={Guevara, Cristi},
      author={Carreon, Fernando},
       title={Scattering and blow up for the two-dimensional focusing quintic
  nonlinear {S}chr\"{o}dinger equation},
        date={2012},
   booktitle={Recent advances in harmonic analysis and partial differential equations, Contemp. Math. {\bf{581}}},
      series={Contemp. Math.},
      volume={581},
   publisher={Amer. Math. Soc., Providence, RI},
       pages={117\ndash 153},
      review={\MR{3013056}},
}

\bib{Guevara}{article}{
      author={Guevara, Cristi~Darley},
       title={Global behavior of finite energy solutions to the
  {$d$}-dimensional focusing nonlinear {S}chr\"{o}dinger equation},
        date={2014},
        ISSN={1687-1200},
     journal={Appl. Math. Res. Express. AMRX},
       pages={177\ndash 243},
      review={\MR{3266698}},
}

\bib{HPR}{article}{
      author={Holmer, Justin},
      author={Platte, Rodrigo},
      author={Roudenko, Svetlana},
       title={Blow-up criteria for the 3{D} cubic nonlinear {S}chr\"{o}dinger
  equation},
        date={2010},
        ISSN={0951-7715},
     journal={Nonlinearity},
      volume={23},
      number={4},
       pages={977\ndash 1030},
      review={\MR{2630088}},
}

\bib{HR_Blow}{article}{
      author={Holmer, Justin},
      author={Roudenko, Svetlana},
       title={On blow-up solutions to the 3{D} cubic nonlinear
  {S}chr\"{o}dinger equation},
        date={2007},
        ISSN={1687-1200},
     journal={Appl. Math. Res. Express. AMRX},
       pages={Art. ID abm004, 31},
      review={\MR{2354447}},
}

\bib{HR_Scat}{article}{
      author={Holmer, Justin},
      author={Roudenko, Svetlana},
       title={A sharp condition for scattering of the radial 3{D} cubic
  nonlinear {S}chr\"{o}dinger equation},
        date={2008},
        ISSN={0010-3616},
     journal={Comm. Math. Phys.},
      volume={282},
      number={2},
       pages={435\ndash 467},
      review={\MR{2421484}},
}

\bib{HR_Div}{article}{
      author={Holmer, Justin},
      author={Roudenko, Svetlana},
       title={Divergence of infinite-variance nonradial solutions to the 3d nls
  equation},
        date={2010},
     journal={Communications in Partial Differential Equations},
      volume={35},
      number={5},
       pages={878\ndash 905},
}

\bib{Kato94}{incollection}{
      author={Kato, Tosio},
       title={An {$L^{q,r}$}-theory for nonlinear {S}chr\"{o}dinger equations},
        date={1994},
   booktitle={Spectral and scattering theory and applications},
      series={Adv. Stud. Pure Math.},
      volume={23},
   publisher={Math. Soc. Japan, Tokyo},
       pages={223\ndash 238},
      review={\MR{1275405}},
}

\bib{KT98}{article}{
      author={Keel, Markus},
      author={Tao, Terence},
       title={Endpoint {S}trichartz estimates},
        date={1998},
        ISSN={0002-9327},
     journal={Amer. J. Math.},
      volume={120},
      number={5},
       pages={955\ndash 980},
      review={\MR{1646048}},
}

\bib{KM_Glob}{article}{
      author={Kenig, Carlos~E.},
      author={Merle, Frank},
       title={Global well-posedness, scattering and blow-up for the
  energy-critical, focusing, non-linear {S}chr\"{o}dinger equation in the
  radial case},
        date={2006},
        ISSN={0020-9910},
     journal={Invent. Math.},
      volume={166},
      number={3},
       pages={645\ndash 675},
      review={\MR{2257393}},
}

\bib{KV10}{article}{
      author={Killip, Rowan},
      author={Visan, Monica},
       title={The focusing energy-critical nonlinear {S}chr\"{o}dinger equation
  in dimensions five and higher},
        date={2010},
        ISSN={0002-9327},
     journal={Amer. J. Math.},
      volume={132},
      number={2},
       pages={361\ndash 424},
      review={\MR{2654778}},
}

\bib{Kwong89}{article}{
      author={Kwong, Man~Kam},
       title={Uniqueness of positive solutions of {$\Delta u-u+u^p=0$} in
  {${\bf R}^n$}},
        date={1989},
        ISSN={0003-9527},
     journal={Arch. Rational Mech. Anal.},
      volume={105},
      number={3},
       pages={243\ndash 266},
      review={\MR{969899}},
}

\bib{higher_thre}{article}{
      author={Li, Dong},
      author={Zhang, Xiaoyi},
       title={Dynamics for the energy critical nonlinear {S}chr\"{o}dinger
  equation in high dimensions},
        date={2009},
        ISSN={0022-1236},
     journal={J. Funct. Anal.},
      volume={256},
      number={6},
       pages={1928\ndash 1961},
      review={\MR{2498565}},
}

\bib{Lio84}{article}{
      author={Lions, P.-L.},
       title={The concentration-compactness principle in the calculus of
  variations. {T}he locally compact case. {II}},
        date={1984},
        ISSN={0294-1449},
     journal={Ann. Inst. H. Poincar\'{e} Anal. Non Lin\'{e}aire},
      volume={1},
      number={4},
       pages={223\ndash 283},
      review={\MR{778974}},
}

\bib{Lio85}{article}{
      author={Lions, P.-L.},
       title={The concentration-compactness principle in the calculus of
  variations. {T}he limit case. {II}},
        date={1985},
        ISSN={0213-2230},
     journal={Rev. Mat. Iberoamericana},
      volume={1},
      number={2},
       pages={45\ndash 121},
      review={\MR{850686}},
}

\bib{MM01}{article}{
      author={Martel, Y.},
      author={Merle, F.},
       title={Instability of solitons for the critical generalized
  {K}orteweg-de {V}ries equation},
        date={2001},
        ISSN={1016-443X},
     journal={Geom. Funct. Anal.},
      volume={11},
      number={1},
       pages={74\ndash 123},
      review={\MR{1829643}},
}

\bib{Rey90}{article}{
      author={Rey, Olivier},
       title={The role of the {G}reen's function in a nonlinear elliptic
  equation involving the critical {S}obolev exponent},
        date={1990},
        ISSN={0022-1236},
     journal={J. Funct. Anal.},
      volume={89},
      number={1},
       pages={1\ndash 52},
      review={\MR{1040954}},
}

\bib{RV07}{article}{
      author={Ryckman, E.},
      author={Visan, M.},
       title={Global well-posedness and scattering for the defocusing
  energy-critical nonlinear {S}chr\"{o}dinger equation in {$\Bbb R^{1+4}$}},
        date={2007},
        ISSN={0002-9327},
     journal={Amer. J. Math.},
      volume={129},
      number={1},
       pages={1\ndash 60},
         url={https://doi.org/10.1353/ajm.2007.0004},
      review={\MR{2288737}},
}

\bib{steinbook}{book}{
      author={Stein, Elias~M.},
       title={Singular integrals and differentiability properties of
  functions},
      series={Princeton Mathematical Series, No. 30},
   publisher={Princeton University Press, Princeton, N.J.},
        date={1970},
      review={\MR{0290095}},
}

\bib{Strauss77}{article}{
      author={Strauss, Walter~A.},
       title={Existence of solitary waves in higher dimensions},
        date={1977},
        ISSN={0010-3616},
     journal={Comm. Math. Phys.},
      volume={55},
      number={2},
       pages={149\ndash 162},
      review={\MR{0454365}},
}

\bib{Tal76}{article}{
      author={Talenti, Giorgio},
       title={Best constant in {S}obolev inequality},
        date={1976},
        ISSN={0003-4622},
     journal={Ann. Mat. Pura Appl. (4)},
      volume={110},
       pages={353\ndash 372},
      review={\MR{0463908}},
}

\bib{TaoBook}{book}{
      author={Tao, Terence},
       title={Nonlinear dispersive equations},
      series={CBMS Regional Conference Series in Mathematics},
   publisher={Published for the Conference Board of the Mathematical Sciences,
  Washington, DC; by the American Mathematical Society, Providence, RI},
        date={2006},
      volume={106},
        ISBN={0-8218-4143-2},
         url={https://doi.org/10.1090/cbms/106},
        note={Local and global analysis},
      review={\MR{2233925}},
}

\bib{TV05}{article}{
      author={Tao, Terence},
      author={Visan, Monica},
       title={Stability of energy-critical nonlinear {S}chr\"{o}dinger
  equations in high dimensions},
        date={2005},
        ISSN={1072-6691},
     journal={Electron. J. Differential Equations},
       pages={No. 118, 28},
      review={\MR{2174550}},
}

\bib{Visan07}{article}{
      author={Visan, Monica},
       title={The defocusing energy-critical nonlinear {S}chr\"{o}dinger
  equation in higher dimensions},
        date={2007},
        ISSN={0012-7094},
     journal={Duke Math. J.},
      volume={138},
      number={2},
       pages={281\ndash 374},
      review={\MR{2318286}},
}

\bib{W_1989}{article}{
      author={Weinstein, M.},
       title={The nonlinear {S}chr\"{o}dinger equation---singularity formation,
  stability and dispersion},
        date={1989},
     journal={Contemp. Math.},
      volume={99},
       pages={213\ndash 232},
      review={\MR{1034501}},
}

\bib{W_Nonl}{article}{
      author={Weinstein, Michael~I.},
       title={Nonlinear {S}chr\"{o}dinger equations and sharp interpolation
  estimates},
        date={1982/83},
        ISSN={0010-3616},
     journal={Comm. Math. Phys.},
      volume={87},
      number={4},
       pages={567\ndash 576},
      review={\MR{691044}},
}

\end{biblist}
\end{bibdiv}


\newcommand{\Addresses}{{
  \bigskip
  \footnotesize

  L. Campos, \textsc{Department of Mathematics, UFMG, Brazil}\par\nopagebreak
  \textit{E-mail address:} \texttt{luccasccampos@gmail.com}
  
  \vspace{3mm}
   L. G. Farah, \textsc{Department of Mathematics, UFMG, Brazil}\par\nopagebreak
  \textit{E-mail address:} \texttt{farah@mat.ufmg.br}
  
  \vspace{3mm}
   S. Roudenko, \textsc{Department of Mathematics and Statistics, FIU, USA}\par\nopagebreak
  \textit{E-mail address:} \texttt{sroudenko@fiu.edu}
}}
\setlength{\parskip}{0pt}
\Addresses
\batchmode
\end{document}